\numberwithin{equation}{section} 
\setlist[enumerate,1]{label = (\alph*)}
\renewcommand{\mathbb}{\mathds}
\tikzset{epi/.code={\pgfsetarrowsend{Computer Modern Rightarrow[width=5pt, length=3pt] Computer Modern Rightarrow[width=5pt, length=3pt]}}}
\tikzset{epi_mini/.code={\pgfsetarrowsend{Computer Modern Rightarrow[width=5pt, length=3pt, scale=0.85] Computer Modern Rightarrow[width=5pt, length=3pt, scale=0.85]}}}
\theoremstyle{plain}
\newtheorem{thm}{Theorem}[section]
\newtheorem{prp}[thm]{Proposition}
\newtheorem{cor}[thm]{Corollary}
\newtheorem{lem}[thm]{Lemma}
\newtheorem{thmA}{Theorem}
\theoremstyle{definition}
\newtheorem{dfn}[thm]{Definition}
\newtheorem{ntn}[thm]{Notation}
\newtheorem{con}[thm]{Construction}
\newtheorem{asn}[thm]{Assumption}
\theoremstyle{remark}
\newtheorem{rmk}[thm]{Remark}
\newtheorem{exa}[thm]{Example}
\newenvironment{proofof}[1]{%
	\proof}{\endproof}
\Crefname{subsection}{Subsection}{Subsections}
\crefname{rmk}{Remark}{Remarks}
\Crefname{thm}{Theorem}{Theorems}
\Crefname{thmA}{Theorem}{Theorems}
\Crefname{prp}{Proposition}{Propositions}
\Crefname{cor}{Corollary}{Corollaries}
\Crefname{lem}{Lemma}{Lemmas}
\Crefname{cnj}{Conjecture}{Conjectures}
\Crefname{dfn}{Definition}{Definitions}
\Crefname{ntn}{Notation}{Notations}
\Crefname{con}{Construction}{Constructions}
\Crefname{asn}{Assumption}{Assumptions}
\Crefname{cnv}{Convention}{Conventions}
\Crefname{rmk}{Remark}{Remarks}
\Crefname{exa}{Example}{Examples}
\newcommand{\ZZ}{\mathbb{Z}}
\newcommand{\NN}{\mathbb{N}}
\newcommand{\A}{\mathcal{A}}
\newcommand{\C}{\mathcal{C}}
\newcommand{\D}{\mathcal{D}}
\newcommand{\E}{\mathcal{E}}
\newcommand{\F}{\mathcal{F}}
\newcommand{\K}{\mathcal{K}}
\newcommand{\cS}{\mathcal{S}}
\newcommand{\T}{\mathcal{T}}
\newcommand{\U}{\mathcal{U}}
\newcommand{\V}{\mathcal{V}}
\newcommand{\Z}{\mathcal{Z}}
\newcommand{\mMor}{\mm\Mor}
\newcommand{\smMor}{\sm \Mor}
\newcommand{\eMor}{\me\Mor}
\newcommand{\seMor}{\se \Mor}
\newcommand{\udots}{\reflectbox{$\ddots$}}
\newcommand{\op}[1]{{#1}^\textup{op}}
\newcommand{\sm}[1]{{#1}^\textup{sm}}
\newcommand{\mm}[1]{{#1}^\textup{m}}
\newcommand{\me}[1]{{#1}^\textup{e}}
\newcommand{\se}[1]{{#1}^\textup{se}}
\newcommand{\Ab}{\textup{Ab}}
\DeclareMathOperator{\id}{id}
\DeclareMathOperator{\mcm}{MCM}
\DeclareMathOperator{\im}{im}
\DeclareMathOperator{\coker}{coker}
\DeclareMathOperator{\Hom}{Hom}
\DeclareMathOperator{\Ext}{Ext}
\DeclareMathOperator{\perf}{perf}
\DeclareMathOperator{\tri}{tri}
\DeclareMathOperator{\APC}{APC}
\DeclareMathOperator{\TAPC}{TAPC}
\DeclareMathOperator{\Mor}{Mor}
\DeclareMathOperator{\Prj}{Proj}
\DeclareMathOperator{\Inj}{Inj}
\DeclareMathOperator{\syz}{syz}
\title[The stabilized $N$-derived category]{The stabilized bounded $\boldsymbol N$-derived category of an exact category}
\author[J.~Frank]{Jonas Frank}
\address{\linebreak
	Jonas Frank\\
	Department of Mathematics, University of Kaiserslautern-Landau\\ 
	67663 Kaiserslautern\\
	Germany
}
\email{\href{jfrank@rptu.de}{jfrank@rptu.de}}
\author[M.~Schulze]{Mathias Schulze}
\address{\linebreak
	Mathias Schulze\\
	Department of Mathematics, University of Kaiserslautern-Landau\\ 
	67663 Kaiserslautern\\
	Germany
}
\email{\href{mschulze@rptu.de}{mschulze@rptu.de}}
\subjclass[2020]{Primary 18G50, 18G80; Secondary 16E65, 18G35, 18G65}
\keywords{$N$-complex, exact category, derived category, singularity category, Cohen-Macaulay.}
\thanks{JF would like to thank Janina C.~Letz for helpful explanations.}
\begin{document}
%%%%%%%%%%%%%%%%%%%%%%%%%%%%%%%%%%%%%%%%%%%%%%%%%%%%%%%%%%%%%%%%%%%%%%%%%%%%%%%%

\begin{abstract}
 Buchweitz related the singularity category of a (strongly) Gorenstein ring and the stable category of maximal Cohen-Macaulay modules by a triangle equivalence. We phrase his result in a relative categorical setting based on $N$-complexes instead of classical 2-complexes. The role of Cohen-Macaulay modules is played by chains of monics in a Frobenius subcategory of an exact category. As a byproduct, we provide foundational results on derived categories of $N$-complexes over exact categories known from the Abelian case or for 2-complexes. 
\end{abstract}

\maketitle

\tableofcontents

\section*{Introduction}

The Auslander-Buchsbaum-Serre theorem states that a Noetherian local ring $S$ is regular if and only if all $S$-modules have a finite projective resolution. In 1980, Eisenbud {\cite{Eis80}} showed that, over a complete intersection $S$, any minimal free resolution with bounded ranks becomes 2-periodic after a finite number of steps, depending only on $\dim(S)$.

\smallskip

In an unpublished preprint from 1986, Buchweitz captured this phenomenon in categorical language.  To extract the asymptotic part of a complex he forms the Verdier quotient $$\underline \D^b(S) = \D^b(S)/\mathrm{Perf}(S)$$ of the bounded derived category by complexes of projective modules, the \emph{perfect} complexes. This \emph{stabilized derived category} of a general (possibly non-commutative) ring $S$ can be seen as a measure for its non-regularity. In 2004, it was rediscovered by Orlov {\cite{Orl04}} in a geometric setting and is today known as the \emph{singularity category}.

\smallskip

Buchweitz works over a (strongly) Gorenstein ring $S$. Beyond the (two-sided) injective dimension of $S$, non-zero syzygies of finite $S$-modules are maximal Cohen-Macaulay modules. Conversely, any such module can be considered a complex, when placed at different positions. This suggests an equivalence between $\underline \D^b(S)$ and the stable category $\underline \mcm(S)$ of maximal Cohen-Macaulay modules. Buchweitz confirmed this fact involving the category $\underline \APC(S)$ of complete resolutions. It serves both as a middleman between the two categories, who translates high to low syzygies and to compute Tate cohomology over $S$.

\smallskip

In 2021, Avramov, Briggs, Iyengar, and Letz {\cite{Buc21}} published an annotated version of Buchweitz's manuscript. In Appendix B, they prove Buchweitz's result under weaker hypotheses, where any finite $S$-module is assumed to have a totally reflexive syzygy of order depending on the module. 

\smallskip

Building on previous work of Murfet and Salarian {\cite{MS11}} and others, Christensen et al.~{\cite{Chr+23,CET20}} generalized Buchweitz's theorem to schemes.

\smallskip

This article provides a purely categorical formulation of Buchweitz's theorem: We replace the category of finite $S$-modules by a general exact idempotent complete category $\E$ and the subcategory $\mcm(S)$ by a Frobenius subcategory $\F$, subject to a list of conditions. As an additional direction of generalization, we pass to a bounded derived category $\D^b_N(\E)$ of \emph{$N$-complexes}, where the $N$th power of the differential is zero for $N \geq 2$. As a consequence, $\F$ becomes the category $\mMor_{N-2}(\F)$ of chains of $N-2$ many monics. Complete $N$-resolutions are then objects of the stable category $\underline{\textup{APC}}_N(\F) = \underline{\textup{TAPC}}_N(\F)$ of (totally) acyclic $N$-complexes over $\F$ with projective objects. More specifically, our main result is

\begin{thmA} \label{thm: buchweitz-intro}
	Let $\E$ be an exact idempotent complete category and $\F$ a Frobenius category, which is a fully exact, replete subcategory of $\E$ with $\Prj(\F)=\Prj(\E)$. Suppose that every object in $\E$ has a syzygy in $\F$. Then there is a commutative diagram of triangle equivalences
	\begin{center}
		\begin{tikzcd}[sep={20mm,between origins}]
			 \underline{\textup{APC}}_N(\F) \ar[rd, "\simeq", "\underline \tau^{\leq 0}"'] \ar[rr, "\underline \Omega^{1}", "\simeq"'] &&  \underline \mMor_{N-2}(\F)  \ar[ld, "\simeq"', "\underline \iota^0"] \\
			&\underline \D^b_N(\E),
		\end{tikzcd}
	\end{center}
	where $\iota^0$ is the embedding, $\tau^{\leq 0}$ the hard truncation and $\Omega^{1}$ the $N$-syzygy, at the respective positions.
\end{thmA}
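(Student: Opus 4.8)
The plan is to follow Buchweitz's original strategy: prove the two more elementary equivalences $\underline\Omega^1$ and $\underline\tau^{\le 0}$ directly, and then deduce the commutativity of the diagram — and with it the remaining equivalence $\underline\iota^0$ — from an explicit comparison morphism whose cone is perfect. I use throughout that $\underline\D^b_N(\E)=\D^b_N(\E)/\perf_N(\E)$ is a Verdier quotient, that $\mMor_{N-2}(\F)$ and the category $\APC_N(\F)$ of acyclic (equivalently totally acyclic) $N$-complexes with projective entries are Frobenius categories with stable categories $\underline\mMor_{N-2}(\F)$ and $\underline{\APC}_N(\F)$, and that $\Omega^1$, $\iota^0$, $\tau^{\le 0}$ are exact for elementary reasons — they send the conflations defining these Frobenius structures to triangles and contractible or projective-injective objects to perfect complexes, i.e.\ to zero — so that it suffices to prove they are equivalences of the underlying categories.

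\emph{The functor $\underline\Omega^1$.} Given a chain of $N-2$ monics $M=(M_{N-2}\hookrightarrow\cdots\hookrightarrow M_0)$ in $\F$, I would realize it as the $N$-syzygy window $\ker(d)\hookrightarrow\ker(d^2)\hookrightarrow\cdots\hookrightarrow\ker(d^{N-1})$ of a complete $N$-resolution obtained by splicing, on one side, a projective resolution in $\E$ — which, since every object of $\E$ has a syzygy in $\F$ and $\Prj(\F)=\Prj(\E)$, may be taken with all syzygies in $\F$ — with, on the other side, an injective co-resolution by the objects of $\Prj(\F)=\Inj(\F)$; this yields essential surjectivity. For fullness and faithfulness of the induced functor I would prove a comparison theorem for complete $N$-resolutions: a morphism of chains of $N-2$ monics lifts to a morphism of chosen complete resolutions (projectively on one side, injectively on the other), and a lift is null-homotopic if and only if the induced morphism of syzygy windows factors through a projective-injective object of $\mMor_{N-2}(\F)$. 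Both are diagram chases across the $N-1$ partial cycle subobjects $\ker(d^r)$.

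\emph{The functor $\underline\tau^{\le 0}$.} For essential surjectivity, given $C\in\D^b_N(\E)$, iterated use of the syzygy hypothesis (together with closure of $\F$ under extensions) replaces $C$, modulo a perfect $N$-complex and hence up to isomorphism in $\underline\D^b_N(\E)$, by a sufficiently high $N$-syzygy, which is then a chain of monics in $\F$, i.e.\ an object of $\mMor_{N-2}(\F)$; a complete $N$-resolution built from it as above has hard truncation isomorphic to $C$. For full faithfulness I would compute the morphisms of the Verdier quotient by a calculus of fractions and show that for objects in the image of $\tau^{\le 0}$ one may clear denominators: the obstruction to rectifying a roof is a bounded tail of projectives, which is absorbed using the lifting property of complete $N$-resolutions from the previous step; an honest morphism between hard truncations then becomes zero in $\underline\D^b_N(\E)$ precisely when it factors through a bounded $N$-complex of projectives — equivalently, since its source is a truncated complete resolution, through a single projective object — which is exactly the stable relation. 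I expect this to be the main obstacle: the fraction calculus must be carried out while tracking all $N-1$ cycle filtrations at once, and it is here that the hypotheses $\Prj(\F)=\Prj(\E)$, repleteness of $\F$, idempotent completeness of $\E$, and the universal existence of $\F$-syzygies are genuinely used.

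\emph{Commutativity and conclusion.} For a complete $N$-resolution $X$ there is a natural morphism of $N$-complexes relating $\tau^{\le 0}(X)$ and $\iota^0(\Omega^1 X)$ whose cone is a bounded $N$-complex with projective entries, hence perfect and zero in $\underline\D^b_N(\E)$; naturality in $X$ and compatibility with the suspensions promote this to an isomorphism of triangle functors $\underline\iota^0\circ\underline\Omega^1\simeq\underline\tau^{\le 0}$, which is the commutativity asserted by the theorem. Since $\underline\Omega^1$ and $\underline\tau^{\le 0}$ are triangle equivalences, so is $\underline\iota^0\simeq\underline\tau^{\le 0}\circ(\underline\Omega^1)^{-1}$, and the proof is complete. (Alternatively, $\underline\iota^0$ can be shown to be an equivalence directly — essential surjectivity because a sufficiently high $N$-syzygy of any $C\in\D^b_N(\E)$ lies in $\mMor_{N-2}(\F)$ and becomes isomorphic to $C$ in $\underline\D^b_N(\E)$, full faithfulness by the same fraction calculus — and the conclusion transported to $\underline\tau^{\le 0}$ via commutativity; the two routes are interchangeable.)
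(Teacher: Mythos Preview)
Your proposal follows essentially the same route as the paper: establish $\underline\Omega^{1}$ as a triangle equivalence by splicing complete resolutions and a comparison theorem; establish the natural isomorphism $\underline\tau^{\leq 0}\cong\underline\iota^{0}\circ\underline\Omega^{1}$; prove essential surjectivity of $\underline\tau^{\leq 0}$ via high syzygies; and deduce the remaining equivalence. Two corrections are in order, however.

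First, your commutativity step misidentifies the cone. The canonical morphism $\tau^{\leq 0}X\to\iota^{0}\Omega^{1}X$ (for $X\in\APC_N(\F)$) is a \emph{quasi-isomorphism}: its cone is acyclic, not a bounded complex of projectives. (The cone contains the entire left tail of $X$, so it cannot be bounded.) This is in fact stronger than what you claim --- the isomorphism $\tau^{\leq 0}X\cong\iota^{0}\Omega^{1}X$ already holds in $\D^b_N(\E)$, before passing to the singularity category --- and the paper proves exactly this (\Cref{lem: quasi}). Your conclusion survives, but the justification needs fixing.

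Second, the paper organizes the full-faithfulness argument slightly differently: rather than proving it for $\underline\tau^{\leq 0}$ directly, it proves that $\underline\iota^{0}$ is fully faithful when restricted to the image $\underline\Omega^{1}(\underline\TAPC_N(\E))$ (\Cref{prp: Orlov1.11}), then transports via the equivalence $\underline\Omega^{1}$. This is logically equivalent to your plan, and the actual work is exactly the Orlov-style roof manipulation you sketch. What makes it go through are two concrete Ext-vanishing statements (\Cref{lem: Orlov-prep}): for $\tilde P\in\TAPC_N(\E)$ and $Q$ bounded perfect, both $\Hom_{\D_N(\E)}(\tau^{\leq n}\tilde P,\tau^{\leq n-N+1}Q)$ and $\Hom_{\D_N(\E)}(\tau^{\leq n}\tilde P,\Sigma\tau^{\leq n}Q)$ vanish. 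These use total acyclicity of $\tilde P$ (hence $\Prj(\F)=\Inj(\F)$) in an essential way, and are the genuine content behind your phrase ``clear denominators''. Your identification of this as the main obstacle is correct; your list of which hypotheses enter here is slightly off --- idempotent completeness is needed much earlier (for the derived category to be well-defined), and the syzygy hypothesis enters only in essential surjectivity, not full faithfulness.
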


Our proof that $\underline \tau^{\leq 0}$ is an equivalence is inspired by the work of Orlov {\cite{Orl09}}.

\smallskip

\Cref{thm: buchweitz-intro} is known in special cases: Bahiraei, Hafezi, and Nematbakhsh {\cite{BHN16}} establish an equivalence between $\underline \TAPC_N(S)$ and $\underline \D^b_N(S)$ over a left coherent ring $S$ using triangular matrix rings.
For $N=2$, Christensen et al.~prove the equivalences in \Cref{thm: buchweitz-intro} for a complete, hereditary cotorsion pair $(\U, \V)$ in an Abelian category $\A$, see {\cite[Thm.~3.10]{Chr+23}} and {\cite[Thm.~3.8]{CET20}}. In their setup, $\E = \V$ and $\F$ is the subcategory of right $\U$-Gorenstein objects in $\A$ with $\Prj(\F)=\U \cap \V=\Prj(\E)$, see {\cite[Thm.~2.11]{CET20}} and  {\cite[Prop.~2.7.(a), Thm.~3.6]{Chr+23}}. Brightbill and Miemietz {\cite{BM24}} prove \Cref{thm: buchweitz-intro}, without explicit mention of $\underline \tau^{\leq 0}$, under the assumption that $\E$ is a Gorenstein Abelian category and $\F$ its subcategory of Gorenstein projective objects.\footnote{The final version of their work was published shortly before the completion of this article.}

\smallskip

Already in 1942, $N$-complexes appeared in the work of Mayer {\cite{May42}} generalizing simplicial homology theory. Kapranov {\cite{Kap96}} and Dubois-Violette {\cite{Dub98}} studied their homological properties. They find application in physics and other areas of mathematics, see for instance {\cite{DH99,CSW07,Hen08}} and \cite{Est07,GH10,KQ15}. Cassel and Wambst {\cite{KW98}} studied $N$-resolutions, but only of single objects.

\smallskip 

\Cref{thm: buchweitz-intro} requires foundations of $N$-derived categories over exact categories. Some have been laid by Iyama, Kato and Miyachi {\cite{IKM17}} in the Abelian case, see also {\cite{YD15}} and  {\cite{YW15}}. General results on semiorthogonal decompositions by Jørgensen and Kato {\cite{JK15}} turn out particularly useful in this context. We combine this work with Keller's \cite{Kel90,Kel96} on 2-derived categories of exact categories and rely on a generalization for deflation-exact categories due to Henrard and van Roosmalen {\cite{HR20}}. In particular, we construct $N$-resolutions of bounded above $N$-complexes, see \Cref{subsection: resolutions}. This leads us to extend results of Verdier {\cite[Ch.~III, Thm.~1.2.3]{Ver96}} and  {\cite[Thm.~3.12]{IKM17}} in the following two theorems, formulated using Verdier's notation:

\begin{thmA} \label{thm: diamond}
	For an exact idempotent complete category $\E$ there is a diagram of canonical fully faithful, triangle functors and equivalences:
	
	\begin{center}
		\begin{tikzcd}[sep={15mm,between origins}]
			&& \D^+_N(\E) \ar[rd, "\simeq"] && \\
			& \D^{+, b}_N(\E) \ar[ru] \ar[rd, "\simeq"] && \D^{\infty, +}_N(\E) \ar[rd] \\
			\D^b_N(\E) \ar[ru, "\simeq"] && \D^{\infty, b}_N(\E) \ar[ru] \ar[rd] && \D_N(\E) \\
			& \D^{-, b}_N(\E) \ar[ru, "\simeq"'] \ar[rd] \ar[lu, <-, "\simeq"] && \D^{\infty, -}_N(\E) \ar[ru] \\
			&& \D^-_N(\E) \ar[ru, "\simeq"']
		\end{tikzcd}
	\end{center}
\end{thmA}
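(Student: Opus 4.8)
The plan is to follow Verdier's proof of the classical diamond {\cite[Ch.~III, Thm.~1.2.3]{Ver96}}, in the $N$-complex refinement over Abelian categories of Iyama--Kato--Miyachi {\cite[Thm.~3.12]{IKM17}}, the one genuinely new point being that every truncation that occurs can be carried out inside $\E$ and not merely in an Abelian hull. Each of the nine categories is by construction the Verdier quotient of a full triangulated subcategory of the homotopy category $\K_N(\E)$ of $N$-complexes --- the subcategory being cut out by a boundedness condition on the $N$-complex, on its $N$-cohomology, or on both --- modulo its full subcategory of $N$-acyclic objects, and each of the twelve arrows is the canonical functor induced by the evident inclusion of homotopy subcategories; triangulatedness of all twelve and commutativity of the diagram are then immediate. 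So it remains to prove full faithfulness of all twelve and essential surjectivity of the six arrows marked ``$\simeq$''. The six unmarked arrows require no work: along each of them the subcategory of $N$-acyclics of the larger homotopy category already lies inside the smaller one --- an $N$-acyclic complex has zero, hence bounded above and below, $N$-cohomology --- so the standard criterion for full faithfulness of a functor between Verdier quotients {\cite{Ver96}} applies verbatim. In particular this identifies $\D^{\infty,b}_N(\E)$, $\D^{\infty,+}_N(\E)$ and $\D^{\infty,-}_N(\E)$ with the full subcategories of $\D_N(\E)$ on those objects whose $N$-cohomology is bounded, bounded below, and bounded above, respectively.

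The technical heart is a good-truncation lemma: if an $N$-complex $X$ over $\E$ is $N$-acyclic in all degrees $\ge n$ (resp.\ in all degrees $\le n$), then replacing the band of $N-1$ degrees around $n$ by the relevant $N$-cycle and $N$-boundary objects again produces an $N$-complex over $\E$, one connected to $X$ by a quasi-isomorphism and bounded above (resp.\ below) by roughly $n$. The substantive assertion is that the cycles and boundaries in question really lie in $\E$; this is precisely where the exactness and idempotent-completeness hypotheses are used, through the description of $N$-acyclic $N$-complexes over an exact category and the $N$-resolution machinery of {\Cref{subsection: resolutions}}, relying on Keller {\cite{Kel90,Kel96}}, Henrard--van Roosmalen {\cite{HR20}} and Jørgensen--Kato {\cite{JK15}}. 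Iterating at the top and at the bottom, an $N$-complex over $\E$ whose $N$-cohomology is concentrated in a bounded range becomes, up to isomorphism in the derived category, an $N$-complex over $\E$ concentrated in a bounded range, with one-sided boundedness of the original complex preserved. Essential surjectivity of all six ``$\simeq$'' arrows is then immediate: a bounded-below $N$-complex with bounded $N$-cohomology good-truncates at the top to a bounded one, an $N$-complex with $N$-cohomology bounded below good-truncates at the bottom to a bounded-below one, and likewise for the remaining four.

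For full faithfulness of the six ``$\simeq$'' arrows I would check the remaining instance of the Verdier quotient criterion: a morphism in $\K_N(\E)$ from an object of the smaller homotopy subcategory into an $N$-acyclic object of the larger one has to factor through an $N$-acyclic object of the smaller one, and the bounded $N$-acyclic complex providing this is built from the cycles and boundaries of the given acyclic complex exactly as in {\cite{Ver96},\cite{IKM17}} --- the good-truncation lemma being what keeps it inside $\E$. Alternatively, this bookkeeping can be imported directly from {\cite[Thm.~3.12]{IKM17}} along a Gabriel--Quillen embedding $\E\hookrightarrow\A$ into an Abelian hull, idempotent completeness of $\E$ being what makes the induced functors on the relevant derived categories fully faithful; then only essential surjectivity, hence only the good-truncation lemma, has to be done by hand. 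Combining the two halves yields the six triangle equivalences, and unwinding the definitions gives the stated diagram.

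The main obstacle is the good-truncation lemma, and inside it the single claim that the good truncation of an $N$-complex over $\E$ along an $N$-acyclic stretch can be formed without leaving $\E$ --- it is essentially here alone that the hypotheses on $\E$ and the foundations of the earlier sections are really needed. The only other point demanding care is the index arithmetic particular to $N$-complexes, namely that a good truncation perturbs a whole band of $N-1$ consecutive degrees; beyond that, the two applications of the quotient criterion, the triangulatedness of the functors and the commutativity of the diagram are a routine transcription of the $2$-complex argument.
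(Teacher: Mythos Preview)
Your approach matches the paper's: the paper invokes the Jørgensen--Kato factoring criterion (\Cref{thm: sod}.\ref{thm: sod-c}), which amounts to your Verdier quotient criterion, and establishes both full faithfulness and essential surjectivity of the marked arrows via soft truncations (\Cref{dfn: soft-trunc}, \Cref{cor: soft-trunc-seq}), exactly as you outline. One small correction worth noting: idempotent completeness is \emph{not} what keeps the soft truncation inside $\E$ --- the relevant cycle and cokernel objects exist in $\E$ automatically from the definition of $N$-acyclicity via admissible morphisms --- but rather enters through \Cref{cor: K-cap-K} (to identify $\K^{\#',\natural}_N(\E)\cap\K^{\#,\varnothing}_N(\E)$ as $\K^{\#',\varnothing}_N(\E)$, which is needed to set up \Cref{thm: sod}) and through \Cref{lem: termwise-triangle} (to obtain the distinguished triangle in $\D_N(\E)$ from the termwise short exact sequence of truncations); the $N$-resolution machinery of \Cref{subsection: resolutions} plays no role here.
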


\begin{thmA} \label{thm: IKM-resolution}
	Let $\E$ be an exact idempotent complete category with enough projectives.
	\begin{enumerate}
		\item The pair $(\K^{-}_N(\Prj(\E)), \K^{-, \varnothing}_N(\E))$ is a semiorthogonal decomposition of $\K^-_N(\E)$, which gives rise to a triangle equivalence $\D^-_N(\E) \simeq \K^-_N(\Prj(\E))$.
		\item The pair $(\K^{-, b_\E}_N(\Prj(\E)), \K^{-, \varnothing }_N(\E))$ is a semiorthogonal decomposition of $\K^{-, b}_N(\E)$, which gives rise to a triangle equivalence $\D^{b}_N(\E) \simeq \K^{-, b_\E}_N(\Prj(\E))$.
	\end{enumerate}
	The obvious dual statements hold as well.
\end{thmA}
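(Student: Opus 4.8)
The plan is to derive both parts from two ingredients: the $N$-resolutions of bounded-above $N$-complexes constructed in \Cref{subsection: resolutions}, and the formalism of semiorthogonal decompositions of \cite{JK15}. Two standing facts are used freely. First, $\K^-_N(\Prj(\E))$ is a triangulated subcategory of $\K^-_N(\E)$: the $N$-suspension and the $N$-mapping cone are assembled from finite direct sums of the terms, hence preserve complexes of projectives. Second, $\K^{-,\varnothing}_N(\E)$, the bounded-above $N$-acyclic complexes, is a thick triangulated subcategory, and $\D^-_N(\E)=\K^-_N(\E)/\K^{-,\varnothing}_N(\E)$ by the construction of the bounded-above $N$-derived category (compatibly with the left edge of \Cref{thm: diamond}).

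\textbf{Part (a).} The crucial step is the $\Hom$-orthogonality: $\Hom_{\K^-_N(\E)}(P^\bullet, A^\bullet)=0$ for all $P^\bullet\in\K^-_N(\Prj(\E))$ and $A^\bullet\in\K^{-,\varnothing}_N(\E)$; that is, every bounded-above $N$-complex of projectives is homotopically projective. To prove it, given a chain map $f\colon P^\bullet\to A^\bullet$ I would build an $N$-contraction --- a family of $N-1$ homotopy operators raising degree --- by descending induction on the degree, using that both complexes vanish in high degrees. Each step reduces to lifting a morphism out of a projective object $P_n$ along the admissible epimorphisms supplied by the structural description of bounded-above $N$-acyclic complexes over an exact category (the admissible epi-mono factorizations of their differentials and the resulting conflations, in the spirit of \cite{IKM17}), and is solved by the defining lifting property of projectives. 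This is the main obstacle of the whole proof: it demands careful bookkeeping of the $N-1$ homotopies and, as a prerequisite, that structural description. Granting the $\Hom$-orthogonality, pick for each $X^\bullet\in\K^-_N(\E)$ an $N$-resolution $p_X\colon P_X^\bullet\to X^\bullet$ with $P_X^\bullet\in\K^-_N(\Prj(\E))$ and complete it to a triangle $P_X^\bullet\to X^\bullet\to C_X^\bullet\to$; the third term $C_X^\bullet=\mathrm{cone}(p_X)$ is $N$-acyclic, hence lies in $\K^{-,\varnothing}_N(\E)$. Thus $(\K^-_N(\Prj(\E)),\K^{-,\varnothing}_N(\E))$ satisfies the defining conditions of a semiorthogonal decomposition of $\K^-_N(\E)$ in the sense of \cite{JK15}, and its standard consequence applies: since $\Hom(\K^-_N(\Prj(\E)),\K^{-,\varnothing}_N(\E))=0$ in particular forces $\K^-_N(\Prj(\E))\cap\K^{-,\varnothing}_N(\E)=0$, the composite $\K^-_N(\Prj(\E))\hookrightarrow\K^-_N(\E)\to\D^-_N(\E)$ is fully faithful, while the decomposition triangle identifies $X^\bullet$ with $P_X^\bullet$ in the quotient, making it essentially surjective.

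\textbf{Part (b).} I would restrict the decomposition of (a) to the thick subcategory $\K^{-,b}_N(\E)$ of complexes with bounded $N$-homology. Both $\K^{-,b_\E}_N(\Prj(\E))$ --- complexes of projectives with bounded $\E$-homology --- and $\K^{-,\varnothing}_N(\E)$ are contained in it, and $\Hom$-orthogonality is inherited. For $X^\bullet\in\K^{-,b}_N(\E)$ the resolution $p_X$ is a quasi-isomorphism, so $P_X^\bullet$ shares the bounded $\E$-homology of $X^\bullet$ and therefore lies in $\K^{-,b_\E}_N(\Prj(\E))$, while $C_X^\bullet$ is acyclic; hence the decomposition triangle stays inside $\K^{-,b}_N(\E)$. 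This makes $(\K^{-,b_\E}_N(\Prj(\E)),\K^{-,\varnothing}_N(\E))$ a semiorthogonal decomposition of $\K^{-,b}_N(\E)$, and the argument of (a) produces a triangle equivalence with $\K^{-,b}_N(\E)/\K^{-,\varnothing}_N(\E)=\D^{-,b}_N(\E)$, which is equivalent to $\D^b_N(\E)$ by \Cref{thm: diamond}.

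\textbf{Dual statements.} Apply (a) and (b) to $\op\E$ --- again exact and idempotent complete, with enough projectives exactly when $\E$ has enough injectives --- and transport the conclusions along the duality between $\K_N(\op\E)$ and $\op{\K_N(\E)}$ (with the order-reversing reindexing of $N$-complexes); using $\Prj(\op\E)=\Inj(\E)$ this yields $\D^+_N(\E)\simeq\K^+_N(\Inj(\E))$ and $\D^b_N(\E)\simeq\K^{+,b_\E}_N(\Inj(\E))$.
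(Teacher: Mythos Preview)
Your proposal is correct and follows essentially the same route as the paper: the $\Hom$-orthogonality is the paper's \Cref{lem: Verdier-crit} (which in turn appeals to \cite[Lem.~3.3]{IKM17} rather than spelling out the inductive construction of the $N$-homotopy), the decomposition triangle comes from the resolution of \Cref{cor: resolution-exist}, and the equivalences are read off via \Cref{prp: sod}, with \Cref{thm: D-diamond} supplying $\D^{b}_N(\E)\simeq\D^{-,b}_N(\E)$.

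One point in part (b) deserves care. Your phrase ``$P_X^\bullet$ shares the bounded $\E$-homology of $X^\bullet$'' is the right intuition, but over a general exact category there is no homology and acyclicity at a position is not \emph{a priori} invariant under homotopy equivalence. The paper handles this via \Cref{cor: resolution-cohomology}: since $\K^{-,b}_N(\E)$ is triangulated and $C(p_X)$ is acyclic, $P_X^\bullet$ is isomorphic in $\K_N(\E)$ to an object of $\C^{-,b}_N(\E)$, and then \Cref{prp: idemp-compl}\ref{prp: idemp-compl-3}---which is exactly where idempotent completeness enters---forces $P_X^\bullet\in\C^{-,b_\E}_N(\Prj(\E))$. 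You should make this dependence explicit.
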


\Cref{thm: buchweitz-intro,thm: diamond,thm: IKM-resolution} agree with Theorems \ref{thm: buchweitz}, \ref{thm: D-diamond} and \ref{thm: K-sod} of the main part.

\section{Preliminaries}

\label{section: preliminaries}

Unless stated otherwise, all (sub)categories and functors considered are assumed to be (full) additive.

\smallskip

Our main reference on the topic of \emph{triangulated categories} is Neeman's book {\cite{Nee01}}. However, we require the more general definition of a triangulated category whose suspension functor is only a autoequivalence instead of an automorphism. These two definitions agree up to a triangulated equivalence, see {\cite[\S 2]{KV87}} and {\cite[\S 2]{May01}}.\\
Recall that a \textit{triangle equivalence} is a  triangulated functor which is an equivalence of categories. Its quasi-inverse is automatically a triangulated functor, see {\cite[Prop.~1.4]{BK89}} for a more general statement.

\smallskip

 Unless stated otherwise, the image of a functor always means the full essential image. Given a fully faithful (triangulated) functor $F\colon \C' \to \C$, we tacitly identify $\C'$ up to equivalence with its image $F(\C')$, which is a strictly full (triangulated) subcategory of $\C$.

\smallskip

Bullets in diagrams represent arbitrary objects, and $E_l$ denotes the unit matrix of size $l \in \NN.$

\subsection{Exact categories}

For convenience of the reader, we recollect relevant definitions and results on the topic of \emph{exact categories} in the sense of Quillen {\cite{Qui73}}. Our main reference is Bühler's expository article {\cite{Buh10}}, which relies in part on work of Keller {\cite{Kel90,Kel96}}.

\begin{dfn} \
	\begin{enumerate}
		\item A \textbf{pullback} (dashed) is the limit of a diagram (solid) of the form
		\begin{center}
			\begin{tikzcd}[sep={15mm,between origins}]
				\bullet \ar[r, dashed,  "a'"] \ar[d, dashed,  "b'"]& \bullet \ar[d, "b"] \\
				\bullet \ar[r, "a"] & \bullet.
			\end{tikzcd}
		\end{center}

		\item A \textbf{pushout} (dashed) is the colimit of a diagram (solid) of the form
		\begin{center}
			\begin{tikzcd}[sep={15mm,between origins}]
				\bullet \ar[r, "a"] \ar[d, "b"] & \bullet \ar[d, dashed,  "b'"]\\
				\bullet \ar[r, dashed,  "a'"] & \bullet.
			\end{tikzcd}
		\end{center}
	\end{enumerate}

	By abuse of wording, the morphism $a'$ is called a \textbf{pullback}, resp.~\textbf{pushout}, of $a$ \textbf{along} $b$. The completed diagrams are also called a \textbf{pullback (square)}, resp.~\textbf{pushout (square)}. A square is called \textbf{bicartesian} if it is both a pullback and a pushout.
\end{dfn}

The following statements are known as the pasting laws for pullbacks and pushouts:

\begin{lem}[{\cite[Ex.~III.4.8]{Mac98}}] \label{lem: pasting-law}
	Consider the following commutative diagram in a category:
	\begin{center}
		\begin{tikzcd}[sep={17.5mm,between origins}]
			\bullet \ar[r] \ar[d] \ar[rd, phantom, "(X)"]
			& \bullet \ar[r] \ar[d] \ar[rd, phantom, "(Y)"] & \bullet \ar[d]\\
			\bullet \ar[r] & \bullet \ar[r] & \bullet
		\end{tikzcd}
	\end{center}

	\begin{enumerate}
		\item Suppose that $(Y)$ is a pullback. Then $(XY)$ is a pullback if and only if $(X)$ is so.
		\item Suppose that $(X)$ is a pushout. Then $(XY)$ is a pushout if and only if $(Y)$ is so.
	\end{enumerate}
	In particular, if $(X)$ and $(Y)$ are bicartesian, then so is $(XY)$. \qed
\end{lem}

In addition, we need the following reverse pasting laws over additive categories:

\begin{lem} \label{lem: pushpull-sep}
	Consider the following commutative diagram in an additive category:
	\begin{center}
		\begin{tikzcd}[sep={17.5mm,between origins}]
			A \ar[r, "r"] \ar[d, "a"] \ar[rd, phantom, "(X)"]
			& B \ar[r, "s"] \ar[d, "b"] \ar[rd, phantom, "(Y)"] & C \ar[d, "c"]\\
			A' \ar[r, "r'"] & B' \ar[r, "s'"]  & C'
		\end{tikzcd}
	\end{center}
	\begin{enumerate}
		\item \label{lem: pushpull-sep-pushout} If the outer square $(XY)$ is a pushout and $\begin{pmatrix} b & r'\end{pmatrix}\colon B \oplus A' \to B'$ is an epic, then $(Y)$ is a pushout.
		\item \label{lem: pushpull-sep-pullback} If the outer square $(XY)$ is a pullback and $\begin{pmatrix} s \\ b \end{pmatrix}\colon B \to C \oplus B'$ is a monic, then $(X)$ is a pullback.
	\end{enumerate}
\end{lem}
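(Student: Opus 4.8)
The plan is to verify the defining universal properties directly, and to treat (b) as the formal categorical dual of (a), so that it suffices to give the argument for (a) in detail. Throughout I use the two commutativity relations read off the diagram, namely $r'a = br$ for square $(X)$ and $s'b = cs$ for square $(Y)$; note these force the outer square $(XY)$ to commute as well, since $s'r'a = s'br = csr$.

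For part (a) I fix a test object $T$ together with morphisms $f\colon C\to T$ and $g\colon B'\to T$ satisfying $fs = gb$, and I must produce a unique $h\colon C'\to T$ with $hc=f$ and $hs'=g$. \emph{Existence.} The pair $(f,\,gr')$ is a cocone on the outer span $C\xleftarrow{\,sr\,}A\xrightarrow{\,a\,}A'$, because $f(sr) = (gb)r = g(r'a) = (gr')a$. Since $(XY)$ is a pushout, this yields a unique $h\colon C'\to T$ with $hc = f$ and $h(s'r') = gr'$. It remains to upgrade the second equality to $hs' = g$, and this is the only place the hypothesis enters: from $(hs')b = h(cs) = (hc)s = fs = gb$ and $(hs')r' = h(s'r') = gr'$ we get that $hs'-g$ precomposes to zero with $\begin{pmatrix} b & r'\end{pmatrix}\colon B\oplus A'\to B'$, which is epic, hence $hs' = g$. \emph{Uniqueness.} If $h'\colon C'\to T$ also satisfies $h'c = f$ and $h's' = g$, then in particular $h'c = f$ and $h'(s'r') = gr'$, so $h' = h$ by the uniqueness clause of the outer pushout.

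Part (b) is obtained by reversing all arrows: given $f\colon T\to A'$ and $g\colon T\to B$ with $r'f = bg$, the pair $(sg,\,f)$ is a cone on the outer cospan, so the pullback $(XY)$ produces a unique $h\colon T\to A$ with $ah = f$ and $(sr)h = sg$; then $b(rh) = (br)h = (r'a)h = r'(ah) = r'f = bg$ together with $s(rh) = (sr)h = sg$ shows that $rh - g$ is killed by $\begin{pmatrix} s \\ b \end{pmatrix}\colon B\to C\oplus B'$, which is monic, whence $rh = g$; uniqueness of $h$ for $(X)$ again transfers from uniqueness for the outer pullback.

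There is no genuine obstacle here: the statement is a routine diagram chase, a mild converse to \Cref{rmk: pushpull-concat}. The one point requiring care is the bookkeeping — ensuring the matrix hypothesis is invoked exactly once, precisely to pass from ``equal after composing with both $b$ and $r'$'' (resp.\ ``equal after postcomposing with both $s$ and $b$'') to genuine equality — and keeping the orientation conventions for pushouts and pullbacks in an additive category consistent with the diagram as drawn.
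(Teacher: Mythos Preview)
Your proof is correct and follows essentially the same approach as the paper's: obtain the candidate morphism from the universal property of the outer pushout (respectively pullback), then use the epic (respectively monic) hypothesis on $\begin{pmatrix} b & r'\end{pmatrix}$ (respectively $\begin{pmatrix} s \\ b \end{pmatrix}$) to upgrade the partial equality $hs'r' = gr'$ to the full one $hs' = g$. The paper's write-up is marginally more compact, but the argument is identical in structure.
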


\begin{proof} \
	\begin{enumerate}[leftmargin=*]
		\item Consider the solid commutative diagram 
		
		\begin{center}
			\begin{tikzcd}[sep={17.5mm,between origins}]
				A \ar[r, "r"] \ar[d, "a"] & B \ar[r, "s"] \ar[d, "b"] & C \ar[d, "c"] \ar[rdd, "t"]\\
				A' \ar[r, "r'"] \ar[rrrd, "t'r'"'] & B' \ar[r, "s'"] \ar[rrd, "t'"] & C' \ar[rd, "u" near start, dashed] \\
				&&& P.
			\end{tikzcd}
		\end{center}
		
		\noindent
		The outer pushout square yields a unique dashed morphism $u$ with $u c = t$ and $u s' r' = t' r'$, hence with $u c = t$ and $u s' r' = t' r'$. Since $\begin{pmatrix} b & r' \end{pmatrix}$ is an epic, $u s' = t'$ follows from
		\begin{align*}
			u s' \begin{pmatrix} b & r' \end{pmatrix} &= \begin{pmatrix} u s' b & u s' r' \end{pmatrix} = \begin{pmatrix} u c s &  t' r' \end{pmatrix} = \begin{pmatrix} t s &  t' r' \end{pmatrix} =  \begin{pmatrix} t' b &  t' r' \end{pmatrix} = t' \begin{pmatrix} b & r' \end{pmatrix}.
		\end{align*}
		
		\item is dual to \ref{lem: pushpull-sep-pushout}. \qedhere
	\end{enumerate} 
\end{proof}

\begin{dfn}[{\cite[Def.~2.1]{Buh10}}] \label{dfn: exact}
	Let $\cS$ be a collection of pairs $(i, p)$ of morphisms in an additive category $\E$, where $i$ is a kernel of $p$ and $p$ is a cokernel of $i$. If $(i, p) \in \cS$, then $i$ is called an \textbf{($\boldsymbol \cS$-)admissible monic} and $p$ an \textbf{($\boldsymbol \cS$-)admissible epic}. The pairs $(i, p) \in \cS$ are referred to as a \textbf{short ($\boldsymbol \cS$-)exact sequences} in $\E$ and are displayed as
	\begin{center}
		\begin{tikzcd}[sep =large]
			A' \arrow[r, tail, "i"] & A \arrow[r, two heads, "p"] & A''.
		\end{tikzcd}
	\end{center}
	The collection $\cS$ is said to define an \textbf{exact structure} on $\E$ if $\cS$ is closed under isomorphisms and if the following axioms are satisfied:
	\begin{enumerate}
		\item For all objects $A \in \E$, the identity $\id_A$ is an admissible monic and epic.
		\item Composition preserves admissible monics and epics.
		\item \label{dfn: exact-pushpull} Pushout, resp.~pullback, along arbitrary morphisms exists for and preserves admissible monics, resp.~epics.
	\end{enumerate}
	
	In this case, $(\E, \cS)$, or just $\E$, is called an \textbf{exact category}.
\end{dfn}

\begin{rmk}[{\cite[Rem.~2.2]{Buh10}}] \label{rmk: E-op}
	If $\E$ is an exact category, then so is $\op \E$ with admissible monics and epics exchanged. Therefore, each statement on exact structures has a dual. For the sake of clarity, we formulate some less obvious dual statements explicitly.
\end{rmk}

\begin{dfn} Let $\A$ be an additive category.
	\begin{enumerate}
		\item A morphism in $\A$ is called a \textbf{split epic (monic)} if it has a right (left) inverse.
		\item A sequence of composable morphisms in $\A$ is called a  \textbf{split short exact} if it is isomorphic to
		\[
		\begin{tikzcd}
			A \ar[r] & A \oplus B \ar[r] & B
		\end{tikzcd}
		\]
		for $A, B \in \A$.
	\end{enumerate}
\end{dfn}

\begin{rmk}[{\cite[Rem.~7.4]{Buh10}}] \label{rmk: Buehler7.4}
	If a split epic $\begin{tikzcd}[cramped, sep=small] r\colon Y \ar[r, epi] & Z \end{tikzcd}$ in an additive category has a kernel $i\colon X \to Y$, then the sequence \begin{tikzcd}[cramped, sep=small] X \ar[r, "i"] & Y \ar[r, "r"] & Z \end{tikzcd} is split short exact.
\end{rmk}

\begin{prp}[{\cite[Prop.~2.9]{Buh10}}] \label{prp: Buehler2.9}
	In an exact category, finite direct sums of short exact sequences are again short exact. In particular, any split short exact sequence is short exact. \qed
\end{prp}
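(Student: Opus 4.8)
The plan is to derive both assertions from only the composition and pushout axioms of \Cref{dfn: exact}. Fix short exact sequences $A'\xrightarrow{i}A\xrightarrow{p}A''$ and $B'\xrightarrow{j}B\xrightarrow{q}B''$. First I would check, by the universal property in the underlying additive category, that the block map $i\oplus\id_{B'}\colon A'\oplus B'\to A\oplus B'$ is a pushout of the admissible monic $i$ along the split monic $\begin{pmatrix}\id_{A'}\\0\end{pmatrix}\colon A'\to A'\oplus B'$. By the pushout axiom it is then an admissible monic, and symmetrically so is $\id_A\oplus j\colon A\oplus B'\to A\oplus B$. Since $i\oplus j=(\id_A\oplus j)\circ(i\oplus\id_{B'})$, the composition axiom shows that $i\oplus j$ is an admissible monic.

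Next I would identify its cokernel: in any additive category a cokernel of $i\oplus j$ is $p\oplus q\colon A\oplus B\to A''\oplus B''$, because cokernels are formed summand-wise. Hence the short exact sequence carried by the admissible monic $i\oplus j$ is isomorphic — via the identity on the first two terms and the canonical comparison isomorphism of cokernels on the third — to $A'\oplus B'\xrightarrow{i\oplus j}A\oplus B\xrightarrow{p\oplus q}A''\oplus B''$, so the latter lies in $\cS$ because $\cS$ is closed under isomorphism. An induction on the number of summands then yields the claim for arbitrary finite direct sums.

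For the addendum, I would observe that $A\xrightarrow{\id_A}A\to 0$ and $0\to B\xrightarrow{\id_B}B$ are short exact by the first axiom of \Cref{dfn: exact} ($\id_A$ is an admissible monic with cokernel $A\to 0$; $\id_B$ is an admissible epic with kernel $0\to B$). Their direct sum is the split short exact sequence with underlying object $A\oplus B$, which is therefore short exact by the first part, and closure of $\cS$ under isomorphism upgrades this to an arbitrary split short exact sequence.

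The argument is essentially bookkeeping with the axioms; the only point deserving a moment's care — and the closest thing to an obstacle — is verifying that the block maps $i\oplus\id_{B'}$, $\id_A\oplus j$ genuinely arise as pushouts, and that $p\oplus q$ genuinely is a cokernel, so that the exact-structure axioms apply to these maps rather than to maps one has merely labelled \enquote{admissible}. Both are one-line checks of universal properties, after which nothing further is required.
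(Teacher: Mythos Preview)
Your proof is correct. The paper does not give its own proof of this proposition---it simply cites B\"uhler \cite[Prop.~2.9]{Buh10} and ends with \qed---so there is no in-paper argument to compare against. Your route (pushout of $i$ along the split inclusion to get $i\oplus\id_{B'}$ admissible, then compose, then identify the cokernel) is the standard one and coincides with B\"uhler's proof; deriving the split case from the direct-sum case via $(\id_A,0)\oplus(0,\id_B)$ is likewise exactly how B\"uhler organises it.
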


\begin{exa} \label{exa: exact} \
	\begin{enumerate}[leftmargin=*]
		\item \label{exa: exact-additive} Any additive category has an exact structure given by the split short exact sequences. We refer to it as the \textbf{split} exact structure.
		
		\item \label{exa: exact-abelian} Any Abelian category has the \textbf{maximal} exact structure with all monics and epics admissible.
		
		\item \label{exa: exact-diagram} Any diagram category over an exact category $\E$ has an exact structure, defined component-wise by the exact structure of $\E$. We refer to it as the \textbf{termwise} exact structure.
	\end{enumerate}
\end{exa}

\begin{prp}[Obscure axiom, {\cite[Prop.~2.16]{Buh10}}] \label{prp: obscure}
	In an exact category, the following statements hold:
	\begin{enumerate}
		\item \label{prp: obscure-mono} If a morphism $i\colon A \to B$ has a cokernel, and $b\colon B \to C$ is a morphism such that $bi\colon A \rightarrowtail C$ is an admissible monic, then $i$ is an admissible monic.
		
		\item \label{prp: obscure-epi} If a morphism $p\colon B \to C$ has a kernel, and $a\colon A \to B$ is a morphism such that $pa\colon A \twoheadrightarrow C$ is an admissible epic, then $p$ is an admissible epic. \qed
	\end{enumerate}
\end{prp}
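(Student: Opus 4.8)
I would prove part~\ref{prp: obscure-epi} and obtain part~\ref{prp: obscure-mono} for free by passing to $\op\E$ (\Cref{rmk: E-op}), where admissible epics and monics, and kernels and cokernels, are interchanged, and the composite $bi$ turns into $i^{\textup{op}}b^{\textup{op}}$. So assume $p\colon B\to C$ has a kernel $k\colon K\to B$, let $a\colon A\to B$, and suppose $q:=pa\colon A\twoheadrightarrow C$ is an admissible epic, with associated short exact sequence $K_0\rightarrowtail A\xrightarrow{q}C$ where $K_0=\ker q$.

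The construction I would use is the pullback $P$ of $q$ along $p$, with projections $\pi_B\colon P\to B$ and $\pi_A\colon P\to A$. By the axioms for an exact structure (\Cref{dfn: exact}), $\pi_B$ is an admissible epic, and pasting pullback squares (\Cref{rmk: pushpull-concat}) identifies $\ker\pi_B$ with $K_0$ and $\ker\pi_A$ with $\ker p=K$; here, and only here, does the hypothesis that $p$ has a kernel enter. Since $pa=q$, the pair $(a,\id_A)\colon A\to P$ is a section of $\pi_A$, so $\pi_A$ is a split epic admitting a kernel; by \Cref{rmk: Buehler7.4} and \Cref{prp: Buehler2.9} the sequence $K\rightarrowtail P\xrightarrow{\pi_A}A$ is split short exact, hence $\pi_A$ is an admissible epic. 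Therefore $p\pi_B=q\pi_A$ is an admissible epic, being a composite of such, and in particular it has a kernel $j\colon L\to P$.

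The remaining, and decisive, step is to descend from $P$ to $B$. One first checks that $K_0=\ker\pi_B$ and $K=\ker\pi_A$ are contained in $L=\ker(p\pi_B)$, that $\pi_B$ restricts to a morphism $\psi\colon L\to K$ characterised by $k\psi=\pi_B j$, that the inclusion $K\hookrightarrow L$ is a section of $\psi$, and that $\ker\psi=K_0$; thus $K_0\rightarrowtail L\xrightarrow{\psi}K$ is split short exact. The heart of the matter is the claim that $B$, together with $\pi_B\colon P\to B$ and $k\colon K\to B$, is the pushout of the admissible monic $j\colon L\to P$ along $\psi\colon L\twoheadrightarrow K$. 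This is a routine universal-property verification: the restriction of $j$ to $\ker\psi=K_0$ is the inclusion $\ker\pi_B\hookrightarrow P$, so any pair of morphisms out of $P$ and $K$ compatible over $L$ kills $\ker\pi_B$ and hence factors through $\pi_B=\coker(\ker\pi_B\hookrightarrow P)$. Granting the claim, the exactness axioms make $k\colon K\to B$ — the pushout of the admissible monic $j$ — an admissible monic, and, since a pushout of a short exact sequence is again short exact, its cokernel is $C$ and the cokernel map is $p$. Hence $K\xrightarrow{k}B\xrightarrow{p}C$ is short exact, i.e.\ $p$ is an admissible epic.

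Everything preceding the descent is routine diagram chasing; the genuine obstacle — the reason the axiom earns the adjective ``obscure'' — is the final step, namely recognising $B$ as a pushout so that the axiom ``pushouts preserve admissible monics'' can be applied. (Alternatively, granting the second isomorphism theorem for exact categories, the descent is immediate from the three short exact sequences $K_0\rightarrowtail P\twoheadrightarrow B$, $K_0\rightarrowtail L\twoheadrightarrow K$ and $L\rightarrowtail P\twoheadrightarrow C$; but since that theorem is customarily deduced from the obscure axiom, I would keep the argument self-contained as above.)
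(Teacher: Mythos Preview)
The paper does not give its own proof of this proposition; it is stated with a citation to B\"uhler and closed by \qed. So there is no in-paper argument to compare against.

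Your argument is correct. The pullback $P$ of $q$ along $p$ is the right construction: $\pi_B$ is an admissible epic by the pullback axiom, $\pi_A$ is split epic via the section $(a,\id_A)$ and hence admissible, and the identification of $\ker\pi_B\cong K_0$, $\ker\pi_A\cong K$ via pasting is standard. The decisive step---recognising $(B,\pi_B,k)$ as the pushout of $j\colon L\rightarrowtail P$ along $\psi\colon L\twoheadrightarrow K$---is verified exactly as you say: any $f\colon P\to X$ with $fj=g\psi$ kills $j|_{K_0}=\ker\pi_B$, hence factors through the cokernel $\pi_B$, and $hk=g$ follows because $\psi$ is epic. Then \Cref{prp: Buehler2.12}.\ref{prp: Buehler2.12-push} gives $K\rightarrowtail B\twoheadrightarrow C$ with the cokernel map equal to $p$ (since it agrees with $p$ after precomposing with the epic $\pi_B$). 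This is essentially B\"uhler's own proof in \cite[Prop.~2.16]{Buh10}, which likewise reduces to a pushout recognition after forming the pullback; your write-up is a faithful and self-contained version of that argument.
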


\begin{cor}[{\cite[Cor.~2.18]{Buh10}}] \label{cor: ses-summand}
	Let $(i, p)$ and $(i', p')$ be two pairs of composable morphisms in an exact category. If their direct sum $(i \oplus i', p \oplus p')$ is short exact, then both $(i, p)$ and $(i', p')$ are short exact. \qed
\end{cor}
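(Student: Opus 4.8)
The plan is to reduce the statement to the obscure axiom, \Cref{prp: obscure}. By symmetry it suffices to show that $(i,p)$ is short exact; write $i\colon A_1\to A_2$, $p\colon A_2\to A_3$, $i'\colon A_1'\to A_2'$, $p'\colon A_2'\to A_3'$, and let $\iota_k\colon A_k\to A_k\oplus A_k'$ and $\pi_k\colon A_k\oplus A_k'\to A_k$ be the canonical inclusions and projections, which are compatible with $i\oplus i'$ and $p\oplus p'$ in the evident way. I first record a purely formal observation: since $(i\oplus i',p\oplus p')$ lies in the exact structure, $p\oplus p'$ is a cokernel of $i\oplus i'$, and hence $p$ is a cokernel of $i$. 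Indeed, the upper-left block of $(p\oplus p')(i\oplus i')=0$ is $pi$, so $pi=0$; the morphism $p$ is epic because $p\oplus p'$ is; and any $f\colon A_2\to Y$ with $fi=0$ gives $f\pi_2\colon A_2\oplus A_2'\to Y$ annihilating $i\oplus i'$, which therefore factors uniquely through $p\oplus p'$ and, after precomposition with $\iota_2$, through $p$. In particular $i$ has a cokernel.

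Next I invoke the exact structure itself. The composite $A_1\xrightarrow{\iota_1}A_1\oplus A_1'\xrightarrow{i\oplus i'}A_2\oplus A_2'$ equals $\iota_2\circ i$. The first factor $\iota_1$ is a split monic, hence an admissible monic, since the split short exact sequence $A_1\rightarrowtail A_1\oplus A_1'\twoheadrightarrow A_1'$ is short exact by \Cref{prp: Buehler2.9}; the second factor $i\oplus i'$ is an admissible monic by hypothesis; and composites of admissible monics are admissible monics. Hence $\iota_2\circ i$ is an admissible monic, and since $i$ has a cokernel by the previous step, \Cref{prp: obscure}\,\ref{prp: obscure-mono} (applied with the morphism $b:=\iota_2$) shows that $i$ is an admissible monic.

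To conclude, fix a cokernel $p_0$ of $i$ with $(i,p_0)$ short exact; by the formal step above, $p$ is also a cokernel of $i$, so $p=\varphi p_0$ for an isomorphism $\varphi$, whence $(i,p)$ is isomorphic, as a pair of composable morphisms, to $(i,p_0)$ and is therefore short exact, the exact structure being closed under isomorphism. Exchanging the roles of $(i,p)$ and $(i',p')$ yields the claim for the other summand. I do not anticipate a real obstacle here: the only slightly delicate point is the formal first step --- that a direct summand of a cokernel of a direct sum of morphisms is again a cokernel --- while the governing idea is simply that the kernel-and-cokernel conditions built into the notion of a short exact sequence hold automatically, so that all the substance of the statement lies in the admissibility of $i$, which is precisely what the obscure axiom supplies.
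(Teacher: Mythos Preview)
Your proof is correct and follows exactly the standard argument (that of B\"uhler, \cite[Cor.~2.18]{Buh10}, which the paper cites without proof): one first checks formally that $p$ is a cokernel of $i$, then observes that $\iota_2\circ i=(i\oplus i')\circ\iota_1$ is a composite of admissible monics, so the obscure axiom \Cref{prp: obscure}\ref{prp: obscure-mono} forces $i$ to be admissible. There is nothing to add.
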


\begin{prp}[{\cite[Prop.~2.12]{Buh10}}] \label{prp: Buehler2.12} \
	\begin{enumerate}[leftmargin=*]
		\item \label{prp: Buehler2.12-push} For a square
		\begin{center}
			\begin{tikzcd}[sep={17.5mm,between origins}]
				A \ar[r, tail, "i"] \ar[d, "f"] & B \ar[d, "f'"] \\
				A' \ar[r, tail, "i'"] & B'
			\end{tikzcd}
		\end{center}
		in an exact category, the following statements are equivalent:
		\begin{enumerate}[label=(\arabic*)]
			\item \label{prp: Buehler2.12-push-1} The square is a pushout.
			\item \label{prp: Buehler2.12-push-2} The square is bicartesian.
			\item \label{prp: Buehler2.12-push-3} The sequence \begin{tikzcd}[sep=large] A \ar[r, "\begin{pmatrix} i \\ -f \end{pmatrix}", tail] & B \oplus A' \ar[r, "\begin{pmatrix} f' \hspace{2mm} i' \end{pmatrix}", two heads] & B' \end{tikzcd} is short exact.
			\item \label{prp: Buehler2.12-push-4} The square is part of a commutative diagram
			\begin{center}
				\begin{tikzcd}[sep={17.5mm,between origins}]
					A \ar[r, tail, "i"] \ar[d, "f"] & B \ar[d, "f'"] \ar[r, two heads] & C \ar[d, equal] \\
					A' \ar[r, tail, "i'"] & B' \ar[r, two heads] & C.
				\end{tikzcd}
			\end{center}
		\end{enumerate}
		
		\smallskip
		
		\item \label{prp: Buehler2.12-pull} For a square
		\begin{center}
			\begin{tikzcd}[sep={17.5mm,between origins}]
				A \ar[r, two heads, "p'"] \ar[d, "g'"] & B \ar[d, "g"] \\
				A' \ar[r, two heads, "p"] & B'
			\end{tikzcd}
		\end{center}
		in an exact category, the following statements are equivalent:
		\begin{enumerate}[label=(\arabic*)]
			\item The square is a pullback.
			\item The square is bicartesian.
			\item The sequence \begin{tikzcd}[sep=large] A \ar[r, "\begin{pmatrix} p' \\ g' \end{pmatrix}", tail] & B \oplus A' \ar[r, "\begin{pmatrix} - g \hspace{2mm} p \end{pmatrix}", two heads] & B' \end{tikzcd} is short exact.
			\item The square is part of a commutative diagram
			\begin{center}
				\begin{tikzcd}[sep={17.5mm,between origins}]
					K \ar[r, tail] \ar[d, equal] & A \ar[r, two heads, "p'"] \ar[d, "g'"] & B \ar[d, "g"] \\
					K \ar[r, tail] & A' \ar[r, two heads, "p"] & B'.
				\end{tikzcd}
			\end{center}
		\end{enumerate}
	\end{enumerate}
	\qed 
\end{prp}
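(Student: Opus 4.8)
Part~(b) is part~(a) applied to $\op\E$, which by \Cref{rmk: E-op} is again an exact category with admissible monics and epics, and pushouts and pullbacks, interchanged (the signs in the displayed sequences are brought into line by relabeling); so it suffices to prove~(a). The plan is to settle the equivalence of (1), (2) and (3) first --- this part is essentially additive-categorical --- and then to close the loop through (1)$\Rightarrow$(4)$\Rightarrow$(3).

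The key preliminary observation is that $\begin{pmatrix} i \\ -f \end{pmatrix}\colon A \to B\oplus A'$ is always an admissible monic. Indeed it factors as
\[
A \xrightarrow{\ \left(\begin{smallmatrix}\id_A\\ -f\end{smallmatrix}\right)\ } A\oplus A' \xrightarrow{\ \left(\begin{smallmatrix} i & 0\\ 0 & \id_{A'}\end{smallmatrix}\right)\ } B\oplus A',
\]
where the first arrow is a split monic with a cokernel --- hence an admissible monic by \Cref{rmk: Buehler7.4} and \Cref{prp: Buehler2.9} --- and the second is the direct sum of the admissible monic $i$ with $\id_{A'}$, hence admissible; so the composite is one by \Cref{dfn: exact}. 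Consequently $\begin{pmatrix} i \\ f \end{pmatrix}$, which differs from $\begin{pmatrix} i \\ -f \end{pmatrix}$ by the automorphism $\left(\begin{smallmatrix}\id_B & 0\\ 0 & -\id_{A'}\end{smallmatrix}\right)$ of $B\oplus A'$, is an admissible monic too. Now recall the elementary fact that, in any additive category, the square in (a) is a pushout exactly when $\begin{pmatrix} f' & i' \end{pmatrix}\colon B\oplus A'\to B'$ is a cokernel of $\begin{pmatrix} i \\ -f \end{pmatrix}$, and a pullback exactly when $\begin{pmatrix} i \\ f \end{pmatrix}\colon A\to B\oplus A'$ is a kernel of $\begin{pmatrix} f' & -i' \end{pmatrix}\colon B\oplus A'\to B'$. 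Then: (1)$\Rightarrow$(3) because a pushout makes $\begin{pmatrix} f' & i' \end{pmatrix}$ a cokernel of the admissible monic $\begin{pmatrix} i \\ -f \end{pmatrix}$, so this pair is a short exact sequence; (3)$\Rightarrow$(2) because (3) exhibits $\begin{pmatrix} i \\ -f \end{pmatrix}$ as an admissible monic with cokernel $\begin{pmatrix} f' & i' \end{pmatrix}$, whence the square is a pushout, and conjugating the middle term of the sequence in (3) by the automorphism above turns it into a short exact sequence with monic $\begin{pmatrix} i \\ f \end{pmatrix}$ and epic $\begin{pmatrix} f' & -i' \end{pmatrix}$, so $\begin{pmatrix} i \\ f \end{pmatrix}$ is in particular a kernel of $\begin{pmatrix} f' & -i' \end{pmatrix}$ and the square is a pullback as well, i.e.\ bicartesian; finally (2)$\Rightarrow$(1) is trivial.

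For (1)$\Rightarrow$(4) I use (1)$\Rightarrow$(3). We have a short exact sequence $A\to B\oplus A'\to B'$ with maps $\begin{pmatrix} i \\ -f \end{pmatrix}$ and $\begin{pmatrix} f' & i' \end{pmatrix}$, and $i$, $i'$ --- being admissible monics --- have cokernels $p\colon B\twoheadrightarrow C$ and $p'\colon B'\twoheadrightarrow C'$. As $\begin{pmatrix} p & 0 \end{pmatrix}\colon B\oplus A'\to C$ annihilates $\begin{pmatrix} i \\ -f \end{pmatrix}$, it factors uniquely as $\bar p \begin{pmatrix} f' & i' \end{pmatrix}$, giving $\bar p\colon B'\to C$ with $\bar p f'=p$ and $\bar p i'=0$. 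The relations $\bar p i'=0$, $p'f'i=p'i'f=0$ together with the universal properties of $p=\coker i$ and $p'=\coker i'$ then furnish a canonical isomorphism $C'\cong C$ carrying $p'$ to $\bar p$, so that $\bar p$ is a cokernel of $i'$; appending $p$ and $\bar p$ to the square, with $\id_C$ on the right, yields the diagram of (4). For (4)$\Rightarrow$(3): again $\begin{pmatrix} i \\ -f \end{pmatrix}$ is an admissible monic, and a short direct check using $p'f'=p$ (part of (4)) and the monicity of $i$ and $i'$ shows it is a kernel of $\begin{pmatrix} f' & i' \end{pmatrix}$; moreover $\begin{pmatrix} f' & i' \end{pmatrix}$ is an admissible epic, since one may identify $B\oplus A'$ with the pullback $B\times_C B'$ of $p\colon B\twoheadrightarrow C$ along $p'\colon B'\twoheadrightarrow C$ --- which exists by \Cref{dfn: exact} --- using $\id_B$ together with $0\colon A'\to B$ for one projection and $f'$ together with $i'$ for the other (an isomorphism because $i'$ is a kernel of $p'$), and under this identification $\begin{pmatrix} f' & i' \end{pmatrix}$ becomes the projection onto $B'$, a pullback of the admissible epic $p$ and hence an admissible epic. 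Its kernel being $\begin{pmatrix} i \\ -f \end{pmatrix}$, the pair $\left(\begin{pmatrix} i \\ -f \end{pmatrix},\begin{pmatrix} f' & i' \end{pmatrix}\right)$ is a short exact sequence, i.e.\ (3). This closes the cycle (1)$\Rightarrow$(4)$\Rightarrow$(3)$\Rightarrow$(2)$\Rightarrow$(1).

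I expect the two steps touching (4) to carry the weight of the argument: matching the two candidate cokernels of $i'$ in (1)$\Rightarrow$(4), and the pullback identification that realizes $\begin{pmatrix} f' & i' \end{pmatrix}$ as an admissible epic in (4)$\Rightarrow$(3). These are precisely the places where the exact-category axioms (existence and stability of pushouts and pullbacks of admissibles, and closure of the exact structure under isomorphism) are used, whereas (1)$\Leftrightarrow$(2)$\Leftrightarrow$(3) is formal once the admissibility of $\begin{pmatrix} i \\ -f \end{pmatrix}$ is noted. Should a more mechanical route to ``$\begin{pmatrix} f' & i' \end{pmatrix}$ is an admissible epic'' be preferred, one can instead invoke the obscure axiom (\Cref{prp: obscure}).
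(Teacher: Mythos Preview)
The paper does not prove this proposition; it is cited from B\"uhler \cite[Prop.~2.12]{Buh10} and closed with a bare \qed. Your argument is correct and follows essentially the route B\"uhler takes: the admissibility of $\left(\begin{smallmatrix} i\\ -f\end{smallmatrix}\right)$ via the factorization through a split monic and a direct sum, the formal additive equivalence of (1), (2), (3), and the pullback identification $B\oplus A'\cong B\times_C B'$ to exhibit $\begin{pmatrix} f' & i'\end{pmatrix}$ as an admissible epic in (4)$\Rightarrow$(3). The only step you left slightly implicit is the isomorphism $C\cong C'$ in (1)$\Rightarrow$(4): to see that the two induced maps are mutually inverse one uses that $\begin{pmatrix} f' & i'\end{pmatrix}$ is epic (from (3)) and that $p$ is epic, but this is routine.
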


\begin{cor} \label{cor: bicart} In an exact category the following statements hold:
	\begin{enumerate}
		\item \label{cor: bicart-already} Pushouts of  admissible monics and pullbacks of admissible epics are bicartesian squares.
		
		\item \label{cor: bicart-prep} A square
		\begin{center}
			\begin{tikzcd}[sep={17.5mm,between origins}]
				A \ar[r, tail] \ar[d] & B \ar[d, two heads] \\
				C \ar[r] & D
			\end{tikzcd}
		\end{center}
		is a pushout if and only if it is a pullback. In this case, opposite arrows are admissible morphisms of the same type.
	\end{enumerate}
\end{cor}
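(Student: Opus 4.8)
The plan is to deduce both parts from axiom~\ref{dfn: exact-pushpull} of \Cref{dfn: exact} --- existence and preservation of pushouts of admissible monics and of pullbacks of admissible epics --- combined with \Cref{prp: Buehler2.12}, using uniqueness of universal constructions and closure of the exact structure under isomorphisms to transfer admissibility between the canonical squares furnished by the axiom and the squares at hand. For part~\ref{cor: bicart-already}: given an admissible monic $i\colon A\rightarrowtail B$ and an arbitrary $f\colon A\to C$, the axiom produces a pushout square along $f$ whose other horizontal arrow $i'\colon C\rightarrowtail D$ is again an admissible monic; since both horizontal arrows of the resulting square are admissible monic, the equivalence of being a pushout and being bicartesian in \Cref{prp: Buehler2.12}, part~\ref{prp: Buehler2.12-push}, shows it is bicartesian. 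The statement for pullbacks of admissible epics is the dual one, via part~\ref{prp: Buehler2.12-pull} (or \Cref{rmk: E-op}).

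For part~\ref{cor: bicart-prep}, fix the displayed square with $i$ an admissible monic and $p$ an admissible epic, and suppose first that it is a pushout. As a pushout of the span $C\xleftarrow{f}A\xrightarrow{i}B$, it is isomorphic as a cocone to the canonical pushout of $i$ along $f$ provided by axiom~\ref{dfn: exact-pushpull}, whose bottom leg is an admissible monic; since $\cS$ is closed under isomorphisms, $g$ is an admissible monic. Now \Cref{prp: Buehler2.12}, part~\ref{prp: Buehler2.12-push}, makes the square bicartesian, in particular a pullback; reading it as a pullback of the cospan $C\xrightarrow{g}D\xleftarrow{p}B$ and invoking axiom~\ref{dfn: exact-pushpull} with uniqueness of pullbacks, its remaining leg $f$ is identified with the pullback of the admissible epic $p$ along $g$, so $f$ is an admissible epic. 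The converse implication ``pullback $\Rightarrow$ pushout'' is the dual statement: the implication just proved is valid in any exact category, hence in the opposite category $\op\E$ (\Cref{rmk: E-op}), where admissible monics and epics --- and pushouts and pullbacks --- are interchanged; applying it to the given square, which in $\op\E$ is a pushout whose top arrow $p$ is an admissible monic and whose right arrow $i$ is an admissible epic, yields back in $\E$ that the original pullback square is a pushout with $g$ an admissible monic and $f$ an admissible epic. In particular, in both cases opposite arrows of the square are admissible of the same type, which is the final assertion.

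The argument is essentially bookkeeping; the only steps requiring care are the repeated appeal to uniqueness of pushouts/pullbacks --- together with closure of $\cS$ under isomorphism --- to transport the admissibility of the canonical legs produced by axiom~\ref{dfn: exact-pushpull} onto the legs of the given square, and keeping straight which arrows become admissible monics, respectively epics, after passing to $\op\E$. I do not expect any genuine obstacle.
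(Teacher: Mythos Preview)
Your proof is correct and follows essentially the same approach as the paper: part~\ref{cor: bicart-already} uses axiom~\ref{dfn: exact-pushpull} to ensure both horizontal arrows are admissible monics before invoking \Cref{prp: Buehler2.12}\ref{prp: Buehler2.12-push}.\ref{prp: Buehler2.12-push-1}$\Leftrightarrow$\ref{prp: Buehler2.12-push-2}, and part~\ref{cor: bicart-prep} combines~\ref{cor: bicart-already} with axiom~\ref{dfn: exact-pushpull} (via uniqueness of the universal construction) to extract admissibility of the remaining arrows, the converse being dual. Your write-up is in fact slightly more explicit than the paper's in spelling out that $g$ must be an admissible monic before \Cref{prp: Buehler2.12}\ref{prp: Buehler2.12-push} can be applied.
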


\begin{proof} \
	\begin{enumerate}[leftmargin=*]
		\item For the pushout, combine \Cref{prp: Buehler2.12}.\ref{prp: Buehler2.12-push}.\ref{prp: Buehler2.12-push-1} $\Leftrightarrow$ \ref{prp: Buehler2.12-push-2} with the pullback axiom, see \Cref{dfn: exact}.\ref{dfn: exact-pushpull}. The argument for the pullback is dual.
		\item If the given diagram is a pushout, then it is a pullback by \ref{cor: bicart-already} and $A \twoheadrightarrow C$ is an admissible epic by the pushout axiom, see \Cref{dfn: exact}.\ref{dfn: exact-pushpull}. The converse implication is dual. \qedhere
	\end{enumerate}
\end{proof}

\begin{prp}[{\cite[Prop.~2.15]{Buh10}}] \label{prp: Buehler2.15} In an exact category, pullback along an admissible epic preserves admissible monics and pushout along an admissible monic preserves admissible epics. \qed
\end{prp}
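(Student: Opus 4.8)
The plan is to establish the first assertion---that pullback along an admissible epic preserves admissible monics---and then deduce the second by passing to the opposite exact category via \Cref{rmk: E-op}. Fix an admissible monic $i\colon X \rightarrowtail Y$ and an admissible epic $f\colon Y' \twoheadrightarrow Y$ and form the pullback
\begin{center}
	\begin{tikzcd}[sep={17.5mm,between origins}]
		P \ar[r, "i'"] \ar[d, "f'"] & Y' \ar[d, "f", two heads] \\
		X \ar[r, "i", tail] & Y.
	\end{tikzcd}
\end{center}
This square exists because $f$ is an admissible epic and may be pulled back along the arbitrary morphism $i$, by \Cref{dfn: exact}.\ref{dfn: exact-pushpull}; the same axiom shows $f'$ is again an admissible epic. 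The task is to show $i'$ is an admissible monic.

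The key idea is to exhibit $i'$ as the kernel of an admissible epic. Pick a cokernel $p\colon Y \twoheadrightarrow Z$ of $i$, so that $(i,p)$ is short exact, and consider $pf\colon Y' \to Z$; being a composite of the admissible epics $f$ and $p$, it is an admissible epic. I claim that $i'$ is a kernel of $pf$. Indeed $pfi' = pif' = 0$, and if $t\colon T \to Y'$ satisfies $pft = 0$, then $ft$ factors uniquely through $i = \ker p$ as $ft = is$; the pair $(s,t)$ is compatible with the defining square of the pullback, so it factors through $P$, and the resulting morphism is the unique $u$ with $i'u = t$ because $i'$, as a pullback of the monic $i$, is itself monic. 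Hence $i' = \ker(pf)$.

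Since $pf$ is an admissible epic, the exact structure contains a short exact sequence $(j, pf)$ with $j = \ker(pf)$; as $i'$ is also a kernel of $pf$ we have $i' \cong j$, and closure of the exact structure under isomorphism gives $(i', pf) \in \cS$, so $i'$ is an admissible monic. I do not expect a genuine obstacle here. The only points needing care are the preliminary observation that the relevant pullback is guaranteed to exist, and keeping apart the two uniqueness statements in the universal-property verification---the one furnished by the pullback and the one furnished by $i'$ being monic. One could instead argue via the bicartesian-square criterion \Cref{prp: Buehler2.12}.\ref{prp: Buehler2.12-pull}, but the kernel argument seems the most direct route.
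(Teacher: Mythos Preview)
Your proof is correct. The paper does not give its own argument here---it cites B\"uhler's \cite[Prop.~2.15]{Buh10} and terminates with \qed---so there is nothing to compare against beyond the cited source; your argument (showing $i'=\ker(pf)$ for the composite admissible epic $pf$) is precisely the standard one found in B\"uhler's exposition.
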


\begin{lem}[Noether lemma, {\cite[Ex.~3.7]{Buh10}}] \label{lem: Buehler3.7}
	Consider the commutative diagram
	
	\begin{center}
		\begin{tikzcd}[sep={17.5mm,between origins}]
			A' \ar[r, tail] \ar[d, tail] & B' \ar[r, two heads] \ar[d, tail] & C' \ar[d, tail, dashed]\\
			A \ar[r, tail] \ar[d, two heads] & B \ar[r, two heads]  \ar[d, two heads] & C  \ar[d, two heads, dashed]\\
			A'' \ar[r, tail] & B'' \ar[r, two heads] & C'' 
		\end{tikzcd}
	\end{center}
	
	in an exact category, where the rows and solid columns are short exact sequences. Then there exist unique morphisms $C' \to C$ and $C \to C''$ making the diagram commute, and $C' \rightarrowtail C \twoheadrightarrow C''$ a short exact sequence.  \qed
\end{lem}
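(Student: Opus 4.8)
The plan is to produce the two morphisms of the third column by universal properties, and then to identify $C'\to C$ as a categorical kernel of $C\to C''$, from which exactness follows via the obscure axiom.

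First I would fix notation: write the middle row as $A\xrightarrow{i}B\xrightarrow{p}C$, the left and middle columns as $A'\xrightarrow{j}A\xrightarrow{q}A''$ and $B'\xrightarrow{k}B\xrightarrow{r}B''$, the bottom row as $A''\xrightarrow{i''}B''\xrightarrow{p''}C''$, the top row as $A'\xrightarrow{i'}B'\xrightarrow{p'}C'$, and the sought column as $C'\xrightarrow{\ell}C\xrightarrow{m}C''$. Then I construct $\ell$ and $m$: since $pki'=pij=0$ and $p'=\coker i'$, the map $pk$ factors uniquely as $\ell p'$; since $p''ri=p''i''q=0$ and $p=\coker i$, the map $p''r$ factors uniquely as $mp$. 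These are visibly the unique morphisms making the enlarged diagram commute, and $m\ell p'=mpk=p''rk=0$ forces $m\ell=0$ since $p'$ is epic. It then remains to show that $C'\xrightarrow{\ell}C\xrightarrow{m}C''$ is a short exact sequence.

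The heart of the matter is to prove that $\ell$ is a categorical kernel of $m$. I would do this by two diagram chases carried out entirely inside the exact structure, the standard device being: to lift a morphism, pull back a suitable admissible epic along it (\Cref{dfn: exact}.\ref{dfn: exact-pushpull}); to descend a lift, check that it annihilates the kernel of the admissible epic along which one descends and use that the epic is a cokernel of that kernel. Concretely: (i) $\ell$ is monic — given $z$ with $\ell z=0$, pull back $p'$ along $z$ and chase the lift through $\ker p=i$, then $\ker q=j$ (using $rk=0$ and that $i''$ is monic), then back along the monic $k$; (ii) $\ell$ satisfies the universal property of $\ker m$ — given $x$ with $mx=0$, pull back $p$ along $x$, then pull back $q$ along the resulting map into $A''$, chase through $\ker p''=i''$ and $\ker r=k$ to obtain a lift of $x$ along $\ell$ after precomposition with an admissible epic, and finally descend it using part (i). Both chases use exactness of the rows and the relation $rk=0$ essentially.

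Granting $\ell=\ker m$, the conclusion is formal: $mp=p''r$ is an admissible epic as a composite of the admissible epics $r$ and $p''$, and $m$ has a kernel, so $m$ is an admissible epic by the obscure axiom, \Cref{prp: obscure}.\ref{prp: obscure-epi}. Hence there is a short exact sequence $\ker m\rightarrowtail C\xrightarrow{m}C''$ with $\ker m$ an admissible monic; since $\ell$ is a kernel of $m$ it differs from $\ker m\to C$ by a unique isomorphism over $C$, and as $\cS$ is closed under isomorphisms, $C'\xrightarrow{\ell}C\xrightarrow{m}C''$ is short exact, the two maps fitting the required diagram by construction. I expect the main obstacle to be chase (ii): organizing the tower of pullbacks and verifying element-freely that the constructed lift descends is routine in spirit but delicate in bookkeeping. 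A slightly longer alternative replaces (ii) by forming the pushout $D=A\sqcup_{A'}B'$, building the canonical comparison $\psi\colon D\to B$, showing $\psi=\ker(p''r)$ (again a chase of comparable difficulty), and then reading off that $\ell$ is an admissible monic with $\coker\ell\cong C''$ from \Cref{cor: bicart}.\ref{cor: bicart-prep}.
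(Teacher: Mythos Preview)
The paper does not give a proof of this lemma; it simply cites B\"uhler's exercise and places a \qed. Your argument is correct: the construction of $\ell$ and $m$ via cokernel universal properties is immediate, the two chases (monicity of $\ell$ and the kernel property) go through exactly as you outline using pullbacks of admissible epics as surrogates for elements, and the final appeal to \Cref{prp: obscure}.\ref{prp: obscure-epi} with $mp=p''r$ admissibly epic and $\ell=\ker m$ is the intended finishing move. The pushout alternative you mention is in fact closer to B\"uhler's own hint, but your direct chase is equally valid and arguably more self-contained.
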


\begin{dfn}
	A covariant functor $F\colon (\E', \cS') \to (\E, \cS)$ between exact categories is called \textbf{exact} if $(F(i), F(p)) \in \cS$ for all $(i, p) \in \cS'$. It is \textbf{fully exact} if, in addition, $(F(i), F(p)) \in \cS$ implies $(i, p) \in \cS'$, for all pairs $(i, p)$ of composable morphisms in $\E'$. Obvious dual notions are defined for contravariant functors.\\
	A subcategory $\E'$ of $\E$ is called  \textbf{(fully) exact} if it is an exact category itself and the inclusion functor $\E' \to \E$ is (fully) exact.\footnote{Bühler uses the term \emph{fully exact} for what we call \textit{extension-closed}, see \Cref{def: ext-closed}.} Note that subcategories of additive categories are fully exact with respect to the split exact structure, see \Cref{exa: exact}.\ref{exa: exact-additive}.
\end{dfn}

\begin{prp}[{\cite[Prop.~5.2]{Buh10}}] \label{prp: exact-pushpull}
	An exact functor preserves pushouts along admissible monics and pullbacks along admissible epics. \qed
\end{prp}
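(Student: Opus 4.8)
The plan is to reduce everything to the characterization of pushouts along admissible monics in \Cref{prp: Buehler2.12}. Let $F\colon (\E',\cS') \to (\E,\cS)$ be exact and start from a pushout square along an admissible monic $i$ in $\E'$, say
\[
\begin{tikzcd}[sep={17.5mm,between origins}]
A \ar[r, tail, "i"] \ar[d, "f"] & B \ar[d, "f'"] \\
A' \ar[r, tail, "i'"] & B'.
\end{tikzcd}
\]
By the pushout axiom (\Cref{dfn: exact}.\ref{dfn: exact-pushpull}) the morphism $i'$ is again an admissible monic, so \Cref{prp: Buehler2.12}.\ref{prp: Buehler2.12-push} applies; the implication \ref{prp: Buehler2.12-push-1} $\Rightarrow$ \ref{prp: Buehler2.12-push-3} tells us that
\[
\begin{tikzcd}[sep=large] A \ar[r, "\begin{pmatrix} i \\ -f \end{pmatrix}", tail] & B \oplus A' \ar[r, "\begin{pmatrix} f' & i' \end{pmatrix}", two heads] & B' \end{tikzcd}
\]
is a short exact sequence in $\E'$.

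Next I would apply $F$ to this sequence. Since $F$ is additive it preserves the finite biproduct $B \oplus A'$ together with its canonical injections and projections, so it carries the two matrix morphisms above to the matrices $\begin{pmatrix} F(i) \\ -F(f)\end{pmatrix}$ and $\begin{pmatrix} F(f') & F(i')\end{pmatrix}$ assembled from the images of the entries. As $F$ is exact, the image sequence
\[
\begin{tikzcd}[sep=large] F(A) \ar[r, "\begin{pmatrix} F(i) \\ -F(f) \end{pmatrix}", tail] & F(B) \oplus F(A') \ar[r, "\begin{pmatrix} F(f') & F(i') \end{pmatrix}", two heads] & F(B') \end{tikzcd}
\]
is short exact in $\E$, and $F(i)$ is an admissible monic because it is the image of one. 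I would then invoke the reverse implication \ref{prp: Buehler2.12-push-3} $\Rightarrow$ \ref{prp: Buehler2.12-push-1} of \Cref{prp: Buehler2.12}.\ref{prp: Buehler2.12-push} for the (obviously commutative) square with rows $F(i)$, $F(i')$ and vertical maps $F(f)$, $F(f')$, concluding that it is a pushout in $\E$.

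Finally, the statement about pullbacks along admissible epics follows by passing to opposite categories: by \Cref{rmk: E-op} the functor $\op F\colon \op{\E'}\to\op\E$ is again exact, and a pullback along an admissible epic in $\E$ is exactly a pushout along an admissible monic in $\op\E$, so the first half applies. I do not expect a real obstacle; the one point that deserves a line of justification is that an additive functor preserves finite biproducts and hence commutes with the formation of matrix morphisms, which is precisely what lets the short-exact-sequence criterion of \Cref{prp: Buehler2.12} travel along $F$.
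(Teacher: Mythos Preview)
Your proof is correct and is essentially the standard argument. The paper does not give its own proof here---it simply cites \cite[Prop.~5.2]{Buh10} and places a \qed---and the argument you wrote is precisely B\"uhler's: translate the pushout into the short exact sequence of \Cref{prp: Buehler2.12}.\ref{prp: Buehler2.12-push}.\ref{prp: Buehler2.12-push-3}, push it through the exact additive functor, and translate back. One tiny remark: to invoke \Cref{prp: Buehler2.12} on the image square you also want $F(i')$ to be an admissible monic, but this is immediate since $i'$ is one by the pushout axiom and $F$ is exact.
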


\begin{dfn}[{\cite[Def.~10.21]{Buh10}}] \label{def: ext-closed}
	Let $\E'$ be a subcategory of an exact category $\E$. We call $\E'$  \textbf{extension-closed} if any $X \in \E$ which fits into a short exact sequence \begin{tikzcd}[cramped] Y' \ar[r, tail] & X \ar[r, two heads] & Y\end{tikzcd} with $Y,Y'\in \E'$ is an object of $\E'$.
\end{dfn}

Part \ref{lem: fullyexact-extclosed} of \Cref{lem: fullyexact} is {\cite[Lem.~10.20]{Buh10}}, part \ref{lem: fullyexact-kerclosed} follows from \Cref{prp: Buehler2.12}:

\begin{lem} \label{lem: fullyexact} A subcategory $\E'$ of an exact category $\E$ is fully exact if one of the following conditions holds:
	
	\begin{enumerate}
		\item \label{lem: fullyexact-extclosed} $\E'$ is an extension-closed subcategory of $\E$.
		\item \label{lem: fullyexact-kerclosed} $\E'$ is closed in $\E$ under kernels of admissible epics and cokernels of admissible monics.
	\end{enumerate}
	
	In both cases, the exact structure is given by short exact sequences in $\E$ with objects in $\E'$. \qed
\end{lem}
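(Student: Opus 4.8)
The plan is to dispatch part \ref{lem: fullyexact-extclosed} by citation --- it is exactly {\cite[Lem.~10.20]{Buh10}}, which also supplies the asserted description of the exact structure --- and to prove part \ref{lem: fullyexact-kerclosed} by hand. For the latter I would take $\cS'$ to be the collection of all short exact sequences of $\E$ whose three terms lie in $\E'$, and verify that $(\E', \cS')$ is an exact category in the sense of \Cref{dfn: exact}, with the inclusion functor being (fully) exact by construction.

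The first step is a convenient reformulation: a morphism of $\E'$ is an $\cS'$-admissible monic \emph{if and only if} it is an admissible monic of $\E$, and dually for admissible epics. The forward direction is immediate. Conversely, if $i\colon A \rightarrowtail B$ is an admissible monic of $\E$ with $A, B \in \E'$, then $\coker_\E(i) \in \E'$ by the hypothesis on $\E'$, so $i$ fits into a short exact sequence of $\E$ with all terms in $\E'$; the epic statement uses closure under kernels of admissible epics instead. Since $\E'$ is full, such a sequence --- being a kernel--cokernel pair in $\E$ --- is also one in $\E'$, so $\cS'$ genuinely consists of kernel--cokernel pairs in $\E'$ and is obviously closed under isomorphism.

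With this reformulation in hand the axioms of \Cref{dfn: exact} are quick. For each $A \in \E'$ the identity $\id_A$ is an admissible monic and epic of $\E$ lying in $\E'$, which gives the identity axiom. A composite of admissible monics of $\E$ is an admissible monic of $\E$ and remains a morphism of $\E'$, which gives the composition axiom (dually for epics). For the pushout--pullback axiom, given an $\cS'$-admissible monic $i\colon A \rightarrowtail B$ and a morphism $f\colon A \to A'$ in $\E'$, I would form the pushout in $\E$: it exists and its opposite edge $i'\colon A' \rightarrowtail B'$ is again an admissible monic of $\E$ by the pushout axiom in $\E$. By \Cref{prp: Buehler2.12} the sequence $A \rightarrowtail B \oplus A' \twoheadrightarrow B'$ is short exact in $\E$, so $B' \cong \coker_\E(A \to B \oplus A')$ lies in $\E'$ because $A, B \oplus A' \in \E'$; hence $i'$ is an $\cS'$-admissible monic, and the square, being a pushout in $\E$, is a pushout in the full subcategory $\E'$ as well. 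The pullback half is dual, via closure under kernels of admissible epics and the second part of \Cref{prp: Buehler2.12}.

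It then only remains to observe that the inclusion $\E' \hookrightarrow \E$ is exact by the very definition of $\cS'$, and fully exact because a kernel--cokernel pair of morphisms of $\E'$ that is short exact in $\E$ automatically has all terms in $\E'$ and so lies in $\cS'$. There is no real obstacle here; the single point needing (mild) care is that passing to pushouts and pullbacks inside $\E$ does not leave $\E'$ and remains universal there, and this is precisely what \Cref{prp: Buehler2.12} delivers by presenting the new corner as a cokernel of an admissible monic, respectively a kernel of an admissible epic, between objects already known to lie in $\E'$.
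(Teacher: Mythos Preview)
Your proposal is correct and follows essentially the same approach as the paper: part \ref{lem: fullyexact-extclosed} is dispatched by citing \cite[Lem.~10.20]{Buh10}, and part \ref{lem: fullyexact-kerclosed} is reduced to \Cref{prp: Buehler2.12}, which is exactly what the paper indicates (without spelling out the details you provide).
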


\begin{ntn} \label{ntn: C}
	The category of sequences over an additive category $\A$ is the diagram category $\C(\A) := \text{Func}(T,\A)$ where $T$ is the infinite linear quiver
	
	\begin{center}
		\begin{tikzcd} \cdots \ar[r] & \underset {-1} \bullet \ar[r] & \underset 0 \bullet \ar[r] & \underset 1 \bullet \ar[r] & \cdots \end{tikzcd}
	\end{center}
	
	with vertices indexed by $\ZZ$ in ascending order. We denote denote objects of $\C(\A)$ by $X=(X,d_X)$, where $X=(X^k)_{k \in \ZZ}$ and $d_X= (d^k_X)_{k \in \ZZ}$ with $d^k_X\colon X^k \to X^{k+1}$ for $k \in \ZZ$. We omit the index $k$ of $d^k_X$ when it is clear from the context. Incomplete sequences are extended by zeros without explicit mention.
\end{ntn}

\begin{rmk} \label{rmk: C-direct-sum}
	Termwise finite coproducts of sequences over an additive category exist.
\end{rmk}

\begin{rmk} \label{rmk: C-exact}
	If $\E$ is an exact category, then $\C(\E)$ has two natural exact structures: The \textbf{termwise} exact structure from \Cref{exa: exact}.\ref{exa: exact-diagram} and the \textbf{termsplit} exact structure defined in the same way by the underlying additive category of $\E$, see \Cref{exa: exact}.\ref{exa: exact-additive}. Unless mentioned otherwise the termsplit exact structure is the \emph{default} choice.\\
	If $\E'$ is a (fully) exact subcategory of $\E$, then so is $\C(\E')$ in $\C(\E')$ due to the termwise exact structure. In particular, $C_N(\A')$ is fully exact in $C_N(\A)$ for a subcategory $\A'$ of an additive category $\A$.
\end{rmk}

\subsection{Stable categories}

\begin{dfn}
	An object $P$ of an exact category $\E$ is called a \textbf{projective} if the covariant functor $\Hom_\E(P, -)\colon \E \to \Ab$ is exact. Dually, an object $I$ of $\E$ is called an \textbf{injective} if the contravariant functor $\Hom_\E(-, I)\colon \E \to \Ab$ is exact. The respective subcategories of $\E$ are denoted  by $\Prj(\E)$ and $\Inj(\E)$. An object is called a  \textbf{projective-injective} if it is both projective and injective.
\end{dfn}

\begin{prp}[{\cite[Prop.~11.3, Cor.~11.4]{Buh10}}] \label{prp: proj-equiv}
	An object $P$ of an exact category $\E$ is projective if and only if any one of the following equivalent conditions holds:
	\begin{enumerate}[label=(\arabic*)]
		\item \label{prp: proj-equiv-lift} For any admissible epic $X \twoheadrightarrow Y$, any morphism $P \to Y$ lifts as follows:
		\begin{center}
			\begin{tikzcd}[sep={17.5mm,between origins}]
				& X \ar[d, two heads] \\
				P \ar[ru, dashed] \ar[r] & Y
			\end{tikzcd}
		\end{center}
	
		\item \label{prp: proj-equiv-Hom} The functor $\Hom_\E(P, -)$ sends admissible epics to surjections.
		
		\item \label{prp: proj-equiv-split} Every admissible epic $X \twoheadrightarrow P$ splits.
	\end{enumerate}
	If a morphism $P \to Z$ with $P \in \Prj(\E)$ admits a right inverse, then $Z$ is projective as well. \qed
\end{prp}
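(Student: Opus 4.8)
The plan is to establish the cycle of implications \emph{$P$ projective} $\Rightarrow$ \ref{prp: proj-equiv-Hom} $\Rightarrow$ \ref{prp: proj-equiv-lift} $\Rightarrow$ \ref{prp: proj-equiv-split} $\Rightarrow$ \ref{prp: proj-equiv-lift} $\Rightarrow$ \emph{$P$ projective}, and then to read off the final assertion from the lifting characterization \ref{prp: proj-equiv-lift}.

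The first point I would make is that $\Hom_\E(P,-)$ is \emph{automatically} left exact: for any short exact sequence $A' \rightarrowtail A \twoheadrightarrow A''$, the sequence $0 \to \Hom_\E(P,A') \to \Hom_\E(P,A) \to \Hom_\E(P,A'')$ is exact simply because the admissible monic $A' \rightarrowtail A$ is a kernel of $A \twoheadrightarrow A''$ and hence a monomorphism. Consequently, $P$ being projective is equivalent to $\Hom_\E(P,-)$ additionally sending every admissible epic $A \twoheadrightarrow A''$ to a surjection $\Hom_\E(P,A) \to \Hom_\E(P,A'')$, which is \ref{prp: proj-equiv-Hom}. The equivalence $\ref{prp: proj-equiv-Hom} \Leftrightarrow \ref{prp: proj-equiv-lift}$ is then just unravelling definitions: surjectivity of $\Hom_\E(P,X) \to \Hom_\E(P,Y)$ says precisely that every morphism $P \to Y$ factors through $X \twoheadrightarrow Y$.

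For $\ref{prp: proj-equiv-lift} \Rightarrow \ref{prp: proj-equiv-split}$ I would apply \ref{prp: proj-equiv-lift} to the admissible epic $X \twoheadrightarrow P$ and the identity $\id_P$; the resulting lift $P \to X$ is a section, and since $X \twoheadrightarrow P$ has a kernel (being admissible), \Cref{rmk: Buehler7.4} shows the underlying sequence is split short exact. The one genuinely non-formal step is $\ref{prp: proj-equiv-split} \Rightarrow \ref{prp: proj-equiv-lift}$, which is exactly where the exact structure enters: given an admissible epic $p\colon X \twoheadrightarrow Y$ and a morphism $f\colon P \to Y$, I would form the pullback
\begin{center}
	\begin{tikzcd}[sep={17.5mm,between origins}]
		X' \ar[r, "g"] \ar[d, two heads, "p'"] & X \ar[d, two heads, "p"] \\
		P \ar[r, "f"] & Y,
	\end{tikzcd}
\end{center}
which by the pullback axiom \Cref{dfn: exact}.\ref{dfn: exact-pushpull} has $p'$ again an admissible epic. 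By \ref{prp: proj-equiv-split}, $p'$ admits a section $s\colon P \to X'$, and then $g s\colon P \to X$ is the desired lift, since $p g s = f p' s = f$. I expect this pullback step --- remembering that pullbacks of admissible epics exist and remain admissible --- to be the only real obstacle; everything else is formal diagram chasing.

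Finally, for the closing statement, suppose $q\colon P \to Z$ has a right inverse $s\colon Z \to P$ with $P \in \Prj(\E)$. I would verify \ref{prp: proj-equiv-lift} for $Z$: given an admissible epic $p\colon X \twoheadrightarrow Y$ and $g\colon Z \to Y$, use that $P$ satisfies \ref{prp: proj-equiv-lift} to lift $g q\colon P \to Y$ through $p$ to some $h\colon P \to X$, and observe that $h s\colon Z \to X$ lifts $g$, since $p h s = g q s = g$. Hence $Z$ is projective. (One could also phrase this by saying $Z$ is a retract of $P$, but the direct argument sidesteps any appeal to idempotent completeness.)
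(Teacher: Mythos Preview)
Your proof is correct and follows the standard argument. Note, however, that the paper does not actually prove this proposition: it is stated with a terminal \texttt{\textbackslash qed} and cited from B\"uhler \cite[Prop.~11.3, Cor.~11.4]{Buh10}, so there is no in-paper proof to compare against. Your argument is essentially the one B\"uhler gives in the cited reference, with the pullback along $f$ being the key step for $\ref{prp: proj-equiv-split} \Rightarrow \ref{prp: proj-equiv-lift}$.
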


\begin{rmk} \label{rmk: Proj-exact}
	Due to \Cref{prp: proj-equiv}.\ref{prp: proj-equiv-split}, \Cref{rmk: Buehler7.4}, and their duals, the subcategories $\Prj(\E)$ and $\Inj(\E)$ of an exact category $\E$ are closed under direct summands and extensions and thus fully exact, see \Cref{lem: fullyexact}.\ref{lem: fullyexact-extclosed}. Their exact structure is the split exact structure, see \Cref{exa: exact}.\ref{exa: exact-additive}.
\end{rmk}

\begin{ntn}
	The \textbf{projectively  stable category} of  an exact category $\E$ is denoted by $\underline \E$. It has the same objects as $\E$, and two morphisms agree if their difference factors through a projective. Dually, we denote the \textbf{injectively stable category} by $\overline \E$. These constructions have a universal property, see {\cite[Prop.~II.8.1]{Mac98}}.
\end{ntn}

\begin{rmk} \label{rmk: factor}
	In an exact category $\E$, the following statements hold:
	\begin{enumerate}[leftmargin=*]
		\item \label{rmk: factor-proj} By \Cref{prp: proj-equiv}.\ref{prp: proj-equiv-lift}, a morphism $f\colon X \to Y$ in $\E$ is zero in $\underline \E$ if and only if it factors through any admissible epic $p_Y\colon P \twoheadrightarrow Y$ with $P \in \Prj(\E)$. Dually, $f$ is zero in $\overline \E$ if and only if it factors through any admissible monic $i_X\colon X \rightarrowtail I$ with $I \in \Inj(\E)$.
		\item \label{rmk: factor-zero} If $A \in \E$ with $A = 0$ in $\underline \E$, then $A \in \Prj(\E)$. Indeed, $\id_A=0$ in $\underline \E$ yields a $P \in \Prj(\E)$ and a morphism $P \to A$ in $\E$ with right inverse. Then $A \in \Prj(\E)$ due to the particular statement of \Cref{prp: proj-equiv}.
	\end{enumerate}
\end{rmk}

\begin{dfn}
	We say that an exact subcategory $\E'$ of $\E$ has \textbf{enough $\E$-projectives} if there is an admissible epic $P \twoheadrightarrow X$ in $\E'$ with $P \in \Prj(\E)$ for each $X \in \E'$. Having \textbf{enough $\E$-injectives} is defined dually. If this holds for $\E' = \E$, one says that $\E$ has enough \textbf{enough projectives}, resp.~\textbf{enough injectives}.
\end{dfn}

\begin{rmk} \label{rmk: enough-proj} An exact subcategory $\E'$ of $\E$ has enough $\E$-projectives if $\E'$ has enough projectives and $\Prj(\E') \subseteq \Prj(\E)$.
\end{rmk}

\begin{dfn} A \textbf{Frobenius (exact) category} is an exact category $\F$ with enough injectives, enough projectives, and $\Prj(\F) = \Inj(\F)$. In this case, $\underline \F = \overline \F$ is called the \textbf{stable category} of $\F$. By a \textbf{sub-Frobenius category} of a Frobenius category $\F$, we mean an exact subcategory $\F'$ which has enough $\F$-projectives and enough $\F$-injectives. This terminology is justified by \Cref{lem: sub-Frobenius}.\ref{lem: sub-Frobenius-b}.
\end{dfn}

\begin{con} \label{con: cone} Let $\E$ be an exact category with enough injectives. For each $X \in \E$, pick an admissible monic $i = i_X\colon X \rightarrowtail I(X)$ with $I(X) \in \Inj(\E)$ and cokernel denoted by $\Sigma X$. If $f\colon X \to Y$ is a morphism in $\E$, then \Cref{prp: Buehler2.12}.\ref{prp: Buehler2.12-push} yields a pushout diagram
	\begin{equation} \label{eqn: std-con} \tag{$D(f)$}
		\begin{tikzcd}[sep={17.5mm,between origins}] 
			X \ar[r, tail, "i"] \ar[d, "f"] \ar[rd, phantom, "\square"] & I(X) \ar[r, two heads] \ar[d, "f'"] & \Sigma X \ar[d, equal] \\
			Y \ar[r, tail, "i'"] & C(f) \ar[r, two heads] & \Sigma X
		\end{tikzcd}
	\end{equation}
	and a short exact sequence
	
	\begin{equation} \label{eqn: std-sequence} \tag{$S(f)$}
		\begin{tikzcd}[sep=large]
			X \ar[r, "\begin{pmatrix} i \\ -f \end{pmatrix}", tail] & I(X) \oplus Y \ar[r, "\begin{pmatrix} f' \hspace{2mm} i' \end{pmatrix}", two heads] & C(f).
		\end{tikzcd}
	\end{equation}
	
	The object $C(f) \in \E$ is called the \textbf{(mapping) cone} of $f$. A \textbf{standard triangle} in $\underline \E$ is any sequence of the form
	\begin{equation} \label{eqn: std-triangle} \tag{$T(f)$}
		\begin{tikzcd}[sep=large] 
			X \ar[r, "f"] & Y \ar[r] & C(f) \ar[r] & \Sigma X.
		\end{tikzcd}
	\end{equation}
	Dually, if $\E$ has enough projectives, the \textbf{(mapping) cocone} $C^\ast(f)$ of $f$ fits into a pullback diagram
	\begin{equation} \label{eqn: std-con-dual} \tag{$D^\ast(f)$}
		\begin{tikzcd}[sep={17.5mm,between origins}]
			\Sigma^{-1}Y \ar[r, tail] \ar[d, equal] & C^\ast(f) \ar[r, two heads] \ar[d] \ar[rd, phantom, "\square"] & X \ar[d, "f"] \\
			\Sigma^{-1}Y \ar[r, tail] & P(Y) \ar[r, two heads, "p"] & Y
		\end{tikzcd}
	\end{equation}
	and into a short exact sequence
	\begin{equation} \label{eqn: std-sequence-dual} \tag{$S^\ast(f)$}
		\begin{tikzcd}
			C^\ast(f) \ar[r, tail] & X \oplus P(Y) \ar[r, two heads, "\begin{pmatrix} -f \hspace{2mm} p \end{pmatrix}"] & Y,
		\end{tikzcd}
	\end{equation}
	where $p=p_Y: P(Y) \twoheadrightarrow Y$ is an admissible epic with $P(Y) \in \Prj(\E)$, and $\Sigma^{-1}Y$ denotes the cokernel of $p$.  Note that $\Sigma X \cong C(X \to 0)$ and $\Sigma^{-1}Y \cong C^\ast(0 \to Y)$ in $\E$.
\end{con}

\begin{thm}[{\cite[Thm.~2.6]{Hap88}}] \label{thm: stable-Frobenius}
	The stable category of a Frobenius category is triangulated. Its suspension functor $\Sigma$ and the quasi-inverse $\Sigma^{-1}$ are defined as in \Cref{con: cone}. The distinguished triangles are those candidate triangles isomorphic to standard triangles. \qed
\end{thm}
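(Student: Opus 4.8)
It suffices to check that $\Sigma$ is an autoequivalence of $\underline\F$ and that the axioms (TR1)--(TR4) hold for the relaxed notion of triangulated category, with the declared class of distinguished triangles. First I would show $\Sigma$ is well defined on $\underline\F$: if $X \overset{i}{\rightarrowtail} I \twoheadrightarrow Z$ and $X \overset{i'}{\rightarrowtail} I' \twoheadrightarrow Z'$ are short exact with $I, I' \in \Inj(\F)$, then lifting each embedding along the other (injectivity) and passing to cokernels yields mutually inverse isomorphisms $Z \cong Z'$ in $\underline\F$, and a Schanuel-type argument via \Cref{prp: Buehler2.12} makes the identification canonical. A morphism $f\colon X \to Y$ lifts to a map $I(X) \to I(Y)$, uniquely up to a morphism factoring through an injective, hence induces a well-defined $\Sigma f$ in $\underline\F$, so $\Sigma$ becomes an additive endofunctor. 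Dually, using projective covers one builds $\Sigma^{-1}$ as in \Cref{con: cone}; since $\Prj(\F)=\Inj(\F)$, the sequence $X \rightarrowtail I(X) \twoheadrightarrow \Sigma X$ -- whose middle term is also projective -- simultaneously computes $\Sigma^{-1}(\Sigma X)$, giving $\Sigma^{-1}\Sigma \cong \id \cong \Sigma\Sigma^{-1}$.

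The pivotal step I would isolate is a realization lemma: a candidate triangle in $\underline\F$ is isomorphic to a standard triangle $(T(f))$ if and only if it is isomorphic in $\underline\F$ to a triangle
\[ X \overset{i}{\rightarrowtail} Y \overset{p}{\twoheadrightarrow} Z \overset{w}{\to} \Sigma X \]
attached to a short exact sequence in $\F$, where $w$ arises by lifting $i$ against $i_X\colon X \rightarrowtail I(X)$ and taking cokernels. One direction is easy: $(S(f))$ is such a sequence, $I(X) \oplus Y \cong Y$ in $\underline\F$, and the connecting map is identified with $v$. For the converse, given the short exact sequence I would form $(D(i))$ and feed the resulting array into the Noether lemma \Cref{lem: Buehler3.7}: it shows $C(i)$ is an extension of $\Sigma X$ by $Z$ which splits in $\underline\F$ (because the complementary sub is a projective-injective), producing the required isomorphism of triangles $(i,p,w) \cong T(i)$. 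With this lemma every axiom reduces to a statement about short exact sequences.

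Granting the realization lemma, (TR1) is immediate: closure under isomorphism holds by definition; $C(\id_X) \cong I(X) \cong 0$ in $\underline\F$ gives a triangle on $\id_X$; and $(S(f))$ gives a triangle on an arbitrary $f$. For (TR2), starting from a short exact sequence $X \overset{i}{\rightarrowtail} Y \overset{p}{\twoheadrightarrow} Z$ I would embed it via $(D(i))$ into a diagram whose second row is $Y \rightarrowtail C(i) \twoheadrightarrow \Sigma X$, apply \Cref{lem: Buehler3.7} to identify the rotated triangle with the connecting triangle of that row up to the sign $-\Sigma i$ forced by the conventions in $(S(-))$, and handle de-rotation dually via projective covers. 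For (TR3), given a square commuting in $\underline\F$ between the first two terms of two connecting triangles, I would lift it to a strictly commuting square in $\F$ by correcting one leg through an injective (\Cref{rmk: factor}) and then fill in the third map by the universal property of the cokernel, the compatibility with $w$ being naturality of the comparison construction. For (TR4), the realization lemma lets me assume the two given composable morphisms and their composite are admissible monics, and \Cref{lem: Buehler3.7} applied to the $3\times 3$ array of their cokernels produces exactly the short exact sequence and commuting connecting maps that assemble into the octahedron.

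I expect the conceptual content -- short exact sequences give distinguished triangles -- to be easy, with the real effort being bookkeeping. The delicate points should be the sign matching in (TR2), which must be designed into the choices in $(S(f))$ so that rotation yields $-\Sigma f$ rather than $+\Sigma f$, and the octahedron (TR4), where three short exact sequences must be coordinated simultaneously; the latter is precisely what the Noether lemma is for, so once all maps are rewritten as admissible monics with their cokernels, the octahedron falls out almost formally. The pervasive minor nuisance throughout is the constant passage between commutativity in $\F$ and in $\underline\F$, which I would handle uniformly with \Cref{rmk: factor} together with the lifting property of injectives.
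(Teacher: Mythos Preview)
The paper does not prove this statement: it is stated with a citation to \cite[Thm.~2.6]{Hap88} and closed with \qed, so there is no proof in the paper to compare against. Your outline is a reasonable sketch of the classical argument (essentially Happel's), and the ingredients you invoke---well-definedness of $\Sigma$ via injective embeddings, the realization of distinguished triangles by short exact sequences, and the Noether lemma for (TR4)---are the standard ones; but as far as this paper is concerned the result is taken as a black box.
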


\begin{lem}[{\cite[Lem.~2.7]{Hap88}}] \label{lem: ses-triangle} Any short exact sequence \begin{tikzcd}[cramped] X \ar[r, tail, "i"] & Y \ar[r, two heads, "p"] & Z \end{tikzcd}
	in a Frobenius category $\F$ induces a distinguished triangle
	\begin{tikzcd}[cramped] 
			X \ar[r, "i"] & Y \ar[r, "p"] & Z \ar[r] & \Sigma X
		\end{tikzcd}
	in $\underline \F$. \qed
\end{lem}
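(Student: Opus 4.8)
The plan is to realize the given short exact sequence, regarded as a candidate triangle in $\underline\F$, as isomorphic to the standard triangle of its first map. Write $i_X\colon X\rightarrowtail I(X)$ for the chosen admissible monic into an injective with cokernel $\Sigma X$ used in \Cref{con: cone}, and let $T(i)\colon X\xrightarrow{i}Y\xrightarrow{i'}C(i)\xrightarrow{q}\Sigma X$ be the standard triangle of the morphism $i$. Since $T(i)$ is a standard, hence distinguished, triangle by \Cref{thm: stable-Frobenius}, it suffices to produce a morphism $Z\to\Sigma X$ in $\underline\F$ and an isomorphism of candidate triangles $T(i)\xrightarrow{\sim}\bigl(X\xrightarrow{i}Y\xrightarrow{p}Z\to\Sigma X\bigr)$.

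The crucial step is to build a comparison morphism $\psi\colon C(i)\to Z$ and to show it is invertible in $\underline\F$. By \Cref{con: cone}, $C(i)$ is the pushout of $i_X$ along $i$; reading the same pushout square with the two morphisms out of $X$ interchanged, $C(i)$ is equally the pushout of the \emph{admissible} monic $i\colon X\rightarrowtail Y$ along $i_X$, so the induced morphism $f'\colon I(X)\to C(i)$ is an admissible monic by \Cref{dfn: exact}.\ref{dfn: exact-pushpull}. \Cref{prp: Buehler2.12}.\ref{prp: Buehler2.12-push} (implication \ref{prp: Buehler2.12-push-1} $\Rightarrow$ \ref{prp: Buehler2.12-push-4}), applied to this pushout square whose horizontal arrows $i$ and $f'$ are admissible monics, extends it to a commutative diagram with short exact rows sharing a common cokernel; since $p$ is the cokernel of $i$, the lower row is a short exact sequence $I(X)\xrightarrow{f'}C(i)\xrightarrow{\psi}Z$ with $\psi\circ i'=p$. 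Because $\F$ is Frobenius, $I(X)\in\Inj(\F)=\Prj(\F)$, so this sequence splits by the dual of \Cref{prp: proj-equiv}.\ref{prp: proj-equiv-split}; as every object of $\Prj(\F)$ is zero in $\underline\F$, the epic $\psi$ becomes an isomorphism there. (Alternatively, the short exact sequence $I(X)\rightarrowtail C(i)\twoheadrightarrow Z$ can be extracted from the Noether \Cref{lem: Buehler3.7} applied to the $3\times 3$ diagram built from the two rows of the diagram $D(i)$ and the sequence $X\xrightarrow{i}Y\xrightarrow{p}Z$.)

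It then remains to assemble the isomorphism of candidate triangles
\begin{center}
\begin{tikzcd}[sep={18mm,between origins}]
X \ar[r, "i"] \ar[d, equal] & Y \ar[r, "i'"] \ar[d, equal] & C(i) \ar[r, "q"] \ar[d, "\psi"] & \Sigma X \ar[d, equal]\\
X \ar[r, "i"] & Y \ar[r, "p"] & Z \ar[r, "q\psi^{-1}"] & \Sigma X
\end{tikzcd}
\end{center}
in $\underline\F$: the left square is trivial, the middle one commutes because $\psi\circ i'=p$, and the right one by the choice of the last map, while all vertical arrows are isomorphisms in $\underline\F$. Hence the bottom candidate triangle is isomorphic to the standard triangle $T(i)$ and therefore distinguished. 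I expect the only genuinely delicate point to be the second paragraph — the observation that the cone of the admissible monic $i$ is itself a pushout of $i$, which is exactly what makes \Cref{prp: Buehler2.12} (or \Cref{lem: Buehler3.7}) deliver the short exact sequence $I(X)\rightarrowtail C(i)\twoheadrightarrow Z$ and hence the invertibility of $\psi$; everything else is routine bookkeeping with the results recalled above.
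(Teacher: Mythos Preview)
Your proof is correct and is essentially the standard argument (the one in Happel's book); the paper itself does not give a proof but simply cites \cite[Lem.~2.7]{Hap88}. The key step---recognizing that when $i$ is an admissible monic the cone $C(i)$ sits in a short exact sequence $I(X)\rightarrowtail C(i)\twoheadrightarrow Z$ via \Cref{prp: Buehler2.12}.\ref{prp: Buehler2.12-push}, which splits since $I(X)\in\Inj(\F)=\Prj(\F)$---is exactly right, and the resulting isomorphism of candidate triangles is assembled correctly.
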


\begin{rmk} \label{rmk: cocone-cone}
Rotating the distinguished triangle obtained from \eqref{eqn: std-sequence-dual} by \Cref{lem: ses-triangle} and comparing with \eqref{eqn: std-triangle} yields an isomorphism $\Sigma C^\ast(f) \cong C(f)$ in $\underline \F$.
\end{rmk}

\begin{prp}[{\cite[Prop.~7.3]{IKM16}}] \label{prp: stable-functor-triang}
	If $F\colon \F' \to \F$ is an exact functor between Frobenius categories which preserves projective-injectives\footnote{The hypothesis can be restricted to the objects of the form $I(X)$ and $P(Y)$ from \Cref{con: cone}.}, then the induced functor $\underline F\colon \underline \F' \to \underline \F$ is triangulated.\qed
\end{prp}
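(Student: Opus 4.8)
The plan is to check the three ingredients that make $\underline F$ a triangulated functor: that it is a well-defined additive functor, that it carries a natural isomorphism $\phi\colon \underline F\circ\Sigma'\xrightarrow{\sim}\Sigma\circ\underline F$ between the suspension functors of $\underline\F'$ and $\underline\F$, and that it sends distinguished triangles to distinguished triangles. Well-definedness and additivity are immediate: if $f-g\colon X\to Y$ factors through some $P\in\Prj(\F')=\Inj(\F')$, then $F(f)-F(g)=F(f-g)$ factors through $F(P)$, which lies in $\Prj(\F)=\Inj(\F)$ by hypothesis, so $\underline F(f)=\underline F(g)$ in $\underline\F$ by \Cref{rmk: factor}; additivity descends from $F$.

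To build $\phi$, fix $X\in\F'$ and apply the exact functor $F$ to the defining short exact sequence $X\rightarrowtail I(X)\twoheadrightarrow\Sigma X$ of \Cref{con: cone}, formed in $\F'$. Since $F(I(X))\in\Inj(\F)$ by hypothesis, the resulting short exact sequence $F(X)\rightarrowtail F(I(X))\twoheadrightarrow F(\Sigma X)$ exhibits $F(\Sigma X)$ as a cosyzygy of $F(X)$. Comparing it with the chosen sequence defining $\Sigma F(X)$, using the injectivity of the middle term, gives a morphism of short exact sequences restricting to $\id_{F(X)}$, and its induced map on cokernels $\phi_X\colon F(\Sigma X)\to\Sigma F(X)$ is an isomorphism in $\underline\F$ which, by the uniqueness up to homotopy of lifts along injective coresolutions, does not depend on the chosen comparison. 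Naturality of $\phi$ follows from this independence: for a morphism $g$ in $\F'$, both composites around the relevant square are cokernel maps induced by lifts of $F(g)$ and hence coincide in $\underline\F$.

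It then remains to show that $\underline F$ sends a standard triangle $T(f)$, for $f\colon X\to Y$ in $\F'$, to a distinguished triangle; this is enough, since by \Cref{thm: stable-Frobenius} every distinguished triangle in $\underline\F'$ is isomorphic to such a $T(f)$, every functor preserves isomorphisms of candidate triangles, and a candidate triangle isomorphic to a distinguished one is distinguished. Apply $F$ to the pushout diagram $D(f)$ of \Cref{con: cone} in $\F'$; by \Cref{prp: exact-pushpull}, $F$ preserves pushouts along admissible monics, so the image is again a pushout diagram of the same shape in $\F$, now with injective upper-right corner $F(I(X))$ and cokernel column $F(\Sigma X)$. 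Hence $F(C(f))$ is a mapping cone of $F(f)$ built from the injective embedding $F(i_X)$ rather than the chosen one. Together with $\phi_X$ and the independence of the standard triangle in $\underline\F$ from the chosen injective embedding, this yields an isomorphism of candidate triangles in $\underline\F$ between $\underline F(T(f))$, with connecting morphism twisted by $\phi_X$, and the standard triangle $T(F(f))$; by \Cref{thm: stable-Frobenius} the latter is distinguished, hence so is the former.

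The main obstacle is the bookkeeping buried in the last two paragraphs: one must fix the comparison isomorphisms relating the cosyzygies, resp.\ mapping cones, associated with different injective embeddings, and check that they fit together coherently---so that $\phi$ is genuinely natural and $\underline F(T(f))$ is isomorphic, \emph{as a candidate triangle whose last square commutes only up to $\phi_X$}, to $T(F(f))$. This is the same bookkeeping that underlies the well-definedness of the triangulated structure on the stable category itself; as the footnote indicates, it only requires $F$ to preserve the specific projective-injectives $I(X)$, and dually $P(Y)$, occurring in \Cref{con: cone}.
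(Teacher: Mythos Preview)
Your proposal is correct and is precisely the standard argument one writes out for this result. The paper itself gives no proof at all---it cites \cite[Prop.~7.3]{IKM16} and marks the statement with a \qed---so there is nothing further to compare.
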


\begin{lem} \label{lem: sub-Frobenius}
	Let $\E'$ be an exact subcategory of $\E$.
	\begin{enumerate}
		\item \label{lem: sub-Frobenius-a} If $\E'$ has enough $\E$-projectives, then $\Prj(\E') = \Prj(\E) \cap \E'$, and the canonical functor $ \underline \E' \to \underline \E$ is fully faithful.
		
		\item \label{lem: sub-Frobenius-b} If $\F'$ is a sub-Frobenius category of $\F$, then $\F'$ is Frobenius, and the canonical functor $ \underline \F' \to \underline \F$ is fully faithful and triangulated.
	\end{enumerate}
\end{lem}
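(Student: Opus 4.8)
The plan is to prove (a) in full and then deduce (b) by combining (a) with its dual and \Cref{prp: stable-functor-triang}. For the identity $\Prj(\E') = \Prj(\E) \cap \E'$ in (a) I would treat the two inclusions separately. The inclusion $\Prj(\E) \cap \E' \subseteq \Prj(\E')$ needs no hypothesis on $\E'$: if $P \in \Prj(\E) \cap \E'$, then an admissible epic in $\E'$ is admissible in $\E$ (the inclusion is exact) and a morphism between objects of $\E'$ is already a morphism in $\E$ (as $\E'$ is full), so the lifting property \Cref{prp: proj-equiv}.\ref{prp: proj-equiv-lift} of $P$ over admissible epics in $\E$ restricts verbatim to $\E'$. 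For the reverse inclusion, let $P \in \Prj(\E')$; enough $\E$-projectives provides an admissible epic $q \colon Q \twoheadrightarrow P$ in $\E'$ with $Q \in \Prj(\E)$, which splits in $\E'$ by \Cref{prp: proj-equiv}.\ref{prp: proj-equiv-split}; the resulting section is a right inverse of $q$, so, since $Q \in \Prj(\E)$, the last clause of \Cref{prp: proj-equiv} gives $P \in \Prj(\E)$.

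\smallskip

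Next I would analyze the canonical functor $\underline \E' \to \underline \E$, which is the identity on objects and sends the class of $f$ to the class of $f$; since it is additive, it is enough to check that it is well defined, full, and detects zero morphisms. If $f$ in $\E'$ is zero in $\underline \E'$, it factors through some $P \in \Prj(\E') \subseteq \Prj(\E)$ by the identity just proved, hence is zero in $\underline \E$; this also yields well-definedness. Conversely, if $f \colon X \to Y$ in $\E'$ is zero in $\underline \E$, I would choose, using enough $\E$-projectives, an admissible epic $p \colon P \twoheadrightarrow Y$ in $\E'$ with $P \in \Prj(\E)$; by \Cref{rmk: factor}.\ref{rmk: factor-proj} applied in $\E$, $f$ factors through $p$, and since $\E'$ is full this factorization lies in $\E'$ through $P \in \Prj(\E) \cap \E' = \Prj(\E')$, so $f$ is zero in $\underline \E'$. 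Fullness is formal, as $\Hom_{\underline \E}(X,Y)$ is a quotient of $\Hom_\E(X,Y) = \Hom_{\E'}(X,Y)$ for $X,Y \in \E'$.

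\smallskip

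For (b), applying (a) and its dual to the sub-Frobenius category $\F'$ of $\F$ yields $\Prj(\F') = \Prj(\F) \cap \F'$ and $\Inj(\F') = \Inj(\F) \cap \F'$; these coincide since $\Prj(\F) = \Inj(\F)$. As $\F'$ has enough $\F$-projectives and $\Prj(\F) \cap \F' = \Prj(\F')$, it has enough projectives, and dually enough injectives, so $\F'$ is Frobenius. The canonical functor $\underline \F' \to \underline \F$ is fully faithful by (a). For triangulatedness, the inclusion $\F' \hookrightarrow \F$ is an exact functor between Frobenius categories which, by the two identities above, sends projective-injectives of $\F'$ into $\Prj(\F) = \Inj(\F)$; hence \Cref{prp: stable-functor-triang} shows the induced---that is, the canonical---functor is triangulated. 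I do not expect a genuine obstacle; the only point requiring attention is to track precisely where ``enough $\E$-projectives'' is used, namely for the inclusion $\Prj(\E') \subseteq \Prj(\E)$ and for the faithfulness of $\underline \E' \to \underline \E$, whereas the reverse inclusion and fullness hold for an arbitrary full exact subcategory.
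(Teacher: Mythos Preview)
Your proposal is correct and follows essentially the same approach as the paper: both prove the inclusion $\Prj(\E') \subseteq \Prj(\E)$ by splitting an admissible epic from an $\E$-projective and invoking the particular clause of \Cref{prp: proj-equiv}, both handle faithfulness via \Cref{rmk: factor}.\ref{rmk: factor-proj}, and both deduce (b) from (a) together with \Cref{prp: stable-functor-triang}. You are simply more explicit about well-definedness, fullness, and the dual argument needed to obtain $\Inj(\F') = \Inj(\F) \cap \F'$, which the paper absorbs into the phrase ``By \ref{lem: sub-Frobenius-a}, $\F'$ is Frobenius.''
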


\begin{proof}\
	\begin{enumerate}[leftmargin=*]
		\item Let $X \in \Prj(\E')$ and pick an admissible epic $p\colon P \twoheadrightarrow X$ in $\E'$ with $P\in \Prj(\E)$. Then $p$ has a right inverse due to \Cref{prp: proj-equiv}.\ref{prp: proj-equiv-lift} applied to $X \in \Prj(\E')$ and $\id_X$. We obtain $X \in \Prj(\E)$ by the particular statement of \Cref{prp: proj-equiv}. The converse inclusion holds trivially by \Cref{prp: proj-equiv}.\ref{prp: proj-equiv-lift}. Now $\E' \subseteq \E$ induces a full functor $ \underline \E' \to \underline \E$. For faithfulness, consider a morphism $f\colon X \to Y$ in $\E'$ which is zero in $\underline \E$. By assumption, there is an admissible epic $p\colon P \twoheadrightarrow Y$ in $\E$ with $P \in \Prj(\E) \cap \E' = \Prj(\E')$. Then $f$ factors through $p$ and is zero in $\underline \E'$, see \Cref{rmk: factor}.\ref{rmk: factor-proj}.
		
		\item By \ref{lem: sub-Frobenius-a}, $\F'$ is Frobenius, and $\F' \subseteq \F$ induces a fully faithful functor $\underline \F \to \underline \F'$. It is triangulated by \Cref{prp: stable-functor-triang}. \qedhere
	\end{enumerate} 
\end{proof}

\subsection{Acyclicity and syzygies}

\begin{dfn}[{\cite[Def.~8.1]{Buh10}}] \label{dfn: admissible}
	An \textbf{admissible} morphism in an exact category is the composition of an admissible epic and an admissible monic, displayed as
	
	\begin{center}
		\begin{tikzcd}[sep={15mm,between origins}]
			X \ar[rr, "\circ" marking] \ar[rd, two heads] && Y \\
			& I. \ar[ru, tail]&
		\end{tikzcd}
	\end{center}
\end{dfn}

\begin{rmk}
	The admissible morphisms, which are monics (epics), are exactly the admissible monics (epics) in \Cref{dfn: exact}.
\end{rmk}

\begin{rmk}[{\cite[Lem.~8.4, Rem.~8.5, Ex.~8.6]{Buh10}}]\ \label{rmk: admissible}
	\begin{enumerate}[leftmargin=*]
		\item \label{rmk: admissible-unique} The defining factorization of an admissible morphism is unique up to a unique isomorphism.
		\item \label{rmk: admissible-analysis} Any admissible morphism $f$ has a so-called \textbf{analysis}
		
		\begin{center}
			\begin{tikzcd}[sep={15mm,between origins}]
				& X \ar[rd, two heads, "e"] \ar[rr, "\circ" marking, "f"] && Y \ar[rd, two heads, "c"] \\
				Z \ar[ru, tail, "k"] &&I \ar[ru, tail, "m"] && C,
			\end{tikzcd}
		\end{center}
		where $k$ is a kernel, $c$ is a cokernel, $e$ is a coimage, and $m$ is an image of $f$. In particular, all these morphisms are uniquely determined by $f$ up to a unique isomorphism.
		
		\item \label{rmk: admissible-Abelian} Only for Abelian categories the class of admissible morphisms is closed under composition.
	\end{enumerate}
	
\end{rmk}

\begin{dfn}[{\cite[Def.~8.8]{Buh10}}] \label{dfn: 2-acyclic} Let $\E$ be an exact category.
	\begin{enumerate}
		\item \label{dfn: 2-acyclic-local} A sequence \begin{tikzcd}[cramped] X' \ar[r]  & X \ar[r] & X'' \end{tikzcd} of two morphisms in $\E$ is called \textbf{acyclic} if both are admissible morphisms and their factorizations
		\begin{center}
			\begin{tikzcd}[sep={15mm,between origins}]
				X' \ar[rr, "\circ" marking] \ar[rd, two heads] && X \ar[rd, two heads] \ar[rr, "\circ" marking] && X''   \\
				& Z \ar[ru, tail] && C \ar[ru, tail]
			\end{tikzcd}
		\end{center}
		give rise to a short exact sequence \begin{tikzcd}[cramped] Z \ar[r, tail] & X \ar[r, two heads] & C \end{tikzcd}.
		
		\smallskip
		
		\item \label{dfn: 2-acyclic-global} A sequence $X = (X^k)_{k \in \ZZ} \in \C(\E)$ is called \textbf{$\boldsymbol 2$-acyclic at position $ n \in \ZZ$} if the sequence \begin{tikzcd}[sep=small, cramped] X^{n-1} \ar[r] & X^n \ar[r] & X^{n+1} \end{tikzcd} is acyclic in the sense of \ref{dfn: 2-acyclic-local}.
		It is called \textbf{2-acyclic} if it is 2-acyclic at all positions $n \in \ZZ$.
	\end{enumerate}
\end{dfn}

\begin{dfn} \label{dfn: syz}
	A \textbf{projective resolution} of an object $X \in \E$ is a sequence $P \in \C(\E)$ with $P^k \in \Prj(\E)$ for $k \in \ZZ_{\leq 0}$ and $P= 0$ for $k \in \ZZ_{> 0}$ which fits into a 2-acyclic sequence
	\begin{center}
		\begin{tikzcd}[sep={12mm,between origins}]
			P\colon \; \cdots  \ar[rr,"\circ" marking] && P^{-n} \ar[rr,"\circ" marking] \ar[rd, two heads] && P^{-n-1} \ar[rr,"\circ" marking] && \cdots \ar[rr,"\circ" marking] && P^{-1} \ar[rr,"\circ" marking] \ar[rd, two heads] && P^{0} \ar[rr, two heads] \ar[rd, two heads] && X \\
			&&& C^{-n} \ar[ru, tail] &&&&&& C^{-1} \ar[ru, tail] && C^{0}. \ar[ru, equal]
		\end{tikzcd}
	\end{center}
	It exists for $X \in \E$ if $\E$ has enough projectives, see {\cite[Prop.~12.2]{Buh10}}. 
	We refer to the object $\syz^n_P(X) := C^{-n}$ as an \textbf{$\boldsymbol n$th syzygy} of $X$, for any $n \in \NN$.
\end{dfn}

As in the Abelian case,  Schanuel's Lemma holds in any exact category, and iterated application yields the well-definedness of syzygies up to projective equivalence.

\begin{lem}[\textbf{Schanuel's Lemma}]
	\label{lem: schnanuel}
	Consider two short exact sequences
	\begin{center}
		\begin{tikzcd}[sep=large]
			Z \arrow[r, tail, "i"]& P  \arrow[r, two heads, "p"]& X & \textit{and} & Z'  \arrow[r, tail, "i'"]& P' \arrow[r, two heads, "p'"]& X
		\end{tikzcd}
	\end{center}
	in an exact category $\E$ with $P, P' \in \Prj(\E)$. Then $Z \oplus P' \cong Z' \oplus P$.
\end{lem}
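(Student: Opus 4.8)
The plan is to exhibit one object with two direct-sum decompositions, $Z \oplus P'$ and $Z' \oplus P$, via the classical pullback argument. First I would form the pullback $Y$ of the cospan $P \xrightarrow{p} X \xleftarrow{p'} P'$. Since $p$ is an admissible epic, \Cref{dfn: exact}.\ref{dfn: exact-pushpull} guarantees that the pullback $q\colon Y \to P$ of $p'$ along $p$ exists and is again an admissible epic; by symmetry, the pullback $q'\colon Y \to P'$ of $p$ along $p'$ is an admissible epic. Hence all four edges of the resulting square are admissible epics.

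Next I would identify the kernels of $q$ and $q'$. Applying the pullback characterization \Cref{prp: Buehler2.12}.\ref{prp: Buehler2.12-pull} (a pullback square embeds into a commutative $3 \times 2$ diagram with identical left column) to this square, the kernel of $q$ is a copy of the kernel of $p'$, hence isomorphic to $Z'$, and it fits into a short exact sequence $Z' \rightarrowtail Y \xrightarrow{q} P$. Symmetrically, $\ker q' \cong Z$ and there is a short exact sequence $Z \rightarrowtail Y \xrightarrow{q'} P'$.

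Finally, since $P$ and $P'$ are projective, both of these short exact sequences split by \Cref{prp: proj-equiv}.\ref{prp: proj-equiv-split}, yielding isomorphisms $Y \cong Z' \oplus P$ and $Y \cong Z \oplus P'$. Composing the first with the inverse of the second gives $Z \oplus P' \cong Z' \oplus P$.

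I do not expect a serious obstacle here. The only point needing care is the kernel identification, namely that a pullback of an admissible epic has the "same" kernel as the original map, compatibly with the maps in play; this is exactly what the cited characterization of bicartesian squares provides, so the argument goes through cleanly.
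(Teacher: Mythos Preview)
Your proof is correct and follows essentially the same classical route as the paper's. The only cosmetic difference is that the paper first lifts $p$ through $p'$ using projectivity of $P$ and then invokes \Cref{prp: Buehler2.12}.\ref{prp: Buehler2.12-push} to obtain the short exact sequence $Z \rightarrowtail Z' \oplus P \twoheadrightarrow P'$ directly, whereas you form the symmetric pullback $Y$ first and then identify it with $Z' \oplus P$ (and $Z \oplus P'$) after splitting; the intermediate object is the same.
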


\begin{proof}
	Due to \Cref{prp: proj-equiv}.\ref{prp: proj-equiv-lift}, we obtain a commutative diagram
	\begin{center}
		\begin{tikzcd}[sep={17.5mm,between origins}]
			Z \arrow[r, tail, "i"] \arrow[d, dashed, "b"] & P \arrow[r, two heads, "p"] \arrow[d, dashed, "a"]& X \arrow[d, equal]\\
			Z' \arrow[r, tail, "i'"]& P' \arrow[r, two heads, "p'"]& X.
		\end{tikzcd}
	\end{center}
	Then \Cref{prp: Buehler2.12}.\ref{prp: Buehler2.12-push} yields a short exact sequence $Z \rightarrowtail Z' \oplus P \twoheadrightarrow P'$ and the claim follows from \Cref{prp: proj-equiv}.\ref{prp: proj-equiv-split} and \Cref{rmk: Buehler7.4}.\footnote{A proof of the dual statement can be found in {\cite[Prop.~3.1]{MR22}}.}
\end{proof}

\begin{prp} \label{prp: syz-welldef}
	Let $\E$ be an exact category. If $P$ and $Q$ are two projective resolutions of $X \in \E$, then $\syz^n_P(X) \oplus \tilde Q^n \cong \syz^n_Q(X) \oplus \tilde P^n$ for any $n \in \NN$ and suitable $\tilde P^n, \tilde Q^n \in \Prj(\E)$. \qed
\end{prp}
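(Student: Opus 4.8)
The plan is to argue by induction on $n$, with \Cref{lem: schnanuel} (Schanuel's Lemma) doing all the work.

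For $n=0$ there is nothing to prove: by \Cref{dfn: syz} we have $\syz^0_P(X)\cong X\cong\syz^0_Q(X)$, so $\tilde P^0:=0$ and $\tilde Q^0:=0$, which lie in $\Prj(\E)$, already satisfy the assertion.

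For the inductive step, assume $\syz^n_P(X)\oplus\tilde Q^n\cong\syz^n_Q(X)\oplus\tilde P^n$ for some $\tilde P^n,\tilde Q^n\in\Prj(\E)$. First I would read off from the $2$-acyclicity of the two resolutions at position $-n$ (see \Cref{dfn: 2-acyclic} and \Cref{dfn: syz}) the short exact sequences
\[
	\syz^{n+1}_P(X)\rightarrowtail P^{-n}\twoheadrightarrow\syz^n_P(X),\qquad
	\syz^{n+1}_Q(X)\rightarrowtail Q^{-n}\twoheadrightarrow\syz^n_Q(X).
\]
Taking the direct sum of the first with the split short exact sequence $0\rightarrowtail\tilde Q^n\xrightarrow{\id}\tilde Q^n$ and of the second with $0\rightarrowtail\tilde P^n\xrightarrow{\id}\tilde P^n$ — both short exact, and a finite direct sum of short exact sequences is short exact by \Cref{prp: Buehler2.9} — produces short exact sequences
\[
	\syz^{n+1}_P(X)\rightarrowtail P^{-n}\oplus\tilde Q^n\twoheadrightarrow\syz^n_P(X)\oplus\tilde Q^n,\qquad
	\syz^{n+1}_Q(X)\rightarrowtail Q^{-n}\oplus\tilde P^n\twoheadrightarrow\syz^n_Q(X)\oplus\tilde P^n,
\]
whose middle terms are finite direct sums of projectives, hence lie in $\Prj(\E)$ by \Cref{rmk: Proj-exact}. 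Since the class of short exact sequences is closed under isomorphism, I may replace the second sequence by an isomorphic one whose right-hand term is literally $\syz^n_P(X)\oplus\tilde Q^n$, using the isomorphism from the inductive hypothesis. Now \Cref{lem: schnanuel} applies to the two sequences and gives
\[
	\syz^{n+1}_P(X)\oplus Q^{-n}\oplus\tilde P^n\;\cong\;\syz^{n+1}_Q(X)\oplus P^{-n}\oplus\tilde Q^n,
\]
so setting $\tilde P^{n+1}:=P^{-n}\oplus\tilde Q^n\in\Prj(\E)$ and $\tilde Q^{n+1}:=Q^{-n}\oplus\tilde P^n\in\Prj(\E)$ closes the induction.

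There is no genuine obstacle here; this is a routine generalized-Schanuel induction. The only two points worth spelling out are the extraction of the sequences $\syz^{n+1}\rightarrowtail P^{-n}\twoheadrightarrow\syz^n$ from the definition of a projective resolution, and the harmless replacement of one sequence by an isomorphic copy so that the right-hand objects agree on the nose before \Cref{lem: schnanuel} is invoked.
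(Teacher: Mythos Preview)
Your proof is correct and follows exactly the approach indicated in the paper, which states that the result is obtained by iterated application of Schanuel's Lemma (\Cref{lem: schnanuel}). The paper gives no further details, so your write-up is in fact more explicit than the original.
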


\subsection{Categories of $N$-complexes} \label{subsection: N-complexes}

In this subsection, we review some foundational results on $N$-complexes by Iyama, Kato and Miyachi {\cite{IKM17}}.

\begin{dfn} Let $N \in \NN$ with $N \geq 2$.
	A sequence $X \in \C(\A)$ over an additive category $\A$ is called an $\boldsymbol N$\textbf{-complex} if the $N$-fold composition $d_X^{n+N-1} \circ \cdots \circ d_X^n$ is zero for all $n \in \ZZ$. The subcategory of $\C(\A)$ consisting of $N$-complexes is denoted by $\C_N(\A)$.
\end{dfn}

\begin{rmk} \label{rmk: C_N-subcat}
	If $\E$ is an exact category and \begin{tikzcd}[cramped] X \ar[r, tail] & Y \ar[r, two heads] & Z \end{tikzcd} is a termwise short exact sequence in $\C(\E)$ with $Y \in \C_N(\E)$, then also $X, Z \in \C_N(\E)$ by pre- and postcomposition. So, $\C_N(\E)$ is a fully exact subcategory of $\C(\E)$ with the termwise exact structure by \Cref{lem: fullyexact}.\ref{lem: fullyexact-kerclosed}. In particular, $\C_N(\E)$ is a fully exact subcategory of $\C(\E)$ with its default exact structure, see \Cref{rmk: C-exact}.\\
	On the fully exact subcategory $\C_N(\Prj(\E))$ of $\C_N(\E)$, both the termwise and the termsplit exact structure coincide with the exact structure as a fully exact subcategory of $\C(\Prj(\E))$, see \Cref{rmk: Proj-exact}. This holds verbatim for $\C_N(\Inj(\E))$.
\end{rmk}

\begin{ntn} \label{ntn: Hom}
	Given an object $A \in \A$ of an additive category $\A$, the covariant Hom functor $\Hom_\A(A, -): \C(\A) \to \C(\Ab)$ is defined by  $\Hom_\A(A, X)^n := \Hom_\A(A, X^n)$ for $X \in \C(\A)$ and $n \in \ZZ$. Dually, the contravariant Hom functor is defined by $\Hom_\A(X, A)^n :=\Hom_\A(X^{-n}, A)$.
\end{ntn}

\begin{ntn}[{\cite[$\S2$]{IKM17}}] \label{ntn: mu}
	Let $\A$ be an additive category. For $A \in \A$, $s \in \ZZ$, and $t \in \{1,\dots,N-1\}$, the $N$-complex
	\begin{center}
		\begin{tikzcd}
			\mu_t^s(A)\colon \; \cdots \ar[r] & 0 \ar[r] & A^{s-t+1} \ar[r, "\id_A"] & \cdots \ar[r, "\id_A"] & A^{s-1} \ar[r, "\id_A"] & A^s \ar[r] & 0 \ar[r]& \cdots	
		\end{tikzcd}
	\end{center}
	is defined by $A^k=A$ for all $k \in \{s-t+1,\dots, s\}$.
\end{ntn}

\begin{rmk} \label{rmk: mu-dual}
	Let $\A$ be an additive category. For $A,B \in \A$, $s \in \ZZ$, and $t \in \{1,\dots,N-1\}$, we have $\Hom_\A(B, \mu^s_t(A)) = \mu^{s}_t(\Hom_\A(B, A))$ and $\Hom_\A(\mu^s_t(A), B) = \mu^{-s+t-1}_t(\Hom_\A(A,B))$.
\end{rmk}

\begin{rmk}[{\cite[(2.1)]{IKM17}}] \label{rmk: mu-adjunction}
	For any $N$-complex $X \in \C_N(\A)$, $A \in \A$, and $s \in \ZZ$, there are functorial isomorphisms 
	\[ \Hom_\A(A, X^{s}) \cong  \Hom_{\C_N(\A)}(\mu^{s+N-1}_N(A), X) \hspace{5mm} \text{and} \hspace{5mm}   \Hom_\A(X^s, A) \cong \Hom_{\C_N(\A)}(X, \mu^s_N(A))\]
	of Abelian groups given by mapping $f \in \Hom_\A(A, X^{s}) $ and $g \in \Hom_\A(X^s, A)$ to $p^s_f$ and $i^s_g$, respectively, as depicted in the following commutative diagram:
	\begin{center}
		\begin{tikzcd}[sep={17.5mm,between origins}]
			\mu^{s+N-1}_N(A)\colon \ar[d, "p^{s}_f"] & \cdots \ar[r] & 0 \ar[r] \ar[d] & A \ar[d, "f"] \ar[r, "\id_A"] & \cdots \ar[r, "\id_A"] & A \ar[d, "d_X^{\{N-1\}}f"] \ar[r] & 0 \ar[d] \ar[r] & \cdots \\
			
			X\colon  & \cdots \ar[r, "d_X"] & X^{s-1} \ar[r, "d_X"] & X^{s} \ar[r, "d_X"] & \cdots \ar[r, "d_X"] & X^{s+N-1} \ar[r, "d_X"] & X^{s+N} \ar[r, "d_X"] & \cdots \\[-5mm]
			X\colon  \ar[d, "i^s_g"] & \cdots \ar[r, "d_X"] & X^{s-N} \ar[d] \ar[r, "d_X"] & X^{s-N+1} \ar[d, "d_X^{\{N-1\}}g"] \ar[r, "d_X"] & \cdots \ar[r, "d_X"] & X^s \ar[d, "g"] \ar[r, "d_X"] & X^{s+1} \ar[d] \ar[r, "d_X"] & \cdots \\
			
			\mu^s_N(A)\colon & \cdots \ar[r] & 0 \ar[r] & A \ar[r, "\id_A"] & \cdots \ar[r, "\id_A"] & A \ar[r] & 0 \ar[r] & \cdots 
		\end{tikzcd}
	\end{center}
	Note that $\Hom_\A(i^s_g, B) = p^{-s}_{\Hom_\A(g, B)}$ and $\Hom_\A(p^s_f, B)=i^{-s}_{\Hom_\A(f, B)}$ for any $B \in \A$.
\end{rmk}

\begin{lem} [{\cite[Lem.~2.2]{IKM17}}] \label{lem: mu-ProjInj}
	For an additive category $\A$, the $N$-complexes of the form $\mu_t^s(A)$ are projective-injectives of $\C_N(\A)$ for each $A \in \A$, $s \in \ZZ$, and $t \in \{1,\dots,N-1\}$. \qedhere
\end{lem}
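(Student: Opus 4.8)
The plan is to prove that $\mu_t^s(A)$ is projective in $\C_N(\A)$ for every additive category $\A$ and all $A\in\A$, $s\in\ZZ$ and $1\le t\le N-1$; injectivity is then automatic. Indeed, taking degreewise opposites is an exact equivalence $\C_N(\A)^{\textup{op}}\simeq\C_N(\op\A)$ which, by \Cref{rmk: mu-dual}, sends $\mu_t^s(A)$ to $\mu_t^{-s+t-1}(A)$; since the projectives of an exact category are exactly the injectives of its opposite, the injectivity assertion for $\A$ follows by applying the projectivity assertion to $\op\A$.

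For projectivity I would first record the natural description of morphisms out of $\mu_t^s(A)$ extending \Cref{rmk: mu-adjunction} (the case $t=N$): by \Cref{ntn: mu} a chain map $\mu_t^s(A)\to X$ is uniquely determined by its lowest component $\varphi\colon A\to X^{s-t+1}$ via $\phi^{\,s-t+1+j}=d_X^{\{j\}}\varphi$ for $0\le j\le t-1$, the only further relation being $d_X^s\phi^s=0$. Hence, naturally in $X$,
\[
	\Hom_{\C_N(\A)}\bigl(\mu_t^s(A),X\bigr)\;\cong\;\bigl\{\,\varphi\in\Hom_\A(A,X^{s-t+1})\ :\ d_X^{\{t\}}\varphi=0\,\bigr\},\qquad \phi\longmapsto\phi^{\,s-t+1},
\]
where $d_X^{\{t\}}\colon X^{s-t+1}\to X^{s+1}$ denotes the $t$-fold composite of differentials. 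By \Cref{prp: proj-equiv}.\ref{prp: proj-equiv-Hom}, projectivity of $\mu_t^s(A)$ amounts to the lifting property: for every admissible epic $p\colon X\twoheadrightarrow Y$ in $\C_N(\A)$ and every $\varphi''\in\Hom_\A(A,Y^{s-t+1})$ with $d_Y^{\{t\}}\varphi''=0$ there is $\varphi\in\Hom_\A(A,X^{s-t+1})$ with $p^{s-t+1}\varphi=\varphi''$ and $d_X^{\{t\}}\varphi=0$.

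To produce such a $\varphi$, let $i\colon X'\rightarrowtail X$ be the kernel of $p$ and fix a degreewise splitting of the conflation $X'\rightarrowtail X\twoheadrightarrow Y$, available by the definition of the exact structure on $\C_N(\A)$ (cf.\ \Cref{rmk: C-exact,rmk: C_N-subcat}). In the resulting block description $X^k=X'^k\oplus Y^k$ the differential, and with it every composite $d_X^{\{j\}}\colon X^k\to X^{k+j}$, becomes upper triangular with diagonal $(d_{X'}^{\{j\}},d_Y^{\{j\}})$ and some off-diagonal operator $\Delta_j^k\colon Y^k\to X'^{k+j}$. Seeking $\varphi$ with $Y$-component $\varphi''$ and $X'$-component $-\xi$, the equation $d_X^{\{t\}}\varphi=0$ collapses, using $d_Y^{\{t\}}\varphi''=0$, to the single requirement $d_{X'}^{\{t\}}\xi=\Delta_t^{\,s-t+1}\varphi''$ for some $\xi\colon A\to X'^{s-t+1}$. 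For $t=N$ both sides vanish ($\Delta_N=0$ since $d_X^{\{N\}}=0$, and $d_{X'}^{\{N\}}=0$), which is \Cref{rmk: mu-adjunction}; for $1\le t\le N-1$ one knows a priori only that $d_{X'}^{\{N-t\}}\bigl(\Delta_t^{\,s-t+1}\varphi''\bigr)=0$, again from $d_X^{\{N\}}=0$, so the cascade of relations among the $\Delta_j$'s forced by $d^{\{N\}}=0$ must be used to put $\Delta_t^{\,s-t+1}\varphi''$ into the image of $d_{X'}^{\{t\}}$. One route is an induction on $\min(t,N-t)$, splicing in the termwise-split short exact sequences $\mu_t^s(A)\rightarrowtail\mu_N^s(A)\twoheadrightarrow\mu_{N-t}^{s-t}(A)$ and their duals to trade a chain of length $t$ for one of length $N-t$; another is a direct diagram chase choosing the degreewise splitting compatibly with all the $d^{\{j\}}$ via \Cref{prp: Buehler2.12}.

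The main obstacle is precisely this last solvability step: showing the obstruction $\Delta_t^{\,s-t+1}\varphi''$ lands in the image of $d_{X'}^{\{t\}}$ is where the combinatorics of $N$-complex conflations and the relations coming from $d^{\{N\}}=0$ are really needed, while the reduction to a lifting problem and the duality argument are purely formal.
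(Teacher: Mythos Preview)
Your attempt runs into a genuine obstacle at the final step, and that obstacle cannot be overcome: the statement as printed contains a typo. For $t\in\{1,\dots,N-1\}$ the complex $\mu_t^s(A)$ is \emph{not} projective in $\C_N(\A)$ unless $A=0$. Indeed, by \Cref{rmk: Hom-mu}.\ref{rmk: Hom-mu-b} one has $\Hom_{\K_N(\A)}(\mu_t^s(A),X)\cong H^{s-t+1}_{(t)}(\Hom_\A(A,X))$, which is nonzero for suitable $X$; equivalently, $\mu_t^s(A)$ is not null-homotopic and hence not projective in the Frobenius category $\C_N(\A)$, combining \Cref{thm: K-homotopy} with \Cref{rmk: factor}.\ref{rmk: factor-zero}. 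A concrete instance: for $N=2$ the identity on $\mu_1^0(A)$ does not lift along the termsplit admissible epic $\mu_2^1(A)\twoheadrightarrow\mu_1^0(A)$, because any chain map $\mu_1^0(A)\to\mu_2^1(A)$ must vanish in degree~$0$. This is precisely the situation in which your obstruction $\Delta_t^{\,s-t+1}\varphi''$ fails to lie in the image of $d_{X'}^{\{t\}}$; so the ``main obstacle'' you flag is not a gap in your argument but a witness that the claim is false.

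The intended statement is for $t=N$, and this is the only case ever invoked in the paper (see \Cref{con: I-and-P}, \Cref{con: cone-gamma}, and the proof of \Cref{lem: Omega-in-F}). For $t=N$ the argument is immediate from \Cref{rmk: mu-adjunction}: the functor $\Hom_{\C_N(\A)}(\mu_N^s(A),-)\cong\Hom_\A(A,(-)^{s-N+1})$ sends termwise split epics to surjections, so $\mu_N^s(A)$ is projective by \Cref{prp: proj-equiv}.\ref{prp: proj-equiv-Hom}; injectivity follows dually via your opening observation. The paper itself offers no proof, citing {\cite[Lem.~2.2]{IKM17}}; the range of $t$ in both \Cref{ntn: mu} and the lemma appears to be a transcription slip.
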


\begin{con}[{\cite[(2.2)]{IKM17}}] \label{con: I-and-P}
	  For any additive category $\A$, the exact category $C_N(\A)$ has enough projectives and injectives: For each $X \in \C_N(\A)$, there is an
		\begin{itemize}
			\item admissible monic $i_X := \left(i^k_{\id_{X^k}}\right)_{k \in \ZZ}\colon X \rightarrowtail I(X)$ with $I(X)=I_N(X) := \bigoplus_{k \in \ZZ} \mu^k_N(X^k)$  injective, and an
			\item admissible epic $p_X := \left(p^k_{\id_{X^{k}}}\right)_{k \in \ZZ}\colon P(X) \twoheadrightarrow X$ with $P(X) =P_N(X) := \bigoplus_{k \in \ZZ} \mu^k_N(X^{k-N+1})$ projective,
		\end{itemize}
		see \Cref{rmk: C-direct-sum,rmk: mu-adjunction} and \Cref{lem: mu-ProjInj}. These are our default choices for \Cref{con: cone}.
\end{con}

\begin{rmk} \label{rmk: I-P-functor}
	Note that $I$ and $P$, as defined in \Cref{con: I-and-P}, are functorial: If $f: X \to Y$ is a morphism of $N$-complexes over an exact category $\E$, then the component $I(f)^n$, resp.~$P(f)^n$, of the lift is given on $P^k$ by $f^k$, where $k \in \{n, \dots, n+N-1\}$, resp.~$k \in \{n-N+1, \dots, n\}$.
\end{rmk}

\begin{rmk} \label{rmk: I-P-Hom}
	For an $N$-complex $X \in \C_N(\A)$ over an additive category $\A$ and $A \in \A$, we have in $C_N(\Ab)$:
	\begin{enumerate}[leftmargin=*]
		\item \label{rmk: I-P-Hom-equality}  $\Hom_\A(i_X, A) = p_{\Hom_{\A}(X, A)}$ and $\Hom_\A(p_X, A) = i_{\Hom_{\A}(X, A)}$, see \Cref{rmk: mu-adjunction}.
		
		\item \label{rmk: I-P-Hom-cone} In particular, applying $\Hom_\A(-, A)$ to the sequences \eqref{eqn: std-sequence} and \eqref{eqn: std-sequence-dual} yields \[\Hom_\A(C(f), A) \cong C^\ast(\Hom_\A(f, A)) \text{ and } \Hom_\A(C^\ast(f), A) \cong C(\Hom_\A(f, A)).\]
	\end{enumerate}	
\end{rmk}

\begin{thm}[{\cite[28]{Hap88}}, {\cite[Thm.~2.1]{IKM17}}] \label{thm: C-Frob}
	The exact category $\C_N(\A)$ of $N$-complexes over an additive category $\A$ is Frobenius (with the termsplit exact structure).\qed
\end{thm}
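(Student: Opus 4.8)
The plan is to derive this from \Cref{con: I-and-P} together with the elementary facts about projectives in \Cref{prp: proj-equiv}, so that the only substantial point becomes the identity $\Prj(\C_N(\A)) = \Inj(\C_N(\A))$. First, $\C_N(\A)$ is an exact category with the termsplit structure by \Cref{rmk: C_N-subcat} (applied with the split exact structure on $\A$), and by \Cref{con: I-and-P} it has enough projectives and enough injectives: every $X \in \C_N(\A)$ admits an admissible epic $p_X\colon P(X) \twoheadrightarrow X$ with $P(X) = \bigoplus_{k \in \ZZ} \mu^k_N(X^{k-N+1})$ projective and an admissible monic $i_X\colon X \rightarrowtail I(X)$ with $I(X) = \bigoplus_{k \in \ZZ} \mu^k_N(X^k)$ injective. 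Granting $\Prj(\C_N(\A)) = \Inj(\C_N(\A))$, the category is then Frobenius by definition.

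The observation that does the work is that the families of objects $\{P(X)\}$ and $\{I(X)\}$ coincide up to isomorphism. Indeed, for $X \in \C_N(\A)$ let $Z$ and $W$ be the $N$-complexes with vanishing differential and $Z^k := X^{k-N+1}$, resp.\ $W^k := X^{k+N-1}$ (these are $N$-complexes, their differentials being zero). Reading off the definitions in \Cref{con: I-and-P} gives $P(X) = I(Z)$ and $I(X) = P(W)$. Hence, again by \Cref{con: I-and-P}, every object of the form $P(X)$ is also injective, and every object of the form $I(X)$ is also projective.

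With this in hand the two inclusions are symmetric. If $P \in \Prj(\C_N(\A))$, then $p_P\colon P(P) \twoheadrightarrow P$ splits by \Cref{prp: proj-equiv}.\ref{prp: proj-equiv-split}, so $P$ is a direct summand of $P(P)$, which is injective by the previous paragraph; thus $P$ is injective by the dual of the particular statement of \Cref{prp: proj-equiv}. Dually, if $I \in \Inj(\C_N(\A))$, the admissible monic $i_I\colon I \rightarrowtail I(I)$ splits, exhibiting $I$ as a direct summand of the projective object $I(I)$, so that $I$ is projective by the particular statement of \Cref{prp: proj-equiv}. This establishes $\Prj(\C_N(\A)) = \Inj(\C_N(\A))$.

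The main obstacle, modest though it is, is the index bookkeeping in the middle step: one must keep track of whether in $\mu^s_N(-)$ the repeated object occupies the top or the bottom of its $N$-term band, and the shifts by $N-1$ defining $Z$ and $W$ are exactly what reconciles the two conventions. Apart from that, every step only invokes results recorded above; alternatively one could simply cite \cite[Thm.~2.1]{IKM17}.
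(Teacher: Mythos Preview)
Your argument is correct. The paper does not give its own proof of this theorem; it merely records the statement with a \qed, deferring to \cite[28]{Hap88} and \cite[Thm.~2.1]{IKM17}. Your approach, by contrast, assembles a self-contained proof from the ingredients already collected in the paper: \Cref{rmk: C_N-subcat} for the exact structure, \Cref{con: I-and-P} for enough projectives and injectives, and then the pleasant reindexing observation $P(X)=I(Z)$, $I(X)=P(W)$ to see that every $P(X)$ is injective and every $I(X)$ is projective. The final step, passing to direct summands via \Cref{prp: proj-equiv}.\ref{prp: proj-equiv-split} and its dual, is exactly right. This is a perfectly valid and slightly more explicit treatment than what the paper offers; you could also have avoided introducing $Z$ and $W$ by noting directly that each $\mu^s_N(A)$ is projective-injective by the adjunctions in \Cref{rmk: mu-adjunction}, but your route is arguably cleaner since it sidesteps any discussion of infinite (termwise finite) direct sums of projective-injectives.
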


\begin{ntn}
	The stable category of $\C_N(\A)$ is denoted by $\K_N(\A)$. This is a triangulated category, see \Cref{thm: stable-Frobenius,thm: C-Frob}.
\end{ntn}

\begin{rmk} \label{rmk: K-in-K}
	Let $\A'$ be a subcategory of an additive category $\A$. Due to \Cref{rmk: C-exact,rmk: enough-proj}, \Cref{con: I-and-P}, and \Cref{thm: C-Frob}, all assumptions in \Cref{lem: sub-Frobenius}.\ref{lem: sub-Frobenius-b} are satisfied, and $\C_N(\A') \subseteq \C_N(\A)$ induces a fully faithful, triangulated functor $\K_N(\A') \to \K_N(\A)$.
\end{rmk}

\begin{ntn} \label{ntn: d^r} Let $X \in \C(\A)$ be a sequence over an additive category $\A$.
	\begin{itemize}
		\item We write $d_X^{\{r\}} =(d_X^{n+r-1} \circ \cdots \circ d_X^n)_{n \in \ZZ}$ for the $r$th power of $d_X$.
		
		\item The \textbf{shift functor} $\Theta: \C(\A) \to \C(\A)$ is given by $(\Theta X)^k := X^{k+1}$ and $d^k_{\Theta X} := d^{k+1}_{X}$.
		
		\item By $- X$ we denote the sequence with $(- X)^k := X^k$ and $d_{-X}^k := - d^k_{X}$.
	\end{itemize}
\end{ntn}

\begin{con} \label{con: Sigma-explicit}
	Let $f\colon X \to Y$ be a morphism of $N$-complexes over an additive category $\A$.
	\begin{enumerate}[leftmargin=*]
		\item \label{con: Sigma-explicit-formula} In {\cite[693ff.]{IKM17}}, there is an explicit description of the cone $C(f)$ and the cocone $C^\ast(f)$ and their special cases, the suspension functor $\Sigma$ and its quasi-inverse $\Sigma^{-1}$:
		\[(\Sigma X)^n = \bigoplus_{k=1}^{N-1}X^{n+k}, \hspace{5mm} d_{\Sigma X} = \left( \begin{array}{c|ccccc}
		0 & & & E_{N-2} \\ \hline
		-d_X^{\{N-1\}} & -d_X^{\{N-2\}} & -d_X^{\{N-3\}} & \cdots & -d_X^{\{2\}} & -d_X
		\end{array} \right),\]
	
	\[(\Sigma^{-1} X)^n = \bigoplus_{k=-N+1}^{-1}X^{n+k}, \hspace{5mm} d_{\Sigma^{-1} X} = \left( \begin{array}{c|c}
		-d_X   \\
		-d_X^{\{2\}}   \\
		\vdots & E_{N-2} \\
		-d_X^{\{N-3\}}  \\
		-d_X^{\{N-2\}}  \\ \hline
		-d_X^{\{N-1\}}  & 0
	\end{array} \right),\]

	\[C(f)^n = Y^n \oplus \bigoplus_{k=1}^{N-1} X^{n+k}, \hspace{5mm} d_{C(f)} = \left(\begin{array}{c|ccccc}
		d_Y&f&0&\cdots&0 & 0 \\ \hline
		0 & & & d_{\Sigma X}
	\end{array}\right),\]

	\[C^\ast(f)^n = \bigoplus_{k=-N+1}^{-1} Y^{n+k} \oplus X^n, \hspace{5mm} d_{C^\ast(f)} = \left(\begin{array}{c|c}
		& 0 \\
		& 0 \\
		d_{\Sigma^{-1} Y} & \vdots \\
		& 0 \\
		& - f \\ \hline
		0 & d_X
	\end{array}\right). \]
	
	\item \label{con: Sigma-explicit-cone} Note that $C(X \to 0) = \Sigma X$ and $C^\ast(0 \to Y) = \Sigma^{-1} Y$, see \Cref{con: cone}.
	
	\item \label{con: Sigma-explicit-theta} Comparing cone and cocone in the case $N=2$ yields $\Theta C^\ast(f) = - C(f)$, see \Cref{rmk: cocone-cone}.
	\end{enumerate}
\end{con}

\begin{thm}[{\cite[Thm.~2.4]{IKM17}}] \label{thm: IKM2.4}
	There is a functorial isomorphism $\Sigma^2 \cong \Theta^N$ of endofunctors on $\K_N(\A)$, for any additive category $\A$.
\end{thm}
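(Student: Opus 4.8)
The plan is to compute $\Sigma^{2}X$ explicitly from the formulas of \Cref{con: Sigma-explicit} and to split it, in $\C_{N}(\A)$, as a direct sum of $\Theta^{N}X$ and a projective-injective $N$-complex. Since projective-injectives are the zero objects of the stable category $\K_{N}(\A)$, and since $\Theta$ (see \Cref{ntn: d^r}) is exact and sends each $\mu_{t}^{s}(A)$ to $\mu_{t}^{s-1}(A)$, hence descends to a triangle endofunctor of $\K_{N}(\A)$ by \Cref{prp: stable-functor-triang}, such a splitting --- once shown to be natural in $X$ --- gives the asserted isomorphism $\Sigma^{2}\cong\Theta^{N}$ of endofunctors.

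First I would iterate the suspension formula. From $(\Sigma Y)^{n}=\bigoplus_{j=1}^{N-1}Y^{n+j}$ one obtains $(\Sigma^{2}X)^{n}=\bigoplus_{1\leq j,k\leq N-1}X^{n+j+k}$, and $d_{\Sigma^{2}X}=d_{\Sigma(\Sigma X)}$ results from substituting the powers $d_{\Sigma X}^{\{r\}}$, $1\leq r\leq N-1$, into the displayed block matrix of the differential of a suspension. These powers are computed from the block form of $d_{\Sigma X}$, where the identity $d_{X}^{\{N\}}=0$ is exactly what makes the off-diagonal entries collapse; the outcome is an explicit, if bulky, block matrix for $d_{\Sigma^{2}X}$ whose entries lie among identities, signs, and powers $d_{X}^{\{r\}}$.

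Grouping the summands of $(\Sigma^{2}X)^{n}$ according to the shift $m=j+k\in\{2,\dots,2N-2\}$, I would then exhibit a degreewise automorphism $\varphi_{n}$ of $(\Sigma^{2}X)^{n}$ --- block-triangular for this grouping, with off-diagonal blocks assembled from the $d_{X}^{\{r\}}$ --- such that conjugating $d_{\Sigma^{2}X}$ by $\varphi=(\varphi_{n})_{n}$ makes it block-diagonal: one block is the map $X^{n+N}\to X^{n+1+N}$ given by $d_{X}$, reproducing $\Theta^{N}X$, while the remaining blocks, having only zeros and identities as entries, reassemble into a termwise-finite direct sum $T(X)$ of $N$-complexes of the form $\mu_{t}^{s}(X^{m})$. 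Equivalently, and perhaps more transparently, one may write down directly a split monic $\Theta^{N}X\rightarrowtail\Sigma^{2}X$ of $N$-complexes --- a suitable embedding of the $X^{n+N}$-summands, corrected by lower-filtration terms --- together with a retraction, and identify its cokernel with such a $T(X)$. As a sanity check, for $N=2$ one finds $\Sigma^{2}X=\Theta^{2}X$ on the nose, with $T(X)=0$.

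Each $\mu_{t}^{s}(A)$ is projective-injective in $\C_{N}(\A)$: by \Cref{lem: mu-ProjInj} for $t\leq N-1$, and by \Cref{con: I-and-P}, as a direct summand of the injective $I(-)$ and of the projective $P(-)$, for $t=N$. A termwise-finite direct sum of projective-injectives is again projective-injective, so $T(X)$ is a zero object of $\K_{N}(\A)$, and the split monic $\Theta^{N}X\rightarrowtail\Sigma^{2}X$ becomes an isomorphism in $\K_{N}(\A)$. Since $\varphi$, the split monic, and $T(\,\cdot\,)$ are all built functorially out of the differential $d_{X}$, the construction is natural in $X$, which yields the functorial isomorphism $\Sigma^{2}\cong\Theta^{N}$. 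The crux is the third paragraph: organizing the block-matrix bookkeeping for $d_{\Sigma^{2}X}$ and exhibiting the correct change of basis (equivalently, the split monic together with its retraction), with all signs under control. Evaluating the powers $d_{\Sigma X}^{\{r\}}$, checking that $T(X)\cong0$ in $\K_{N}(\A)$, and verifying naturality are routine.
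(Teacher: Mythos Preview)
The paper does not prove this theorem; it merely quotes it from \cite[Thm.~2.4]{IKM17}. So there is no in-paper argument to compare against. Your sketch is along the lines of the IKM17 proof: one computes $\Sigma^{2}X$ from the explicit description of $\Sigma$ and exhibits, by a block change of basis built out of the $d_{X}^{\{r\}}$, a natural splitting $\Sigma^{2}X \cong \Theta^{N}X \oplus T(X)$ in $\C_{N}(\A)$ with $T(X)$ a (termwise-finite) direct sum of complexes of the form $\mu_{t}^{s}(X^{m})$, hence contractible. Your sanity check for $N=2$ and your remarks on naturality and on why $\mu_{N}^{s}(A)$ is projective-injective are fine.

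The one honest caveat is that the third paragraph is a plan rather than a proof: you do not actually produce the change of basis $\varphi$, the split monic, or the identification of the cokernel with a sum of $\mu$'s. That bookkeeping is the entire content of the theorem, and the signs and block patterns in $d_{\Sigma X}^{\{r\}}$ are where mistakes happen. If you want this to stand as a proof rather than a proof strategy, you must either carry out the matrix computation explicitly (as IKM17 do) or give a cleaner structural argument---for instance, by directly writing down the natural short exact sequence $\Theta^{N}X \rightarrowtail \Sigma^{2}X \twoheadrightarrow T(X)$ and checking termwise that it splits and that $T(X)$ is null-homotopic.
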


\begin{lem}[{\cite[Lem.~2.6 (i)]{IKM17}}] \label{lem: Sigma-mu}
	Let $\A$ be an additive category. For all $A \in \A$, $k,s \in \ZZ$, $t \in \{1,\dots,N-1\}$, and $l \in \{0,1\}$, we have
	\[\Sigma^{2k+l}\mu^s_t(A) = \begin{cases}
		\mu^{-kN+s}_t(A), & \text{ if } l=0, \\
		\mu^{-kN+s-t}_{N-t}(A), & \text{ if } l=1.
	\end{cases}\]
\end{lem}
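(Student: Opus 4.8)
The plan is to reduce the statement to the single instance $k=0$, $l=1$ --- an explicit description of $\Sigma\mu^s_t(A)$ --- and then to propagate it by iterating $\Sigma$. The crucial observation is that, for $t\in\{1,\dots,N-1\}$, placing $\mu^s_t(A)$ in the top $t$ degrees of $\mu^s_N(A)$ and $\mu^{s-t}_{N-t}(A)$ in the bottom $N-t$ degrees exhibits a short exact sequence
\[
\mu^s_t(A)\rightarrowtail\mu^s_N(A)\twoheadrightarrow\mu^{s-t}_{N-t}(A)
\]
in $\C_N(\A)$, where the monic is $\id_A$ in degrees $s-t+1,\dots,s$ and the epic is $\id_A$ in degrees $s-N+1,\dots,s-t$, all other components being zero. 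First I would verify that these are indeed chain maps of $N$-complexes with the asserted kernel and cokernel and that the sequence is termwise split --- in each degree it is one of $0\to 0\to 0$, $A\xrightarrow{\id}A\to 0$, or $0\to A\xrightarrow{\id}A$ --- so that it lies in the default (termsplit) exact structure of $\C_N(\A)$, see \Cref{rmk: C-exact,rmk: C_N-subcat}.

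By \Cref{lem: ses-triangle}, this short exact sequence induces a distinguished triangle
\[
\mu^s_t(A)\to\mu^s_N(A)\to\mu^{s-t}_{N-t}(A)\to\Sigma\mu^s_t(A)
\]
in $\K_N(\A)$. The middle term $\mu^s_N(A)$ is projective-injective: it occurs as $I(\mu^s_1(A))$ --- equivalently as $P(\mu^{s-N+1}_1(A))$ --- in \Cref{con: I-and-P}, hence is a direct summand of an injective and of a projective object, see \Cref{rmk: Proj-exact}. Consequently $\mu^s_N(A)$ is a zero object of $\K_N(\A)$, and rotating the triangle shows that the connecting morphism $\mu^{s-t}_{N-t}(A)\to\Sigma\mu^s_t(A)$ is an isomorphism. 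This yields, for every $s\in\ZZ$ and every $t\in\{1,\dots,N-1\}$, an isomorphism $\Sigma\mu^s_t(A)\cong\mu^{s-t}_{N-t}(A)$ in $\K_N(\A)$; call it $(\ast)$.

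It then remains to iterate $(\ast)$. Applying it twice gives $\Sigma^2\mu^s_t(A)\cong\Sigma\mu^{s-t}_{N-t}(A)\cong\mu^{s-N}_t(A)$, consistent with --- and alternatively a consequence of --- the isomorphism $\Sigma^2\cong\Theta^N$ of \Cref{thm: IKM2.4}. Since $(\ast)$, together with its $\Sigma^{-1}$-shifted form, moves $s$ down or up by $N$, an induction on $k\in\ZZ$ gives $\Sigma^{2k}\mu^s_t(A)\cong\mu^{-kN+s}_t(A)$, which is the case $l=0$. For $l=1$, combine the two steps: $\Sigma^{2k+1}\mu^s_t(A)=\Sigma^{2k}\bigl(\Sigma\mu^s_t(A)\bigr)\cong\Sigma^{2k}\mu^{s-t}_{N-t}(A)\cong\mu^{-kN+s-t}_{N-t}(A)$, using $(\ast)$ and then the case $l=0$ applied to the parameters $(s-t,\,N-t)$.

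The only genuinely substantive point is spotting the short exact sequence above; everything after it is bookkeeping in a triangulated category. The two things that still need checking are that the displayed sequence really is a termsplit short exact sequence of $N$-complexes, and --- since \Cref{lem: mu-ProjInj} is stated only for widths $t\le N-1$ --- that the full-width complex $\mu^s_N(A)$ is nonetheless projective-injective, which is exactly what \Cref{con: I-and-P} provides. No sign subtleties enter, because $\Sigma$ is never computed from its explicit formula in \Cref{con: Sigma-explicit}; only an isomorphism in $\K_N(\A)$ is needed, and the triangle axioms supply it.
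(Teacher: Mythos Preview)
Your argument is correct. The paper does not supply its own proof of this lemma; it merely records the statement with a citation to \cite[Lem.~2.6(i)]{IKM17}. Your short exact sequence $\mu^s_t(A)\rightarrowtail\mu^s_N(A)\twoheadrightarrow\mu^{s-t}_{N-t}(A)$, the resulting distinguished triangle via \Cref{lem: ses-triangle}, and the vanishing of $\mu^s_N(A)$ in $\K_N(\A)$ (which you correctly justify through \Cref{con: I-and-P} rather than \Cref{lem: mu-ProjInj}) give a clean and self-contained derivation of the base case $\Sigma\mu^s_t(A)\cong\mu^{s-t}_{N-t}(A)$, from which the general formula follows by the iteration you describe.

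One small remark: the ``$=$'' in the lemma must be read as isomorphism in $\K_N(\A)$, not literal equality of $N$-complexes. Indeed, with the explicit formula of \Cref{con: Sigma-explicit}.\ref{con: Sigma-explicit-formula} one finds, for instance, $(\Sigma\mu^s_t(A))^{s-t}\cong A^{\oplus t}$, whereas $\mu^{s-t}_{N-t}(A)^{s-t}=A$. Your proof establishes precisely the isomorphism in $\K_N(\A)$, which is what is used elsewhere in the paper (e.g.\ in \Cref{lem: perf-tri}), so this is the right interpretation and your final caveat about not invoking the explicit $\Sigma$ is apt.
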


\begin{ntn} \label{ntn: Z-B-H}
	Given a sequence $X \in \C(\A)$ over an additive category $\A$, $r \in \NN$, and $n \in \ZZ$, set
	\begin{itemize}
			\item  $Z^n_{(r)} := Z^n_{(r)}(X) :=  \ker\left(d_X^{n+r-1} \circ \cdots \circ d_X^n\right)$,
			\item  $B^n_{(r)} := B^n_{(r)}(X) := \im\left(d_X^{n-1} \circ \cdots \circ d_X^{n-r}\right)$,
			\item  $C^n_{(r)} := C^n_{(r)}(X) := \coker\left(d_X^{n-1} \circ \cdots \circ d_X^{n-r}\right)$,
	\end{itemize}
	if the respective object exists. This is the case, for instance, if the respective $r$-fold composition is an admissible morphism, see \Cref{dfn: admissible}.  If $\A$ is an Abelian category, the homology of an $N$-complex $X \in \C_N(\A)$ is defined as $$H^n_{(r)}:=H^n_{(r)}(X):=Z^n_{(r)}(X) / B^n_{(N-r)}(X).$$
	The lower index $r=1$ is omitted if $N=2$.
\end{ntn}

\begin{rmk} \label{rmk: kernel-mu}
	For an object $A$ of an additive category $\A$,
	\[B^n_{(N-r)}(\mu_N^s(A))=Z^n_{(r)}(\mu_N^s(A)) =
	\begin{cases}
		A, & \text{ if }  s-r+1 \leq n \leq s,\\
		0, & \text{ otherwise,}
	\end{cases}\]

	\[C^n_{(r)}(\mu_N^s(A)) =
	\begin{cases}
		A, & \text{ if }  s-N+1 \leq n \leq s-N+r,\\
		0, & \text{ otherwise,}
	\end{cases}\]
for all $n,s \in \ZZ$ and $r \in \{1, \dots, N-1\}$.
\end{rmk}

\begin{rmk}[{\cite[(3.4)]{IKM17}}] \label{rmk: Hom-mu}
	For an $N$-complex $X \in \C_N(\A)$ over an additive category $\A$, and any $A \in \A$, there are the following isomorphisms of Abelian groups for all $s \in \ZZ$ and $t \in \{1,\dots,N-1\}$:
	\begin{enumerate}[leftmargin=*]
		\item \label{rmk: Hom-mu-a} $\Hom_{\C_N(\A)}(\mu_t^s(A), X)\cong Z^{s-t+1}_{(t)}(\Hom_\A(A, X))$
		\item \label{rmk: Hom-mu-b} $\Hom_{\K_N(\A)}(\mu_t^s(A), X)\cong H^{s-t+1}_{(t)}(\Hom_\A(A, X))$
		\item \label{rmk: Hom-mu-c} $\Hom_{\C_N(\A)}(X, \mu_t^s(A))\cong Z^{-s}_{(t)}(\Hom_\A(X,A))$
		\item \label{rmk: Hom-mu-d} $\Hom_{\K_N(\A)}(X, \mu_t^s(A))\cong H^{-s}_{(t)}(\Hom_\A(X,A))$
	\end{enumerate}
\end{rmk}

\begin{dfn}
	A morphism $f\colon X \to Y$ of $N$-complexes over an additive category $\A$ is called {\textbf{($\boldsymbol N$-)null-homotopic}} if there exists a collection $h=(h^k)_{k \in \ZZ}$ of morphisms $h^k\colon X^k \to Y^{k-N+1}$ in $\A$ such that
	\[f^k = \sum_{r=0}^{N-1} d_Y^{\{N-r-1\}} h^{k+r} d_X^{\{r\}} \]
	for all $k \in \ZZ$. Such an $h$ is referred to   as an \textbf{($\boldsymbol N$-)homotopy}. Two morphisms in $C_N(\A)$ are called \textbf{($\boldsymbol N$-)homotopy equivalent} if their difference is $N$-null-homotopic. The \textbf{homotopy category} of $\C_N(\A)$ has the same objects as $\C_N(\A)$ and $N$-homotopy equivalence classes as morphisms.
\end{dfn}

\begin{thm}[{\cite[Thm.~2.3]{IKM17}}] \label{thm: K-homotopy}
	The homotopy category of $\C_N(\A)$ over an additive category $\A$ is the stable category $\K_N(\A)$.
\end{thm}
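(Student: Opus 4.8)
The plan is to show that the homotopy category and $\K_N(\A)$ have the same objects (namely those of $\C_N(\A)$) and identical hom-groups, compatibly with composition. Each of the two is the quotient of $\C_N(\A)$ by an ideal of morphisms: the $N$-null-homotopic ones in the first case, and — since $\K_N(\A)$ is by definition the stable category $\underline{\C_N(\A)}$, and $\C_N(\A)$ is Frobenius by \Cref{thm: C-Frob}, so its projectives are exactly its projective-injectives — the morphisms factoring through a projective-injective object in the second case. Hence it suffices to establish, for an arbitrary morphism $f\colon X\to Y$ of $N$-complexes, the equivalence: $f$ is $N$-null-homotopic $\iff$ $f$ factors through a projective-injective of $\C_N(\A)$.

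The first step is to replace ``factors through a projective-injective'' by ``factors through the canonical admissible monic $i_X\colon X\rightarrowtail I(X)$ of \Cref{con: I-and-P}'', where $I(X)=\bigoplus_{k\in\ZZ}\mu^k_N(X^k)$. One implication is clear because $I(X)$ is projective-injective by \Cref{lem: mu-ProjInj}. For the other, if $f=\beta\alpha$ with $\alpha\colon X\to Q$, $\beta\colon Q\to Y$ and $Q$ injective, then injectivity of $Q$ makes $\Hom_{\C_N(\A)}(i_X,Q)$ surjective (it sends the admissible monic $i_X$ to an epimorphism of abelian groups), producing $\tilde\alpha\colon I(X)\to Q$ with $\tilde\alpha\, i_X=\alpha$, so that $f=(\beta\tilde\alpha)\,i_X$ factors through $i_X$.

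The heart of the proof is a bare-hands identification of factorizations through $i_X$ with $N$-homotopies, in the spirit of Happel's argument for $2$-complexes. Applying the adjunction isomorphism of \Cref{rmk: mu-adjunction} summand-wise (using $d_Y^{\{N\}}=0$, so that the relevant cycle object of \Cref{rmk: Hom-mu}.\ref{rmk: Hom-mu-a} is all of $\Hom_\A(X^k,Y^{k-N+1})$), a morphism $g\colon I(X)\to Y$ is the same datum as a family $h=(h^k)_{k\in\ZZ}$ with $h^k\colon X^k\to Y^{k-N+1}$, via $g=\bigl(p^{k-N+1}_{h^k}\bigr)_{k}$. Substituting the explicit component formulas of \Cref{con: I-and-P} and \Cref{rmk: mu-adjunction} for $i_X=\bigl(i^k_{\id_{X^k}}\bigr)_k$ and for $p^{k-N+1}_{h^k}$ and composing yields
\[
(g\circ i_X)^n=\sum_{k=n}^{n+N-1} d_Y^{\{n-k+N-1\}}\,h^k\,d_X^{\{k-n\}}=\sum_{r=0}^{N-1} d_Y^{\{N-r-1\}}\,h^{n+r}\,d_X^{\{r\}}.
\]
The right-hand side is precisely the defining formula for $f$ to be $N$-null-homotopic with homotopy $h$; hence $f$ factors through $i_X$ if and only if $f$ is $N$-null-homotopic, which closes the chain of equivalences, and the induced identity-on-objects correspondence visibly respects composition.

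I expect the only genuine obstacle to be the index bookkeeping in the final display — matching the position of each summand $\mu^k_N(X^k)\subseteq I(X)$ against the iterated differentials $d_X^{\{r\}}$ and keeping the shifts straight in $p^{k-N+1}_{h^k}$ — but this is mechanical given the explicit descriptions already recorded in \Cref{con: I-and-P} and \Cref{rmk: mu-adjunction}. (Dually, one could run the argument on the projective side using $p_Y\colon P(Y)\twoheadrightarrow Y$; the two versions are interchanged by passing to $\op\A$.)
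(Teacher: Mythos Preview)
The paper does not give its own proof of this theorem; it merely cites \cite[Thm.~2.3]{IKM17}. Your argument is correct and is essentially the standard proof (the one in the cited reference, and the $N$-analogue of Happel's argument for $N=2$): reduce to showing that the ideal of $N$-null-homotopic morphisms coincides with the ideal of morphisms factoring through a projective-injective, replace the latter by ``factors through $i_X$'' using \Cref{rmk: factor}.\ref{rmk: factor-proj} (dually), and then compute $(g\circ i_X)^n$ via the adjunction of \Cref{rmk: mu-adjunction} to recover the homotopy formula. Your index bookkeeping in the final display is accurate.
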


\begin{dfn} \label{dfn: null-homotopic}
	An $N$-complex $X \in \C_N(\A)$ over an additive category $\A$ is called \textbf{($\boldsymbol{N}$-)null-homotopic} or \textbf{($\boldsymbol{N}$-)contractible} if $\id_{X}$ is $N$-null-homotopic, or equivalently, if $X=0$ in $\K_N(\E)$.
\end{dfn}

\subsection{Categories of monics}

Categories of morphisms have been studied by Brightbill and Miemietz {\cite[$\S$3]{BM24}}. We collect some of their results, see also {\cite[Ex.~13.12]{Buh10}}.

\begin{ntn}[{\cite[Def.~3.1]{BM24}}, {\cite[Def~4.1]{IKM17}}] \label{dfn: Mor} For an additive category $\A$ and $l \in \NN$, let $\Mor_{l}(\A)$ denote the diagram category $\textup{Func}(T_l, \A)$ where $T_l$ is the linear quiver
	\begin{center}
	 \begin{tikzcd} \underset 1 \bullet \ar[r] & \underset 2 \bullet \ar[r] & \cdots \ar[r] & \underset l \bullet \ar[r] & \underset {l+1} \bullet  \end{tikzcd}
	\end{center}
	 with $l$ arrows. Over an exact category $\E$, we denote by
	
	\begin{itemize}
		\item $\mMor_l(\E)$ the subcategory of $\Mor_l(\E)$ where all arrows map to admissible monics,
		\item $\smMor_l(\E)$ the subcategory of $\mMor_{l}(\E)$ where all arrows map to split monics,
		\item $\eMor_l(\E)$ the subcategory of $\Mor_l(\E)$ where all arrows map to admissible epics and by
		\item $\seMor_l(\E)$ the subcategory of $\eMor_{l}(\E)$ where all arrows map to split epics.
	\end{itemize}
	
\end{ntn}

\begin{rmk}[{\cite[9]{BM24}}]
	The categories $\op{\mMor_l(\E)}$ and $\eMor_{l}(\op\E)$ over an exact category $\E$ agree, see \Cref{rmk: E-op}. Therefore, each statement about $\mMor_l(\E)$ has a dual for $\eMor_{l}(\E)$. 
\end{rmk}

\begin{ntn} \label{ntn: iota} Let $\E$ be an exact category, $l \in \NN$ with $l < N$, and $n \in \ZZ$. Any object \begin{tikzcd}[cramped, sep=small] X^1 \ar[r, tail] & \cdots \ar[r, tail] & X^{l+1} \end{tikzcd}
	of $\mMor_{l}(\E)$ can be considered as a bounded complex with $X^{l+1}$ at position $n$ and zero otherwise, see \Cref{ntn: C}. Homotopies between two such complexes are zero. This gives rise to fully faithful functors
	\begin{center}
	\begin{tikzcd}
		\iota^n = \iota^n_\E\colon \mMor_{l}(\E) \ar[r] & \C_{N}(\E)
	\end{tikzcd}
	\hspace{2.5mm}
	\text{ and }
	\hspace{2.5mm}
	\begin{tikzcd}
		\iota^n = \iota^n_\E\colon \mMor_{l}(\E) \ar[r] & \K_{N}(\E).
	\end{tikzcd}
	\end{center}
\end{ntn}

\Cref{ntn: mu-mMor} is analogous to \Cref{ntn: mu}.

\begin{ntn} \label{ntn: mu-mMor}
	For an object $A$ of an exact category $\E$ and $t \in \{1, \dots, l+1\}$, we define the object
	\begin{center}
		\begin{tikzcd}
			\mu_t(A)\colon \; 0 \ar[r, tail] & \cdots \ar[r, tail] & 0 \ar[r, tail] & A^{l-t+2} \ar[r, tail, "\id_A"] & \cdots \ar[r, tail, "\id_A"] & A^{l+1}
		\end{tikzcd}
	\end{center}
	of $\smMor_{l}(\E)$ by $A^k := A$ for $k \in \{l-t+2, \dots, l+1\}$.
\end{ntn}

\begin{thm}[{\cite[Props.~3.5, 3.9, 3.11]{BM24}}] \label{thm: mMor} Let $\E$ be an exact category.
	
	\begin{enumerate}
		\item \label{thm: mMor-exact} The category $\mMor_l(\E)$ is exact with the termwise exact structure.
		
		\item \label{thm: mMor-Proj} We have $\Prj(\mMor_{l}(\E))=\smMor_l(\Prj(\E))$ and $\Inj(\mMor_{l}(\E))=\smMor_l(\Inj(\E))=\mMor_l(\Inj(\E))$.
		
		\item \label{thm: mMor-enough} If $\E$ has enough projectives resp. injectives, then so has $\mMor_l(\E)$.
	\end{enumerate}
\end{thm}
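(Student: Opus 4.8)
I would prove the three assertions in the order (a); then the projective halves of (b) and (c) together; then deduce the injective statements by duality.

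\emph{Part (a).} The category $\Mor_l(\E) = \Func(T_l, \E)$ carries the termwise exact structure, see \Cref{exa: exact}.\ref{exa: exact-diagram}, so by \Cref{lem: fullyexact}.\ref{lem: fullyexact-extclosed} it suffices to show that $\mMor_l(\E)$ is extension-closed in $\Mor_l(\E)$; the induced exact structure is then the termwise one. Thus let $X \rightarrowtail Y \twoheadrightarrow Z$ be termwise short exact in $\Mor_l(\E)$ with $X, Z \in \mMor_l(\E)$; writing $j_k\colon X^k \rightarrowtail Y^k$ and $p_k\colon Y^k \twoheadrightarrow Z^k$ for the $k$-th column and $a_k, b_k, c_k$ for the transition maps of $X, Y, Z$ (with $a_k, c_k$ admissible monics), I must show each $b_k$ is an admissible monic. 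The plan is to form the pushout $W$ of the admissible monic $a_k$ along $j_k$; by \Cref{prp: Buehler2.12}.\ref{prp: Buehler2.12-push} this produces admissible monics $i'\colon Y^k \rightarrowtail W$ and $f'\colon X^{k+1} \rightarrowtail W$ with $\coker(i') \cong \coker(a_k)$ and $\coker(f') \cong \coker(j_k) = Z^k$. Since $b_k$ and $j_{k+1}$ agree on $X^k$, the pushout induces $u\colon W \to Y^{k+1}$ with $u i' = b_k$ and $u f' = j_{k+1}$, so it remains only to see that $u$ is an admissible monic. For this I would check that $u$ fits into a morphism of short exact sequences from $X^{k+1} \xrightarrow{f'} W \twoheadrightarrow Z^k$ to $X^{k+1} \xrightarrow{j_{k+1}} Y^{k+1} \xrightarrow{p_{k+1}} Z^{k+1}$ which is the identity on the common kernel $X^{k+1}$; by the criterion \Cref{prp: Buehler2.12}.\ref{prp: Buehler2.12-pull} its right-hand square is then a pullback, so $u$ is a pullback of the admissible monic $c_k$ along the admissible epic $p_{k+1}$, hence an admissible monic by \Cref{prp: Buehler2.15}. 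Then $b_k = u i'$ is an admissible monic as a composite of such. I expect \emph{this paragraph is the main obstacle}: it requires keeping careful track of the two cokernels produced by the pushout and of the fact that the comparison square is well defined with the stated kernel, all of which is a routine but fiddly application of \Cref{prp: Buehler2.12}.

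\emph{Parts (b) and (c), projective case.} The key observation is an adjunction in the spirit of \Cref{rmk: mu-adjunction}: for $A \in \E$ and $t \in \{1, \dots, l+1\}$, a morphism $\mu_t(A) \to X$ in $\mMor_l(\E)$ is determined by its bottom component $A \to X^{l+1}$, which must factor uniquely through the subobject $X^{l-t+2} \rightarrowtail X^{l+1}$, and conversely any $A \to X^{l-t+2}$ extends to such a morphism; hence $\mu_t(-)$ is left adjoint to the evaluation functor $X \mapsto X^{l-t+2}$, which is exact for the termwise exact structure. Consequently $\mu_t(P) \in \Prj(\mMor_l(\E))$ whenever $P \in \Prj(\E)$. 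Since every object of $\smMor_l(\Prj(\E))$ is a finite direct sum of such $\mu_t(P)$'s — a chain of split monics splits off its consecutive cokernels, which are projective as summands of the projective terms — and $\Prj(\mMor_l(\E))$ is closed under finite direct sums, this gives $\smMor_l(\Prj(\E)) \subseteq \Prj(\mMor_l(\E))$. Now assume $\E$ has enough projectives. Given $X \in \mMor_l(\E)$, set $R^0 := X^1$ and $R^i := \coker(X^i \rightarrowtail X^{i+1})$ for $1 \le i \le l$, choose admissible epics $Q^i \twoheadrightarrow R^i$ with $Q^i \in \Prj(\E)$, and put $P := \bigoplus_{i=0}^{l} \mu_{l+1-i}(Q^i) \in \smMor_l(\Prj(\E))$; a horseshoe-type construction applied iteratively to the short exact sequences $X^i \rightarrowtail X^{i+1} \twoheadrightarrow R^i$ yields an admissible epic $P \twoheadrightarrow X$ in $\mMor_l(\E)$. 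Hence $\mMor_l(\E)$ has enough projectives, lying in $\smMor_l(\Prj(\E))$; in particular every $X \in \Prj(\mMor_l(\E))$ is a direct summand of such a $P$, and as $\smMor_l(\Prj(\E))$ is closed under direct summands (a retraction restricts along the summand data, and $\Prj(\E)$ is summand-closed by \Cref{rmk: Proj-exact}), we conclude $X \in \smMor_l(\Prj(\E))$. This settles (c) and the projective half of (b).

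\emph{Injective statements.} Via the identification $\op{\mMor_l(\E)} = \eMor_l(\op\E)$, the preceding paragraph dualizes to give that $\mMor_l(\E)$ has enough injectives when $\E$ does and that $\Inj(\mMor_l(\E)) = \smMor_l(\Inj(\E))$. Finally, $\smMor_l(\Inj(\E)) = \mMor_l(\Inj(\E))$ because any admissible monic out of an injective object splits, by the dual of \Cref{prp: proj-equiv}.\ref{prp: proj-equiv-split}, so a chain of admissible monics with injective terms is automatically a chain of split monics.
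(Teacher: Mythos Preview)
The paper does not prove this theorem, citing \cite{BM24} instead. Your argument for (a) and for the projective half of (c) are correct. There is, however, a gap in (b): you establish $\Prj(\mMor_l(\E)) \subseteq \smMor_l(\Prj(\E))$ only under the extra hypothesis that $\E$ has enough projectives, whereas (b) is stated unconditionally. The fix is to note that $\mu_t$ is not just a left adjoint (to evaluation at position $l-t+2$) but also a \emph{right} adjoint: one computes $\Hom_{\mMor_l(\E)}(X, \mu_t(A)) \cong \Hom_\E(X^{l+1}/X^{l-t+1}, A)$ (with $X^0 := 0$), since a morphism into $\mu_t(A)$ is determined by its top component $X^{l+1}\to A$, which must kill $X^{l-t+1}$. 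As $\mu_t$ is exact, the left adjoint $X \mapsto X^{l+1}/X^{l-t+1}$ preserves projectives, so for $X \in \Prj(\mMor_l(\E))$ every quotient $X^{l+1}/X^k$ lies in $\Prj(\E)$; by descending induction on $k$ one splits off the projective quotients in $X^{k+1}/X^k \rightarrowtail X^{l+1}/X^k \twoheadrightarrow X^{l+1}/X^{k+1}$ and then in $X^k \rightarrowtail X^{k+1} \twoheadrightarrow X^{k+1}/X^k$ to conclude $X \in \smMor_l(\Prj(\E))$.

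Your appeal to duality for the injective statements also fails: the identity $\op{\mMor_l(\E)} = \eMor_l(\op\E)$ converts the projective assertions for $\mMor_l$ into injective assertions for $\eMor_l$, not for $\mMor_l$; there is no self-duality on $\mMor_l(\E)$ swapping projectives and injectives. The injective half of (b) needs a direct argument, which is in fact shorter: since $\mu_t$ is exact and left adjoint to evaluation at $l-t+2$, every evaluation functor preserves injectives, giving $\Inj(\mMor_l(\E)) \subseteq \mMor_l(\Inj(\E))$ immediately; and since $\mu_t$ is right adjoint to $X \mapsto X^{l+1}/X^{l-t+1}$, which is exact by the Noether lemma, $\mu_t$ preserves injectives, whence the reverse inclusion via the decomposition of $\smMor_l(\Inj(\E))$ into $\mu_t(I)$'s. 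Likewise, enough injectives for (c) requires a co-horseshoe over the filtration $X^1 \rightarrowtail \cdots \rightarrowtail X^{l+1}$ (embedding $X^1$ and each successive cokernel into injectives and assembling), not formal duality from the projective case.
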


\begin{cor}[{\cite[Thm.~3.12]{BM24}}] If $\F$ is a Frobenius category, then so is $\mMor_l(\F)$, and $\Prj(\mMor_{l}(\F))=\mMor_l(\Prj(\F))$. In particular, the stable category $\underline \mMor_l(\F)$ is triangulated, see \Cref{thm: stable-Frobenius}. \qed
\end{cor}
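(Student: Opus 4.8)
The plan is to read everything off from \Cref{thm: mMor}, applied to the exact category $\E=\F$, together with the defining property $\Prj(\F)=\Inj(\F)$ of a Frobenius category. First, \Cref{thm: mMor}.\ref{thm: mMor-exact} already gives that $\mMor_l(\F)$ is exact with the termwise exact structure; and since $\F$, being Frobenius, has enough projectives and enough injectives, \Cref{thm: mMor}.\ref{thm: mMor-enough} shows that $\mMor_l(\F)$ has enough projectives and enough injectives as well. So the only thing left is to match up the projective and injective objects of $\mMor_l(\F)$.

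For this I would invoke \Cref{thm: mMor}.\ref{thm: mMor-Proj}, which yields $\Prj(\mMor_l(\F))=\smMor_l(\Prj(\F))$ and $\Inj(\mMor_l(\F))=\smMor_l(\Inj(\F))=\mMor_l(\Inj(\F))$. Feeding in $\Prj(\F)=\Inj(\F)$ gives
\[ \Prj(\mMor_l(\F)) = \smMor_l(\Prj(\F)) = \smMor_l(\Inj(\F)) = \Inj(\mMor_l(\F)), \]
so $\mMor_l(\F)$ is Frobenius; and applying $\Prj(\F)=\Inj(\F)$ to the remaining two equalities of \Cref{thm: mMor}.\ref{thm: mMor-Proj},
\[ \Prj(\mMor_l(\F)) = \smMor_l(\Inj(\F)) = \mMor_l(\Inj(\F)) = \mMor_l(\Prj(\F)), \]
which is the asserted description of the projectives. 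That $\underline \mMor_l(\F)$ is triangulated is then just \Cref{thm: stable-Frobenius} applied to the Frobenius category $\mMor_l(\F)$.

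There is no genuine obstacle here: the real content — exactness of $\mMor_l(-)$ with the termwise structure, the existence of enough projectives and injectives, and their shape as chains of split monics (resp.\ of admissible monics) between projectives (resp.\ injectives) — is already packaged in \Cref{thm: mMor}, and the corollary only supplies the identification $\Prj(\F)=\Inj(\F)$. The one point worth spelling out is why $\mMor_l(\Prj(\F))$ and $\smMor_l(\Prj(\F))$ coincide: in a general exact category an admissible monic between projectives need not split, but in the Frobenius category $\F$ every projective is injective, so an admissible monic with projective source splits (its source being injective, $\id$ extends along it). This is precisely what the equality $\smMor_l(\Inj(\F))=\mMor_l(\Inj(\F))$ of \Cref{thm: mMor}.\ref{thm: mMor-Proj} encodes once $\Prj(\F)=\Inj(\F)$ is used. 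In particular, no idempotent completeness hypothesis on $\F$ is needed.
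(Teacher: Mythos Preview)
Your argument is correct and is exactly the intended one: the paper states this corollary with a bare \qed, meaning it is meant to follow immediately from \Cref{thm: mMor} together with $\Prj(\F)=\Inj(\F)$, which is precisely what you unpack. Your explicit remark on why $\mMor_l(\Prj(\F))=\smMor_l(\Prj(\F))$ via the equality $\smMor_l(\Inj(\F))=\mMor_l(\Inj(\F))$ is the right observation and the only point that might otherwise be overlooked.
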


\begin{rmk}
	In general, $\Mor_l(\E)$ is not Frobenius even if $\E$ is so, see {\cite[Prop.~3.10]{BM24}}.
\end{rmk}

\begin{lem} \label{lem: stab-mMor-embed}
	Let $\E'$ be a (fully) exact subcategory of $\E$. Then the same holds for the subcategory $\mMor_{l}(\E')$ of $\mMor_{l}(\E)$. If $\E'$ has enough $\E$-projectives, then the canonical functor $\underline \mMor_{l}(\E') \to \underline \mMor_{l}(\E)$ is fully faithful.
\end{lem}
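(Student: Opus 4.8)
The plan is to treat the two claims separately. For the first, I would verify componentwise that $\mMor_l(\E')$ is a (fully) exact subcategory of $\mMor_l(\E)$. By \Cref{thm: mMor}.\ref{thm: mMor-exact} both categories carry their termwise exact structures, so a sequence in $\mMor_l(\E')$, resp.\ in $\mMor_l(\E)$, is short exact exactly when each of its $l+1$ components is short exact in $\E'$, resp.\ in $\E$. Since the inclusion $\E'\hookrightarrow\E$ is exact, chains of admissible monics in $\E'$ are chains of admissible monics in $\E$, so (using that subcategories are full) the inclusion $\mMor_l(\E')\to\mMor_l(\E)$ is a well-defined, fully faithful functor; moreover it sends termwise short exact sequences over $\E'$ to termwise short exact sequences over $\E$ and is therefore exact. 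If in addition $\E'$ is fully exact in $\E$, then a sequence in $\mMor_l(\E')$ which becomes short exact in $\mMor_l(\E)$ is componentwise short exact in $\E$, hence componentwise short exact in $\E'$, hence short exact in $\mMor_l(\E')$; so the inclusion is fully exact.

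For the second claim, in view of \Cref{lem: sub-Frobenius}.\ref{lem: sub-Frobenius-a} applied to the exact subcategory $\mMor_l(\E')$ of $\mMor_l(\E)$, it remains to show that $\mMor_l(\E')$ has enough $\mMor_l(\E)$-projectives. By \Cref{rmk: enough-proj} it suffices that $\mMor_l(\E')$ has enough projectives and that $\Prj(\mMor_l(\E'))\subseteq\Prj(\mMor_l(\E))$. For the first point, note that $\E'$ itself has enough projectives: by \Cref{lem: sub-Frobenius}.\ref{lem: sub-Frobenius-a} we have $\Prj(\E')=\Prj(\E)\cap\E'$, so each admissible epic $P\twoheadrightarrow X$ in $\E'$ with $P\in\Prj(\E)$ provided by the hypothesis already has $P\in\Prj(\E')$; hence $\mMor_l(\E')$ has enough projectives by \Cref{thm: mMor}.\ref{thm: mMor-enough}. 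For the second point, \Cref{thm: mMor}.\ref{thm: mMor-Proj} (applied to $\E'$ and to $\E$) gives $\Prj(\mMor_l(\E'))=\smMor_l(\Prj(\E'))\subseteq\smMor_l(\Prj(\E))=\Prj(\mMor_l(\E))$, using $\Prj(\E')\subseteq\Prj(\E)$ once more. Now \Cref{lem: sub-Frobenius}.\ref{lem: sub-Frobenius-a} yields that the canonical functor $\underline\mMor_l(\E')\to\underline\mMor_l(\E)$ is fully faithful.

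I do not expect a genuine obstacle: the argument is an assembly of \Cref{thm: mMor}, \Cref{rmk: enough-proj} and \Cref{lem: sub-Frobenius}, and the only step that needs care is pinning down that $\mMor_l(\E')$ has enough $\mMor_l(\E)$-projectives (so that \Cref{lem: sub-Frobenius}.\ref{lem: sub-Frobenius-a} applies). If one prefers a hands-on version of that step, one can instead construct, for $X=(X^1\rightarrowtail\cdots\rightarrowtail X^{l+1})$ in $\mMor_l(\E')$, admissible epics $q^k\colon P^k\twoheadrightarrow C^k$ in $\E'$ with $P^k\in\Prj(\E)$ onto the cokernels $C^k:=\coker(X^{k-1}\rightarrowtail X^k)$ (with $X^0:=0$), set $Q:=\bigoplus_{k=1}^{l+1}\mu_{l+2-k}(P^k)\in\smMor_l(\Prj(\E))$ as in \Cref{ntn: mu-mMor}, and assemble a morphism $\phi\colon Q\to X$ from lifts $\tilde q^k\colon P^k\to X^k$ of the $q^k$ along the cokernel maps; that each component $\phi^j$ is an admissible epic in $\E'$ then follows by induction on $j$, the key point being that $\begin{pmatrix} d^j & \tilde q^{j+1} \end{pmatrix}\colon X^j\oplus P^{j+1}\to X^{j+1}$ is a pullback of the admissible epic $q^{j+1}$ along $X^{j+1}\twoheadrightarrow C^{j+1}$, hence an admissible epic.
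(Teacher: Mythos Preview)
Your proposal is correct and follows essentially the same route as the paper's proof: the first claim is handled by the termwise exact structure, and the second by combining \Cref{thm: mMor}.\ref{thm: mMor-Proj},\ref{thm: mMor-enough}, \Cref{rmk: enough-proj}, and \Cref{lem: sub-Frobenius}.\ref{lem: sub-Frobenius-a} to show that $\mMor_l(\E')$ has enough $\mMor_l(\E)$-projectives. The only minor differences are cosmetic: you spell out explicitly why $\E'$ itself has enough projectives (which the paper leaves implicit), and you verify only the inclusion $\Prj(\mMor_l(\E'))\subseteq\Prj(\mMor_l(\E))$ needed for \Cref{rmk: enough-proj}, whereas the paper records the full equality $\Prj(\mMor_l(\E'))=\Prj(\mMor_l(\E))\cap\mMor_l(\E')$; your optional hands-on construction at the end is extra but not required.
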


\begin{proof} The first statement holds due to the termwise exact structure. Suppose now that $\E'$ has enough $\E$-projectives. Then $\mMor_{l}(\E')$ has enough projectives due to \Cref{thm: mMor}.\ref{thm: mMor-enough}. By \Cref{lem: sub-Frobenius}.\ref{lem: sub-Frobenius-a} and \Cref{thm: mMor}.\ref{thm: mMor-Proj}, we have
\begin{align*}
	\Prj(\mMor_{l}(\E')) = \smMor_{l}(\Prj(\E')) &= \smMor_{l}(\Prj(\E) \cap \E')\\
	&= \smMor_{l}(\Prj(\E)) \cap \mMor_{l}(\E') = \Prj(\mMor_{l}(\E)) \cap \mMor_{l}(\E').
\end{align*}
Then $\mMor_{l}(\E')$ has enough $\mMor_{l}(\E)$-projectives, see \Cref{rmk: enough-proj}, and \Cref{lem: sub-Frobenius}.\ref{lem: sub-Frobenius-a} yields the claim.
\end{proof}

\subsection{Idempotent complete categories}

\begin{dfn}[{\cite[Def.~6.1]{Buh10}}] \label{dfn: idem-compl} An \textbf{idempotent} $e\colon A \to A$ in an additive category $\A$ is an endomorphism with $e^2 = e$. It is called \textbf{split} if there is a biproduct decomposition $A \cong eA \oplus (1-e)A$ of $A$ into objects $eA, (1-e)A \in \A$ such that $e \cong \begin{pmatrix}
		\id_{eA} & 0 \\
		0 & 0
	\end{pmatrix}$ with respect to this decomposition.
	
	\smallskip
	
	The category $\A$ is called \textbf{idempotent complete} if every idempotent splits or, equivalently, if every idempotent has a kernel, see {\cite[Rem.~6.2]{Buh10}}.
\end{dfn}

\begin{rmk} \label{rmk: eA}
	Let $s\colon B \to A$ and $r\colon A \to B$ be morphisms in an additive category $\A$ with $rs = \id_B$. If the idempotent $e:= sr\colon A \to A$ splits, then $s$ induces an isomorphism $B \cong eA$, whose inverse is $r$ restricted to the direct summand $eA$ of $A$.
\end{rmk}

\begin{rmk} \label{rmk: biproduct-C}
	The conditions on the morphisms defining a biproduct of sequences over an additive category are termwise.
\end{rmk}

\begin{lem} \label{rmk: idem-morph}
	Let $a\colon A' \to A$ be a morphism in an additive category $\A$. Let $(e', e)\colon a \to a$ be an idempotent in the morphism category of $\A$. If both $e'$ and $e$ are split, then so is $(e', e)$. That is, there are unique morphisms 
	$a' = (e', e)a\colon e'A' \to eA$ and $a''=(1-(e',e))a\colon (1-e')A' \to (1-e)A$ forming a biproduct $a \cong a' \oplus a''$ given termwise by $A' \cong e'A' \oplus (1-e') A'$ and $A \cong eA \oplus (1-e) A$.
\end{lem}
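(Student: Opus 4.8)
The plan is to unwind the hypotheses and then argue by a direct block decomposition in the morphism category $\Mor_1(\A)$, an object of which is a morphism of $\A$ and a morphism of which is a commutative square. Being an endomorphism of the object $a$ in $\Mor_1(\A)$, the pair $(e',e)$ satisfies $ea = ae'$; and since composition in $\Mor_1(\A)$ is componentwise, idempotency of $(e',e)$ means exactly $e'^2 = e'$ and $e^2 = e$. Using the assumed splittings, I fix (by \Cref{dfn: idem-compl} and \Cref{rmk: eA}) sections $\iota'\colon e'A' \to A'$, $\bar\iota'\colon (1-e')A' \to A'$ together with retractions $\pi'\colon A' \to e'A'$, $\bar\pi'\colon A' \to (1-e')A'$ realizing the biproduct $A' \cong e'A' \oplus (1-e')A'$, so that $\pi'\iota' = \id$, $\bar\pi'\bar\iota' = \id$, $\pi'\bar\iota' = 0$, $\bar\pi'\iota' = 0$, $\iota'\pi' = e'$, $\bar\iota'\bar\pi' = 1-e'$ and $\iota'\pi' + \bar\iota'\bar\pi' = \id_{A'}$; analogously I fix $\iota, \bar\iota, \pi, \bar\pi$ on the $A$-side realizing $A \cong eA \oplus (1-e)A$.

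Next I set $a' := \pi\, a\, \iota'\colon e'A' \to eA$ and $a'' := \bar\pi\, a\, \bar\iota'\colon (1-e')A' \to (1-e)A$. The crucial point is to check that the four pairs $(\iota',\iota)\colon a' \to a$, $(\pi',\pi)\colon a \to a'$, $(\bar\iota',\bar\iota)\colon a'' \to a$ and $(\bar\pi',\bar\pi)\colon a \to a''$ really are morphisms in $\Mor_1(\A)$, i.e.\ that the corresponding squares commute. For $(\iota',\iota)$ this says $a\iota' = \iota a'$, which I would deduce from $a\iota' = a(e'\iota') = (ae')\iota' = (ea)\iota' = (\iota\pi)a\iota' = \iota a'$, using $e'\iota' = \iota'\pi'\iota' = \iota'$, the relation $ea = ae'$, and $e = \iota\pi$. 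For $(\pi',\pi)$ it says $a'\pi' = \pi a$, which follows from $a'\pi' = \pi a\iota'\pi' = \pi ae' = \pi(ea) = (\pi\iota\pi)a = \pi a$. The two bar-versions go through verbatim once one notes $(1-e)a = a(1-e')$, again a direct consequence of $ea = ae'$. This commutativity check is the only place where the hypothesis that $(e',e)$ is an endomorphism of $a$ (that is, $ea = ae'$) gets used, and it is the step I expect to be the main --- indeed essentially the only --- obstacle; the rest is formal bookkeeping.

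Finally I would assemble the biproduct. All the defining identities of a biproduct in $\Mor_1(\A)$ are componentwise (\Cref{rmk: biproduct-C}), so they reduce to the chosen termwise splittings: $(\pi',\pi)(\iota',\iota) = \id_{a'}$, $(\bar\pi',\bar\pi)(\bar\iota',\bar\iota) = \id_{a''}$, $(\pi',\pi)(\bar\iota',\bar\iota) = 0$, $(\bar\pi',\bar\pi)(\iota',\iota) = 0$, and $(\iota',\iota)(\pi',\pi) + (\bar\iota',\bar\iota)(\bar\pi',\bar\pi) = (e' + (1-e'),\, e + (1-e)) = \id_a$. Hence $a \cong a' \oplus a''$ in $\Mor_1(\A)$, with the two termwise decompositions being precisely $A' \cong e'A' \oplus (1-e')A'$ and $A \cong eA \oplus (1-e)A$. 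Moreover $(\iota',\iota)(\pi',\pi) = (e',e)$ and $(\bar\iota',\bar\iota)(\bar\pi',\bar\pi) = 1-(e',e)$, so $(e',e)$ splits, $a'$ being the image of $(e',e)$ and $a''$ the image of $1-(e',e)$; this is exactly what is meant by $a' = (e',e)a$ and $a'' = (1-(e',e))a$. Uniqueness is then automatic: for any morphism written as $a' \oplus a''$ with respect to fixed biproduct decompositions of source and target, the diagonal blocks are forced, since they are recovered as $\pi\,(-)\,\iota'$ and $\bar\pi\,(-)\,\bar\iota'$; and here the termwise decompositions are the prescribed splittings of $e'$ and $e$, so $a'$ and $a''$ must equal $\pi a\iota'$ and $\bar\pi a\bar\iota'$.
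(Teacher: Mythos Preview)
Your proof is correct and follows essentially the same route as the paper: reduce via \Cref{rmk: biproduct-C} to termwise conditions, and use the single relation $ea=ae'$ (and its complement $(1-e)a=a(1-e')$) to check the needed commutative squares. The only cosmetic difference is that you write down $a'=\pi a\iota'$ and $a''=\bar\pi a\bar\iota'$ explicitly and then verify $a\iota'=\iota a'$ etc., whereas the paper first shows $q a j'=0$ and invokes the universal property of $j=\ker q$ (and dually $q'=\coker j'$) to obtain $a'$ and $a''$; these are the same morphisms.
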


\begin{proof} Due to \Cref{rmk: biproduct-C}, it suffices to construct $a'$ and $a''$ to fit in the commutative diagrams
	\[
		\begin{tikzcd}[column sep={25mm,between origins}, row sep={20mm,between origins}]
			e'A' \ar[r, "a'", dashed] \ar[d, tail, "j'"] & eA \ar[d, tail, "j"] \\
			A' \ar[r, "a"] \ar[d, two heads, "q'"] & A \ar[d, two heads, "q"] \\
			(1-e')A' \ar[r, "a''", dashed] & (1-e)A
		\end{tikzcd}
		\; \text{ and } \;
		\begin{tikzcd}[column sep={25mm,between origins}, row sep={20mm,between origins}]
			e'A' \ar[r, "a'", dashed]  & eA  \\
			A' \ar[r, "a"] \ar[u, two heads, "r'"'] & A \ar[u, two heads, "r"'] \\
			(1-e')A' \ar[r, "a''", dashed] \ar[u, tail, "s'"']& (1-e)A, \ar[u, tail, "s"']
		\end{tikzcd}
	\]
	where the vertical morphisms define the respective splittings of $e'$ and $e$. Using $j'r' = e'$ and $jr = e$  we compute $aj'r' = ae' = ea = jra$. Then $qaj'r' = qjra =0$ as $qj=0$, and hence $qaj' = 0$ since $r'$ is an epic. As $j$ is a kernel of $q$  and $q'$ a cokernel of $j'$, the two dashed morphisms in the left diagram exist. Then $ja'r' = aj'r' = jra$ and hence $a'r' = ra$ since $j$ is a monic. So, the upper right square commutes, and analogously the lower one.
\end{proof}

\begin{rmk} \label{rmk: idem-induce} If $(e', e)$ is an idempotent of a morphism, then  $e'$ induces an idempotent of the kernel and $e$ of the cokernel, if the respective object exists.
\end{rmk}

\begin{lem} \label{lem: idem-ses}
	Consider a short exact sequence \begin{tikzcd}[cramped] A' \ar[r, tail, "i"] & A \ar[r, two heads, "p"] & A'' \end{tikzcd} in an exact idempotent complete category $\E$ and an idempotent $(e', e)\colon i \to i$ in the morphism category of $\E$. Then there is an idempotent $(e, e'')\colon p \to p$. The corresponding morphisms from \Cref{rmk: idem-morph} form a biproduct $(i, p) \cong (i', p') \oplus (i'', p'')$ of short exact sequences displayed as follows:
	\begin{center}
		\begin{tikzcd}[column sep={25mm,between origins}, row sep={20mm,between origins}]
			e'A' \ar[r, tail, dashed, "i'"] \ar[d, tail, "j'"] & eA \ar[r, two heads, dashed, "p'"] \ar[d, tail, "j"] & e''A'' \ar[d, tail, dashed, "j''"] \\
			A' \ar[r, tail, "i"] \ar[d, two heads, "q'"] & A \ar[r, two heads,  "p"] \ar[d, two heads, "q"] & A'' \ar[d, two heads, dashed, "q''"] \\
			(1-e')A' \ar[r, tail, dashed, "i''"] & (1-e)A \ar[r, two heads, dashed, "p''"] & (1-e'') A''
		\end{tikzcd}
	\end{center}
	In particular, applying idempotents preserves admissible monics.
\end{lem}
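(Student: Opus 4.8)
The plan is to first extract the missing idempotent on $p$, then split all three idempotents — which is possible because $\E$ is idempotent complete — and finally recognize the asserted diagram as a biproduct of the short exact sequence $(i,p)$, to which \Cref{cor: ses-summand} applies. For the first step: since $(e',e)\colon i \to i$ is a morphism in the morphism category we have $ei = ie'$, and hence $pei = pie' = 0$ because $pi = 0$. As $p$ is a cokernel of $i$, the map $pe\colon A \to A''$ factors uniquely as $pe = e''p$ for some $e''\colon A'' \to A''$, which is exactly the idempotent induced on the cokernel by $e$ in the sense of \Cref{rmk: idem-induce}; idempotency follows from $(e'')^2 p = e''(pe) = (pe)e = pe^2 = pe = e''p$ after cancelling the epic $p$. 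Thus $(e,e'')\colon p \to p$ is an idempotent in the morphism category of $\E$.

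Since $\E$ is idempotent complete, the idempotents $e'$, $e$ and $e''$ all split, so \Cref{rmk: idem-morph} applies both to the morphism $i$ with idempotent $(e',e)$ and to $p$ with idempotent $(e,e'')$. This produces the dashed morphisms $i' = (e',e)i$, $i'' = (1-(e',e))i$, $p' = (e,e'')p$, $p'' = (1-(e,e''))p$ of the diagram, together with biproduct decompositions $i \cong i'\oplus i''$ along the splittings of $e'$ and $e$, and $p \cong p'\oplus p''$ along the splittings of $e$ and $e''$, the relevant structure maps being $j',j,j''$ and $q',q,q''$. The decomposition of the middle object $A$ is the splitting of $e$ in both cases, so the two decompositions are compatible and combine to an isomorphism of composable pairs $(i,p) \cong (i'\oplus i'', p'\oplus p'')$, which is precisely the displayed diagram.

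To conclude, $(i,p)$ is short exact, hence so is the isomorphic pair $(i'\oplus i'', p'\oplus p'')$ since $\cS$ is closed under isomorphism, and \Cref{cor: ses-summand} then shows that both $(i',p')$ and $(i'',p'')$ are short exact; in particular $i'$ and $i''$ are admissible monics. For the final assertion it remains to note that an arbitrary admissible monic has a cokernel and may thus be embedded into such a short exact sequence before applying an idempotent of it. The only point requiring attention is the compatibility of the two decompositions of $A$ in the second paragraph, needed to glue them into a single biproduct of the composable pair; everything else is the universal property of the cokernel together with direct appeals to \Cref{rmk: idem-morph} and \Cref{cor: ses-summand}.
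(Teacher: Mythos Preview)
Your proof is correct and follows essentially the same approach as the paper: construct $e''$ as the induced idempotent on the cokernel (\Cref{rmk: idem-induce}), apply \Cref{rmk: idem-morph} to both $i$ and $p$ using the common splitting of $e$, and conclude via \Cref{cor: ses-summand}. The paper's version is simply more terse, citing \Cref{rmk: biproduct-C} for the compatibility of the two termwise decompositions that you spell out explicitly.
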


\begin{proof}
	Clearly, $e''\colon A'' \to A''$ can be chosen as the induced morphism between cokernels, see \Cref{rmk: idem-induce}. Fix a choice of splittings of the idempotents $e'$, $e$ and $e''$.
	\Cref{rmk: idem-morph}, applied to the idempotent $(e', e)$ of $i$ and the idempotent $(e, e'')$ of $p$, yields the dashed morphisms and the desired biproduct, see \Cref{rmk: biproduct-C}. The sequences $(i', p')$ and $(i'', p'')$ are short exact by \Cref{cor: ses-summand}.
\end{proof}

\begin{prp}\label{prp: idemp-compl-C-mMor} \
	\begin{enumerate}
		\item \label{prp: idemp-compl-C} If $\A$ is an idempotent complete category, then so are $\C(\A)$ and $\C_N(\A)$.
		
		\item \label{prp: idemp-compl-mMor} If $\E$ is an exact idempotent complete category, then so is $\mMor_l(\E)$.
	\end{enumerate}
\end{prp}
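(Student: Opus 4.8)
The plan is to reduce both parts to the following principle: a full additive subcategory $\C'$ of an idempotent complete additive category $\C$ that is closed under direct summands in $\C$ is itself idempotent complete. Indeed, an idempotent $e$ of an object $X \in \C'$ splits in $\C$, say $X \cong X_1 \oplus X_2$; the summands $X_1, X_2$ lie in $\C'$ by hypothesis, and the structure morphisms of the biproduct lie in $\C'$ by fullness, so $e$ splits in $\C'$. It therefore suffices, in each case, to exhibit an idempotent complete ambient category together with the required closure under direct summands.

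For part \ref{prp: idemp-compl-C}, I would first verify that $\C(\A)$ is idempotent complete. Given an idempotent $e \colon X \to X$ in $\C(\A)$, each component $e^k$ is an idempotent of $X^k$ in $\A$ and hence splits; with such splittings fixed, the pair $(e^k, e^{k+1})$ is an idempotent of the differential $d_X^k$ in the morphism category of $\A$, which splits by \Cref{rmk: idem-morph}. This produces, for every $k$, a biproduct decomposition of $d_X^k$ compatible with the splittings of $e^k$ and $e^{k+1}$, and by \Cref{rmk: biproduct-C} these assemble into a biproduct $X \cong eX \oplus (1-e)X$ in $\C(\A)$ realizing the splitting of $e$. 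Next, $\C_N(\A)$ is closed under direct summands in $\C(\A)$: if $X \cong X_1 \oplus X_2$, then by \Cref{rmk: biproduct-C} this holds termwise and the $d_X^k$ are block diagonal, whence $d_X^{\{N\}} = d_{X_1}^{\{N\}} \oplus d_{X_2}^{\{N\}}$, so $d_X^{\{N\}} = 0$ forces $d_{X_i}^{\{N\}} = 0$, i.e.\ $X_i \in \C_N(\A)$. As $\C_N(\A)$ is full in $\C(\A)$, the principle above applies.

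For part \ref{prp: idemp-compl-mMor}, I would invoke part \ref{prp: idemp-compl-C}: extension by zero identifies $\Mor_l(\E) = \Func(T_l, \E)$ with the full subcategory of $\C(\E)$ of sequences concentrated in positions $1, \dots, l+1$, which is evidently closed under direct summands, so $\Mor_l(\E)$ is idempotent complete. Now $\mMor_l(\E)$ is the full subcategory of $\Mor_l(\E)$ of chains all of whose arrows are admissible monics, and the key point is that it is again closed under direct summands: a direct summand $a_1$ of an admissible monic $a$ arises as the image of $a$ under the idempotent given by the projection onto that summand, and \Cref{lem: idem-ses} — applied to this idempotent together with the short exact sequence completing $a$ by its cokernel (whose induced idempotent automatically splits) — shows $a_1$ is again an admissible monic. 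Applying this to each arrow of a decomposition $X \cong X_1 \oplus X_2$ in $\Mor_l(\E)$ with $X \in \mMor_l(\E)$ gives $X_1, X_2 \in \mMor_l(\E)$, and the principle applies once more.

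The one point that needs care — though not, I expect, a real obstacle — is the bookkeeping in part \ref{prp: idemp-compl-C}: the splittings of the $e^k$ must be chosen once and for all, so that the morphism-category splittings of two consecutive differentials $d_X^k$ and $d_X^{k+1}$ refer to the same decomposition of $X^{k+1}$. \Cref{rmk: idem-morph} and \Cref{rmk: biproduct-C} are tailored precisely to this, so the assembly is routine; everything else is a formal consequence of the summand principle and of \Cref{lem: idem-ses}.
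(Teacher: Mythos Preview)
Your proposal is correct and follows essentially the same route as the paper: part~\ref{prp: idemp-compl-C} via \Cref{rmk: idem-morph} and \Cref{rmk: biproduct-C}, and part~\ref{prp: idemp-compl-mMor} via \Cref{lem: idem-ses}. The only cosmetic difference is that for the closure of $\C_N(\A)$ under summands you argue directly with $d_X^{\{N\}} = d_{X_1}^{\{N\}} \oplus d_{X_2}^{\{N\}}$, whereas the paper cites \Cref{rmk: C_N-subcat} (closure under kernels and cokernels in termwise short exact sequences); and you wrap both parts in an explicit ``summand principle'' rather than applying the lemmas directly, but the content is the same.
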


\begin{proof} \
	\begin{enumerate}[leftmargin=*]
		\item This follows from  \Cref{rmk: idem-morph} and \Cref{rmk: C_N-subcat,rmk: biproduct-C}.
		
		\item This follows from \Cref{lem: idem-ses}. \qedhere
	\end{enumerate}
\end{proof}

\subsection{Semiorthogonal decompositions}

In this subsection, we review definitions and results on the topic of \emph{semiorthogonal decompositions} of triangulated categories.

\begin{ntn}
	Consider a triangulated category $\T$ with suspension functor $\Sigma$ and subcategories $\U, \V$.
	Then $\U \ast \V$ denotes the subcategory of $\T$ consisting of objects $T \in \T$ which fit into a distinguished triangle $U \to T \to V \to \Sigma U$ where $U \in \U$ and $V \in \V$.
\end{ntn}

\begin{dfn} \label{dfn: sod}
	A pair $(\U, \V)$ of triangulated subcategories of a triangulated category $\T$ is called a \textbf{semiorthogonal decomposition} of $\T$ if
	\begin{enumerate}
		\item \label{dfn: sod-1} $\Hom_\T(\U, \V) = 0$ and
		\item \label{dfn: sod-2} $\T = \U \ast \V$.
	\end{enumerate}
\end{dfn}

\begin{prp}[{\cite[Prop.~1.2]{IKM11}}]  \label{prp: sod}
	Let $(\U, \V)$ be a semiorthogonal decomposition of a triangulated category $\T$. Then the inclusion functors $i_!\colon \U \to \T$ and $j_\ast\colon \V \to \T$ have a respective right adjoint $i^!\colon \T \to \U$ and left adjoint $j^\ast\colon \T \to \V$. These induce triangle equivalences $\T/\V \simeq \U$ and $\T/\U \simeq \V$. \qed
\end{prp}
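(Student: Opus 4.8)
The plan is to recover the adjoints $i^!$ and $j^\ast$ directly from the two axioms of a semiorthogonal decomposition and then obtain the equivalences by formal arguments. Fix $T \in \T$ and, by \Cref{dfn: sod}.\ref{dfn: sod-2}, a distinguished triangle $U \xrightarrow{a} T \xrightarrow{b} V \to \Sigma U$ with $U \in \U$ and $V \in \V$. For an arbitrary $U'' \in \U$, I would apply the cohomological functor $\Hom_\T(U'', -)$ to the rotated triangle $\Sigma^{-1} V \to U \xrightarrow{a} T \xrightarrow{b} V$; in the resulting long exact sequence the two terms flanking $a_\ast \colon \Hom_\T(U'', U) \to \Hom_\T(U'', T)$ are $\Hom_\T(U'', \Sigma^{-1}V)$ and $\Hom_\T(U'', V)$, and both vanish by \Cref{dfn: sod}.\ref{dfn: sod-1} — for the first one because $\Sigma^{-1}V$ still lies in $\V$, as $\V$ is a triangulated subcategory. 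Hence $a_\ast$ is bijective for every $U'' \in \U$, which by the pointwise criterion for adjoints means that $a$ is the counit at $T$ of a right adjoint; letting $T$ vary this produces a functor $i^!\colon \T \to \U$ right adjoint to the inclusion $i^\ast$ with $i^! T = U$. Dually, applying $\Hom_\T(-, V'')$ for $V'' \in \V$ to $U \xrightarrow{a} T \xrightarrow{b} V \to \Sigma U$ shows that $b^\ast\colon \Hom_\T(V, V'') \to \Hom_\T(T, V'')$ is bijective, so $j_\ast$ admits a left adjoint $j^\ast\colon \T \to \V$ with $j^\ast T = V$. Being adjoints of the triangulated functors $i^\ast$ and $j_\ast$, the functors $i^!$ and $j^\ast$ carry canonical triangulated structures, see {\cite[Prop.~1.4]{BK89}}.

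I would then record two formal consequences. Since $i^\ast$ is fully faithful, the unit $\id_\U \to i^! i^\ast$ is an isomorphism. Moreover $i^!$ annihilates $\V$: for $V \in \V$ the adjunction gives $\Hom_\U(U'', i^! V) \cong \Hom_\T(U'', V) = 0$ for all $U'' \in \U$, whence $\id_{i^! V} = 0$ and $i^! V \cong 0$. Therefore $i^!$ inverts every morphism of $\T$ whose cone lies in $\V$, and by the universal property of the Verdier quotient $Q\colon \T \to \T/\V$ it factors uniquely as $i^! = \overline{i^!} \circ Q$ with $\overline{i^!}\colon \T/\V \to \U$ triangulated; this is the candidate equivalence.

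Finally I would verify that $Q i^\ast\colon \U \to \T/\V$ and $\overline{i^!}$ are mutually quasi-inverse. One composite is $\overline{i^!} \circ (Q i^\ast) = i^! i^\ast \cong \id_\U$. For the other, the decomposition triangle of an arbitrary $T \in \T$ reads $i^! T \xrightarrow{a} T \to V \to \Sigma\, i^! T$ with $V \in \V$, so $Q(a)\colon Q i^\ast i^! T \to Q T$ is an isomorphism in $\T/\V$, natural in $T$; as every object of $\T/\V$ has the form $Q T$, this yields $(Q i^\ast) \circ \overline{i^!} \cong \id_{\T/\V}$. Hence $\overline{i^!}$ is a triangle equivalence $\T/\V \simeq \U$, and the dual argument yields a triangle equivalence $\T/\U \simeq \V$ induced by $j^\ast$.

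The step I expect to be the genuine obstacle — in the sense of the one input I would rather cite than reprove — is that an adjoint of a triangulated functor inherits a compatible triangulated structure; everything else is a manipulation of distinguished triangles and $\Hom$-groups using only the two defining properties in \Cref{dfn: sod}. A minor point to handle carefully, but not a real difficulty, is the set-theoretic legitimacy of the Verdier quotients: the universal property used above does not presuppose local smallness, and local smallness follows a posteriori from the equivalences just established.
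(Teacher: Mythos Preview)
Your proof is correct and follows the standard route; the paper itself does not prove this statement but cites it from \cite{IKM11}, and the explicit description of the adjoints you derive matches exactly what the paper records in \Cref{rmk: Bondal-sod}.
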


\begin{rmk} \label{rmk: cohom}
	Let $\T$ be a triangulated category with suspension functor $\Sigma$. The covariant (contravariant) Hom-functor is (co)homological: Given a distinguished triangle
	\begin{center}
		\begin{tikzcd}
			Y \ar[r, "f"] & Y' \ar[r, "g"] & Y'' \ar[r] & \Sigma Y,
		\end{tikzcd}
	\end{center}
	each of the following two solid  commutative diagrams can be completed by a dashed arrow:
	\begin{center}
		\begin{tikzcd}[sep={22.5mm,between origins}]
			& X  \ar[ld, dashed] \ar[rd, "0"] \ar[d, "f'"'] &&& X & \\
			Y \ar[r, "f"]  & Y' \ar[r, "g"]  & Y'' & Y \ar[r, "f"] \ar[ur, "0"] & Y' \ar[r, "g"] \ar[u, "g'"'] & Y'' \ar[ul, dashed]
		\end{tikzcd}
	\end{center}
\end{rmk}

\begin{rmk}\label{rmk: Bondal-sod}
	The adjoints in \Cref{prp: sod} can be made explicit as follows, see {\cite[Proof of Lem.~3.1.(b) $\Rightarrow$ (c)]{Bon90}}:
	\begin{itemize}
		\item For $T \in \T = \U \ast \V$, fixing a distinguished triangle 
		\begin{center}
			\begin{tikzcd}
				i^!T \ar[r, "u_T"] & T \ar[r] & V_T \ar[r] & \Sigma i^!T
			\end{tikzcd}
		\end{center}
		in $\T$ with $V_T \in \V$ defines $i^!T \in \U$ up to isomorphism.
		\item Let $f\colon T \to T'$ be a morphism in $\T$. Using $\Hom_\T(\U, \V) = 0$ and \Cref{rmk: cohom}, $i^!f$ is uniquely determined by the commutative diagram
		\begin{center}
			\begin{tikzcd}[column sep={22.5mm,between origins}, row sep={20mm,between origins}]
				& i^!T \ar[d, "u_T"] \ar[rdd, "0"] \ar[ldd, dashed, "i^!f"'] & \\
				& T \ar[d, "f"] \\
				i^!T' \ar[r, "u_{T'}"] & T' \ar[r] & V_{T'}.
			\end{tikzcd}
		\end{center}
	\end{itemize}
	The functor $j^\ast$ is given dually.
\end{rmk}

\begin{thm}[{\cite[Lem.~1.1, Thm.~B]{JK15}}] \label{thm: sod} Consider triangulated subcategories $\U, \V$ of a triangulated category $\T$. Then the following conditions are equivalent:
	\begin{enumerate}[label=(\arabic*)]
		\item \label{thm: sod-1} $\V \ast \U \subseteq \U \ast \V$.
		\item \label{thm: sod-2} $\U \ast \V$ is a triangulated subcategory of $\T$.
		\item \label{thm: sod-3} Any morphism in $\Hom_\T(\U, \V)$ factors through an object of $\U \cap \V$.
	\end{enumerate}
	In this case, the following statements hold:
	\pushQED{\qed}
	\begin{enumerate}
		\item \label{thm: sod-a} The pair $(\U/(\U \cap \V), \V/(\U \cap \V))$ is a semiorthogonal decomposition of $(\U \ast \V)/(\U \cap \V)$.
		
		\item \label{thm: sod-b} There are triangle equivalences $\U / (\U \cap \V) \simeq (\U \ast \V)/\V$ and $\V / (\U \cap \V) \simeq (\U \ast \V)/\U$.
		
		\item \label{thm: sod-c} The canonical functors $\U/(\U \cap \V) \to \T/\V$ and $\V/(\U \cap \V) \to \T/\U$ are fully faithful. \qedhere
	\end{enumerate}
\end{thm}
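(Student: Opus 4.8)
The plan is to reduce everything to the octahedral axiom, using the associativity of the operation $*$ on full subcategories closed under isomorphism, $(\A_1 * \A_2) * \A_3 = \A_1 * (\A_2 * \A_3)$, a standard consequence of the octahedron. Two elementary facts about a triangulated subcategory $\A \subseteq \T$ will be used repeatedly: $\Sigma^{\pm 1}\A = \A$, and $\A * \A = \A$ (both because $\A$ is closed under shifts and extensions and contains $0$); in particular any object sitting in a distinguished triangle whose two outer terms lie in $\A$ lies in $\A$, and $\U,\V \subseteq \U*\V$.

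\emph{The equivalences.} For $(1)\Rightarrow(2)$: $\U*\V$ is closed under shifts, so it suffices to show it is closed under cones. If $f\colon A\to B$ with $A,B\in\U*\V$, then $\operatorname{cone}(f)$ sits in the triangle $B\to\operatorname{cone}(f)\to\Sigma A\to\Sigma B$, so $\operatorname{cone}(f)\in(\U*\V)*(\U*\V)$; by associativity and $(1)$, $(\U*\V)*(\U*\V)=\U*(\V*\U)*\V\subseteq\U*(\U*\V)*\V=\U*\V$. For $(2)\Rightarrow(1)$: if $T\in\V*\U$, pick a triangle $V\to T\to U\to\Sigma V$ with $V\in\V\subseteq\U*\V$ and $U\in\U\subseteq\U*\V$; since $\U*\V$ is triangulated it is extension-closed, so $T\in\U*\V$. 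For $(1)\Rightarrow(3)$: given $f\colon U\to V$, complete to a triangle $U\xrightarrow{f}V\xrightarrow{b}C\xrightarrow{a}\Sigma U$; rotating shows $C\in\V*\Sigma\U=\V*\U\subseteq\U*\V$, so there is a triangle $U'\xrightarrow{h}C\to V'\to\Sigma U'$ with $U'\in\U$, $V'\in\V$. Apply the octahedron to $U'\xrightarrow{h}C\xrightarrow{a}\Sigma U$: the object $W:=\operatorname{cone}(ah)$ lies in $\U$ (as $ah$ is a morphism between objects of $\U$) and fits into a triangle $V'\to W\to\Sigma V\to\Sigma V'$, hence lies in $\V$, so $W\in\U\cap\V$; moreover the octahedral compatibility forces the composite $\Sigma U\to W\to\Sigma V$ of structure maps to equal the structure map $\Sigma U\to\operatorname{cone}(a)=\Sigma V$ of the triangle on $a$, which is $-\Sigma f$. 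Desuspending, $f$ factors through $\Sigma^{-1}W\in\U\cap\V$. For $(3)\Rightarrow(1)$: if $T\in\V*\U$, take a triangle $V\to T\to U\xrightarrow{w}\Sigma V$ and, by $(3)$, write $w=w_2w_1$ with $w_1\colon U\to W$, $w_2\colon W\to\Sigma V$, $W\in\U\cap\V$; the octahedron on $U\xrightarrow{w_1}W\xrightarrow{w_2}\Sigma V$ puts $\Sigma T=\operatorname{cone}(w)$ into a triangle $\operatorname{cone}(w_1)\to\Sigma T\to\operatorname{cone}(w_2)\to\Sigma\operatorname{cone}(w_1)$ with $\operatorname{cone}(w_1)\in\U$ and $\operatorname{cone}(w_2)\in\V$, so $\Sigma T\in\U*\V$ and $T\in\U*\V$.

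\emph{The ``in this case'' statements.} Assume the conditions hold and write $\mathcal W:=\U*\V$, $\mathcal I:=\U\cap\V$, noting $\mathcal I$ is a triangulated subcategory of $\U$, of $\V$, and of $\mathcal W$, so that the Verdier quotients and the canonical functors $\U/\mathcal I\to\mathcal W/\mathcal I$, $\V/\mathcal I\to\mathcal W/\mathcal I$ exist. For $(a)$: since triangles in $\mathcal W$ map to triangles in $\mathcal W/\mathcal I$, the defining property $\mathcal W=\U*\V$ passes to $\mathcal W/\mathcal I=(\U/\mathcal I)*(\V/\mathcal I)$; and for $\Hom_{\mathcal W/\mathcal I}(\U/\mathcal I,\V/\mathcal I)=0$, a morphism $\bar U\to\bar V$ is a roof $U\xleftarrow{s}E\xrightarrow{g}V$ with $\operatorname{cone}(s)\in\mathcal I$, where necessarily $E\in\U$ (it is an extension of $U\in\U$ by $\Sigma^{-1}\operatorname{cone}(s)\in\U$), so by $(3)$ the map $g$ factors through $\mathcal I$, making $\bar g=0$ in $\mathcal W/\mathcal I$ and the roof zero. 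For $(b)$: apply \Cref{prp: sod} to this semiorthogonal decomposition to get $(\mathcal W/\mathcal I)/(\V/\mathcal I)\simeq\U/\mathcal I$ and $(\mathcal W/\mathcal I)/(\U/\mathcal I)\simeq\V/\mathcal I$, then identify the left-hand sides with $\mathcal W/\V$ and $\mathcal W/\U$ via the iterated Verdier quotient (legitimate since $\mathcal I\subseteq\V$ and $\mathcal I\subseteq\U$). For $(c)$: the canonical functor $\U/\mathcal I\to\T/\V$ equals the composite $\U/\mathcal I\xrightarrow{\sim}\mathcal W/\V\to\T/\V$ coming from $(b)$, and $\mathcal W/\V\to\T/\V$ is fully faithful because in any roof $W_1\xleftarrow{s}T\to W_2$ with $W_i\in\mathcal W$ and $\operatorname{cone}(s)\in\V\subseteq\mathcal W$ one has $T\in\mathcal W$ (again an extension of objects of $\mathcal W$), and likewise for the refinements witnessing equality of roofs; the statement for $\V/\mathcal I\to\T/\U$ is dual.

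\emph{Main obstacle.} The delicate point is $(1)\Rightarrow(3)$: one must first realize $C$ explicitly as an object of $\U*\V$ and then, in the octahedron built on $U'\to C\to\Sigma U$, track precisely which structure maps are composed, so as to pin down simultaneously that $\operatorname{cone}(ah)\in\U\cap\V$ and that the relevant composite is exactly $\pm\Sigma f$. Everything else is formal once associativity of $*$ and the fully faithfulness of $\mathcal R/\mathcal S\to\mathcal T/\mathcal S$ along a chain $\mathcal S\subseteq\mathcal R\subseteq\mathcal T$ of triangulated subcategories are in hand.
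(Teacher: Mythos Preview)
Your proof is correct. Note, however, that the paper does not give its own proof of this theorem: the statement is quoted from \cite{JK15} and closed with a \qedhere, so there is nothing in the paper to compare against. Your argument follows the expected route via the octahedral axiom and associativity of $\ast$, which is essentially how J{\o}rgensen and Kato proceed.

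A couple of small remarks. In part (a) you tacitly use that the canonical functors $\U/\mathcal I \to \mathcal W/\mathcal I$ and $\V/\mathcal I \to \mathcal W/\mathcal I$ are fully faithful, so that $\U/\mathcal I$ and $\V/\mathcal I$ really are triangulated \emph{sub}categories of $\mathcal W/\mathcal I$; this is the same roof argument you spell out for part (c), and it would be cleaner to state it once up front (``if $\mathcal I \subseteq \mathcal R \subseteq \mathcal S$ are triangulated subcategories then $\mathcal R/\mathcal I \to \mathcal S/\mathcal I$ is fully faithful'') and then invoke it in both places. In $(1)\Rightarrow(3)$ your tracking of the octahedral compatibility is correct: the map $\Sigma U \to \Sigma V$ in the rotated triangle on $a$ is indeed $-\Sigma f$, and the octahedron identifies this with the composite $\Sigma U \to W \to \Sigma V$; your final ``main obstacle'' paragraph is an accurate assessment of where the work lies.
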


\section{$N$-acyclicity} \label{section: N-acyclicity}

In this section, we define (total) $N$-acyclicity of sequences over an exact category $\E$ (\Cref{subsection: contraction}). We show that it is preserved locally under extensions, cones and suspensions (\Cref{subsection: cones}). This leads to various triangulated subcategories of $\K_N(\E)$ known from the classical case. We describe acyclic $N$-complexes over $\E$ equivalently in terms of so-called \emph{acyclic $N$-arrays} (\Cref{subsection: arrays}). To resolve bounded above $N$-complexes over $\E$, we extend a construction of Keller from the case $N=2$ (\Cref{subsection: resolutions}). If $\E$ is Frobenius, we relate complete resolutions with their $N$-syzygies (\Cref{subsection: N-syzygies}).

\subsection{Contraction and acyclicity} \label{subsection: contraction} In this subsection, we define the concept of $N$-acyclicity by reduction to the special case $N=2$, see \Cref{dfn: 2-acyclic}.

\begin{dfn} \label{dfn: gamma}
	Let $\A$ be an additive category. Given $N \in \NN_{\geq 2}$, $n \in \ZZ$, and $r \in \{1,\dots,N-1\}$, the \textbf{contraction functor} $\gamma^n_{r, N}\colon \C(\A) \to \C(\A)$ is defined by sending $X \in \C(\A)$ to the sequence
	\begin{center}
		\begin{tikzcd}[sep=8mm]
			\cdots \ar[r]  &X^{n-N} \ar[r, "d_X^{\{r\}}"] &X^{n-N+r} \ar[r, "d_X^{\{N-r\}}"] &X^n \ar[r, "d_X^{\{r\}}"] & X^{n+r} \ar[r, "d_X^{\{N-r\}}"] & X^{n+N} \ar[r, "d_X^{\{r\}}"] & X^{n+N+r} \ar[r] & \cdots 
		\end{tikzcd}
	\end{center}
	with $X^n$ at position $n$, see \Cref{ntn: d^r}. A morphism $(f^k)_{k \in \ZZ}$ in $\C(\A)$ is mapped to \[(\dots, f^{n-N+r}, f^n, f^{n+r}, f^{n+N}, \dots).\]
	 By definition, $\gamma^n_{r, N}$ restricts to a functor  $\gamma^n_r\colon \C_N(\A) \to \C_2(\A)$. If $\E$ is an exact category,  $\gamma^n_{r, N}\colon \C(\E) \to \C(\E)$ is exact with respect to the termwise exact structure, see \Cref{rmk: C-exact}.
\end{dfn}

\begin{dfn} \label{dfn: N-acyclic}
	Let $\E$ be an exact category and $N\in \NN_{\geq 2}$. We call a sequence $X \in \C(\E)$ \textbf{$\boldsymbol N$-acyclic at position $ n \in \ZZ$} if $\gamma^n_r(X)$ is 2-acyclic at position $n$ for all $r \in \{1,\dots,N-1\}$, see \Cref{dfn: 2-acyclic}.\ref{dfn: 2-acyclic-global}. We call it \textbf{totally $\boldsymbol N$-acyclic at position $ n \in \ZZ$}  if, in addition, $\Hom_\E(X, P) \in \C_N(\Ab)$ is $N$-acyclic at position $- n$ for all $P \in \Prj(\E)$, see \Cref{ntn: Hom}. We say that $X$ is \textbf{(totally) $\boldsymbol N$-acyclic} if it is (totally) $N$-acyclic at all positions $n \in \ZZ$.\\
	The subcategory of $\C(\E)$ consisting of $N$-acyclic sequences is denoted by $\C_N^{\infty, \varnothing}(\E)$, its subcategory of totally $N$-acyclic sequences by $\C_N^{\infty, \varnothing^\ast}(\E)$. This notation becomes clear in \Cref{ntn: Verdier-notation}. Note that $\C_N^{\infty, \varnothing}(\E)$ is a subcategory of $\C_N(\E)$. For this reason we also use the term \textbf{(totally) acyclic $\boldsymbol N$-complex}.
\end{dfn}

\begin{rmk} \label{rmk: Frobenius-tac}
	If $\F$ is a Frobenius category, then $\C^{\infty, \varnothing}_N(\F) = \C^{\infty, \varnothing^\ast}_N(\F)$ as any $P \in \Prj(\F) = \Inj(\F)$ makes $\Hom_\F(-, P)$ an exact functor.
\end{rmk}

\begin{rmk} \label{rmk: gamma-Hom} \
	\begin{enumerate}[leftmargin=*]
		\item \label{rmk: gamma-Hom-equality} Over an additive category $\A$, we have $\Hom_\A(A, -) \circ \gamma^n_r = \gamma^n_r \circ \Hom_\A(A, -)$ and $\Hom_\A(-, A) \circ \gamma^n_r = \gamma^{-n}_{N-r} \circ \Hom_\A(-, A)$ as functors on $\C(\A)$, for all $A \in \A$, $n \in \ZZ$, and $r \in \{1, \dots, N-1\}$.
		
		\item \label{rmk: gamma-Hom-tac} In particular, an $N$-complex $X \in \C_N(\E)$ over an exact category $\E$ is totally $N$-acyclic at position $n \in \ZZ$ if and only if $\gamma^n_r(X)$ is totally 2-acyclic at $n$, for all $r \in \{1, \dots, N-1\}$.
	\end{enumerate}
\end{rmk}

\begin{rmk} \label{rmk: gamma-homology} Let $\E$ be an exact category, $n \in \ZZ$, and $r \in \{1,\dots,N-1\}$.
	Note that $Z^n_{(r)}(X) = Z^n (\gamma^n_r(X))$ and $B^n_{(r)}(X) = B^n(\gamma^n_{N-r}(X))$ for any sequence $X \in \C(\E)$ if the respective objects exist, see \Cref{ntn: Z-B-H}. In this case, $X$ is $N$-acyclic at position $n$ if and only if $Z^n_{(r)}(X) = B^n_{(N-r)}(X)$ for all $r$.
	In particular, $H^n_{(r)} = H^n \circ \gamma^n_r$ if $\E$ is Abelian and $X\in \C_N(\E)$ an $N$-complex. In this case, $X$ is $N$-acyclic at position $n$ if and only if $H^n_{(r)}(X) =0$ for all $r$.
\end{rmk}

\begin{lem} \label{lem: Zn-exact}
	Consider a term\emph{wise} short exact sequence	\begin{tikzcd}[cramped]
		X \ar[r, tail] & Y \ar[r, two heads] & Z
	\end{tikzcd} of $N$-complexes over an exact category $\E$. If $X$, $Y$, and $Z$ are acyclic at position $n \in \ZZ$, then the induced sequence
	\[\begin{tikzcd}
		Z^n_{(r)}(X) \ar[r, tail] & Z^n_{(r)}(Y) \ar[r, two heads] & Z^n_{(r)}(Z)
	\end{tikzcd}\]
	is short exact for all $r \in \{1, \dots, N-1\}$. The verbatim statement holds for cokernels instead of kernels.
\end{lem}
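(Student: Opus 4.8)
The plan is to reduce to the case $N=2$ by contraction and then prove the $2$-complex version by a direct snake-lemma argument resting on the obscure axiom. Since $\gamma^n_r\colon\C_N(\E)\to\C_2(\E)$ is exact for the termwise exact structure (\Cref{dfn: gamma}), it carries the given termwise short exact sequence to a termwise short exact sequence $\gamma^n_r(X)\rightarrowtail\gamma^n_r(Y)\twoheadrightarrow\gamma^n_r(Z)$ of $2$-complexes, each $2$-acyclic at position $n$ by the definition of $N$-acyclicity (\Cref{dfn: N-acyclic}). As $Z^n_{(r)}(-)=Z^n(\gamma^n_r(-))$ (\Cref{rmk: gamma-homology}), it suffices to prove: for a termwise short exact sequence $A\rightarrowtail B\twoheadrightarrow C$ of $2$-complexes that are $2$-acyclic at $n$, with chain maps $a$ and $b$, the induced sequence $Z^n(A)\to Z^n(B)\to Z^n(C)$ (with $Z^n=\ker d^n$) is short exact. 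The assertion for $C^n_{(r)}$ then follows by passing to $\op\E$, under which $C^n_{(r)}$ becomes a $Z^n_{(r)}$-type object after reindexing and $N$-acyclicity is preserved (compare \Cref{rmk: gamma-Hom}).

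For the $2$-complex statement, fix $W\in\{A,B,C\}$. By \Cref{dfn: 2-acyclic}, $2$-acyclicity at $n$ means that $d^{n-1}_W$ and $d^n_W$ are admissible with $\im(d^{n-1}_W)=\ker(d^n_W)=:Z^n(W)$; reading off the analyses gives an admissible monic $z_W\colon Z^n(W)\rightarrowtail W^n$, namely the kernel of $d^n_W$, and an admissible epic $\bar d^{n-1}_W\colon W^{n-1}\twoheadrightarrow Z^n(W)$ with $z_W\bar d^{n-1}_W=d^{n-1}_W$. Writing $a^k,b^k$ for the degree-$k$ components of $a,b$, one has $d^n_B a^n z_A=a^{n+1}d^n_A z_A=0$, so (as $z_B=\ker d^n_B$) the map $a$ induces a unique $\bar z_A\colon Z^n(A)\to Z^n(B)$ with $z_B\bar z_A=a^n z_A$, which is monic because $z_B\bar z_A$ is. Likewise $b$ induces $\bar z_B\colon Z^n(B)\to Z^n(C)$ with $z_C\bar z_B=b^n z_B$, and composing with the monic $z_C$ (and using $b^n a^n=0$) gives $\bar z_B\bar z_A=0$ together with $\bar z_B\bar d^{n-1}_B=\bar d^{n-1}_C\, b^{n-1}$.

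Next I would show that $\bar z_A$ is a kernel of $\bar z_B$: if $t\colon T\to Z^n(B)$ satisfies $\bar z_B t=0$, then $b^n z_B t=z_C\bar z_B t=0$, so $z_B t=a^n\alpha$ for a unique $\alpha\colon T\to A^n$ (since $a^n=\ker b^n$); then $a^{n+1}d^n_A\alpha=d^n_B a^n\alpha=d^n_B z_B t=0$ and the monic $a^{n+1}$ force $d^n_A\alpha=0$, hence $\alpha$ factors through $z_A=\ker d^n_A$, which produces the factorization of $t$ through $\bar z_A$, unique as $\bar z_A$ is monic. (Equivalently: $Z^n(A)$ is the pullback $A^n\times_{B^n}Z^n(B)$.) Then $\bar z_B$ is an admissible epic: it has a kernel by the previous step, and $\bar z_B\bar d^{n-1}_B=\bar d^{n-1}_C\, b^{n-1}$ is a composition of admissible epics, hence an admissible epic, so \Cref{prp: obscure}.\ref{prp: obscure-epi} applies. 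Since an admissible epic is the cokernel of its kernel, $Z^n(A)\to Z^n(B)\to Z^n(C)$ is short exact and $\bar z_A$ is an admissible monic.

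The crux — and the expected main obstacle — is establishing that $\bar z_A$ is a genuine categorical kernel of $\bar z_B$ (the pullback description of $Z^n(A)$) and verifying the hypotheses of the obscure axiom in the last step, namely the existence of the kernel of $\bar z_B$ and the admissibility of the composite $\bar z_B\bar d^{n-1}_B$. This is precisely where one exploits that the chain-map components $a^n,a^{n+1}$ are admissible monics and that $2$-acyclicity replaces $\ker d^n$ by $\im d^{n-1}$; by comparison, the contraction reduction and the dualization for $C^n_{(r)}$ are routine bookkeeping.
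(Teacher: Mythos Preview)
Your proof is correct and follows essentially the same approach as the paper: reduce to $N=2$ via contraction, show that the induced map $Z^n(A)\to Z^n(B)$ is a kernel of $Z^n(B)\to Z^n(C)$, and then apply the obscure axiom using that $\bar z_B\bar d^{n-1}_B=\bar d^{n-1}_C\,b^{n-1}$ is a composite of admissible epics. The only cosmetic difference is that the paper extracts your kernel verification into a standalone lemma (\Cref{lem: prep-Zn}) phrased in slightly more abstract terms, whereas you carry it out in situ.
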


\begin{proof} Due to \Cref{rmk: gamma-homology}, we may assume that $N=2$. There is a commutative diagram
	\begin{center}
		\begin{tikzcd}[sep={20mm,between origins}]
			X^{n-1} \ar[r, two heads] \ar[d, tail] & Z^n(X) \ar[r, "x'", tail] \ar[d, dashed, "z'"', "y'"] & X^n \ar[d, "v'", tail] \ar[r, "\circ" marking] \ar[rd, "x"] & X^{n+1} \ar[d, tail] \\
			Y^{n-1} \ar[r, two heads, "p"] \ar[d, two heads,] & Z^n(Y) \ar[r, "w'", tail] \ar[d, dashed, "z"'] \ar[rd, "y"] & Y^n \ar[d, "v", two heads] \ar[r, "w", "\circ" marking] & Y^{n+1} \ar[d, two heads] \\
			Z^{n-1} \ar[r, two heads] & Z^n(Z) \ar[r, tail] & Z^n \ar[r, "\circ" marking] & Z^{n+1}, 
		\end{tikzcd}
	\end{center}

	where $z$ and $z'$ are induced on the respective kernels, see \Cref{rmk: help-commute}.\ref{rmk: help-commute-mono}. By \Cref{lem: prep-Zn}, $z'$ is a kernel of $z$. Since $zp$ is an admissible epic, so is $z$ due to \Cref{prp: obscure}.\ref{prp: obscure-epi}. It follows that
	\begin{center}
		\begin{tikzcd}[sep=large]
			Z^n(X) \ar[r, tail, "z'"] & Z^n(Y) \ar[r, two heads, "z"] & Z^n(Z)
		\end{tikzcd}
	\end{center}
	is a short exact sequence in $\E$ as desired.
\end{proof}

\begin{rmk} \label{rmk: help-commute}
	Consider a not necessarily commutative diagram
	\begin{center}
		\begin{tikzcd}[sep={17.5mm,between origins}]
			\bullet \ar[r, "e"] \ar[d] \ar[rd, phantom, "(X)"] & \bullet \ar[r] \ar[d] \ar[rd, phantom, "(Y)"] & \bullet \ar[d] \\
			\bullet \ar[r] & \bullet \ar[r, "m"] & \bullet
		\end{tikzcd}
	\end{center}
	in an arbitrary category, and suppose that the outer rectangle $(XY)$ commutes. Then:
	\begin{enumerate}[leftmargin=*]
		\item \label{rmk: help-commute-epi} If $e$ is an epic and $(X)$ commutes, then $(Y)$ commutes as well.
		\item \label{rmk: help-commute-mono} If $m$ is a monic and $(Y)$ commutes, then $(X)$ commutes as well.
	\end{enumerate}
\end{rmk}

\begin{lem} \label{lem: prep-Zn}
	In a category with zero morphisms, consider a commutative diagram
	\[
	\begin{tikzcd}[sep={15mm,between origins}]
		& \bullet \ar[rd, "x'"] && \\
		\bullet \ar[ru, equal] \ar[rd, "y'"] && \bullet \ar[rd, "x"] \ar[d, "v'"] \\
		& \bullet \ar[rd, "y"] \ar[r, hook, "w'"] & \bullet \ar[d, "v"] \ar[r, "w"] & \bullet \\
		&& \bullet
	\end{tikzcd}
	\]
	with $ww'=0$ and $x'$ and $v'$ kernels of $x$ and $v$, respectively. Then $y'$ is a kernel of $y$.
\end{lem}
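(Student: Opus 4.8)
The argument is a direct diagram chase verifying the universal property of a kernel; the only subtlety is that we work merely in a category with zero morphisms, with no additive structure available, so every uniqueness claim must be extracted from monicity rather than from cancelling a difference. Write $A$ for the source of $x'$ (equivalently, of $y'$), $B$ for the target of $x'$, $D$ for the target of $y'$, and $E$ for the common target of $v'$ and $w'$. Reading off the commutativity of the diagram yields the identities $y = v w'$, $x = w v'$ and $v' x' = w' y'$; by hypothesis $w w' = 0$; and from $v' = \ker v$ and $x' = \ker x$ one has in particular $v v' = 0$, $x x' = 0$, and the monicity of $v'$ and $x'$.

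I would first check $y y' = 0$ by computing $y y' = v w' y' = v v' x' = 0$, using $y = v w'$, then $w' y' = v' x'$, then $v v' = 0$. For the universal property, let $t\colon T \to D$ be a morphism with $y t = 0$. Then $v(w' t) = (v w') t = y t = 0$, so since $v'$ is a kernel of $v$ there is a morphism $u\colon T \to B$ with $v' u = w' t$. Next $x u = (w v') u = w(v' u) = w(w' t) = (w w') t = 0$, so since $x'$ is a kernel of $x$ there is $s\colon T \to A$ with $x' s = u$; postcomposing the desired identity with the monic $w'$ gives $w'(y' s) = (w' y') s = (v' x') s = v' u = w' t$, hence $y' s = t$. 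For uniqueness, if $y' s_1 = y' s_2$ then $v' x' s_1 = w' y' s_1 = w' y' s_2 = v' x' s_2$, and cancelling the monics $v'$ and then $x'$ gives $s_1 = s_2$; together with $y y' = 0$ this exhibits $y'$ as a kernel of $y$.

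The only real obstacle is bookkeeping: keeping the several objects and arrows of the diagram straight and isolating the three commutativity relations together with $w w' = 0$. Once that is done there is no genuine difficulty, and the proof uses none of the exact structure — it is valid in any category with zero morphisms.
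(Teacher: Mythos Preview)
Your proof is correct and essentially identical to the paper's: both verify $yy'=0$ via $yy' = vw'y' = vv'x' = 0$, then for a test morphism use the kernel property of $v'$ to produce a lift through $v'$, use $ww'=0$ and the kernel property of $x'$ to lift further, and finally cancel the monic $w'$ to obtain the factorization through $y'$. The only cosmetic difference is that the paper phrases uniqueness via the uniqueness clauses in the kernel universal properties of $v'$ and $x'$, whereas you cancel them directly as monics; these are equivalent.
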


\begin{proof}
	First note that $yy' = vv'x'=0$ since $v'$ is a kernel of $v$. To prove universality, let $a$ be a morphism with $ya = 0$. Then $0=vw'a$, and hence $w'a = v'b$ for a unique $b$ since $v'$ is a kernel of $v$. We compute $xb = wv'b=ww'a = 0$ using $ww'=0$. Hence, $b=x'c$ for a unique $c$ since $x'$ is a kernel of $x$.
	\[
	\begin{tikzcd}[sep={15mm,between origins}]
		& \bullet \ar[rd, "x'"] && \\
		\bullet \ar[ru, equal] \ar[rd, "y'"] & \bullet \ar[d, "a"] \ar[r, dashed, "b"] \ar[u, dashed, "c"] \ar[l, dashed, "c"] & \bullet \ar[rd, "x"] \ar[d, "v'"] \\
		& \bullet \ar[rd, "y"] \ar[r, hook, "w'"] & \bullet \ar[d, "v"] \ar[r, "w"] & \bullet \\
		&& \bullet
	\end{tikzcd}
	\]
	We show that $c$ is the unique morphism with $y'c=a$ as required: Note that $w'y'c = v'x'c = v'b = w'a$, which implies $y'c=a$ since $w'$ is monic. If $c'$ is an arbitrary morphism with $y'c'=a$, then $v'x'c'=w'y'c' = w'a$. Thus, $x'c'=b$, and hence $c'=c$ by uniqueness of $b$ and $c$, respectively.
\end{proof}

\begin{ntn} \label{ntn: gamma-indices} For $N \in \NN_{\geq 2}$, $n \in \ZZ$ and, $r \in \{1,\dots,N-1\}$, consider the set \[\Z = \Z^n_r:= \{n+aN+br \, \vert \, a \in \ZZ, b \in \{0,1\}\} \subseteq \ZZ\]  of indices selected by the contraction functor $\gamma := \gamma^n_r$. Note that for any $s \in \ZZ$ the interval $[s-N+1,s]$ contains exactly two elements of $\Z$. Denote by $\gamma'\colon \Z \to \ZZ$, $n+aN+br \mapsto n+2a+b$, the reindexing  realized by $\gamma$. By abuse of notation, we also write
	\[\gamma = \gamma^n_r\colon \ZZ \longrightarrow \ZZ, \, s \longmapsto \gamma'(\max\{k\in \Z \, \vert \, k \leq s\}),\] 
	which extends $\gamma'$ to $\ZZ$.
\end{ntn}

\begin{rmk} \label{rmk: mu-acyclic}
	Let $\E$ be an exact category, $A \in \E$, and $s \in \ZZ$.
	\begin{enumerate}[leftmargin=*]
		\item \label{rmk: mu-acyclic-formula} Using \Cref{ntn: gamma-indices}, we have $\gamma^n_r(\mu^s_N(A)) = \mu^{\gamma^n_r(s)}_2(A)$ for all $n \in \ZZ$ and $r \in \{1, \dots, N-1\}$.
		
		\item \label{rmk: mu-acyclic-statement} Suppose that $A \neq 0$. Then the sequence $\mu^s_t(A)$ is (totally) $N$-acyclic if and only if $t = N$: Indeed, $\gamma^n_r(\mu^s_N(A))$ is 2-acyclic for all $n \in \ZZ$ and $r \in \{1, \dots, N-1\}$ due to \ref{rmk: mu-acyclic-formula}. Then also $\Hom_\E(\mu^s_N(A), P) = \mu^{-s+N-1}_N(\Hom_\E(A,P))$ is $N$-acyclic for any $P \in \Prj(\E)$, see \Cref{rmk: mu-dual}. Thus, $\mu^s_N(A)$ is even totally $N$-acyclic. If $t \in \{1, \dots, N-1\}$, then the $2$-complex $\gamma^s_1(\mu^s_t(A))=\mu^s_1(A)$ is clearly not 2-acyclic. So, $\mu^s_N(A)$ is not $N$-acyclic.
	\end{enumerate}
\end{rmk}

\subsection{Cones and extensions} \label{subsection: cones} In this subsection, we describe to which extent the cone $C(f)$ and the cocone $C^\ast(f)$, see \Cref{con: cone}, preserve local $N$-acyclicity of the source and target of $f$ (\Cref{prp: cone-acyclic}). This specializes to the suspension functor $\Sigma$ and its quasi-inverse $\Sigma^{-1}$ (\Cref{cor: Sigma-acyclic}). We examine the preservation of local $N$-acyclicity is preserved under extensions (\Cref{prp: acyclic-ext-closed}). Imposing boundedness and acyclicity conditions, defines various extension-closed Frobenius subcategories of $\C_N(\E)$. Their stable categories are related in a diagram of triangulated subcategories of $\K_N(\E)$ (\Cref{thm: Verdier-notation}).

\begin{lem}[{\cite[Lem.~10.3]{Buh10}}] \label{lem: Buehler10.3}
	Let $f\colon X \to Y$ be a morphism of 2-complexes over an exact category $\E$.  If $X$ and $Y$ are acyclic at positions $n, n+1 \in \ZZ$, then the cone $C(f)$ of $f$ is acyclic at position $n$. 
	Dually, if $X$ and $Y$ are acyclic at positions $n-1, n \in \ZZ$, then the cocone $C^\ast(f)$ of $f$ is acyclic at position $n$.
\end{lem}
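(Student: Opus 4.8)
The plan is to reduce the cocone assertion to the cone assertion and then prove the cone assertion directly from the explicit description of the mapping cone for $N=2$. For the reduction, recall from \Cref{con: Sigma-explicit} that $\Theta C^\ast(f)=-C(f)$ when $N=2$; hence $C^\ast(f)$ is 2-acyclic at a position $n$ if and only if $C(f)$ is 2-acyclic at $n-1$ (reindexing by $\Theta$; signs on differentials do not affect 2-acyclicity), so the cocone statement at position $n$ is exactly the cone statement at position $n-1$. It thus suffices to treat the cone, where by \Cref{con: Sigma-explicit} one has $C(f)^m=Y^m\oplus X^{m+1}$ with $d_{C(f)}^m=\left(\begin{smallmatrix}d_Y^m & f^{m+1}\\ 0 & -d_X^{m+1}\end{smallmatrix}\right)$, together with a termwise split short exact sequence of $2$-complexes $0\to Y\to C(f)\to\Sigma X\to 0$ (evident from the block form of $d_{C(f)}$), and $\Sigma X$ is 2-acyclic at positions $n-1$ and $n$ because $(\Sigma X)^m=X^{m+1}$ with differential $-d_X$, so 2-acyclicity of $\Sigma X$ at $m$ is 2-acyclicity of $X$ at $m+1$.

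The heart of the argument is to show that $d_{C(f)}^{n-1}$ and $d_{C(f)}^n$ are admissible morphisms. First I would prove the slightly more general statement that $d_{C(f)}^m$ is admissible whenever $X$ and $Y$ are 2-acyclic at position $m+1$; applying it at $m=n$ and at $m=n-1$ then uses the hypothesis at positions $n+1$ and $n$. For this claim, 2-acyclicity of $X$ at $m+1$ gives the analysis $d_X^{m+1}\colon X^{m+1}\twoheadrightarrow\coker(d_X^m)\rightarrowtail X^{m+2}$ together with $\ker(d_X^{m+1})=\im(d_X^m)$, and 2-acyclicity of $Y$ at $m+1$ gives $d_Y^m$ admissible together with $\ker(d_Y^{m+1})=\im(d_Y^m)$. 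Since $f$ is a chain map, $f^{m+1}$ carries $\ker(d_X^{m+1})$ into $\ker(d_Y^{m+1})=\im(d_Y^m)$. Hence the composite of $d_{C(f)}^m$ with the projection $C(f)^{m+1}\to X^{m+2}$ has kernel $Y^m\oplus\ker(d_X^{m+1})$, an admissible subobject of $C(f)^m$, and the restriction of $d_{C(f)}^m$ to it factors as $Y^m\oplus\ker(d_X^{m+1})\twoheadrightarrow\im(d_Y^m)\rightarrowtail Y^{m+1}\hookrightarrow C(f)^{m+1}$; the first map is an admissible epic by the obscure axiom \Cref{prp: obscure}.\ref{prp: obscure-epi}, since it restricts on $Y^m$ to the coimage map of $d_Y^m$ and has a kernel, being a pullback along that admissible epic. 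Therefore $K:=\ker(d_{C(f)}^m)$ exists and is an admissible monic into $C(f)^m$. The Noether lemma \Cref{lem: Buehler3.7} presents $C(f)^m/K$ as an extension of $\coker(d_X^m)$ by $\im(d_Y^m)$, and a short diagram chase shows the induced morphism $C(f)^m/K\to C(f)^{m+1}$ is an admissible monic — it is an admissible monic into $Y^{m+1}\oplus\coker(d_X^m)$ followed by $\id\oplus\big(\coker(d_X^m)\rightarrowtail X^{m+2}\big)$. So $d_{C(f)}^m$ factors as an admissible epic onto $C(f)^m/K$ followed by an admissible monic, hence is admissible.

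It then remains to identify $\im(d_{C(f)}^{n-1})$ with $\ker(d_{C(f)}^n)$ as subobjects of $C(f)^n$; the short exact sequence $\ker(d_{C(f)}^n)\rightarrowtail C(f)^n\twoheadrightarrow\coker(d_{C(f)}^{n-1})$ required by \Cref{dfn: 2-acyclic} is then precisely the analysis of the admissible morphism $d_{C(f)}^{n-1}$. The computation of the previous paragraph at $m=n$ realizes $\ker(d_{C(f)}^n)$ as the pullback of the coimage map $Y^n\twoheadrightarrow\im(d_Y^n)$ against a suitable restriction of $f^{n+1}$; running the same computation at $m=n-1$ and invoking the relation $d_Y^nf^n=f^{n+1}d_X^n$ together with $\ker(d_Y^n)=\im(d_Y^{n-1})$ and $\ker(d_X^{n+1})=\im(d_X^n)$ from the hypothesis, one checks that $\im(d_{C(f)}^{n-1})$ is that same pullback. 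This proves $C(f)$ is 2-acyclic at $n$. The step I expect to be the main obstacle is the admissibility of $d_{C(f)}^{n-1}$ and $d_{C(f)}^n$ in the previous paragraph: the off-diagonal entry $f^{m+1}$ forbids splitting $d_{C(f)}^m$ into the differentials of $X$ and $Y$, so the argument must be routed through the chain-map property and the 2-acyclicity of the target $Y$ in order to absorb $f^{m+1}$ into $\im(d_Y^m)$; everything else is bookkeeping with pushout/pullback squares (\Cref{prp: Buehler2.12}, \Cref{prp: Buehler2.15}), the obscure axiom, and the Noether lemma.
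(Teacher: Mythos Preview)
Your proposal is correct and aligns with the paper's approach: the paper reduces the cocone case to the cone case via $\Theta C^\ast(f)=-C(f)$ exactly as you do, and for the cone case it simply observes that B\"uhler's original proof of his global Lemma~10.3 applies position by position. Your direct argument---building admissibility of $d_{C(f)}^m$ from the block-triangular form via the obscure axiom, pullbacks along admissible epics, and the Noether lemma---is a faithful local reconstruction of that argument.
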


\begin{proof}
	Although Bühler's Lemma concerns global acyclicity, his arguments prove our local claim. The dual statement then follows by \Cref{con: Sigma-explicit}.\ref{con: Sigma-explicit-theta}.
\end{proof}

\begin{rmk} \label{rmk: gamma-acyclicity}
	If a sequence $X \in \C(\E)$ over an exact category $\E$ is $N$-acyclic at positions $n, n+1, \dots, n+N-1 \in \ZZ$, resp. $n-N+1, \dots, n-1, n \in \ZZ$, then $\gamma^n_r(X)$ is 2-acyclic at positions $n, n+1$, resp. $n-1, n$, for all $r \in \{1, \dots, N-1\}$. The verbatim statement for total $N$-acyclicity follows due to \Cref{rmk: gamma-Hom}.\ref{rmk: gamma-Hom-tac}.
\end{rmk}

\begin{prp} \label{prp: acyclic-ext-closed}
	Let $X \rightarrowtail Y \twoheadrightarrow Z$ be a termsplit short exact sequence of $N$-complexes $X, Y, Z \in \C_N(\E)$ over an exact category $\E$.
	\begin{enumerate}
		\item \label{prp: acyclic-ext-closed-ac} If $X$ is acyclic at positions $n, n+1, \dots, n+N-1 \in \ZZ$ and and $Z$ is acyclic at positions $n-N+1, \dots, n-1, n$, then $Y$ is acyclic at position $n$.
		
		\item \label{prp: acyclic-ext-closed-tac} If $X$ and $Z$ are totally acyclic at positions $n-N+1, \dots, n, \dots, n+N-1 \in \ZZ$, then $Y$ is totally acyclic at position $n$.
	\end{enumerate}
	
	In particular, the subcategories $\C^{\infty, \varnothing}_N(\E)$ and $\C^{\infty, \varnothing^\ast}_N(\E)$ of (totally) acyclic $N$-complexes are both extension-closed in $\C_N(\E)$.
	
\end{prp}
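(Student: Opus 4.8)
The plan is to reduce both parts to the case $N=2$ via the contraction functors $\gamma^n_r$ and then invoke \Cref{lem: Buehler10.3}. The point exploited throughout is that the given short exact sequence is \emph{termsplit}, a property preserved both by each $\gamma^n_r$ and by $\Hom_\E(-,P)$ for $P\in\Prj(\E)$. For part~\ref{prp: acyclic-ext-closed-ac}, I would first apply $\gamma^n_r$ to $X\rightarrowtail Y\twoheadrightarrow Z$: since $\gamma^n_r$ is additive and acts by selecting and composing terms (\Cref{dfn: gamma}) and a termsplit sequence is a termwise biproduct (\Cref{rmk: biproduct-C}), the sequence $\gamma^n_r(X)\rightarrowtail\gamma^n_r(Y)\twoheadrightarrow\gamma^n_r(Z)$ is again a termsplit short exact sequence, now of $2$-complexes over $\E$, with the term at position $n$ unchanged. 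By \Cref{rmk: gamma-acyclicity} the hypotheses turn into: $\gamma^n_r(X)$ is $2$-acyclic at $n,n+1$ and $\gamma^n_r(Z)$ is $2$-acyclic at $n-1,n$, for every $r\in\{1,\dots,N-1\}$; by \Cref{dfn: N-acyclic} it then suffices to show each $\gamma^n_r(Y)$ is $2$-acyclic at $n$. This reduces part~\ref{prp: acyclic-ext-closed-ac} to the case $N=2$.

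So let $X'\rightarrowtail Y'\twoheadrightarrow Z'$ be a termsplit short exact sequence of $2$-complexes with $X'$ $2$-acyclic at $n,n+1$ and $Z'$ $2$-acyclic at $n-1,n$. A termwise splitting gives $Y'^k\cong X'^k\oplus Z'^k$ with $d_{Y'}=\left(\begin{smallmatrix} d_{X'} & h\\ 0 & d_{Z'}\end{smallmatrix}\right)$, and comparing with the explicit cone formula of \Cref{con: Sigma-explicit} identifies $Y'$, as an object of $\C_2(\E)$, with the mapping cone $C(f)$ of a morphism $f$ of $2$-complexes with target $X'$ and source a shift of $Z'$ --- up to sign, $f\colon\Theta^{-1}(-Z')\to X'$ with $f^k=h^{k-1}$, the chain-map condition being precisely the identity forced by $d_{Y'}^2=0$. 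As $\Theta^{-1}$ only shifts positions and negating a differential leaves kernels and images unchanged, the source of $f$ is $2$-acyclic at $n,n+1$ if and only if $Z'$ is $2$-acyclic at $n-1,n$. Hence \Cref{lem: Buehler10.3} applied to $f$ gives that $C(f)\cong Y'$ is $2$-acyclic at $n$, which settles part~\ref{prp: acyclic-ext-closed-ac}.

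For part~\ref{prp: acyclic-ext-closed-tac}, recall that total $N$-acyclicity at a position is, by definition, ordinary $N$-acyclicity there together with $N$-acyclicity of $\Hom_\E(-,P)$ of the complex at the negated position, for all $P\in\Prj(\E)$. Since $X\rightarrowtail Y\twoheadrightarrow Z$ is termwise split, the additive contravariant functor $\Hom_\E(-,P)$ sends it to a termsplit short exact sequence $\Hom_\E(Z,P)\rightarrowtail\Hom_\E(Y,P)\twoheadrightarrow\Hom_\E(X,P)$ of $N$-complexes over $\Ab$ with the reindexing $k\mapsto-k$ of \Cref{ntn: Hom} --- crucially, no injectivity of $P$ is used. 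The hypotheses say that $\Hom_\E(X,P)$ and $\Hom_\E(Z,P)$ are $N$-acyclic at all positions in $\{-n-N+1,\dots,-n+N-1\}$, so applying the already-established part~\ref{prp: acyclic-ext-closed-ac} to this sequence at position $-n$ yields that $\Hom_\E(Y,P)$ is $N$-acyclic at $-n$; together with part~\ref{prp: acyclic-ext-closed-ac} (giving $Y$ acyclic at $n$) this is total $N$-acyclicity of $Y$ at $n$. The final assertion is now immediate: a short exact sequence in $\C_N(\E)$ with its default termsplit exact structure (\Cref{rmk: C_N-subcat}) is a termsplit short exact sequence, so if its outer terms lie in $\C^{\infty,\varnothing}_N(\E)$, resp.\ $\C^{\infty,\varnothing^\ast}_N(\E)$, then by parts~\ref{prp: acyclic-ext-closed-ac}, resp.\ \ref{prp: acyclic-ext-closed-tac}, applied at every $n\in\ZZ$, so does the middle term; hence both subcategories are extension-closed (\Cref{def: ext-closed}).

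The step I expect to be the main obstacle is the $N=2$ cone identification: pinning down the correct shift $\Theta^{-1}$ and the sign of the differential so that \Cref{lem: Buehler10.3} applies verbatim. The remainder --- that $\gamma^n_r$ and $\Hom_\E(-,P)$ preserve termsplit sequences, and the translation of ``acyclicity on a window of positions'' through the contraction --- is bookkeeping, but it is exactly what lets the non-exact functor $\Hom_\E(-,P)$ be used in part~\ref{prp: acyclic-ext-closed-tac}.
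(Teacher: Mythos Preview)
Your proposal is correct and follows essentially the same route as the paper: reduce to $N=2$ via the contraction functors and \Cref{rmk: gamma-acyclicity}, identify $Y'$ with the cone $C(\Theta^{-1}f)$ of a morphism $\Theta^{-1}f\colon -\Theta^{-1}Z'\to X'$ using the termsplit decomposition, and invoke \Cref{lem: Buehler10.3}; for part~\ref{prp: acyclic-ext-closed-tac} apply $\Hom_\E(-,P)$ (which preserves termsplit sequences) and rerun part~\ref{prp: acyclic-ext-closed-ac}. Your identification of the shift and sign is exactly the one the paper uses.
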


\begin{proof} \
	\begin{enumerate}[leftmargin=*]
		\item By \Cref{rmk: gamma-acyclicity}, the claim reduces to the case $N=2$. By hypothesis, there are decompositions $Y^k \cong X^k \oplus Z^k$ for all $k$. We can thus write the $k$th morphism in $Y$ as
		\[d_Y^k = \begin{pmatrix}
			d_X^k & f^k \\
			0 & d_Z^k
		\end{pmatrix}\]
		\noindent
		for some morphism $f^k\colon Z^k \to X^{k+1}$ in $\E$. Since $Y$ is a complex, we have
		\[0=d_Y^2 = \begin{pmatrix}
			d_X^2 & d_X f + f d_Z \\
			0 & d_Z^2
		\end{pmatrix}.\]
		\noindent
		Consequently, $f \circ (-d_Z)  = d_X \circ f$ and $\Theta^{-1}f\colon - \Theta^{-1} Z \to  X$ is a morphism in $\C_N(\E)$ with
		\[d^k_{C(\Theta^{-1}f)} = \begin{pmatrix} d^k_X & (\Theta^{-1} f)^{k+1} \\ 0 & -d_{- \Theta^{-1} Z}^{k+1} \end{pmatrix} = \begin{pmatrix}
			d_X^k & f^k \\
			0 & d_Z^k
		\end{pmatrix} = d_Y^k. \]
		\noindent
		Since $-\Theta^{-1}Z$ and $X$ are acyclic at positions $n, n+1$ by hypothesis, $Y \cong C(\Theta^{-1}f)$ is acyclic at position $n$ due to \Cref{lem: Buehler10.3}.
		
		\smallskip
		
		\item This follows from \ref{prp: acyclic-ext-closed-ac} applied to the termsplit short exact sequence
		\[
			\begin{tikzcd}[sep=large]
				\Hom_\E(Z, P) \ar[r, tail] & \Hom_\E(Y, P) \ar[r, two heads] & \Hom_\E(X, P)
			\end{tikzcd}
		\]
		for each $P \in \Prj(\E)$. \qedhere
	\end{enumerate}
\end{proof}

\begin{lem}\label{lem: gauss}
	If \begin{tikzcd}[cramped] A' \ar[r, tail, "a'"] & A \ar[r, two heads, "a"] & A'' \end{tikzcd} is a (split) short exact sequence in an exact category $\E$, then so are the following sequences:
	
	\begin{enumerate}
		\item \label{lem: gauss-a} \begin{tikzcd}[column sep={25mm,between origins}, row sep={20mm,between origins}, ampersand replacement = \&]
			A' \oplus B \ar{r}{\begin{pmatrix} a' & b \\ 0 & \id_B \end{pmatrix}}  \& A \oplus B  \ar{r}{\begin{pmatrix} a & -ab   \end{pmatrix}}  \& A'' 
		\end{tikzcd}
		for any morphism $b\colon B \to A$ in $\E$, and
		\item \label{lem: gauss-b}	\begin{tikzcd}[column sep={25mm,between origins}, row sep={20mm,between origins}, ampersand replacement = \&]
			A'  \ar{r}{\begin{pmatrix} a' \\ -ca' \end{pmatrix}}  \& A \oplus C \ar{r}{\begin{pmatrix} a & 0 \\  c & \id_C \end{pmatrix}}   \& A'' \oplus C 
		\end{tikzcd}
		for any morphism $c\colon A \to C$ in $\E$.
	\end{enumerate}

\end{lem}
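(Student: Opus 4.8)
The plan is to realize each of the two displayed sequences as an isomorphic copy of a direct sum of the given short exact sequence with a trivial split short exact sequence. Since finite direct sums of short exact sequences are short exact, and in particular any split short exact sequence is short exact (\Cref{prp: Buehler2.9}), and since an exact structure is closed under isomorphisms (\Cref{dfn: exact}), this will settle both parts at once; the parenthetical ``(split)'' is tracked along the way, because a direct sum of split short exact sequences is again split short exact, as is any sequence isomorphic to one.

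For part \ref{lem: gauss-a} I would start from the direct sum of $A' \xrightarrow{a'} A \xrightarrow{a} A''$ with the split short exact sequence $B \xrightarrow{\id_B} B \to 0$, that is, the sequence $A' \oplus B \to A \oplus B \to A''$ with first morphism $\left(\begin{smallmatrix} a' & 0 \\ 0 & \id_B\end{smallmatrix}\right)$ and second morphism $\left(\begin{smallmatrix} a & 0\end{smallmatrix}\right)$, which is short exact by \Cref{prp: Buehler2.9}. Then I would conjugate the middle term by the elementary automorphism $\phi := \left(\begin{smallmatrix} \id_A & b \\ 0 & \id_B\end{smallmatrix}\right)$ of $A \oplus B$, whose inverse is $\left(\begin{smallmatrix} \id_A & -b \\ 0 & \id_B\end{smallmatrix}\right)$. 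A direct matrix computation gives $\phi \circ \left(\begin{smallmatrix} a' & 0 \\ 0 & \id_B\end{smallmatrix}\right) = \left(\begin{smallmatrix} a' & b \\ 0 & \id_B\end{smallmatrix}\right)$ and $\left(\begin{smallmatrix} a & -ab\end{smallmatrix}\right) \circ \phi = \left(\begin{smallmatrix} a & 0\end{smallmatrix}\right)$, so the triple $(\id_{A' \oplus B}, \phi, \id_{A''})$ is an isomorphism of pairs of composable morphisms from the direct-sum sequence above onto the sequence in \ref{lem: gauss-a}. Hence the latter lies in the exact structure.

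Part \ref{lem: gauss-b} is dual to \ref{lem: gauss-a} when read in $\op\E$ (with the morphism $c$ in the role of $b$), see \Cref{rmk: E-op}; alternatively one runs the same argument directly, starting from the direct sum of $A' \xrightarrow{a'} A \xrightarrow{a} A''$ with $0 \to C \xrightarrow{\id_C} C$ and conjugating the middle term by $\psi := \left(\begin{smallmatrix} \id_A & 0 \\ -c & \id_C\end{smallmatrix}\right)$ of $A \oplus C$, with inverse $\left(\begin{smallmatrix} \id_A & 0 \\ c & \id_C\end{smallmatrix}\right)$, so that $(\id_{A'}, \psi, \id_{A'' \oplus C})$ becomes the required isomorphism of pairs. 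In either part, if the starting sequence is split short exact then so is every sequence occurring in the argument, which yields the parenthetical claim. There is no genuine obstacle: the only point that needs care is writing down the two ``Gauss'' automorphisms with the correct signs and verifying the handful of matrix identities above; I would give \ref{lem: gauss-a} in full and deduce \ref{lem: gauss-b} from it by duality.
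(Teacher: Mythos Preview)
Your proof is correct and essentially identical to the paper's: both exhibit the sequences in \ref{lem: gauss-a} and \ref{lem: gauss-b} as isomorphic (via an elementary ``Gauss'' automorphism of the middle term) to the direct sum of the given short exact sequence with a trivial split one, invoking \Cref{prp: Buehler2.9}. The only cosmetic difference is that the paper writes the vertical isomorphism in the opposite direction (using $\phi^{-1}$ rather than your $\phi$), and gives \ref{lem: gauss-b} directly rather than by duality.
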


\begin{proof} The following isomorphisms of short exact sequences in $\E$ prove the claims:
	\begin{enumerate}[leftmargin=*]
		\item 
		\[
		\begin{tikzcd}[column sep={25mm,between origins}, row sep={20mm,between origins}, ampersand replacement = \&]
			A' \oplus B \ar{r}{\begin{pmatrix} a' & b \\ 0 & \id_B \end{pmatrix}} \ar[d, "\id_{A' \oplus B}"] \& A \oplus B  \ar{r}{\begin{pmatrix} a & -ab   \end{pmatrix}} \ar[d, color=white, "\textcolor{black}{\cong}"'] \ar{d}{\begin{pmatrix} \id_A & -b  \\ 0 & \id_B  \end{pmatrix}} \& A''  \ar[d, "\id_{A''}"] \\
			A' \oplus B  \& A \oplus B  \ar[<-]{l}{\begin{pmatrix} a' & 0 \\ 0 & \id_B \end{pmatrix}} \& A''  \ar[<-]{l}{\begin{pmatrix} a & 0 \end{pmatrix}}
		\end{tikzcd}
		\]
		
		\item 
		\[
		\begin{tikzcd}[column sep={25mm,between origins}, row sep={20mm,between origins}, ampersand replacement = \&]
			A'  \ar{r}{\begin{pmatrix} a' \\ -ca' \end{pmatrix}} \ar[d, "\id_{A'}"] \& A \oplus C \ar{r}{\begin{pmatrix} a & 0 \\  c & \id_C \end{pmatrix}} \ar[d, color=white, "\textcolor{black}{\cong}"'] \ar{d}{\begin{pmatrix} \id_A & 0 \\ c & \id_C \end{pmatrix}} \& A'' \oplus C \ar[d, "\id_{A'' \oplus C}"] \\
			A' \& A  \oplus C \ar[<-]{l}{\begin{pmatrix} a' \\ 0\end{pmatrix}} \& A'' \oplus C \ar[<-]{l}{\begin{pmatrix} a & 0  \\ 0 & \id_C \end{pmatrix}}
		\end{tikzcd}
		\]
	\end{enumerate}
\end{proof}

\begin{con} \label{con: cone-gamma}
	Let $f\colon X \to Y$ be a morphism of $N$-complexes over an exact category $\E$. Using $\gamma = \gamma^n_r$ and $\Z = \Z^n_r$ from \Cref{ntn: gamma-indices}, we have, see \Cref{con: I-and-P} and \Cref{rmk: mu-acyclic}.\ref{rmk: mu-acyclic-formula},
	\[ \gamma I_N(X) = \bigoplus_{k \in \ZZ} \gamma (\mu^k_N(X^k)) = \bigoplus_{k \in \ZZ} \mu^{\gamma(k)}_2(X^k),\]
	 which is a direct sum of $I_2(\gamma X) = \bigoplus_{k \in \Z} \mu^{\gamma (k)}_2(X^k)$ and  $C = \bigoplus_{k \in \ZZ \backslash \Z} \mu^{\gamma(k)}_2(X^k)$. This leads to a split short exact sequence 
	\begin{center}
		\begin{tikzcd}[sep=large]
			I_2(\gamma X) \ar[r, tail, "j_X"] & \gamma I_N(X) \ar[r, two heads, "q_X"]& C
		\end{tikzcd}
	\end{center}
	with $C \in \Prj(\C_N(\E)) \cap \C^{\infty, \varnothing}_N(\E)$, see \Cref{lem: mu-ProjInj} and \Cref{rmk: mu-acyclic}.\ref{rmk: mu-acyclic-statement}. This persists after twisting the maps by $ D^k := \left( \begin{array}{c}
		d_X \\
		\vdots \\
		d_X^{\{k\}} 
	\end{array} \right)$ as follows:
	\[ j_X = \begin{pmatrix}
		1 & 0 \\
		D^{r-1} & 0 \\
		0 & 1 \\
		0 & D^{N-r-1}
	\end{pmatrix}  \hspace{5mm} \text{ and } \hspace{5mm} q_X = \begin{pmatrix}
	-D^{r-1} & E_{r-1} & 0 & \\
	0 & 0 & -D^{N-r-1} & E_{N-r-1}
\end{pmatrix}.\]
	 Indeed, permuting the direct summands yields
	 \[(j_X, p_X) \cong \left( \begin{pmatrix}
	 	D \\ E_2
	 \end{pmatrix}, \begin{pmatrix}
	 E_{N-2} & -D
 \end{pmatrix} \right), \text{ where } D := \begin{pmatrix} D^{r-1} & 0 \\ 0 & D^{N-r-1} \end{pmatrix}.\]

	This is a split short exact sequence due to \Cref{lem: gauss}.\ref{lem: gauss-a} applied to $a' = 0$, $b=D$ and $a = E_{N-2}$. With $i_{\gamma X} = \begin{pmatrix} 1 \\ d_X^{\{r\}} \end{pmatrix}$ and $\gamma i_X = \begin{pmatrix}
		1 \\ D^{N-1}
	\end{pmatrix}$, see \Cref{con: cone,con: I-and-P}, we obtain
	\[j_X \circ i_{\gamma X} = \gamma i_{X}.\]
	We now apply the exact functor $\gamma$ to the sequence \eqref{eqn: std-sequence} and relate the result with $(S(\gamma f))$, see \Cref{con: cone,con: I-and-P}. By \Cref{lem: Buehler3.7}, the commutative square given by the preceding formula can be extended to the following commutative diagram in $\C_N(\E)$ with termsplit short exact rows and split columns:

	\begin{align} \label{diag: cone-contract}
		\begin{tikzcd}[column sep={30mm,between origins}, row sep={25mm,between origins}, ampersand replacement = \&]
			\gamma X \ar[r, tail, "\begin{pmatrix} i_{\gamma X} \\ -\gamma f \end{pmatrix}"] \ar[d, tail, "\id"] \& I_2(\gamma X) \oplus \gamma Y  \ar[r, two heads] \ar[tail]{d}{\begin{pmatrix} j_X & 0 \\ 0 & \id_{\gamma Y} \end{pmatrix}} \& C(\gamma f) \ar[d, dashed, tail, "c_X"] \\
			\gamma X \ar[r, tail, "\begin{pmatrix} \gamma i_{X} \\ \gamma(-f) \end{pmatrix}"] \ar[d, two heads] \& \gamma I_N(X) \oplus \gamma Y \ar[r, two heads] \ar[two heads]{d}{\begin{pmatrix} q_X & 0 \end{pmatrix}} \& \gamma C(f) \ar[d, two heads, dashed] \\
			0 \ar[r, tail] \& C \ar[r, two heads, "\id"] \& C
		\end{tikzcd}
	\end{align}
\end{con}

We summarize this result in the following

\begin{lem}\label{lem: cone-acyclic-prep}
	Let $f\colon X \to Y$ a morphism of $N$-complexes over an exact category $\E$. Consider the contraction functor $\gamma^n_r$, for some $n \in \ZZ$ and $r \in \{1,\dots,N-1\}$. Then there is a split admissible monic $c_X\colon C(\gamma^n_r(f)) \rightarrowtail \gamma^n_r(C(f))$ with cokernel in $\Prj(\C_N(\E)) \cap \C^{\infty, \varnothing^\ast}_N(\E)$. In particular:
	\begin{enumerate}
		\item \label{lem: cone-acyclic-prep-iso} The morphism $c_X$ is an isomorphism in $\K_2(\E)$. 
		
		\item \label{lem: cone-acyclic-prep-2} If $C(\gamma^n_r(f))$ is (totally) 2-acyclic at position $n$, then so is $\gamma^n_r(C(f))$.\qed
	\end{enumerate}
\end{lem}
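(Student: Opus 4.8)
The statement merely repackages \Cref{con: cone-gamma}, so the plan is to read off what has already been built there. The rightmost column of diagram~\eqref{diag: cone-contract} --- obtained in that construction from the Noether lemma \Cref{lem: Buehler3.7}, applied to the displayed $3\times 3$ diagram with termsplit short exact rows and split first two columns --- is a short exact sequence $C(\gamma^n_r(f)) \overset{c_X}{\rightarrowtail} \gamma^n_r(C(f)) \twoheadrightarrow C$ in $\C_2(\E)$, whose cokernel is the termwise-finite direct sum $C = \bigoplus_{k \in \ZZ \setminus \Z^n_r} \mu^{\gamma^n_r(k)}_2(X^k)$ of interval $2$-complexes. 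First I would record the properties of $C$: it is projective (and injective) by \Cref{lem: mu-ProjInj} and \Cref{con: I-and-P}, and each summand $\mu^{\gamma^n_r(k)}_2(X^k)$, hence $C$ itself, is \emph{totally} $2$-acyclic at every position by \Cref{rmk: mu-acyclic}.\ref{rmk: mu-acyclic-statement} together with the fact that (total) $2$-acyclicity at a position is preserved by finite direct sums (direct sums of short exact sequences are short exact, \Cref{prp: Buehler2.9}, and $\Hom_\E(-,P)$ preserves finite sums). So the cokernel of $c_X$ lies in $\Prj(\C_2(\E)) \cap \C^{\infty, \varnothing^\ast}_2(\E)$. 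Since $C$ is projective, the admissible epic $\gamma^n_r(C(f)) \twoheadrightarrow C$ splits (\Cref{prp: proj-equiv}), so the sequence is split (\Cref{rmk: Buehler7.4}); in particular $c_X$ is a \emph{split} admissible monic and $\gamma^n_r(C(f)) \cong C(\gamma^n_r(f)) \oplus C$.

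For part~\ref{lem: cone-acyclic-prep-iso}, I would note that the short exact sequence above induces a distinguished triangle $C(\gamma^n_r(f)) \to \gamma^n_r(C(f)) \to C \to \Sigma C(\gamma^n_r(f))$ in $\K_2(\E)$, see \Cref{lem: ses-triangle} and \Cref{thm: C-Frob}. As a direct sum of contractible interval complexes, $C$ is null-homotopic, i.e.\ $C = 0$ in $\K_2(\E)$ (\Cref{dfn: null-homotopic}); since a morphism with vanishing cone is an isomorphism, $c_X$ is an isomorphism in $\K_2(\E)$.

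For part~\ref{lem: cone-acyclic-prep-2}, I would exploit the splitting $\gamma^n_r(C(f)) \cong C(\gamma^n_r(f)) \oplus C$ directly: (total) $2$-acyclicity at the single position $n$ is inherited by a finite direct sum of $2$-complexes from its summands, again via \Cref{prp: Buehler2.9} applied to the factorization sequence at $n$ and to its image under $\Hom_\E(-,P)$. As $C(\gamma^n_r(f))$ is (totally) $2$-acyclic at $n$ by hypothesis and $C$ is (totally) $2$-acyclic everywhere, so is $\gamma^n_r(C(f))$.

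The substance --- the explicit twisted splitting maps $j_X$, $q_X$ and the termsplit $3\times 3$ diagram --- was already produced in \Cref{con: cone-gamma}, so essentially no obstacle remains. The only points needing a moment of attention are: (i) upgrading ``admissible monic'' to ``split admissible monic'', which follows from projectivity of the cokernel; (ii) checking that $C$ is \emph{totally} --- not merely --- $2$-acyclic; and (iii) in part~\ref{lem: cone-acyclic-prep-2}, not quoting \Cref{prp: acyclic-ext-closed} directly --- whose hypotheses would demand acyclicity of $C(\gamma^n_r(f))$ at $n$ \emph{and} $n+1$ --- but instead using that the extension splits.
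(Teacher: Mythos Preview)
Your proposal is correct and follows exactly the intended route: the paper's proof consists solely of the preceding \Cref{con: cone-gamma}, and you have spelled out precisely the details left implicit there (split via projectivity of the cokernel, $C=0$ in $\K_2(\E)$, and the direct-sum argument for local (total) acyclicity). Your caution in part~\ref{lem: cone-acyclic-prep-2} about not invoking \Cref{prp: acyclic-ext-closed} --- which would demand acyclicity at two consecutive positions --- but instead exploiting the splitting is well observed and is indeed the point.
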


\begin{prp} \label{prp: cone-acyclic} 
	Let $f\colon X \to Y$ be a morphism of $N$-complexes over an exact category $\E$. If $X$ and $Y$ are (totally) $N$-acyclic at positions $n, n+1, \dots, n+N-1 \in \ZZ$, then the cone $C(f)$ of $f$ is (totally) $N$-acyclic at position $n$. Dually, if $X$ and $Y$ are (totally) $N$-acyclic at positions $n-N+1, \dots, n-1, n \in \ZZ$, then the cocone $C^\ast(f)$ of $f$ is (totally) $N$-acyclic at position $n$.
\end{prp}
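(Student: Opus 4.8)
The plan is to reduce everything to the ordinary (non-total) acyclicity statements in the case $N=2$, for which \Cref{lem: Buehler10.3} and \Cref{lem: cone-acyclic-prep} are tailor-made, and then to bootstrap the total case by applying the ordinary case to the $\Hom$-complexes.

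First I would settle the ordinary acyclic \emph{cone} statement. Assume $X,Y$ are $N$-acyclic at positions $n,n+1,\dots,n+N-1$. By \Cref{dfn: N-acyclic} it suffices to show that $\gamma^n_r(C(f))$ is $2$-acyclic at position $n$ for every $r\in\{1,\dots,N-1\}$. By \Cref{lem: cone-acyclic-prep}.\ref{lem: cone-acyclic-prep-2} this follows once $C(\gamma^n_r(f))$ is $2$-acyclic at $n$; and by \Cref{rmk: gamma-acyclicity} the hypothesis yields that $\gamma^n_r(X)$ and $\gamma^n_r(Y)$ are $2$-acyclic at positions $n,n+1$, so \Cref{lem: Buehler10.3} applies. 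The ordinary acyclic \emph{cocone} statement I would obtain dually: either by running the same argument with the cocone halves of \Cref{lem: Buehler10.3} and the evident dualization of \Cref{con: cone-gamma} and \Cref{lem: cone-acyclic-prep} (cf.\ \Cref{con: Sigma-explicit}.\ref{con: Sigma-explicit-theta}), or by passing to $\op\E$ via \Cref{rmk: E-op}, noting that $C^\ast$ in $\E$ is $C$ in $\op\E$ up to reflecting indices and that ordinary $N$-acyclicity is self-dual under this reflection.

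For the total statements I would invoke \Cref{rmk: I-P-Hom}.\ref{rmk: I-P-Hom-cone}, which gives $\Hom_\E(C(f),P)\cong C^\ast(\Hom_\E(f,P))$ and $\Hom_\E(C^\ast(f),P)\cong C(\Hom_\E(f,P))$ for all $P\in\Prj(\E)$. Suppose $X,Y$ are totally $N$-acyclic at $n,\dots,n+N-1$. Then $C(f)$ is $N$-acyclic at $n$ by the ordinary case, and by definition of total $N$-acyclicity the complexes $\Hom_\E(X,P),\Hom_\E(Y,P)$ are $N$-acyclic at the positions $-n,-(n+1),\dots,-(n+N-1)$, that is, at $(-n)-N+1,\dots,(-n)-1,-n$. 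Applying the ordinary acyclic cocone statement to $\Hom_\E(f,P)$ at position $-n$ gives that $C^\ast(\Hom_\E(f,P))\cong\Hom_\E(C(f),P)$ is $N$-acyclic at $-n$; since $P$ was arbitrary, $C(f)$ is totally $N$-acyclic at $n$. The total cocone case is symmetric: if $X,Y$ are totally $N$-acyclic at $n-N+1,\dots,n$, then $\Hom_\E(X,P),\Hom_\E(Y,P)$ are $N$-acyclic at $-n,\dots,-n+N-1$, so the ordinary acyclic cone statement gives $C(\Hom_\E(f,P))\cong\Hom_\E(C^\ast(f),P)$ $N$-acyclic at $-n$, and $C^\ast(f)$ is totally $N$-acyclic at $n$.

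The main obstacle is not conceptual but bookkeeping: one must line up the index ranges, keeping straight that $\Hom_\E(-,P)$ reflects positions ($n\mapsto -n$) and that the ``cone range'' $n,\dots,n+N-1$ for $X,Y$ is carried by $\Hom_\E(-,P)$ precisely onto the ``cocone range'' $(-n)-N+1,\dots,-n$ for the $\Hom$-complexes, and conversely. The only other point requiring care is that a cocone analogue of \Cref{lem: cone-acyclic-prep} is used; this is harmless, being the literal dual of \Cref{con: cone-gamma}, but it should be stated explicitly or circumvented by working in $\op\E$.
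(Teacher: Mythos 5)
Your proposal is correct, and its non-total part is exactly the paper's argument: contract with $\gamma^n_r$, invoke \Cref{rmk: gamma-acyclicity} and \Cref{lem: Buehler10.3} to get $2$-acyclicity of $C(\gamma^n_r(f))$ at $n$, and transfer to $\gamma^n_r(C(f))$ via \Cref{lem: cone-acyclic-prep}.\ref{lem: cone-acyclic-prep-2}; like the paper, you leave the cocone half to duality (the paper's own proof also only writes out the cone case). Where you genuinely diverge is in the propagation of the word \emph{totally}. The paper stays at the $2$-level: it applies \Cref{rmk: I-P-Hom}.\ref{rmk: I-P-Hom-cone} to the contracted morphism $\gamma^n_r(f)$, uses the cocone half of \Cref{lem: Buehler10.3} to get total $2$-acyclicity of $C(\gamma^n_r(f))$, transfers it with the \enquote{totally} clause of \Cref{lem: cone-acyclic-prep}.\ref{lem: cone-acyclic-prep-2}, and concludes with \Cref{rmk: gamma-Hom}.\ref{rmk: gamma-Hom-tac}. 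You instead bootstrap at the $N$-level: having both ordinary halves, you apply \Cref{rmk: I-P-Hom}.\ref{rmk: I-P-Hom-cone} to $f$ itself and feed $\Hom_\E(f,P)$ into the ordinary cocone (resp.\ cone) statement over $\Ab$ at position $-n$. Your index bookkeeping checks out ($\Hom_\E(-,P)$ carries the cone range onto the cocone range and vice versa, and total acyclicity of $C(f)$ only asks for plain acyclicity of $\Hom_\E(C(f),P)$), and there is no circularity since the ordinary statements are established first. The trade-off: your route makes the total case a clean formal consequence of the ordinary case, but it forces you to have the $N$-level ordinary cocone statement available before proving total acyclicity of the cone, and hence to supply (or invoke via $\op\E$) the dual of \Cref{con: cone-gamma} and \Cref{lem: cone-acyclic-prep} — a point you correctly flag; the paper avoids this because for $N=2$ the cocone half of \Cref{lem: Buehler10.3} is already settled by \Cref{con: Sigma-explicit}.\ref{con: Sigma-explicit-theta}, so its total-case argument never needs the $N$-level dual machinery.
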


\begin{proof} By \Cref{rmk: gamma-acyclicity}, $\gamma^n_r(X)$ and $\gamma^n_r(Y)$ are (totally) 2-acyclic at positions $n$, $n+1$ for all $r \in \{1,\dots,N-1\}$. \Cref{lem: Buehler10.3,lem: cone-acyclic-prep}.\ref{lem: cone-acyclic-prep-2} then yield 2-acyclicity of $C(\gamma^n_r(f))$, and hence of $\gamma^n_r(C(f))$ at position $n$ for all $r$. This means exactly that $C(f)$ is $N$-acyclic at position $n$.
	Assuming total acyclicity, $\Hom_\E(\gamma^n_r(X), P)$ and $\Hom_\E(\gamma^n_r(Y), P)$ are 2-acyclic at positions $-n-1, -n$ for all $r$ and $P \in \Prj(\E)$.
	
	Using \Cref{rmk: I-P-Hom}.\ref{rmk: I-P-Hom-cone}, $\Hom_\A(C(\gamma^n_r(f)), P) \cong C^\ast(\Hom_\A(\gamma^n_r(f), P))$ is then 2-acyclic at position $-n$ by \Cref{lem: Buehler10.3}. This proves total 2-acyclicity of $C(\gamma^n_r(f))$, and hence of $\gamma^n_r(C(f))$ at $n$ for all $r$ due to \Cref{lem: cone-acyclic-prep}.\ref{lem: cone-acyclic-prep-2}. By \Cref{rmk: gamma-Hom}.\ref{rmk: gamma-Hom-tac}, $C(f)$ is now totally $N$-acyclic at position $n$.
\end{proof}

In view of \Cref{con: Sigma-explicit}.\ref{con: Sigma-explicit-cone}, we obtain, as a special case, the following

\begin{cor} \label{cor: Sigma-acyclic}
	Let $X \in \C_N(\E)$ be an $N$-complex over an exact category $\E$. If $X$ is (totally) acyclic at positions $n, n+1, \dots, n+N-1 \in \ZZ$, then $\Sigma X$ is (totally) acyclic at position $n$. Dually, if $X$ is (totally) acyclic at positions $n-N+1, \dots, n-1, n \in \ZZ$, then $\Sigma^{-1} X$ is (totally) acyclic at position $n$. \qed
\end{cor}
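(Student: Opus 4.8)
The plan is to deduce \Cref{cor: Sigma-acyclic} directly from \Cref{prp: cone-acyclic} by taking $f$ to be a zero morphism. First I would record that the zero $N$-complex is totally $N$-acyclic at every position $m \in \ZZ$: one has $Z^m_{(r)}(0) = 0 = B^m_{(N-r)}(0)$ for all $r \in \{1,\dots,N-1\}$, so $0$ is $N$-acyclic by \Cref{rmk: gamma-homology}, and $\Hom_\E(0, P) = 0$ is $N$-acyclic for every $P \in \Prj(\E)$, whence $0$ is totally $N$-acyclic by \Cref{dfn: N-acyclic}.

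Next I would apply \Cref{prp: cone-acyclic} to the zero morphism $f\colon X \to 0$. By hypothesis $X$ is (totally) $N$-acyclic at the positions $n, n+1, \dots, n+N-1$, and the target $0$ is (totally) $N$-acyclic everywhere, so both source and target satisfy the assumption of \Cref{prp: cone-acyclic} there; the conclusion is that $C(f)$ is (totally) $N$-acyclic at position $n$. Since $C(X \to 0) = \Sigma X$ by \Cref{con: Sigma-explicit}.\ref{con: Sigma-explicit-cone}, this gives the first assertion. For the dual assertion, I would instead apply the dual half of \Cref{prp: cone-acyclic} to the zero morphism $f\colon 0 \to X$, whose source and target are (totally) $N$-acyclic at $n-N+1, \dots, n-1, n$; then $C^\ast(0 \to X) = \Sigma^{-1} X$ is (totally) $N$-acyclic at position $n$, again by \Cref{con: Sigma-explicit}.\ref{con: Sigma-explicit-cone}.

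I do not expect a genuine obstacle here, since all the work is done by \Cref{prp: cone-acyclic}; the only thing to verify is the trivial observation on the zero complex. The single point requiring a moment's attention is that the index ranges match exactly: \Cref{prp: cone-acyclic} needs the relevant acyclicity on all of $n, \dots, n+N-1$ (resp.\ $n-N+1, \dots, n$), which is precisely what is assumed in the statement, so nothing is lost in the specialization.
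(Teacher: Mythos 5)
Your proposal is correct and is exactly the paper's argument: the corollary is obtained as the special case of \Cref{prp: cone-acyclic} for the zero morphisms $X \to 0$ and $0 \to X$, using $C(X \to 0) = \Sigma X$ and $C^\ast(0 \to X) = \Sigma^{-1}X$ from \Cref{con: Sigma-explicit}.\ref{con: Sigma-explicit-cone}. The observation that the zero complex is (totally) acyclic everywhere is the only extra check, and you handle it correctly.
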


\begin{ntn} \label{ntn: Verdier-notation}
	We use Verdier's notation for various subcategories of $N$-complexes:
	\begin{itemize}[leftmargin=*]
		\item  For an additive category $\A$, let $\C^-_N(\A)$, $\C^+_N(\A)$ and $\C^b_N(\A)$ denote the subcategories of $\C_N(\A)$ consisting of $N$-complexes $X \in \C_N(\A)$ with $X^k = 0$ for any sufficiently large, any sufficiently small, and almost all $k \in \ZZ$, respectively. These subcategories are extension-closed and hence fully exact in $\C_N(\A)$, see \Cref{lem: fullyexact}.\ref{lem: fullyexact-extclosed}. To afford the following notation, we set $\C^{\infty}_N(\A) := \C_N(\A)$. For $\# \in \{\infty, +,-,b\}$, the projectively stable category of $\C_N^{\#}(\A)$ is denoted by $\K_N^{\#}(\A)$.
		\item For an exact category $\E$ and $\# \in \{\infty, +,-,b\}$, let $\C^{\#, -}_N(\E)$, $\C^{\#, +}_N(\E)$, $\C^{\#, b}_N(\E)$ and $\C^{\#, \varnothing}_N(\E)$ denote the subcategories of $\C^\#_N(\E) =: \C^{\#, \infty}_N(\E)$ consisting of $N$-complexes which are acyclic at all sufficiently large, sufficiently small, almost all, and all positions, respectively. In addition, we denote by $\C^{\#, \varnothing^\ast}_N(\E)$ the subcategory of $\C^{\#, \varnothing}_N(\E)$ consisting of totally acyclic $N$-complexes. For $\natural \in \{\infty, +, -, b, \varnothing, \varnothing^\ast\}$, the associated projectively stable category is denoted by $\K_N^{\#, \natural}(\A)$.
		
		\item It convenient to use the obvious partial ordering
		\begin{center}
			\begin{tikzcd}[sep={7.5mm,between origins}]
				& \infty \ar[-, rd] & \\
				+  \ar[-, rd]  \ar[-, ru] && - \\
				& b  \ar[-, ru] \ar[-, d] \\
				& \varnothing  \ar[-, d] \\
				& \varnothing^\ast.
			\end{tikzcd}
		\end{center}
	
		\item Let $\A$ be a subcategory of an exact category $\E$. For $\# \in \{\infty, +,-,b\}$ and $\natural \in \{\infty, +, -, b, \varnothing, \varnothing^\ast\}$, we abbreviate $\C^{\#, \natural_\E}_N(\A) := \C_N(\A) \cap \C^{\#, \natural}_N(\E)$, and denote its projectively stable category by $\K^{\#, \natural_\E}_N(\A)$.  There are two notable special cases:
		\begin{itemize}
			\item $\APC_N(\E) := \C^{\infty, \varnothing_\E}(\Prj(\E))$, the category of acyclic $N$-complexes of projectives, and
			
			\item $\TAPC_N(\E) := \C^{\infty, \varnothing^\ast_\E}(\Prj(\E))$, the category of totally acyclic $N$-complexes of projectives.
		\end{itemize}
		They coincide if $\E$ is Frobenius, see \Cref{rmk: Frobenius-tac}.
	\end{itemize}
\end{ntn}

\begin{thm} \label{thm: Verdier-notation}
	Let $\E$ be an exact category, $\# \in \{\infty, +,-,b\}$, and $\natural \in \{\infty, +, -, b, \varnothing, \varnothing^\ast\}$.
	\begin{enumerate}
		\item \label{thm: Verdier-notation-a} The subcategory $\C_N^{\#, \natural}(\E)$ is extension-closed and hence fully exact in $\C_N(\E)$.
		
		\item \label{thm: Verdier-notation-b} The category $\C_N^{\#, \natural}(\E)$ is a sub-Frobenius category of $\C_N(\E)$.
		
		\item \label{thm: Verdier-notation-c}  There are the following diagrams of canonical fully faithful, triangulated functors:
		\begin{center}
			\begin{tikzcd}[sep={20mm,between origins}]
				&&& \K^+_N(\E) \ar[rd] && \\
				& \K^{+, \varnothing}_N(\E) \ar[rd] \ar[r] & \K^{+, b}_N(\E) \ar[ru] \ar[rd] && \K^{\infty, +}_N(\E) \ar[rd] \\
				\K^{b, \varnothing}_N(\E) \ar[rd] \ar[ru] \ar[r] & \K^b_N(\E) \ar[ru, crossing over] & \K^{\infty, \varnothing}_N(\E) \ar[r] & \K^{\infty, b}_N(\E) \ar[ru] \ar[rd] && \K_N(\E) \\
				& \K^{-, \varnothing}_N(\E) \ar[ru] \ar[r] & \K^{-, b}_N(\E) \ar[ru] \ar[rd] \ar[lu, <-, crossing over] && \K^{\infty, -}_N(\E) \ar[ru] \\
				&&& \K^-_N(\E) \ar[ru]
			\end{tikzcd}
			\begin{tikzcd}[sep={20mm,between origins}]
				& \K^{+, \varnothing^\ast}_N(\E) \ar[rd] \ar[r] & \K^{+, \varnothing}_N(\E) \ar[rd] & \\
				\K^{b, \varnothing^\ast}_N(\E) \ar[rd] \ar[ru] \ar[r] & \K^{b, \varnothing}_N(\E) \ar[ru, crossing over] & \K^{\infty, \varnothing^\ast}_N(\E) \ar[r] & \K^{\infty, \varnothing}_N(\E) \\
				& \K^{-, \varnothing^\ast}_N(\E) \ar[ru] \ar[r] & \K^{-, \varnothing}_N(\E) \ar[ru]  \ar[lu, <-, crossing over]
			\end{tikzcd}
		\end{center}
	\end{enumerate}
	For a subcategory $\A$ of $\E$, these statements hold more generally with $\E$ replaced by $\A$, and $\natural$ by $\natural_\E$.
\end{thm}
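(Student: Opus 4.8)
The plan is to prove the three parts in order, using throughout that $\C_N(\E)$ with its default (termsplit) exact structure is nothing but $\C_N$ of the underlying additive category of $\E$, hence Frobenius by \Cref{thm: C-Frob}, with projective--injective objects the $\mu^s_t(A)$ of \Cref{lem: mu-ProjInj} and canonical resolutions $i_X\colon X\rightarrowtail I(X)$ and $p_X\colon P(X)\twoheadrightarrow X$ from \Cref{con: I-and-P}. For part \ref{thm: Verdier-notation-a}, recall that $\C_N^{\#,\natural}(\E)$ is cut out of $\C_N(\E)$ by a vanishing-of-terms condition governed by $\#$ and a (total) acyclicity condition governed by $\natural$. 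Given a termsplit short exact sequence $X\rightarrowtail Y\twoheadrightarrow Z$ in $\C_N(\E)$ with $X,Z\in\C_N^{\#,\natural}(\E)$, the termwise splittings give $Y^k\cong X^k\oplus Z^k$, so the $\#$-condition passes to $Y$ at once. For the $\natural$-condition I would invoke the local content of \Cref{prp: acyclic-ext-closed} (its two statements, read at a single position): if $X$ is (totally) acyclic at positions $n,n+1,\dots,n+N-1$ and $Z$ at positions $n-N+1,\dots,n-1,n$, then $Y$ is (totally) acyclic at position $n$. Since a bounded shift of the window of positions involved does not affect the property of holding at all, at almost all, at all sufficiently large, or at all sufficiently small positions (and $\natural=\infty$ is vacuous), $Y$ inherits the $\natural$-condition. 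Thus $\C_N^{\#,\natural}(\E)$ is extension-closed, and \Cref{lem: fullyexact}.\ref{lem: fullyexact-extclosed} makes it fully exact with the inherited termsplit structure.

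For part \ref{thm: Verdier-notation-b}, I would show that $\C_N^{\#,\natural}(\E)$ has enough $\C_N(\E)$-projectives and $\C_N(\E)$-injectives. For $X\in\C_N^{\#,\natural}(\E)$ the objects $I(X)=\bigoplus_k\mu^k_N(X^k)$ and $P(X)=\bigoplus_k\mu^k_N(X^{k-N+1})$ are projective--injective in $\C_N(\E)$, and they lie in $\C_N^{\#,\natural}(\E)$: each summand is supported in $N$ consecutive degrees attached to a term of $X$, so the $\#$-condition is inherited, and each $\mu^k_N(\,\cdot\,)$ is totally $N$-acyclic by \Cref{rmk: mu-acyclic}.\ref{rmk: mu-acyclic-statement}, while at every position only finitely many summands contribute, so total $N$-acyclicity passes to the direct sum by \Cref{cor: ses-summand}. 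Hence $i_X$ and $p_X$ exhibit enough $\C_N(\E)$-injectives and $\C_N(\E)$-projectives, so $\C_N^{\#,\natural}(\E)$ is a sub-Frobenius category of $\C_N(\E)$. By \Cref{lem: sub-Frobenius}.\ref{lem: sub-Frobenius-b} it is itself Frobenius and the canonical functor $\K_N^{\#,\natural}(\E)\to\K_N(\E)$ is fully faithful and triangulated.

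For part \ref{thm: Verdier-notation-c}, the previous point lets me identify each $\K_N^{\#,\natural}(\E)$, via the convention identifying a category with the image of a fully faithful functor, with a strictly full triangulated subcategory of $\K_N(\E)$. Every arrow in the two diagrams has the form $\K_N^{\#_1,\natural_1}(\E)\to\K_N^{\#_2,\natural_2}(\E)$ with $\#_1\leq\#_2$ and $\natural_1\leq\natural_2$ in the displayed partial orders, in which case the defining conditions only weaken, so $\C_N^{\#_1,\natural_1}(\E)\subseteq\C_N^{\#_2,\natural_2}(\E)$ and the induced functor is an inclusion of triangulated subcategories of $\K_N(\E)$, hence fully faithful and triangulated; all triangles and squares commute because every functor in sight is a subcategory inclusion into $\K_N(\E)$. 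Finally, for a subcategory $\A$ of $\E$ I would rerun the argument with the Frobenius category $\C_N(\A)$ (Frobenius by \Cref{thm: C-Frob}) in place of $\C_N(\E)$: a termsplit short exact sequence in $\C_N(\A)$ is one in $\C_N(\E)$ with all terms in $\C_N(\A)$, so part \ref{thm: Verdier-notation-a} for $\E$ puts the middle term in $\C_N(\A)\cap\C_N^{\#,\natural}(\E)=\C_N^{\#,\natural_\E}(\A)$; and for $X\in\C_N^{\#,\natural_\E}(\A)$ the objects $I(X),P(X)$ from \Cref{con: I-and-P} applied to the additive category $\A$ already lie in $\C_N(\A)$ and, by the same computation as above, in $\C_N^{\#,\natural}(\E)$, hence in $\C_N^{\#,\natural_\E}(\A)$, giving enough $\C_N(\A)$-projectives and $\C_N(\A)$-injectives. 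Parts \ref{thm: Verdier-notation-b} and \ref{thm: Verdier-notation-c} then follow verbatim with $\K_N(\A)$ replacing $\K_N(\E)$.

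The main obstacle I anticipate is the acyclicity bookkeeping in part \ref{thm: Verdier-notation-a}: one must check with care that the $N$-fold window shift built into the hypotheses of \Cref{prp: acyclic-ext-closed} is harmless for each of $\natural\in\{+,-,b,\varnothing,\varnothing^\ast\}$, together with the verification that the canonical resolutions $I(X)$ and $P(X)$ never leave the subcategory — this last point is exactly what drives the sub-Frobenius step in part \ref{thm: Verdier-notation-b}, and hence everything downstream.
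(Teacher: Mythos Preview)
Your overall approach matches the paper's, but there is a genuine gap in part \ref{thm: Verdier-notation-b}. You verify that $I(X)$ and $P(X)$ lie in $\C_N^{\#,\natural}(\E)$ and are projective--injective in $\C_N(\E)$, and then conclude that $i_X$ and $p_X$ exhibit enough $\C_N(\E)$-injectives and $\C_N(\E)$-projectives. But the definition of ``enough $\C_N(\E)$-projectives'' requires $p_X\colon P(X)\twoheadrightarrow X$ to be an admissible epic \emph{in the subcategory} $\C_N^{\#,\natural}(\E)$. Since this subcategory is fully exact, its admissible epics are precisely those termsplit epics whose kernel also lies in $\C_N^{\#,\natural}(\E)$. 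The kernel of $p_X$ is $\Sigma^{-1}X$ and the cokernel of $i_X$ is $\Sigma X$, so you must check that $\Sigma X,\Sigma^{-1}X\in\C_N^{\#,\natural}(\E)$. Extension-closedness alone does not give you this: $\Sigma X$ is a cokernel, not an extension, of objects in the subcategory.

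The paper fills this gap by invoking \Cref{con: Sigma-explicit}.\ref{con: Sigma-explicit-formula} for the $\#$-condition (the explicit formulas show $\Sigma X$ and $\Sigma^{-1}X$ are supported in a bounded shift of the support of $X$) and \Cref{cor: Sigma-acyclic} for the $\natural$-condition (if $X$ is (totally) acyclic at positions $n,\dots,n+N-1$ then $\Sigma X$ is so at $n$, and dually). Once $\Sigma^{\pm 1}X\in\C_N^{\#,\natural}(\E)$ is established, $i_X$ and $p_X$ are admissible in the subcategory and your argument goes through. As a minor point, your citation of \Cref{cor: ses-summand} for the acyclicity of $I(X)$ and $P(X)$ is backwards: that corollary extracts summands from an exact direct sum, whereas you need the converse direction, namely \Cref{prp: Buehler2.9}.
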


\begin{proof} Part \ref{thm: Verdier-notation-a} follows from \Cref{lem: fullyexact}.\ref{lem: fullyexact-extclosed} and \Cref{prp: acyclic-ext-closed}. For any $X \in \C^{\#, \natural}_N(\E)$, both $I(X)$ and $P(X)$ lie in $\C^{\#, \natural}_N(\E)$ due to \Cref{con: I-and-P} and \Cref{rmk: mu-acyclic}.\ref{rmk: mu-acyclic-statement}, and in $ \Prj(\C_N(\E))$  due to \Cref{lem: mu-ProjInj}. Since also $\Sigma X$ and $\Sigma^{-1}X$ lie in $\C^{\#, \natural}_N(\E)$ due to \Cref{con: Sigma-explicit}.\ref{con: Sigma-explicit-formula} and \Cref{cor: Sigma-acyclic}, the morphisms $i_X$ and $p_X$ from \Cref{con: I-and-P} are admissible in $\C^{\#, \natural}_N(\E)$, and part \ref{thm: Verdier-notation-b} follows. In view of \Cref{thm: C-Frob}, the assumptions of \Cref{lem: sub-Frobenius}.\ref{lem: sub-Frobenius-b} are then satisfied, and part \ref{thm: Verdier-notation-c} follows. Intersecting with $\C_N(\A)$, the preceding arguments also prove the more general claims.
\end{proof}

Similar arguments yield

\begin{prp}  \label{prp: K-in-K} Let $\E'$ be an exact subcategory of $\E$.
	Consider subcategories $\A$ and $\A'$ of $\E$ and $\E'$, respectively, and suppose that $\A'$ is a subcategory of $\A$. Then $\C^{\#, \natural_{\E'}}_N(\A')$ is a fully exact sub-Frobenius category of $\C^{\#, \natural_\E}_N(\A)$, and there is a canonical fully faithful, triangulated functor $\K^{\#, \natural_{\E'}}_N(\A') \to \K^{\#, \natural_\E}_N(\A)$ where $\# \in \{\infty, +,-,b\}$ and $\natural \in \{\infty, +, -, b, \varnothing, \varnothing^\ast\}$. \qed
\end{prp}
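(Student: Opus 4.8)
The plan is to imitate the proof of \Cref{thm: Verdier-notation}, the only new input being that the inclusions $\A' \subseteq \A$ and $\E' \hookrightarrow \E$ are compatible with every piece of structure in sight. I would proceed in four steps: (1) $\C^{\#,\natural_{\E'}}_N(\A')$ is a subcategory of $\C^{\#,\natural_\E}_N(\A)$; (2) it is fully exact there; (3) it is sub-Frobenius in $\C^{\#,\natural_\E}_N(\A)$; (4) conclude with \Cref{lem: sub-Frobenius}.\ref{lem: sub-Frobenius-b}.

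For Step~1, $\A' \subseteq \A$ gives $\C_N(\A') \subseteq \C_N(\A)$, so it remains to check that an $N$-complex with terms in $\A'$ that is (totally) $N$-acyclic at a position $n$ over $\E'$ stays (totally) $N$-acyclic at $n$ over $\E$. Applying the contraction functors $\gamma^n_r$ of \Cref{dfn: gamma} and invoking \Cref{rmk: gamma-homology,rmk: gamma-Hom}, this reduces to $N=2$; there, acyclicity at $n$ is witnessed by a factorization of the differentials into admissible epics and monics together with a short exact sequence, and the exact inclusion $\E' \hookrightarrow \E$ transports all of this to $\E$. For total acyclicity one moreover notes that $\Hom$-groups between objects of $\A'$ are computed in the common ambient additive category, so $\Hom_{\E'}(X,P)=\Hom_\E(X,P)$ as complexes, and that any object of $\Prj(\E)$ lying in $\E'$ automatically belongs to $\Prj(\E')$ (short exact sequences of $\E'$ are short exact in $\E$ and the relevant $\Hom$-functors agree); hence acyclicity of $\Hom_{\E'}(X,-)$ on $\Prj(\E')$ yields that of $\Hom_\E(X,-)$ on $\Prj(\E)$. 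This bookkeeping — ensuring that the boundedness, acyclicity and especially the total acyclicity conditions all transfer across the exact inclusion — is the main obstacle; the remaining steps are formal consequences of \Cref{thm: Verdier-notation} and \Cref{lem: sub-Frobenius}.

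For Step~2, \Cref{thm: Verdier-notation}.\ref{thm: Verdier-notation-a} over $\E'$ makes $\C^{\#,\natural_{\E'}}_N(\A')$ fully exact in $\C_N(\A')$, which is fully exact in $\C_N(\A)$ by \Cref{rmk: C-exact}, and $\C^{\#,\natural_\E}_N(\A)$ is fully exact in $\C_N(\A)$ by \Cref{thm: Verdier-notation}.\ref{thm: Verdier-notation-a} over $\A$; these combine to give full exactness of $\C^{\#,\natural_{\E'}}_N(\A')$ in $\C^{\#,\natural_\E}_N(\A)$. For Step~3, take $X \in \C^{\#,\natural_{\E'}}_N(\A')$ and its standard hulls $i_X\colon X \rightarrowtail I(X)$ and $p_X\colon P(X) \twoheadrightarrow X$ from \Cref{con: I-and-P}: the terms of $I(X)$ and $P(X)$ are finite direct sums of objects $\mu^k_N(A)$ with $A \in \A'$, so $I(X),P(X)\in\C_N(\A')$; they are totally $N$-acyclic by \Cref{rmk: mu-acyclic}.\ref{rmk: mu-acyclic-statement} and inherit the boundedness of $X$, hence lie in $\C^{\#,\natural_{\E'}}_N(\A')$; and by \Cref{lem: mu-ProjInj} they are projective-injective in $\C_N(\A)$, hence — lying in the sub-Frobenius subcategory $\C^{\#,\natural_\E}_N(\A)$ of $\C_N(\A)$, cf.\ \Cref{thm: Verdier-notation}.\ref{thm: Verdier-notation-b} — projective-injective in $\C^{\#,\natural_\E}_N(\A)$ by \Cref{lem: sub-Frobenius}.\ref{lem: sub-Frobenius-a}. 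Since the cokernel $\Sigma X$ of $i_X$ and the kernel $\Sigma^{-1}X$ of $p_X$ have terms in $\A'$ by the explicit formulas in \Cref{con: Sigma-explicit}.\ref{con: Sigma-explicit-formula} and are (totally) acyclic at the required positions by \Cref{cor: Sigma-acyclic}, the morphisms $i_X$ and $p_X$ are admissible already in $\C^{\#,\natural_{\E'}}_N(\A')$; thus this category has enough $\C^{\#,\natural_\E}_N(\A)$-projectives and, dually, enough $\C^{\#,\natural_\E}_N(\A)$-injectives.

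Finally, in Step~4, $\C^{\#,\natural_\E}_N(\A)$ is Frobenius by \Cref{thm: Verdier-notation}.\ref{thm: Verdier-notation-b} together with \Cref{lem: sub-Frobenius}.\ref{lem: sub-Frobenius-b}, and $\C^{\#,\natural_{\E'}}_N(\A')$ is a sub-Frobenius category of it by Steps~1--3, so \Cref{lem: sub-Frobenius}.\ref{lem: sub-Frobenius-b} shows that $\C^{\#,\natural_{\E'}}_N(\A')$ is Frobenius and that the canonical functor $\K^{\#,\natural_{\E'}}_N(\A') \to \K^{\#,\natural_\E}_N(\A)$ is fully faithful and triangulated, as claimed.
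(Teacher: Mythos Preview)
Your overall strategy—replaying the proof of \Cref{thm: Verdier-notation} and concluding via \Cref{lem: sub-Frobenius}.\ref{lem: sub-Frobenius-b}—is precisely what the paper means by ``similar arguments yield'', and Steps~2--4 go through as written.

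The gap is in Step~1 for $\natural = \varnothing^\ast$. You correctly observe that $\Prj(\E) \cap \E' \subseteq \Prj(\E')$, and then assert that ``acyclicity of $\Hom_{\E'}(X,-)$ on $\Prj(\E')$ yields that of $\Hom_\E(X,-)$ on $\Prj(\E)$''. This does not follow. Total $N$-acyclicity over $\E$ demands that $\Hom_\E(X,P)$ be acyclic for \emph{every} $P \in \Prj(\E)$, including those $P$ not lying in $\E'$ at all; nothing in the hypotheses guarantees $\Prj(\E) \subseteq \E'$. Your containment $\Prj(\E)\cap\E' \subseteq \Prj(\E')$ only says that the test over $\E'$ covers those $\E$-projectives that happen to live in $\E'$, which is the wrong direction for the implication you want. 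Without an extra assumption relating $\Prj(\E)$ and $\Prj(\E')$—for instance $\Prj(\E) \subseteq \Prj(\E')$, or the equality $\Prj(\E')=\Prj(\E)$ that holds in the paper's main application—there is no reason the inclusion $\C^{\#,\varnothing^\ast_{\E'}}_N(\A') \subseteq \C^{\#,\varnothing^\ast_\E}_N(\A)$ should hold. The paper's one-line ``proof'' does not address this point either, so this looks like an imprecision in the statement rather than a defect specific to your write-up; but as an argument, your Step~1 for $\varnothing^\ast$ does not establish what it claims. For all other values of $\natural$ your Step~1 is correct.
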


\begin{cor} \label{cor: APC}
	Let $\E'$ be an exact subcategory of $\E$ with $\Prj(\E') \subseteq \Prj(\E)$. Then $\APC_N(\E')$ is a fully exact sub-Frobenius category of $\APC_N(\E)$ and there is a canonical fully faithful, triangulated functor $\underline \APC_N(\E') \to \underline \APC_N(\E)$. The verbatim statement holds for $\TAPC$ instead of $\APC$. \qed
\end{cor}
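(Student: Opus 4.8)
The plan is to obtain this as a direct specialization of \Cref{prp: K-in-K}. First I would unwind the abbreviations from \Cref{ntn: Verdier-notation}: by definition $\APC_N(\E) = \C^{\infty, \varnothing_\E}_N(\Prj(\E))$ and $\TAPC_N(\E) = \C^{\infty, \varnothing^\ast_\E}_N(\Prj(\E))$, and the same with $\E$ replaced by $\E'$, so that the projectively stable categories in question are $\underline \APC_N(\E) = \K^{\infty, \varnothing_\E}_N(\Prj(\E))$ and $\underline \TAPC_N(\E) = \K^{\infty, \varnothing^\ast_\E}_N(\Prj(\E))$, and likewise over $\E'$.

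Next I would apply \Cref{prp: K-in-K} with $\# = \infty$ and $\natural \in \{\varnothing, \varnothing^\ast\}$, choosing the subcategory $\A := \Prj(\E)$ of $\E$ and the subcategory $\A' := \Prj(\E')$ of $\E'$. The only hypothesis of that proposition which is not automatic is that $\A'$ be a subcategory of $\A$, and this is precisely the assumption $\Prj(\E') \subseteq \Prj(\E)$. Its conclusion then reads that $\C^{\#, \natural_{\E'}}_N(\A') = \APC_N(\E')$, resp.~$\TAPC_N(\E')$, is a fully exact sub-Frobenius category of $\C^{\#, \natural_\E}_N(\A) = \APC_N(\E)$, resp.~$\TAPC_N(\E)$, and that the induced functor $\K^{\#, \natural_{\E'}}_N(\A') \to \K^{\#, \natural_\E}_N(\A)$, i.e.~$\underline \APC_N(\E') \to \underline \APC_N(\E)$, resp.~its $\TAPC$ analogue, is fully faithful and triangulated. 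This is exactly the asserted statement.

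Since the corollary is a pure specialization of an already-established proposition, there is no genuine obstacle; the only points requiring care are matching the heavy Verdier-style indexing of \Cref{ntn: Verdier-notation} to the abbreviations $\APC$ and $\TAPC$, and observing that $\Prj(\E') \subseteq \Prj(\E)$ is precisely what lets one regard $\Prj(\E')$ as a subcategory of $\Prj(\E)$ in the sense demanded by \Cref{prp: K-in-K} (recall from \Cref{rmk: Proj-exact} that these categories of projectives carry the split exact structure, with respect to which any subcategory is automatically fully exact, so no extra exactness check is needed).
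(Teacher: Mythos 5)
Your proof is correct and is exactly the paper's (implicit) argument: the corollary is stated with no separate proof precisely because it is the specialization of \Cref{prp: K-in-K} to $\# = \infty$, $\natural \in \{\varnothing, \varnothing^\ast\}$, $\A = \Prj(\E)$, $\A' = \Prj(\E')$, with the hypothesis $\Prj(\E') \subseteq \Prj(\E)$ supplying the required containment of subcategories. Your unwinding of \Cref{ntn: Verdier-notation} and the identification of the stable categories $\underline\APC_N$, $\underline\TAPC_N$ with the corresponding $\K^{\infty,\natural_\E}_N(\Prj(\E))$ is accurate, so nothing is missing.
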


\subsection{Acyclic $N$-arrays} \label{subsection: arrays}

The kernels in an acyclic $N$-complex can be organized in an array of bicartesian squares. Over an Abelian category, such arrays have been used for instance in {\cite{BM24}} and {\cite{IKM17}}. We build \emph{acyclic $N$-arrays} from acyclic $N$-complexes over an exact category and establish an equivalence between the respective categories (\Cref{thm: array}). Our construction is local with respect to the indices of the considered $N$-complex (\Cref{prp: array-local}). As a result, one can define \emph{soft truncations} as in the Abelian case (\Cref{dfn: soft-trunc}).

\begin{dfn}
	A \textbf{(bicartesian) $\boldsymbol N$-array} $X_\bullet^\bullet = (X_\bullet^\bullet, p_\bullet^\bullet, i_\bullet^\bullet)$ over an exact category, where $X^\bullet_\bullet = (X^k_r)^{k \in \ZZ}_{r \in \{0,\dots,N\}}$, $p^\bullet_\bullet = (p^k_r)^{k \in \ZZ}_{r \in \{1,\dots,N\}}$ and $i^\bullet_\bullet = (i^k_r)^{k \in \ZZ}_{r \in \{0,\dots,N-1\}}$, is a diagram consisting of (bicartesian) commutative squares
	\begin{center}
		\begin{tikzcd}[sep={15mm,between origins}]
			& X^{k+1}_{r+1} \ar[rd, "p^{k+1}_{r+1}"] & \\
			X^k_r \ar[rd, "p^k_r"'] \ar[ru, "i^{k}_{r}"] && X^{k+1}_r \\
			&X^{k}_{r-1} \ar[ru, "i^{k}_{r-1}"']
		\end{tikzcd}
	\end{center}
	for $k \in \ZZ$ and $r \in \{1,\dots,N-1\}$. We call $X^\bullet_\bullet$ \textbf{epic}, resp.~\textbf{monic}, if $p^k_r$ is an admissible epic, resp.~$i^k_r$ an admissible monic, for all $k$ and $r$. We call it \textbf{bounded above}, \textbf{bounded below}, or \textbf{bounded}, if $X^k_\bullet = 0$ for any sufficiently large, any sufficiently small, almost all $k \in \ZZ$, respectively.\\
	A morphism $f\colon X \to Y$ between such  arrays is a collection $f^\bullet_\bullet = (f^k_r)^{k \in \ZZ}_{r \in \{0,\dots,N\}}$ of morphisms $f^k_r\colon X^k_r \to Y^k_r$ which establish commutativity. We drop the bullets and write $X=(X, p, i)$ if there is no ambiguity.
\end{dfn}

\begin{rmk} \label{rmk: array-complex} Any $N$-array $X$ over an exact category $\E$ gives rise to morphisms $p^{\{N\}}\colon X^{\bullet}_N \to X^{\bullet}_0$ and $i^{\{N\}}\colon X^\bullet_0 \to X^\bullet_N$ in $\C(\E)$, where $d_{X^\bullet_0}=pi$ and $d_{X^\bullet_N}=ip$. Note that $X^\bullet_0, X^\bullet_N \in \C_N(\E)$ if $p^{\{N\}}=0$ or $i^{\{N\}}=0$.
\end{rmk}

\begin{dfn}
	An \textbf{acyclic $\boldsymbol N$-array} (of $X^\bullet_N \in \C_N(\E)$) over an exact category $\E$ is an epic and monic bicartesian $N$-array with $X^\bullet_0=0$, see also {\cite[Def.~4.4]{BM24}}. We denote the category of acyclic $N$-arrays over $\E$ by $A_N(\E)$.
\end{dfn}

Over an Abelian category, \Cref{thm: array} can be easily verified as mentioned in {\cite[21]{BM24}}. 

\begin{thm} \label{thm: array}
	For an exact category $\E$, the categories $A_N(\E)$ and $\C_N^{\infty, \varnothing}(\E)$ are equivalent. Under this equivalence, an acyclic $N$-complex $X \in \C_N^{\infty, \varnothing}(\E)$ corresponds to an acyclic $N$-array $(X^k_{r})^{k \in \ZZ}_{r = 0, \dots, N}$ of $X$, where $X^k_r = C^k_{(r)}(X) = Z^{k+N-r}_{(r)}(X)$ and, in particular, $X^\bullet_N=X$.
\end{thm}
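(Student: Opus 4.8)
The plan is to construct mutually quasi-inverse functors $\Phi\colon\C_N^{\infty,\varnothing}(\E)\to A_N(\E)$ and $\Psi\colon A_N(\E)\to\C_N^{\infty,\varnothing}(\E)$. For $\Phi$, start from $X\in\C_N^{\infty,\varnothing}(\E)$; by \Cref{dfn: N-acyclic} and \Cref{rmk: gamma-homology}, each iterated differential $d_X^{\{m\}}$ is an admissible morphism and $Z^k_{(m)}(X)=B^k_{(N-m)}(X)$ as admissible subobjects of $X^k$, for all $k\in\ZZ$ and $m\in\{1,\dots,N-1\}$. For $r\in\{0,\dots,N\}$ let $X^k_r$ be the middle term of the analysis (\Cref{rmk: admissible}.\ref{rmk: admissible-analysis}) of the admissible morphism $d_X^{\{N-r\}}\colon X^k\to X^{k+N-r}$; by acyclicity this identifies $X^k_r\cong C^k_{(r)}(X)\cong Z^{k+N-r}_{(r)}(X)$, with $X^k_N=X^k$ and $X^k_0=0$, and equips $X^k_r$ with a canonical admissible epic $\pi^k_r\colon X^k\twoheadrightarrow X^k_r$ and admissible monic $\iota^k_r\colon X^k_r\rightarrowtail X^{k+N-r}$ whose composite is $d_X^{\{N-r\}}$.

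The structure maps of $\Phi(X)$ come from the nested admissible subobjects $B^k_{(r)}(X)\subseteq B^k_{(r-1)}(X)$ of $X^k$ and $B^{k+N-r}_{(N-r)}(X)\subseteq B^{k+N-r}_{(N-r-1)}(X)$ of $X^{k+N-r}$: the first yields an admissible epic $p^k_r\colon X^k_r\twoheadrightarrow X^k_{r-1}$ with $p^k_r\pi^k_r=\pi^k_{r-1}$, the second (noting that $X^k_r$ and $X^{k+1}_{r+1}$ share the target $X^{k+N-r}$) an admissible monic $i^k_r\colon X^k_r\rightarrowtail X^{k+1}_{r+1}$ with $\iota^{k+1}_{r+1}i^k_r=\iota^k_r$; admissibility follows from the obscure axiom (\Cref{prp: obscure}) and the Noether lemma (\Cref{lem: Buehler3.7}). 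Since all these maps are induced from $d_X^k$ by passage to sub- and quotient objects, the array squares commute, and each is bicartesian by \Cref{cor: bicart}.\ref{cor: bicart-prep} once the pushout property is checked via \Cref{prp: Buehler2.12}.\ref{prp: Buehler2.12-push}. Thus $\Phi(X)$ is an epic and monic bicartesian $N$-array with $X^\bullet_0=0$, i.e.\ an acyclic $N$-array; functoriality is immediate from the universal properties involved. For $\Psi$, send an acyclic $N$-array $X^\bullet_\bullet$ to the sequence $X^\bullet_N$ with differential $d^k=i^k_{N-1}p^k_N$, see \Cref{rmk: array-complex}; commutativity of the array forces $d^{\{N\}}$ to factor through the diagonal $p$-then-$i$ path through $X^\bullet_0=0$, so $X^\bullet_N\in\C_N(\E)$, and $\Psi$ is visibly functorial.

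Acyclicity of $\Psi(X^\bullet_\bullet)=X^\bullet_N$ is the crux. Using the commuting squares one shows $d^{\{s\}}\colon X^n_N\to X^{n+s}_N$ factors as $X^n_N\xrightarrow{p^n_{N-s+1}\cdots p^n_N}X^n_{N-s}\xrightarrow{i^{n+s-1}_{N-1}\cdots i^n_{N-s}}X^{n+s}_N$, a composite of admissible epics followed by admissible monics, hence admissible. Pasting the small bicartesian squares along staircases (\Cref{rmk: pushpull-concat}) produces, after $N-s$ iterations down to $X^{n-N+s}_0=0$, a big bicartesian square whose pullback half identifies $Z^n_{(s)}(X^\bullet_N)=\ker(X^n_N\to X^n_{N-s})$ with the image of the monic $i$-tower $X^{n-N+s}_s\rightarrowtail X^n_N$, and whose pushout half identifies $B^n_{(N-s)}(X^\bullet_N)$ with the image of the same $i$-tower. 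Hence $Z^n_{(s)}(X^\bullet_N)=B^n_{(N-s)}(X^\bullet_N)$ for all $n,s$, so $X^\bullet_N\in\C_N^{\infty,\varnothing}(\E)$ by \Cref{rmk: gamma-homology}. Finally, $\Psi\Phi\cong\id$ is immediate: the top row of $\Phi(X)$ is $(C^k_{(N)}(X))_k=(X^k)_k$ with differential $i^k_{N-1}p^k_N=\iota^k_{N-1}\pi^k_{N-1}=d_X^{\{1\}}=d_X^k$. For $\Phi\Psi\cong\id$, the factorization of $d_{X^\bullet_N}^{\{N-r\}}$ just described has middle term $X^k_r$, so $\Phi(\Psi(X^\bullet_\bullet))^k_r=C^k_{(r)}(X^\bullet_N)\cong X^k_r$, and the same staircase-pasting shows these isomorphisms respect the $p$'s and $i$'s. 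This proves $A_N(\E)\simeq\C_N^{\infty,\varnothing}(\E)$ with an acyclic $N$-complex $X$ corresponding to the array with $X^k_r=C^k_{(r)}(X)=Z^{k+N-r}_{(r)}(X)$ and $X^\bullet_N=X$, as claimed.

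The difficulty is bookkeeping rather than conceptual content: over an Abelian category everything above is a transparent diagram chase (as noted in \cite{BM24}), but over a general exact category one must systematically replace elementwise arguments by repeated, careful use of the obscure axiom, the Noether lemma, the characterizations of bicartesian squares in \Cref{prp: Buehler2.12} and \Cref{cor: bicart}, and the pasting law \Cref{rmk: pushpull-concat} --- in particular to verify that every structure map $p^k_r$, $i^k_r$ is admissible, that every small and every pasted square is bicartesian, and that the reconstruction isomorphisms witnessing $\Phi\Psi\cong\id$ are natural and compatible with the array structure. Total acyclicity plays no role here; the theorem concerns $\C_N^{\infty,\varnothing}(\E)$.
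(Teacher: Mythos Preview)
Your proposal is correct and follows essentially the same strategy as the paper: one direction sends an acyclic $N$-array to its top row $X^\bullet_N$, the other builds the array from an acyclic $N$-complex using the (co)images of the iterated differentials, and the identifications $X^k_r=C^k_{(r)}(X)=Z^{k+N-r}_{(r)}(X)$ fall out. The paper differs only in organization: rather than constructing an explicit quasi-inverse $\Phi$, it shows the functor $A_N(\E)\to\C_N^{\infty,\varnothing}(\E)$, $X^\bullet_\bullet\mapsto X^\bullet_N$, is essentially surjective, full, and faithful directly, which avoids the naturality bookkeeping for $\Phi\Psi\cong\id$.

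The one place where the paper is more explicit than your sketch is the verification that all the small squares are bicartesian with \emph{admissible} morphisms. You cite \Cref{prp: obscure}, \Cref{lem: Buehler3.7}, \Cref{cor: bicart}.\ref{cor: bicart-prep}, and \Cref{prp: Buehler2.12}.\ref{prp: Buehler2.12-push} and say the pushout property is ``checked''; the paper isolates this step as a separate local statement (\Cref{prp: array-local}) and proves it via a dedicated inductive lemma (\Cref{lem: array-admissible}) that starts from the big concatenated squares---which are bicartesian by acyclicity and \Cref{prp: Buehler2.12}---and breaks them down into the individual $(Y^n_r)$ using the separation lemma \Cref{lem: pushpull-sep} (via \Cref{lem: prep-thm-array}). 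This top-down decomposition is slightly cleaner than your bottom-up construction of each $p^k_r$, $i^k_r$ from nested admissible subobjects, because it gets admissibility and bicartesianness simultaneously by descending induction on $r$, whereas your route requires separately arguing admissibility (obscure axiom/Noether) and then bicartesianness. Both work; the paper's packaging just makes the induction more transparent.
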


\begin{ntn}\
	\begin{enumerate}
		\item By $A \hookrightarrow B$ (in contrast to $A \rightarrowtail B$), we denote a monic (which might not be admissible).
		
		\item By \begin{tikzcd}[sep=small, cramped] A \ar[r, epi_mini] & B \end{tikzcd} (in contrast to $A \twoheadrightarrow B$), we denote an epic (which might not be admissible).
	\end{enumerate}
\end{ntn}

\begin{lem} \label{lem: prep-thm-array} Let $\E$ be an exact category.
	\begin{enumerate}
		\item \label{lem: prep-thm-array-upward} Suppose that the following diagram $(XY)$ in $\E$ is bicartesian:
		\begin{center}
			\begin{tikzcd}[sep={15mm,between origins}]
				&& \bullet \ar[rd] \ar[dd, phantom, "(Y)"] &\\
				& \bullet \ar[rd, epi, "e"] \ar[ru, tail, "i"] \ar[dd, phantom, "(X)"]  &  & \bullet \\
				\bullet \ar[ru] \ar[rd] && \bullet \ar[ru] \\
				& \bullet \ar[ru]
			\end{tikzcd}
		\end{center}
		Then $(Y)$ is bicartesian. If $e$ is an admissible epic, then also $(X)$ is bicartesian.
		
		\smallskip
		
		\item \label{lem: prep-thm-array-downward} Suppose that the following diagram $(Y'X')$ in $\E$ is bicartesian:
		\begin{center}
			\begin{tikzcd}[sep={15mm,between origins}]
				& \bullet \ar[rd, two heads, "p"] \ar[dd, phantom, "(Y')"] && \\
				\bullet \ar[ru] \ar[rd] && \bullet \ar[rd] \ar[dd, phantom, "(X')"] \\
				& \bullet \ar[ru, hook, "m"] \ar[rd]&& \bullet \\
				&& \bullet \ar[ru]
			\end{tikzcd}
		\end{center}
		Then $(Y')$ is bicartesian. If $m$ is an admissible monic, then also $(X')$ is bicartesian.
	\end{enumerate}
\end{lem}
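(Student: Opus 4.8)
The plan is to unfold each pair of diamonds into two ordinary commutative squares sharing a common edge, and then to quote \Cref{lem: pushpull-sep} together with \Cref{cor: bicart}.\ref{cor: bicart-already}. Since a commutative diamond is the same datum as a commutative square, and since part \ref{lem: prep-thm-array-downward} is the dual of part \ref{lem: prep-thm-array-upward} (pass to $\op \E$, interchanging admissible monics with admissible epics and pushouts with pullbacks, see \Cref{rmk: E-op}), it suffices to treat part \ref{lem: prep-thm-array-upward}. Written in this way, the diamond $(XY)$ becomes the horizontal concatenation of the square $(X)$ (on the left) and the square $(Y)$ (on the right):
\begin{center}
\begin{tikzcd}[sep={15mm,between origins}]
\bullet \ar[r] \ar[d] & \bullet \ar[r, tail, "i"] \ar[d, "e"] & \bullet \ar[d] \\
\bullet \ar[r] & \bullet \ar[r] & \bullet
\end{tikzcd}
\end{center}
Here the shared middle vertical arrow is the epic $e$, the upper arrow of $(Y)$ is the admissible monic $i$, and the outer rectangle is bicartesian by hypothesis.

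To show that $(Y)$ is bicartesian, I would apply \Cref{lem: pushpull-sep}.\ref{lem: pushpull-sep-pushout} to this rectangle, viewed as a pushout: the matrix $\begin{pmatrix} e & \ast \end{pmatrix}$ occurring there is epic because its restriction to the first summand is the epic $e$. Hence $(Y)$ is a pushout square; since one of its two span arrows is the admissible monic $i$, the square $(Y)$ is the pushout square of an admissible monic, hence bicartesian by \Cref{cor: bicart}.\ref{cor: bicart-already}.

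To show that $(X)$ is bicartesian when $e$ is an admissible epic, I would apply \Cref{lem: pushpull-sep}.\ref{lem: pushpull-sep-pullback} to the same rectangle, now viewed as a pullback: the matrix $\begin{pmatrix} i \\ e \end{pmatrix}$ occurring there is monic because its first component $i$ is monic. Hence $(X)$ is a pullback square; if in addition $e$ is an admissible epic, then, $e$ being one of its two cospan arrows, $(X)$ is the pullback square of an admissible epic, hence bicartesian, again by \Cref{cor: bicart}.\ref{cor: bicart-already}. The only step that needs care is the translation between the diamond picture and the two-square picture --- in particular, making sure $e$ is the shared edge and that $i$, resp.\ $e$, really occurs as a span, resp.\ cospan, arrow of the square under consideration; once that bookkeeping is pinned down, each remaining step is a direct citation.
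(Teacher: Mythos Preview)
Your proposal is correct and follows essentially the same approach as the paper's proof: both invoke \Cref{lem: pushpull-sep} (parts \ref{lem: pushpull-sep-pushout} and \ref{lem: pushpull-sep-pullback}) to extract $(Y)$ as a pushout and $(X)$ as a pullback from the bicartesian concatenation, then apply \Cref{cor: bicart}.\ref{cor: bicart-already} using that $i$ is an admissible monic, resp.\ $e$ an admissible epic; part \ref{lem: prep-thm-array-downward} is handled by duality in both. Your version is slightly more explicit about why the matrix hypotheses in \Cref{lem: pushpull-sep} are met, but the argument is the same.
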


\begin{proof} \
	\begin{enumerate}[leftmargin=*]
		\item As $e$ is an epic, $(Y)$ is a pushout by \Cref{lem: pushpull-sep}.\ref{lem: pushpull-sep-pushout} and hence bicartesian by \Cref{cor: bicart}.\ref{cor: bicart-already}. As $i$ is a monic, $(X)$ is a pullback by \Cref{lem: pushpull-sep}.\ref{lem: pushpull-sep-pullback} and hence bicartesian by \Cref{cor: bicart}.\ref{cor: bicart-already} if $e$ is an admissible epic.
		
		\item is dual to \ref{lem: prep-thm-array-upward}. \qedhere
	\end{enumerate}
\end{proof}

\begin{lem} \label{lem: array-admissible} Over an exact category $\E$, consider the commutative diagram
	\begin{center}
		\begin{tikzcd}[column sep={10mm,between origins}, row sep={12.5mm,between origins}]
			X^0_N \ar[rd, two heads] && X^1_N \ar[rd, two heads] && X^2_N \ar[rd, two heads] && X^3_N & \cdots & X^{N-3}_N \ar[rd, two heads] && X^{N-2}_N \ar[rd, two heads] && X^{N-1}_N \ar[rd, two heads] && X^N_N \\
			& X^0_{N-1} \ar[rd, epi] \ar[ru, tail] && X^1_{N-1} \ar[rd, epi] \ar[ru, tail] && X^2_{N-1} \ar[rd, epi] \ar[ru, tail] &&\cdots && X^{N-3}_{N-1} \ar[rd, epi] \ar[ru, tail] && X^{N-2}_{N-1} \ar[rd, epi] \ar[ru, tail] && X^{N-1}_{N-1} \ar[ru, tail] \\
			&& X^0_{N-2} \ar[ru, hook] \ar[rd, epi] && X^1_{N-2} \ar[ru, hook] \ar[rd, epi] && \ddots \ar[rd, epi] && \udots \ar[ru, hook] && X^{N-3}_{N-2} \ar[ru, hook] \ar[rd, epi] && X^{N-2}_{N-2} \ar[ru, hook] \\
			&&& X^0_{N-3} \ar[ru, hook] \ar[rd, epi] && \ddots \ar[rd, epi] && X^{2}_{4} \ar[ru, hook] \ar[rd, epi] && \udots \ar[ru, hook] && X^{N-3}_{N-3} \ar[ru, hook] \ar[ru, hook] \\
			&&&& \ddots \ar[rd, epi] && X^{1}_3 \ar[ru, hook] \ar[rd, epi] && X^{2}_3 \ar[ru, hook] \ar[rd, epi] && \udots \ar[ru, hook] \\
			&&&&& X^{0}_2 \ar[ru, hook] \ar[rd, epi] && X^{1}_2 \ar[ru, hook] \ar[rd, epi] && X^2_2 \ar[ru, hook] \\
			&&&&&& X^{0}_1 \ar[ru, hook] \ar[rd, epi] && X^1_1 \ar[ru, hook] \\
			&&&&&&& X^0_0. \ar[ru, hook]
		\end{tikzcd}
	\end{center}
	
	For $n \in \{1, \dots, N-1\}$ and $r \in \{n, \dots, N-1\}$, let $(Y^n_r)$ denote the square
	
	\begin{center}
		\begin{tikzcd}[sep={15mm,between origins}]
			& X^n_{r+1} \ar[rd, epi] \ar[dd, phantom, "(Y^n_r)"]&\\
			X^{n-1}_{r} \ar[ru, hook] \ar[rd, epi] && X^{n}_{r} \\
			& X^{n-1}_{r-1}. \ar[ru, hook]
		\end{tikzcd}
	\end{center}
	
	Suppose that for each $n \in \{1, \dots, N-1\}$ the concatenation of
	
	\begin{center}
		\begin{tikzcd}[sep={7.5mm,between origins}]
			&& (Y^{n}_{N-1}) && \\
			&\udots && \ddots \\
			(Y^{1}_{N-n}) &&&& (Y^{n}_n) \\
			&\ddots && \udots\\
			&& (Y^{1}_1)
		\end{tikzcd}
	\end{center}
	is bicartesian. Then each individual square $(Y^{n}_r)$  is bicartesian with admissible morphisms.
\end{lem}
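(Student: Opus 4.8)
The plan is to read off the given \enquote{super-squares} from the hypothesis and peel them apart with \Cref{lem: prep-thm-array}, propagating admissibility of morphisms along the way via the exact-structure axioms and \Cref{cor: bicart}.\ref{cor: bicart-prep}. It is convenient to organise the squares $(Y^n_r)$, $1 \le n \le r \le N-1$, into \emph{columns}: for fixed $n$, let $C_n$ denote the concatenation of $(Y^n_n), (Y^n_{n+1}), \dots, (Y^n_{N-1})$, glued along the morphisms $i^{n-1}_r$; this $C_n$ is precisely the rightmost column of the diamond $D_n$ appearing in the hypothesis. Its outline is the square with corners $X^n_N$ (top), $X^{n-1}_{N-1}$ (left), $X^n_n$ (right), $X^{n-1}_{n-1}$ (bottom), whose left side is the composite $p^{n-1}_n \circ \cdots \circ p^{n-1}_{N-1}$ and whose top left edge is $i^{n-1}_{N-1}$.

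\textbf{Step 1: each column $C_n$ is bicartesian.} The hypothesis diamond $D_n$ is bicartesian, and its outline factors as the concatenation of the outline of the sub-diamond obtained by deleting the rightmost column and the outline of $C_n$, glued along the morphism $p^{n-1}_n \circ \cdots \circ p^{n-1}_{N-1}\colon X^{n-1}_{N-1} \to X^{n-1}_{n-1}$ — an epic, being a composite of epics. At its source $X^{n-1}_{N-1}$ sits the morphism $i^{n-1}_{N-1}\colon X^{n-1}_{N-1} \to X^n_N$, which is a \enquote{tail} of the displayed diagram, hence an admissible monic, and which is an edge of $C_n$. Thus \Cref{lem: prep-thm-array}.\ref{lem: prep-thm-array-upward} applies with $C_n$ in the role of $(Y)$ and shows $C_n$ bicartesian (for $n=1$ there is nothing to delete and $C_1 = D_1$).

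\textbf{Step 2: downward induction on $r$.} I would prove that for every $r$ from $N-1$ down to $1$ and every $n \in \{1,\dots,r\}$, the square $(Y^n_r)$ is bicartesian with all four morphisms admissible. The key observation is that the \enquote{top} morphism $i^{n-1}_r$ of $(Y^n_r)$ is the \enquote{bottom} morphism of $(Y^n_{r+1})$ and the \enquote{right} morphism $p^n_{r+1}$ is the \enquote{left} morphism of $(Y^{n+1}_{r+1})$, so by the inductive hypothesis — or, when $r = N-1$, directly from the \enquote{tails} and \enquote{two heads} of the displayed diagram — these are an admissible monic and an admissible epic. Given this, I peel $(Y^n_r)$ out of the bicartesian column $C_n$ in at most two applications of \Cref{lem: prep-thm-array}.\ref{lem: prep-thm-array-downward}. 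First, write the outline of $C_n$ as the concatenation of the outlines of $(Y^n_n)\cdots(Y^n_r)$ and of $(Y^n_{r+1})\cdots(Y^n_{N-1})$, glued along $i^{n-1}_r\colon X^{n-1}_r \to X^n_{r+1}$; the second factor is a concatenation of inductively bicartesian squares, hence bicartesian by \Cref{rmk: pushpull-concat}, and it carries the admissible epic $p^n_{r+2}\circ\cdots\circ p^n_N$ into $X^n_{r+1}$, so \Cref{lem: prep-thm-array}.\ref{lem: prep-thm-array-downward} (whose $(X')$-clause uses that $i^{n-1}_r$ is an admissible monic) makes the first factor $(Y^n_n)\cdots(Y^n_r)$ bicartesian. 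Second, write that factor as the concatenation of $(Y^n_n)\cdots(Y^n_{r-1})$ and $(Y^n_r)$, glued along $i^{n-1}_{r-1}$; here $(Y^n_r)$ plays the role of $(Y')$ and carries the admissible epic $p^n_{r+1}$ into $X^n_r$, so \Cref{lem: prep-thm-array}.\ref{lem: prep-thm-array-downward} makes $(Y^n_r)$ bicartesian. (The cases $r = N-1$ and $r = 1$ are degenerate: one of the factors is empty, so one of the two splittings is vacuous.) Finally, $(Y^n_r)$ is now bicartesian with \enquote{top} an admissible monic and \enquote{right} an admissible epic, so \Cref{cor: bicart}.\ref{cor: bicart-prep} shows its opposite morphisms $i^{n-1}_{r-1}$ and $p^{n-1}_r$ are admissible of the same type; this closes the induction and proves all morphisms of all $(Y^n_r)$ admissible.

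\textbf{Main obstacle.} Once the geometry is in place the argument is routine, so the real work — and the only source of difficulty — is the bookkeeping: at each of the many invocations of \Cref{lem: prep-thm-array} one must correctly identify which sub-outline is $(X)$ resp.\ $(Y)$, verify the orientation of the shared morphism and of the admissible monic/epic attached to it, and track exactly which morphisms have already been proven admissible at that stage of the induction. Making the shapes of the super-squares $C_n$ and $(Y^n_n)\cdots(Y^n_r)$, and their outlines, fully explicit is what will take most of the write-up.
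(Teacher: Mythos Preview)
Your proposal is correct and follows essentially the same route as the paper: isolate each column $C_n$ from the hypothesis diamond via \Cref{lem: prep-thm-array}.\ref{lem: prep-thm-array-upward}, then run a descending induction on $r$ using \Cref{lem: prep-thm-array}.\ref{lem: prep-thm-array-downward} (twice, to peel off first the already-handled top part and then the single square $(Y^n_r)$), invoking \Cref{cor: bicart}.\ref{cor: bicart-prep} to propagate admissibility. Your write-up is in fact slightly more explicit than the paper's about one point: you identify that the admissibility of the top-right morphism $p^n_{r+1}$ of $(Y^n_r)$ comes from the neighbouring column via $(Y^{n+1}_{r+1})$, whereas the paper leaves this implicit in the phrase \enquote{the upper morphisms of $(Y^n_r)$ are admissible}.
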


\begin{proof} By \Cref{lem: prep-thm-array}.\ref{lem: prep-thm-array-upward}, the assumption implies that $(Y^{n}_{N-1} \cdots Y^{n}_{n})$ bicartesian. We prove the claim by descending induction on $r= N-1, \dots, n$. Starting with $r =N-1$, the square $(Y^{n}_{N-1})$ is bicartesian due to \Cref{lem: prep-thm-array}.\ref{lem: prep-thm-array-downward}, and its morphisms are admissible by \Cref{cor: bicart}.\ref{cor: bicart-prep}. Let now $n, n' \in \{1, \dots, N-1\}$ and $r, r' \in \{n, \dots, N-2\}$, and suppose that the claim holds for $(Y^{n'}_{r'})$ whenever $r' > r$.
	Then $(Y^{n}_{N-1} \cdots Y^{n}_{r+1})$ is bicartesian with admissible morphisms, see \Cref{lem: pasting-law}, and the upper morphisms of $(Y^{n}_r)$ are admissible. So, $(Y^{n}_{r} \cdots Y^{n}_n)$ is bicartesian by the additional claim of \Cref{lem: prep-thm-array}.\ref{lem: prep-thm-array-downward}. Now the claim for $(Y^{n}_r)$ follows from \Cref{lem: prep-thm-array}.\ref{lem: prep-thm-array-downward} and \Cref{cor: bicart}.\ref{cor: bicart-prep} as before.  
\end{proof}

\begin{prp} \label{prp: array-local}
	Let $X \in \C_N(\E)$ be an $N$-complex over an exact category $\E$ and $n \in \ZZ$. Suppose that all compositions of differentials between positions in $\{n, n+1, \dots, n+N\}$ are admissible.
	
	\begin{enumerate}[leftmargin=*]
		\item \label{prp: array-local-diagram}  For each choice $X^k_{r}$ of (co)images, see \Cref{rmk: admissible}.\ref{rmk: admissible-analysis}, there is a unique commutative diagram:
		\begin{center}
			\begin{tikzcd}[sep={14mm,between origins}]
				X^{n} \ar[rr] \ar[rd, two heads] && X^{n+1} \ar[rr] \ar[rd, two heads] && X^{n+2} \ar[r] & \cdots \ar[r] & X^{n+N-2} \ar[rr]\ar[rd, two heads]  && X^{n+N-1} \ar[rr] \ar[rd, two heads] && X^{n+N} \\
				& X^{n}_{N-1} \ar[rd, epi] \ar[ru, tail] && X^{n+1}_{N-1} \ar[rd, epi] \ar[ru, tail] && && X^{n+N-2}_{N-1} \ar[rd, epi] \ar[ru, tail] && X^{n+N-1}_{N-1} \ar[ru, tail] \\
				&& X^{n}_{N-2} \ar[rd, epi] \ar[ru, hook] && \ddots \ar[rd, epi] && \udots \ar[ru, hook] && X^{n+N-2}_{N-2} \ar[ru, hook] \\
				&&& \ddots\ar[rd, epi]  && X^{n+1}_{2}\ar[rd, epi] \ar[ru, hook] && \udots \ar[ru, hook] \\
				&&&& X^{n}_{1} \ar[rd, epi] \ar[ru, hook] && X^{n+1}_{1} \ar[ru, hook] \\
				&&&&& 0 \ar[ru, hook]
			\end{tikzcd}
		\end{center}
		If $X$ acyclic at position $k \in \{n, \dots, n+N\}$, we have  $X^k_r = C^k_{(r)}(X)$ and $X^{k-N+r}_r = Z^{k}_{(r)}(X)$, for all occurring $r$. If this holds for all $k$, then every morphism of such $N$-complexes induces a unique morphism of the associated diagrams.
	
		\item \label{prp: array-local-admissible} If $\gamma^{n+k}_{N-k}(X)$ is 2-acyclic at position $n+k$ for each $k \in \{1, \dots, N-1\}$, then all squares of the diagram in in \ref{prp: array-local-diagram} are bicartesian with admissible morphisms. In particular, this holds if $X$ is acyclic at positions $n+1, \dots, n+N-1$.
	\end{enumerate}
\end{prp}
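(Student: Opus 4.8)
The plan is to prove (a) by an explicit recursive construction that factors the composed differentials through their (co)images, and (b) by feeding the diagram produced in (a) into \Cref{lem: array-admissible}.

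For part (a): by the standing hypothesis every composition $d_X^{\{N-r\}}\colon X^k\to X^{k+N-r}$ with $n\le k$ and $k+N-r\le n+N$ is an admissible morphism, hence admits an analysis in the sense of \Cref{rmk: admissible}.\ref{rmk: admissible-analysis}; in particular its coimage, image, kernel and cokernel exist. I would set $X^k_r$ to be such a (co)image, so that $X^k_N=X^k$ and $X^n_0=\operatorname{coim}(d_X^{\{N\}})=0$ because $d_X^{\{N\}}=0$. The remaining maps are then forced: the factorizations $d_X^{\{N-r\}}=d_X\circ d_X^{\{N-r-1\}}=d_X^{\{N-r-1\}}\circ d_X$ exhibit $Z^k_{(N-r-1)}(X)$ as a subobject of $Z^k_{(N-r)}(X)$ and $\im(d_X^{\{N-r\}}\colon X^k\to X^{k+N-r})$ as a subobject of $\im(d_X^{\{N-r-1\}}\colon X^{k+1}\to X^{k+N-r})$, which yields the epic $X^k_{r+1}\to X^k_r$ (an epic, not in general admissible) and the monic $X^k_r\hookrightarrow X^{k+1}_{r+1}$; commutativity of every square of the triangular diagram, and uniqueness of the diagram given the chosen (co)images, follow from the uniqueness up to unique isomorphism of analyses, see \Cref{rmk: admissible}.\ref{rmk: admissible-unique}. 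If $X$ is acyclic at position $k$, then $Z^k_{(r)}(X)=B^k_{(N-r)}(X)$ for all $r$ by \Cref{rmk: gamma-homology}; this identifies $X^k_r=X^k/Z^k_{(N-r)}(X)$ with $C^k_{(r)}(X)$ and, viewing $X^{k-N+r}_r$ as the subobject $B^k_{(N-r)}(X)$ of $X^k$, with $Z^k_{(r)}(X)$. When acyclicity holds at all positions, the $X^k_r$ become the functorial objects $C^k_{(r)}$, $Z^k_{(r)}$, so a morphism of such $N$-complexes induces a morphism of the associated diagrams, unique by the same argument.

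For part (b): after shifting the lower index of \Cref{lem: array-admissible} by $n$, the diagram from (a) has exactly the shape considered there, so it suffices to verify that for each $n'\in\{1,\dots,N-1\}$ the prescribed concatenation of the diamond-shaped family of squares is bicartesian. Reading off the diagram, that composite square has source $X^n_{N-n'}$, sink $X^{n+n'}_{n'}$, and middle corners $X^{n+n'}$ and $0$, so it is precisely the square attached to the analyses of the admissible morphisms $d_X^{\{n'\}}\colon X^n\to X^{n+n'}$ and $d_X^{\{N-n'\}}\colon X^{n+n'}\to X^{n+N}$; by \Cref{prp: Buehler2.12} and \Cref{cor: bicart} (equivalently, by unwinding \Cref{dfn: 2-acyclic}), it is bicartesian if and only if $B^{n+n'}_{(n')}(X)=Z^{n+n'}_{(N-n')}(X)$ inside $X^{n+n'}$, that is, if and only if $\gamma^{n+n'}_{N-n'}(X)$ is $2$-acyclic at position $n+n'$, which is the hypothesis for $k=n'$. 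Hence \Cref{lem: array-admissible} applies and every individual square of the diagram is bicartesian with admissible morphisms. The final assertion is immediate, since acyclicity of $X$ at positions $n+1,\dots,n+N-1$ means that $\gamma^m_r(X)$ is $2$-acyclic at $m$ for every such $m$ and every $r\in\{1,\dots,N-1\}$, in particular for $(m,r)=(n+k,N-k)$.

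The part I expect to require the most care is the bookkeeping: making the recursion in (a) fully precise (in which order the factorizations are taken, that every square of the triangular diagram genuinely commutes, and that the diagram is uniquely pinned down by the chosen (co)images), and matching the triangular arrangement of the $X^k_r$ against the diagram in \Cref{lem: array-admissible} so as to confirm that the concatenated diamonds really do compute the homology squares $B^{n+n'}_{(n')}(X)=Z^{n+n'}_{(N-n')}(X)$. Each single step is a routine application of the obscure axiom (\Cref{prp: obscure}), \Cref{prp: Buehler2.12}, and \Cref{cor: bicart}, but assembling them coherently with the correct indices is the delicate point.
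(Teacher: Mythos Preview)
Your proposal is correct and follows essentially the same approach as the paper: part (a) is built from the universal property of (co)images (the paper uses \Cref{rmk: help-commute} for the commutativity bookkeeping, where you invoke uniqueness of analyses; both work), and part (b) is exactly the paper's argument---identify the concatenated diamond with the short exact sequence $X^n_{N-k}\rightarrowtail X^{n+k}\twoheadrightarrow X^{n+k}_k$ coming from the $2$-acyclicity hypothesis, then apply \Cref{prp: Buehler2.12} and \Cref{lem: array-admissible}.
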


\begin{proof}\
	\begin{enumerate}[leftmargin=*]
		\item The morphisms in the diagram exist due to the universal property of (co)images. It commutes by repeated application of \Cref{rmk: help-commute}. The morphisms $f^k_r\colon X^k_{r} \to  Y^k_{r}$ induced by $f$ on cokernels form a morphism of the respective diagrams associated to $X$ and $Y$. Indeed, using \Cref{rmk: help-commute} multiple times, one verifies that the following diagrams commute for all occurring indices $k$, $r$ and $s$: 
		
		\begin{center}
			\begin{tikzcd}[column sep={14mm,between origins}, row sep={10mm,between origins}]
				X^k \ar[dd, "f^k"] \ar[rd, two heads] \\
				& X^{k}_{r} \ar[dd, "f^{k}_{r}"] \ar[rd, two heads] \\
				Y^k \ar[rd, two heads]&& X^{k}_{s} \ar[dd, "f^{k}_{s}"] \\
				& Y^{k}_{r} \ar[rd, two heads] \\
				&& Y^{k}_{s}
			\end{tikzcd}
			\begin{tikzcd}[sep={15mm,between origins}]
				X^k \ar[rr, "\circ" marking, "d_X^{\{N-r\}}"] \ar[rd, two heads] \ar[ddd, "f^k"] && X^{k+N-r} \ar[ddd, "f^{k+N-r}"] \\			
				& X^{k}_{r} \ar[ru, tail] \ar[d, "f^{k}_{r}"] \\
				& Y^{k}_{r} \ar[rd, tail]  \\
				Y^k \ar[rr, "d_Y^{\{N-r\}}", "\circ" marking] \ar[ru, two heads]  & \textcolor{white}{\bullet} & Y^{k+N-r}
			\end{tikzcd}
			\begin{tikzcd}[column sep={17mm,between origins}, row sep={10mm,between origins}]
				&& X^k \ar[dd, "f^k"] \\
				& X^{k-N+r}_{r} \ar[dd, "f^{k-N+r}_r"]  \ar[ru, tail] \\
				X^{k-N+s}_{s} \ar[dd, "f^{k-N+s}_{s}"] \ar[ru, tail] && Y^k\\
				& Y^{k-N+r}_{r} \ar[ru, tail] \\
				Y^{k-N+s}_{s} \ar[ru, tail]
			\end{tikzcd}
		\end{center}
	
	\item By hypothesis, \begin{tikzcd}[cramped] X^n_{N-k} \ar[r, tail] & X^{n+k} \ar[r, two heads] & X^{n+k}_k \end{tikzcd} is short exact, and hence
	\begin{center}
		\begin{tikzcd}[sep={15mm,between origins}]
			& X^{n+k} \ar[rd, two heads]& \\
			X^n_{N-k} \ar[ru, tail] \ar[rd, two heads] && X^{n+k}_k \\
			& 0  \ar[ru, tail]
		\end{tikzcd}
	\end{center}
	 is bicartesian for all $k \in \{1, \dots, N-1\}$ by \Cref{prp: Buehler2.12}. \Cref{lem: array-admissible} yields the claim. \qedhere
	\end{enumerate}
\end{proof}

\begin{proofof}{\Cref{thm: array}}
	 Given an acyclic $N$-array $X$, the sequences \begin{tikzcd}[cramped, sep=small] X^{n-r}_{N-r} \ar[r, tail] & X^n_N \ar[r, two heads] & X^{n}_{r} \end{tikzcd} are exact for all $n$ and $r$ by \Cref{prp: Buehler2.12}. So, $X_N^\bullet$ is an acyclic $N$-complex, see \Cref{rmk: array-complex}, with $C^n_{(r)}(X_N^\bullet) = Z^{n+N-r}_{(r)}(X_N^\bullet) = X^n_r$ for all $n$ and $r$. This yields a functor $A_N(\E) \to \C^{\infty, \varnothing}_N(\E)$ which sends a morphism $f\colon X \to Y$ in $A_N(\E)$ to the morphism $f^\bullet_N\colon X_N^\bullet \to Y_N^\bullet$ in $\C_N(\E)$.\\
	 To show essential surjectivity, let $X \in \C^{\infty, \varnothing}_N(\E)$ be an acyclic $N$-complex. Apply \Cref{prp: array-local} for all $n \in \ZZ$ with a fixed choice of all (co)images. The respective diagrams patch together to form an acyclic $N$-array, sent to $X$ under the above functor. Fullness follows from \Cref{prp: array-local} as well. To see that the functor is faithful, suppose that $f^n_N=0$ for all $n \in \ZZ$. Then the commutative diagram
	
	\begin{center}
		\begin{tikzcd}[sep={17.5mm,between origins}]
			X^{n}_{N} \ar[r, two heads] \ar[d, "f^n_N"] \ar[rd, "0"] & X^n_{r} \ar[d, "f^{n}_{r}"] \\
			Y^{n}_{N} \ar[r, two heads] & Y^n_r
		\end{tikzcd}
	\end{center}
	shows that $f^n_r =0$ for all $n$ and $r$.\qedhere
\end{proofof}

Based on \Cref{prp: array-local}, we define soft truncations. For convenience, we impose slightly stronger acyclicity hypotheses than necessary.

\begin{dfn} \label{dfn: soft-trunc}
	Let $X \in \C_N(\E)$ be an $N$-complex over an exact category  $\E$ and $n \in \ZZ$.
	
	\begin{enumerate}
		\item \label{dfn: soft-trunc-left} If $X$ is acyclic at positions $n,\dots, n+N-2$, its \textbf{(left) soft truncation} is the $N$-complex
		
		\begin{center}
			\begin{tikzcd}[sep=small]
				\sigma^{\geq n} X\colon & C^{n}_{(1)}  \ar[r, tail] & \cdots  \ar[r, tail] & C^{n+N-2}_{(N-1)}  \ar[r, tail] & X^{n+N-1} \ar[r] & X^{n+N} \ar[r] & X^{n+N+1} \ar[r] & \cdots.
			\end{tikzcd}
		\end{center}
		Note that $C^{n+r-1}_{(r)}(X)=Z^{n+N-1}_{(r)}(X)$ for all $r \in \{1, \dots, N-1\}$ if $X$ is acyclic also at position $n+N-1$.
		\item \label{dfn: soft-trunc-right} If $X$ is acyclic at positions $n-N+2, \dots, n$, its \textbf{(right) soft truncation} is the $N$-complex
		\begin{center}
			\begin{tikzcd}[sep=small]
				\sigma^{\leq n} X\colon & \cdots \ar[r] & X^{n-N-1} \ar[r] & X^{n-N} \ar[r] & X^{n-N+1} \ar[r, two heads] & Z^{n-N+2}_{(N-1)} \ar[r, two heads] & \cdots \ar[r, two heads] & Z^{n}_{(1)}.
			\end{tikzcd}
		\end{center}
		Note that $Z^{n-r+1}(X)=C^{n-N+1}_{(r)}(X)$ for all $r \in \{1, \dots, N-1\}$ if $X$ is acyclic also at position $n-N+1$.
	\end{enumerate}
\end{dfn}

\begin{cor} \label{cor: soft-trunc-seq} Let $X \in \C_N(\E)$ be an $N$-complex over an exact category  $\E$ and $n \in \ZZ$.
	\begin{enumerate}
		\item \label{cor: soft-trunc-seq-a} If $X$ is acyclic at all positions greater than or equal to $n$, then $\sigma^{\geq n} X$ is acyclic. Dually, if $X$ is acyclic at all positions up to $ n$, then $\sigma^{\leq n} X$ is acyclic.
		
		\item \label{cor: soft-trunc-seq-b} If $X$ is acyclic at positions $n-N+1, \dots, n+1 \in \ZZ$, then there is a term\emph{wise} short exact sequence of $N$-complexes
		
		\begin{center}
			\begin{tikzcd}[column sep=small]
				\sigma^{\leq n} X\colon \ar[d] & \cdots \ar[r] & X^{n-N+1} \ar[r, two heads] \ar[d, tail, "\id"] & X^{n-N+1}_{N-1} \ar[r, two heads] \ar[d, tail] & \cdots \ar[r, two heads] & X^{n-N+1}_{1}  \ar[r] \ar[d, tail] & 0 \ar[r] \ar[d, tail] & \cdots \\
				X\colon \ar[d] & \cdots \ar[r] & X^{n-N+1} \ar[r] \ar[d, two heads] & X^{n-N+2} \ar[r] \ar[d, two heads] & \cdots \ar[r] & X^{n} \ar[r] \ar[d, two heads] & X^{n+1} \ar[r] \ar[d, two heads, "\id"]  & \cdots \\
				\sigma^{\geq n-N+2}X\colon& \cdots \ar[r] & 0 \ar[r] & X^{n-N+2}_{1} \ar[r, tail] & \cdots \ar[r, tail] & X^{n}_{N-1} \ar[r, tail]  & X^{n+1} \ar[r] & \cdots,
			\end{tikzcd}
		\end{center}
		where $X^k_r = C^k_{(r)}(X) = Z^{k+N-r}_{(r)}(X)$ for all occurring indices $k$ and $r$.
	\end{enumerate}
\end{cor}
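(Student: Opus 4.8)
The plan is to extract both statements from the local bicartesian-array structure furnished by \Cref{prp: array-local}, once the index conventions of \Cref{dfn: soft-trunc} are reconciled with those of \Cref{thm: array}.

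For part \ref{cor: soft-trunc-seq-a}, first observe that $\sigma^{\geq n}X$ is zero in all positions $\le n-1$, so it is $N$-acyclic there automatically: every contraction $\gamma^m_r$ with $m \le n-1$ passes through a zero object, and $0 \to Y$, $Y \to 0$ are admissible morphisms with trivial analysis. For positions $m \ge n$, I would apply \Cref{prp: array-local} at every $n' \ge n$; since $X$ is acyclic at all positions $\ge n$, the syzygy objects $X^k_r = C^k_{(r)}(X) = Z^{k+N-r}_{(r)}(X)$ with $k \ge n$ patch into one commutative diagram of bicartesian squares with admissible morphisms. The $N$-complex $\sigma^{\geq n}X$ is exactly the ``diagonal-then-horizontal'' path $X^n_1 \rightarrowtail X^{n+1}_2 \rightarrowtail \cdots \rightarrowtail X^{n+N-2}_{N-1} \rightarrowtail X^{n+N-1} \to X^{n+N} \to \cdots$ through this diagram, whose differentials are composites of the admissible monics $i^\bullet_\bullet$ and (for the differential $d_{X^\bullet_N} = ip$) of admissible epics $p^\bullet_\bullet$. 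Checking $N$-acyclicity of this path at a position $m \ge n$ reduces, through the contraction functors $\gamma^m_r$, to the admissibility of certain composites of such morphisms (clear, as composites of admissible morphisms of the same type are admissible) together with the fact that their analyses reproduce the short exact sequences $Z^m_{(r)}(X) \rightarrowtail X^m \twoheadrightarrow X^m_{N-r}$ already present in the array, which hold by \Cref{prp: Buehler2.12} applied to the bicartesian squares. Equivalently, one pads the truncated columns of the array by identities above level $k-n+1$ to obtain a genuine acyclic $N$-array with associated $N$-complex $\sigma^{\geq n}X$ and invokes \Cref{thm: array}. The dual assertion for $\sigma^{\leq n}X$ follows by passing to $\op\E$, see \Cref{rmk: E-op}.

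For part \ref{cor: soft-trunc-seq-b}, I would apply \Cref{prp: array-local} at $n' = n-N+1$. The hypothesis that $X$ is acyclic at $n-N+1, \dots, n+1$ ensures that all compositions of differentials between positions in $\{n-N+1, \dots, n+1\}$ are admissible and that $\gamma^{n-N+1+k}_{N-k}(X)$ is $2$-acyclic at $n-N+1+k$ for $k=1,\dots,N-1$, so the resulting array is bicartesian with admissible morphisms and $X^k_r = C^k_{(r)}(X) = Z^{k+N-r}_{(r)}(X)$. Now define the vertical maps. The map $\sigma^{\leq n}X \to X$ is the identity in positions $\le n-N+1$, is the admissible monic $X^{n-N+1}_{N-j} \rightarrowtail X^{n-N+1+j}$ obtained by composing $j$ of the arrows $i^\bullet_\bullet$ of the array in position $n-N+1+j$ (for $j=1,\dots,N-1$), and is zero in positions $\ge n+1$; dually, $X \to \sigma^{\geq n-N+2}X$ is zero in positions $\le n-N+1$, is the admissible epic $X^{n-N+1+j} \twoheadrightarrow X^{n-N+1+j}_j$ obtained by composing $N-j$ of the arrows $p^\bullet_\bullet$, and is the identity in positions $\ge n+1$. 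Commutativity with the differentials in each position is read off the commutative array of \Cref{prp: array-local}.\ref{prp: array-local-diagram}; at the boundary positions $n-N+1$ and $n+1$ one uses that $d^{n-N+1}_X$ and $d^n_X$ factor through their analyses, which is exactly where acyclicity of $X$ at the endpoints $n-N+1$ and $n+1$ enters. It remains to verify termwise exactness: in positions $\le n-N+1$ the column is $X^k \overset{\id}{\rightarrowtail} X^k \twoheadrightarrow 0$, in positions $\ge n+1$ it is $0 \rightarrowtail X^k \overset{\id}{\twoheadrightarrow} X^k$, and in position $n-N+1+j$ it is $X^{n-N+1}_{N-j} \rightarrowtail X^{n-N+1+j} \twoheadrightarrow X^{n-N+1+j}_j$, which is precisely the short exact sequence extracted from the $2$-acyclicity of $\gamma^{n-N+1+j}_{N-j}(X)$ at position $n-N+1+j$, its image being $Z^{n-N+1+j}_{(N-j)}(X) = B^{n-N+1+j}_{(j)}(X)$ and its cokernel $X^{n-N+1+j}_j$. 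Since $X \in \C_N(\E)$, \Cref{rmk: C_N-subcat} then shows $\sigma^{\leq n}X$ and $\sigma^{\geq n-N+2}X$ lie in $\C_N(\E)$, so we obtain a termwise short exact sequence of $N$-complexes.

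The main obstacle is purely bookkeeping: reconciling the index shifts between the descriptions of the soft truncations in \Cref{dfn: soft-trunc} and of the array in \Cref{thm: array} and \Cref{prp: array-local}, and checking that the vertical maps commute with the differentials at the two boundary positions. Once the array of \Cref{prp: array-local} is in hand, every short exact sequence and every commutativity we need is one of its constituent bicartesian squares or analyses.
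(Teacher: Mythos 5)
Your proposal is correct and matches the paper's own argument: for part (a) the paper likewise patches the local diagrams of \Cref{prp: array-local}, pads the left end with identities and zeros to get an acyclic $N$-array whose top row is $\sigma^{\geq n}X$, and invokes \Cref{thm: array} (your ``equivalently'' formulation), with the dual case by duality. For part (b) the paper simply declares the termwise sequence obvious from the definitions; your explicit verification via the short exact sequences $Z^{m}_{(N-j)}(X) \rightarrowtail X^{m} \twoheadrightarrow C^{m}_{(j)}(X)$ coming from acyclicity is just a spelled-out version of that.
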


\begin{proof}
	 Part \ref{cor: soft-trunc-seq-b} is obvious from the definitions. To see \ref{cor: soft-trunc-seq-a}, patch all diagrams that can be obtained from \Cref{prp: array-local}, as in the proof of \Cref{thm: array}. Then modify the resulting diagram at the left end  as follows, and extend it by zero to create an acyclic $N$-array of $\sigma^{\geq n} X$:
	
	\begin{center}
		\begin{footnotesize}
			\begin{tikzcd}[sep={11mm,between origins}]
				X^{n}_{1} \ar[rr, tail] \ar[rd, equal] && X^{n+1}_{2} \ar[rr, tail] \ar[rd, equal] && X^{n+2}_{3} \ar[>-, r] & \cdots \ar[r] & X^{n+N-3}_{N-2} \ar[rr, tail] \ar[rd, equal]  && X^{n+N-2}_{N-1} \ar[rr, tail] \ar[rd, equal] && X^{n+N-1} \ar[rd, two heads] \ar[rr] && X^{n+N} \ar[rd, two heads] \ar[r] & \cdots\\
				& X^{n}_{1} \ar[rd, equal] \ar[ru, tail] && X^{n+1}_{2} \ar[rd, equal] \ar[ru, tail] && && X^{n+N-3}_{N-2} \ar[rd, equal] \ar[ru, tail] && X^{n+N-2}_{N-1} \ar[rd, two heads] \ar[ru, tail] && X^{n+N-1}_{N-1} \ar[ru, tail] \ar[rd, two heads] && \ddots \\
				&& X^{n}_{1} \ar[rd, equal] \ar[ru, tail] && \ddots \ar[rd, equal] && \udots \ar[ru, tail] && X^{n+N-3}_{N-2} \ar[rd, two heads] \ar[ru, tail] && X^{n+N-2}_{N-2} \ar[ru, tail] \ar[rd, two heads] && \ddots \\
				&&& \ddots\ar[rd, equal]  && X^{n+1}_{2}\ar[rd, equal] \ar[ru, tail] && \udots \ar[ru, tail] && X^{n+N-3}_{N-3} \ar[ru, tail] \ar[rd, two heads] && \ddots \\
				&&&& X^{n}_{1} \ar[rd, equal] \ar[ru, tail] && X^{n+1}_{2} \ar[rd, two heads] \ar[ru, tail] && \udots \ar[ru, tail] && \ddots \\
				&&&&& X^n_{1} \ar[ru, tail] \ar[rd, two heads] && X^{n+1}_{1} \ar[ru, tail] \ar[rd, two heads] && \udots \\
				&&&&&& 0 \ar[ru, tail] && 0 \ar[ru, tail]
			\end{tikzcd}
		\end{footnotesize}
	\end{center}
	
	The claimed acyclicity follows from \Cref{thm: array}.
\end{proof}

\subsection{Resolutions of $N$-complexes} \label{subsection: resolutions}

Keller described (injective) resolutions of 2-complexes over an exact category $\E$ which are bounded on one side, see {\cite[4.1, Lemma]{Kel90}}. In this subsection we generalize his approach to construct projectively resolving $N$-arrays, which then yield projective $N$-resolutions (\Cref{cor: resolution-exist}). For elements of $\mMor_{N-2}(\E)$, see \Cref{ntn: iota}, such resolutions take the form of one-sided acyclic $N$-arrays (\Cref{cor: syz-resolution}).

\begin{dfn} Let $\E$ be an exact category.
	\begin{enumerate}
		\item We refer to an epic bicartesian $N$-array $(X, p, i)$ over $\E$ with $i^{\{N\}}=0$ as a \textbf{resolving $\boldsymbol N$-array} (of $X^\bullet_0 \in \C_N(\E)$). We call it \textbf{projectively resolving} if $X^\bullet_N \in \C_N(\Prj(\E))$, see \Cref{rmk: array-complex}.
		
		\item We refer to a monic bicartesian $N$-array $(X, p, i)$ over $\E$ with $p^{\{N\}}=0$ as a \textbf{coresolving $\boldsymbol N$-array} (of $X^\bullet_0 \in \C_N(\E)$). We call it \textbf{injectively coresolving} if $X^\bullet_0 \in \C_N(\Inj(\E))$, see \Cref{rmk: array-complex}.
	\end{enumerate}
\end{dfn}

In the following, we consider only (projectively) resolving $N$-arrays. However, there are obvious dual statements on (injectively) coresolving $N$-arrays.

\begin{prp} \label{prp: resolution-exist}
	Let $\E$ be an exact category with enough projectives. Then any bounded above $N$-complex over $\E$ admits a bounded above, projectively resolving $N$-array.
\end{prp}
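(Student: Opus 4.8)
The plan is to generalize Keller's staircase construction of one-sided projective resolutions of $2$-complexes, see \cite[4.1, Lemma]{Kel90}, now for $N$-complexes and with the output recorded as an $N$-array. Observe first that giving a bounded above, projectively resolving $N$-array of $Y := X^\bullet_0$ amounts to giving a bounded above $N$-complex $P = X^\bullet_N \in \C_N(\Prj(\E))$ together with a morphism $g := p^{\{N\}}\colon P \to Y$ which is a termwise admissible epic whose kernel $N$-complex is acyclic. Indeed, from such a $g$ the remaining rows $X^k_r$ ($0 < r < N$) arise as the partial quotients of $P^k = X^k_N$ along the filtration recorded by the acyclic $N$-array of $\ker g$, and the maps $p^k_r, i^k_r$ and the bicartesian squares are then forced, see \Cref{rmk: array-complex}, \Cref{prp: Buehler2.12} and \Cref{thm: array}. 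So it is enough to produce $P$ and $g$. Fix $n \in \ZZ$ with $Y^k = 0$ for $k > n$; we construct $P$ by descending induction on the degree $k$, keeping $P^k = 0$ for $k > n$, which makes $P$ bounded above.

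For the inductive step, suppose $P^j$, the differentials among the $P^j$, and the components $g^j \colon P^j \to Y^j$ have been built for all $j > k$, compatibly and so that the partially-built kernel $N$-complex is acyclic in the range already available. Let $W^k$ be the auxiliary object obtained by pulling back, along $d^k_Y \colon Y^k \to Y^{k+1}$ and along the iterated differentials of $P^{\ge k+1}$, the cycle objects $Z^{k+1}_{(r)}$ of the part built so far; $W^k$ is the object carrying simultaneously the chain-map condition $g^{k+1}d^k_P = d^k_Y g^k$, the $N$-complex condition $d^{k+N-1}_P\cdots d^k_P = 0$, and the acyclicity to be achieved next, and it is here that the $N$ layers of the staircase must be fed in at once, in contrast to the single pullback used for $N=2$. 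Using that $\E$ has enough projectives, choose an admissible epic $P^k \twoheadrightarrow W^k$ with $P^k$ projective, and let $d^k_P$ and $g^k$ be the induced composites. The pushout/pullback axioms \Cref{dfn: exact}.\ref{dfn: exact-pushpull}, together with \Cref{prp: obscure}, \Cref{prp: Buehler2.15}, \Cref{lem: pushpull-sep}, \Cref{cor: bicart} and — to propagate admissibility and bicartesianness across the new slice — \Cref{lem: array-admissible}, then ensure that $g^k$ is again a termwise admissible epic and that the new layer of kernels consists of bicartesian squares, so that $\ker g$ remains acyclic.

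Patching the columns for all $k \le n$ produces $P \in \C_N(\Prj(\E))$ with $P^k = 0$ for $k > n$ and $g\colon P \to Y$ with the required properties, hence a bounded above, projectively resolving $N$-array of $Y$. The step I expect to be the main obstacle is identifying the auxiliary object $W^k$ precisely: for $N = 2$ it is a single pullback square and the verification is Keller's, whereas for general $N$ one must incorporate into $W^k$ the cycle data of the $N-1$ intermediate rows of the slice in degree $k+1$ in a manner that (i) survives covering by a projective, so that all the resulting vertical maps are admissible epics and all squares bicartesian, and (ii) produces $i^{\{N\}} = 0$, i.e.\ makes $P = X^\bullet_N$ a genuine $N$-complex. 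Once $W^k$ is correctly pinned down, the remaining verifications are routine applications of the exact-category results recalled in \Cref{section: preliminaries} together with \Cref{lem: prep-thm-array} and \Cref{lem: array-admissible}.
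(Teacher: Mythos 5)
Your overall strategy — descending induction, feeding the already-built data into a pullback, then capping with a projective cover — is the same as the paper's, but the heart of the argument is exactly the point you leave open. The inductive step must produce, given the slices in all degrees $>n$, a connecting morphism $i^{n-1}_0\colon X^{n-1}\to X^{n}_1$ that simultaneously lifts $d^{n-1}_X$ through the admissible epic $p^{n}_1$ and composes to zero with the $N-1$ monics already constructed, so that the condition $i^{\{N\}}=0$ (equivalently $d^{\{N\}}_{X^\bullet_N}=0$) survives the step; only then can the new squares $(Y^{n}_r)$, $r=1,\dots,N-1$, be formed by successive pullbacks of admissible epics (automatically bicartesian with admissible morphisms by \Cref{dfn: exact}.\ref{dfn: exact-pushpull} and \Cref{cor: bicart}) and the slice be capped by an admissible epic from a projective. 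You explicitly flag the identification of your auxiliary object $W^k$, together with points (i) and (ii), as "the main obstacle" and do not resolve it — but this is the actual content of the proposition, not a routine verification. The paper settles it with \Cref{lem: help-resolution}: regarding the concatenations $(Y^{n+1}_1\cdots Y^{n+N-1}_{N-1})$ and $(Y^{n+N}_{N-1}\cdots Y^{n+N}_1)$ as two bicartesian squares, the hypotheses $ee'=d^{\{N\}}_X=0$ and $c'a=i^{\{N\}}=0$ yield the dashed arrow $j=i^{n-1}_0$ with $a'j=0$, and everything else follows by pullback and a projective cover. Without an argument of this kind, your construction does not guarantee $d_P^{\{N\}}=0$, nor that admissibility and bicartesianness persist after covering $W^k$ by a projective.

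Two further points. First, your opening reduction — that a bounded above projectively resolving $N$-array of $Y$ is the same thing as a bounded above $P\in\C_N(\Prj(\E))$ with a termwise admissible epic $g\colon P\to Y$ whose kernel $N$-complex is acyclic — is asserted, not proved, and the citations do not deliver it: \Cref{thm: array} concerns acyclic arrays (bottom row zero), and reconstructing the intermediate rows, the maps $p^k_r,i^k_r$ and the bicartesian squares from $g$ requires precisely the admissibility bookkeeping you are trying to sidestep. The paper makes no such reduction; it builds the array directly and only afterwards (\Cref{prp: cone-resolution}) converts it into a resolution. Second, defining $W^k$ via "cycle objects $Z^{k+1}_{(r)}$ of the part built so far" is delicate in a general exact category: such kernels need only exist when the relevant iterated differentials are already known to be admissible, which is part of what is being constructed; the array formalism of pullbacks of admissible epics is used in the paper exactly to avoid invoking these kernels.
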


The proof relies on the following argument of Keller:

\begin{lem} \label{lem: help-resolution} Consider a commutative diagram
	\[
	\begin{tikzcd}[sep={15mm,between origins}]
		&& \bullet \ar[rd, two heads, "b"] && \bullet \ar[rd, two heads, "d"] &\\
		& \bullet \ar[rd, two heads, "b'"] \ar[ru, "a'"] & \diamond & \bullet \ar[rd, two heads, "d'"] \ar[ru, "c'"] &\diamond& \bullet \\
		\bullet \ar[rr, "e'"] \ar[ru, dashed, "j"]&& \bullet \ar[ru, "a"] \ar[rr, "e"] && \bullet \ar[ru, "c"]
	\end{tikzcd}
	\]
	 of two bicartesian squares in an additive category. If $c'a = 0$ and $ee'=0$, then there exists a morphism $j$ completing the diagram with $a'j = 0$.
\end{lem}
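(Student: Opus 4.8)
The plan is to use only that the two squares are pullbacks; their pushout property is not needed. Label the bottom row as $A_0 \xrightarrow{e'} A_1 \xrightarrow{e} A_2$, and let $P$ be the common source of $a'$ and $b'$ and $Q$ the common source of $c'$ and $d'$, that is, the respective limit corners of the two squares. Then $a$ is a morphism $A_1 \to Q$, and by commutativity of the diagram $e$ equals the composite $d' a$.

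The first step is to check that $a e' \colon A_0 \to Q$ vanishes. I would compose it with the two projections $c'$ and $d'$ of the pullback $Q$: the hypothesis $c' a = 0$ gives $c'(a e') = (c' a) e' = 0$, while $e = d' a$ together with the hypothesis $e e' = 0$ gives $d'(a e') = (d' a) e' = e e' = 0$. Since the right square is a pullback, a morphism with target $Q$ is uniquely determined by its composites with $c'$ and $d'$; as the zero morphism $A_0 \to Q$ has the same vanishing composites, uniqueness forces $a e' = 0$.

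The second step feeds this into the left square. The pair consisting of the zero morphism from $A_0$ into the source of $b$ and of $e' \colon A_0 \to A_1$ is a cone over the cospan whose limit is $P$, because both of its composites into $Q$ vanish: $b \circ 0 = 0$ and $a e' = 0$ by the first step. The pullback property of the left square therefore produces a (unique) morphism $j \colon A_0 \to P$ with $a' j = 0$ and $b' j = e'$; the equality $b' j = e'$ is precisely the commutativity that makes $j$ complete the diagram, and $a' j = 0$ is the additional vanishing asserted. I do not expect a genuine obstacle: the only care needed is to identify correctly which vertex of each square is its limit and to route the two hypotheses through the appropriate projection, after which each step is a one-line diagram chase.
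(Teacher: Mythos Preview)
Your proof is correct and follows essentially the same approach as the paper's: both first show $ae'=0$ using that the right square is a pullback (so $(c',d')$ is jointly monic), and then invoke the universal property of the left pullback to produce $j$ with $a'j=0$ and $b'j=e'$. Your observation that only the pullback property of each square is needed is accurate; the paper phrases the monic-ness of $\begin{pmatrix} c' \\ d' \end{pmatrix}$ via the short exact sequence interpretation of a bicartesian square, but this is the same content.
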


\begin{proof} Interpreting the bicartesian squares as short exact sequences as in \Cref{prp: Buehler2.12}.\ref{prp: Buehler2.12-pull}, the hypotheses yield
	\[\begin{pmatrix} c' \\ d'\end{pmatrix} \begin{pmatrix} -b & a\end{pmatrix} \begin{pmatrix} 0 \\ e' \end{pmatrix} = \begin{pmatrix} c'ae' \\ d'ae'\end{pmatrix} = \begin{pmatrix} c'ae' \\ ee'\end{pmatrix} = 0,\]
	and hence $\begin{pmatrix} -b & a\end{pmatrix} \begin{pmatrix} 0 \\ e' \end{pmatrix} = 0$, since $\begin{pmatrix} c' \\ d'\end{pmatrix}$ is a monic. Then $j$ exists by the universal property of pullbacks.
\end{proof}

\begin{proofof}{\Cref{prp: resolution-exist}}
	Let $X\in \C^{-}_N(\E)$, and set $m := \max\{ k \in \ZZ \, \vert \, X^k \neq 0\}$. We construct a projectively resolving $N$-array $X^\bullet_\bullet$ of $X$ as follows:
	
	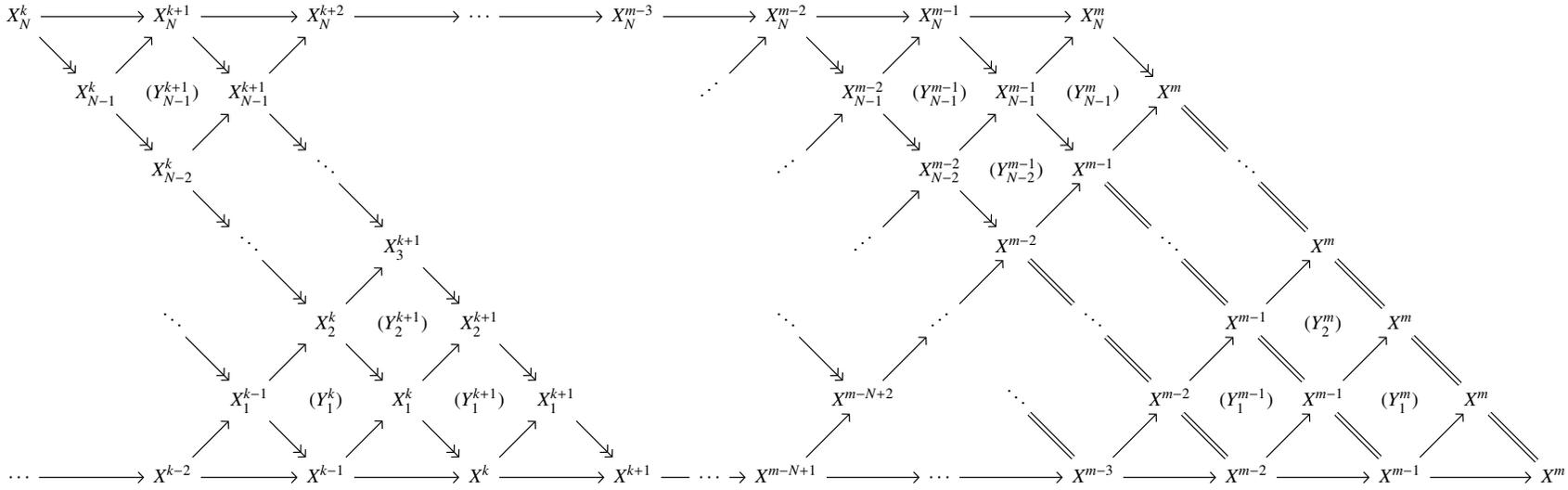
\begin{figure}
		\begin{scriptsize}
			\rotatebox{90}{
				\begin{tikzcd}[sep={11mm,between origins}, ampersand replacement = \&]
					X_N^k \ar[dr, two heads] \ar[rr] \& \&  X_N^{k+1} \ar[dr, two heads] \ar[rr] \& \&  X_N^{k+2} \ar[rr]  \&\& \cdots \ar[rr] \& \& X_N^{m-3} \ar[rr] \& \&  X_N^{m-2} \ar[rd, two heads] \ar[rr]\& \& X_N^{m-1} \ar[rd, two heads] \ar[rr] \& \& X_N^m \ar[rd, two heads] \& \& \& \& \& \&\\
					\&  X^{k}_{N-1} \ar[dr, two heads] \ar[ru] \& (Y^{k+1}_{N-1}) \&  X^{k+1}_{N-1} \ar[dr, two heads] \ar[ru] \& \&  \& \& \& \& \udots \ar[ru] \& \& X^{m-2}_{N-1} \ar[rd, two heads] \ar[ru] \& (Y^{m-1}_{N-1}) \& X^{m-1}_{N-1} \ar[ru] \ar[rd, two heads] \& (Y^{m}_{N-1}) \& X^m  \ar[rd, equal] \\
					\& \&  X^{k}_{N-2} \ar[ru] \ar[rd, two heads] \&  \&  \ddots \ar[rd, two heads] \& \& \& \& \& \& \udots \ar[ru] \& \&  X^{m-2}_{N-2} \ar[ru] \ar[rd, two heads] \& (Y^{m-1}_{N-2}) \& X^{m-1} \ar[ru] \ar[rd, equal] \&\& \ddots \ar[rd, equal]  \\
					\& \& \&  \ddots \ar[dr, two heads] \& \&  X^{k+1}_3 \ar[rd, two heads] \& \& \& \& \& \& \udots \ar[ru]  \&\& X^{m-2} \ar[ru] \ar[rd, equal] \&\& \ddots \ar[rd, equal] \&\& X^m  \ar[rd, equal]\\
					\& \&  \ddots \ar[dr, two heads]  \& \&  X^k_2  \ar[rd, two heads] \ar[ru] \& (Y^{k+1}_2) \&  X^{k+1}_{2} \ar[rd, two heads] \& \& \& \&\ddots \ar[rd, two heads] \& \& \udots \ar[ru] \&\& \ddots \ar[rd, equal] \&\&  X^{m-1} \ar[rd, equal] \ar[ru] \&(Y^m_2)\& X^m \ar[rd, equal]  \\
					\& \& \&  X^{k-1}_1 \ar[rd, two heads] \ar[ru] \& (Y^{k}_1) \&  X^k_1 \ar[rd, two heads] \ar[ru] \& (Y^{k+1}_1) \&  X^{k+1}_1 \ar[rd, two heads]  \& \& \& \&  X^{m-N+2} \ar[ru] \&\& \ddots \ar[rd, equal] \&\&  X^{m-2} \ar[rd, equal] \ar[ru] \& (Y^{m-1}_1) \& X^{m-1} \ar[rd, equal] \ar[ru] \& (Y^m_1) \& \ar[rd, equal] X^m  \\
					\cdots \ar[rr] \& \&  X^{k-2} \ar[ru] \ar[rr] \& \&  X^{k-1} \ar[ru] \ar[rr] \& \&  X^k \ar[ru] \ar[rr] \& \&  X^{k+1} \ar[r,-] \& \cdots \ar[r] \& X^{m-N+1} \ar[rr]\ar[ru] \&\& \cdots \ar[rr] \&\& X^{m-3} \ar[rr] \ar[ru]  \&\& X^{m-2} \ar[ru]  \ar[rr] \&\& X^{m-1} \ar[rr] \ar[ru] \&\& X^m
				\end{tikzcd}
				}
			\end{scriptsize}
		\caption{Keller's resolution for general $N$}
	\end{figure}
	
	For all $k > m$ and $r \in \{1, \dots, N-1\}$, set $X^k_r := 0$. Fix $n \in \ZZ_{\leq m}$ and assume that $(Y^k_r)$ has already been constructed for $r \in \{1, \dots, N\}$ and $k > n$. Consider the following diagram joining the concatenated bicartesian squares $(Y^{n+1}_1 \cdots Y^{n+N-1}_{N-1})$ and $(Y^{n+N}_{N-1} \cdots Y^{n+N}_{1})$, see \Cref{lem: pasting-law}:
	
	\begin{center}
		
		\begin{tikzcd}[sep={17.5mm,between origins}]
			&&&X_N^{n+N-1} \ar[rd, "p^{n+N-1}_N"'] && X_N^{n+N} \ar[rrdd, two heads, "\prod_{l=2}^N p^{n+N}_l"] & \\
			&&&& X^{n+N-1}_{N-1} \ar[ru, "i^{n+N-1}_{N-1}"'] \ar[rrdd, two heads, "\prod_{l=1}^{N-1} p^{n+N-1}_l"]&\\ 
			& X^n_1 \ar[rruu, "\prod_{l=N-1}^{1} i^{n-1+l}_l"] \ar[rd, two heads, "p^n_1"] &&&&&& X^{n+N}_{1}\\
			X^{n-1} \ar[ru, "i^{n-1}_0", dashed] \ar[rr, "d^{n-1}"] && X^{n} \ar[rrrr, "d^{\{N-1\}}"] \ar[rruu, "\prod_{l=N-2}^{0} i^{n+l}_l"]&&&& X^{n+N-1} \ar[ru, "i^{n+N-1}_{0}"'] &
		\end{tikzcd}
	\end{center}
	
	Then the morphism $i^{n-1}_0$ exists due to \Cref{lem: help-resolution}. The squares $(Y^{n}_r)$ can now be defined by successive pullbacks for increasing $r = 1,\dots, N-1$. To complete the induction step, we pick an admissible epic $p^{n-1}_N\colon X_N^{n-1} \twoheadrightarrow X^{n-1}_{N-1}$ with $X_N^{n-1} \in \Prj(\E)$. \qedhere
\end{proofof}

\begin{rmk}
	Let $X$ be a (projectively) resolving $N$-array over an exact category $\E$. Pick $n \in \ZZ$ and $r \in \{1,\dots,N-1\}$. Then suitably composing morphisms in $X$ yields a (projectively) resolving 2-array of $\gamma^n_r(X_0^\bullet)$, as constructed by Keller:
	\begin{center}
		\begin{tikzcd}[sep={12.5mm,between origins}]
			\cdots \ar[rr] && X_N^{n-N+r} \ar[rd, two heads] \ar[rr, "d^{\{N-r\}}_{X_N}"] && X_N^n \ar[rd, two heads] \ar[rr, "d^{\{r\}}_{X_N}"] && X_N^{n+r} \ar[rd, two heads] \ar[rr, "d^{\{N-r\}}_{X_N}"] && X_N^{n+N} \ar[rr] && \cdots \\
			& \cdots && X^{n-N+r}_r \ar[ru] \ar[rd, two heads] && X^{n}_{N-r} \ar[ru] \ar[rd, two heads] && X^{n+r}_r \ar[ru] \ar[rd, two heads]   && \cdots \\
			\cdots\ar[rr]  && X_0^{n-N} \ar[ru] \ar[rr, "d_{X^\bullet_0}^{\{r\}}"] && X_0^{n-N+r} \ar[ru] \ar[rr, "d_{X^\bullet_0}^{\{N-r\}}"] && X_0^{n} \ar[ru] \ar[rr, "d_{X^\bullet_0}^{\{r\}}"] && X_0^{n+r} \ar[rr] && \cdots
		\end{tikzcd}
	\end{center}
\end{rmk}

\begin{rmk} \label{rmk: array-lift}
	Let $X^\bullet_\bullet$ and $Y^\bullet_\bullet$ be resolving $N$-arrays of $N$-complexes $X$ and $Y$, respectively, over an exact category $\E$. Suppose $X^\bullet_\bullet$ is projectively resolving and one of $X^\bullet_\bullet$ and $Y^\bullet_\bullet$ is bounded above. Then any morphism $X \to Y$ in $\C_N(\E)$ lifts to a morphism $X^\bullet_\bullet \to Y^\bullet_\bullet$, which in turn induces a morphism $X^\bullet_N \to Y^\bullet_N$ in $\C_N(\E)$: Due to boundedness, the morphism $X^k_r \to Y^k_r$ must be zero, for any sufficiently large $k \in \ZZ$. The remaining morphisms exist exist due to functoriality of pullbacks for $r \in \{1, \dots, N-1\}$, and due to projectivity for $r=N$.
\end{rmk}

Combining the proof of \Cref{prp: resolution-exist} with \Cref{prp: Buehler2.15,prp: Buehler2.12} yields

\begin{cor} \label{cor: syz-resolution} Let $\E$ be an exact category with enough projectives and $X \in \mMor_{N-2}(\E)$. Then $\iota^0 X$, see \Cref{ntn: iota}, admits a projectively resolving $N$-array which can be modified into a one-sided $N$-acyclic array as follows:
		\begin{center}
			\pushQED{\qed}
			\begin{tikzcd}[sep={14mm,between origins}]
				\cdots \ar[r] & X_N^{-N+1} \ar[rr] \ar[rd, two heads] && X_N^{-N+2} \ar[rr] \ar[rd, two heads] && X_N^{-N+3} \ar[r] & \cdots \ar[r] & X_N^{-1} \ar[rr]\ar[rd, two heads]  && X_N^{0} \ar[rd, two heads] && \\
				&& X^{-N+1}_{N-1} \ar[rd, two heads] \ar[ru, tail] && X^{-N+2}_{N-1} \ar[rd, two heads] \ar[ru, tail] && && X^{-1}_{N-1} \ar[rd, two heads] \ar[ru, tail] && X^{N-1} \\
				&\udots \ar[ru, tail] && X^{-N+1}_{N-2} \ar[rd, two heads] \ar[ru, tail] && \ddots \ar[rd, two heads] && \udots \ar[ru, tail] && X^{N-2} \ar[ru, tail] \\
				&& \udots \ar[ru, tail] && \ddots\ar[rd, two heads]  && X^{-N+2}_2\ar[rd, two heads] \ar[ru, tail] && \udots \ar[ru, tail] \\
				&&&&& X^{-N+1}_1 \ar[rd, two heads] \ar[ru, tail] && X^1 \ar[ru, tail] \ar[rd, two heads] \\
				&&&&0 \ar[ru, tail] && 0 \ar[ru, tail] && 0
			\end{tikzcd}
		\end{center}
		In particular, $X$ can be recovered as a cokernel in $\Mor_{N-2}(\E)$ of the morphism
		
		\begin{center}
			\begin{tikzcd}[sep={17.5mm,between origins}]
				X^{-N+1}_N \ar[d] \ar[r, equal]& X^{-N+1}_N \ar[d] \ar[r, equal] & \cdots \ar[r, equal] &  X^{-N+1}_N \ar[d] \ar[r, equal] & X^{-N+1}_N \ar[d] \\  X^{-N+2}_N \ar[r] & X^{-N+3}_N \ar[r] & \cdots \ar[r] & X^{-1}_N \ar[r] & X^{0}_N.
			\end{tikzcd}
		\end{center}
		The obvious dual statements hold for elements of $\eMor_{N-2}(\E).$\qed
\end{cor}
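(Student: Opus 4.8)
The plan is to run the construction in the proof of \Cref{prp: resolution-exist} for the bounded-above $N$-complex $\iota^0 X$ and then to use that $X$ is a chain of admissible monics to put the resulting array into the displayed shape.

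Note first that $\iota^0 X$ has $(\iota^0 X)^k = X^{k+N-1}$ for $k\in\{-N+2,\dots,0\}$ and vanishes otherwise, so $m := \max\{k : (\iota^0 X)^k\neq 0\}=0$; running the construction produces a bounded-above, projectively resolving $N$-array $X^\bullet_\bullet$ of $\iota^0 X$ with $X^\bullet_N\in\C^{-}_N(\Prj(\E))$ supported in degrees $\le 0$ and $X^k_r=0$ for $k\ge 1$. I would then study the top-right corner, i.e.\ the squares $(Y^n_r)$ meeting positions $k\in\{-N+2,\dots,0\}$. There the differential $d^k_{X^\bullet_0}=p^{k+1}_1\circ i^k_0$ of $X^\bullet_0=\iota^0 X$ is an admissible monic for $k\in\{-N+2,\dots,-1\}$ --- it is the $k$-th arrow of $X$ --- and vanishes for all other $k$. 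Combined with $X^k_\bullet=0$ for $k\ge 1$, \Cref{prp: Buehler2.12}, which identifies a bicartesian square with a short exact sequence (so that a bicartesian square two of whose opposite corners vanish has an isomorphism on the remaining edge), gives by induction on the position and the level that the vertical maps $p^k_r$ in this corner are isomorphisms; in particular $X^k_r\cong X^{k+N-1}$ for the admissible range of $r$. The maps $i^k_0$, conjugated by these isomorphisms, are then admissible monics by the obscure axiom \Cref{prp: obscure}, and applying \Cref{prp: Buehler2.15} along the array --- a pullback of an admissible monic along an admissible epic is an admissible monic --- upgrades every square in the corner to a bicartesian square with admissible morphisms of both types. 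Contracting these isomorphisms and relabelling the surviving objects $X^1,\dots,X^{N-1}$, with $X^r$ at level $r$, leaves exactly the displayed one-sided acyclic $N$-array: the right edge is the chain $X^1\rightarrowtail\cdots\rightarrowtail X^{N-1}$ of arrows of $X$, the bottom row is $0$, and the diagram is still epic, monic and bicartesian --- the last point because deleting the collapsed identity squares from a concatenation of bicartesian squares leaves bicartesian squares, see \Cref{rmk: pushpull-concat} together with the separation statements \Cref{lem: pushpull-sep} and \Cref{lem: prep-thm-array}.

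For the cokernel assertion I would read off the array that, for $j\in\{1,\dots,N-1\}$, the $j$-fold composite $d^{\{j\}}\colon X^{-N+1}_N\to X^{-N+1+j}_N$ along the top row has cokernel the object over position $-N+1+j$ at level $j$, namely $X^{-N+1+j}_j\cong X^j$, and that the maps $X^j\to X^{j+1}$ induced on these cokernels are the arrows of $X$. As these componentwise cokernels exist, they assemble to the cokernel in $\Mor_{N-2}(\E)=\Func(T_{N-2},\E)$, which is therefore $X$; its source is $\mu_{N-1}(X^{-N+1}_N)$, the projective $X^{-N+1}_N$ repeated $N-1$ times with identity arrows, and its target is the chain $X^{-N+2}_N\to\cdots\to X^0_N$, as stated. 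The dual statement for $\eMor_{N-2}(\E)$ follows by passing to $\op\E$ via $\op{\mMor_l(\E)}=\eMor_l(\op\E)$.

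The main obstacle will be the bookkeeping of the contraction step: checking that the corner staircase consists of isomorphisms in the required order, and that contracting it leaves a genuine epic, monic and bicartesian array whose right boundary is the original chain $X^1\rightarrowtail\cdots\rightarrowtail X^{N-1}$ and not some twist of it. Once the admissibility of the $i^k_0$ and the staircase pattern are established, the rest is the routine diagram chasing already prepared in \Cref{prp: array-local} and \Cref{lem: array-admissible}.
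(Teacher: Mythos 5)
Your proposal is correct and takes essentially the same route as the paper, whose entire proof is the remark that combining the construction in \Cref{prp: resolution-exist} with \Cref{prp: Buehler2.15,prp: Buehler2.12} yields the claim: you run that construction on $\iota^0 X$, use the bicartesian squares with vanishing corners in degrees $\geq 1$ to collapse the degenerate corner onto the chain $X$, propagate admissibility of the monics via \Cref{prp: Buehler2.15}, and read off the cokernel statement from the concatenated bicartesian squares. Only minor wording needs polishing (the two vanishing corners in your base-case squares are adjacent rather than opposite, and the maps $p^k_r$ are isomorphisms only in the degenerate triangle $r \leq k+N-1$, which is what you call the admissible range), but this does not affect the argument.
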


\begin{rmk} \label{rmk: mMor-lift} In the situation of \Cref{cor: syz-resolution}, the induced morphism $X_N^\bullet \to Y_N^\bullet$ in \Cref{rmk: array-lift} can be considered as a lift of a morphism $X \to Y$ in $\mMor_{N-2}(\E)$.
\end{rmk}

\begin{prp} \label{lem: lifting-bijection} 
	Let $\E$ be an exact category with enough projectives. For each $X \in \mMor_{N-2}(\E)$, choose an $N$-complex $X^\bullet_N \in \C^{-}_N(\Prj(\E))$ as in \Cref{cor: syz-resolution}. Then morphisms lift uniquely up to homotopy from $\mMor_{N-2}(\E)$ to $\C^-_N(\Prj(\E))$, see \Cref{rmk: mMor-lift}. This defines a fully faithful functor $\rho: \mMor_{N-2}(\E) \to \K^{-}_N(\Prj(\E))$, see \Cref{thm: K-homotopy}.
\end{prp}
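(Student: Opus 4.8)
The plan is to set $\rho(X):=X^\bullet_N$ on objects, where $X^\bullet_N\in\C^-_N(\Prj(\E))$ is the $N$-complex fixed in \Cref{cor: syz-resolution}, recalling that there $X^r\cong C^{-N+1+r}_{(r)}(X^\bullet_N)$ for $r=1,\dots,N-1$. Call a chain map $g\colon X^\bullet_N\to Y^\bullet_N$ a \emph{lift} of $f\colon X\to Y$ in $\mMor_{N-2}(\E)$ if the morphisms induced by $g$ on these cokernels are the components $f^r$ of $f$. The whole statement then follows from: (i) every $f$ has a lift, unique up to $N$-homotopy; and (ii) the induced functor is fully faithful. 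Granting (i), functoriality of $\rho$ is free, since a composite of lifts is a lift of the composite and $\id_{X^\bullet_N}$ lifts $\id_X$.

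Existence of a lift comes from \Cref{rmk: array-lift}, applied to the bounded above, projectively resolving $N$-arrays of $\iota^0X$ and $\iota^0Y$ from \Cref{cor: syz-resolution}: its top row is the desired $g$, and passing to cokernels — using that the recovered chains consist of \emph{monics}, so each induced component is determined by the last one — shows $g$ is a lift of $f$; this is \Cref{rmk: mMor-lift}. By additivity, uniqueness up to homotopy reduces to the comparison statement that a lift $g$ of $0$ is $N$-null-homotopic. I would build an $N$-homotopy $h=(h^k)$, $h^k\colon(X^\bullet_N)^k\to(Y^\bullet_N)^{k-N+1}$, by descending induction on $k$: take $h^k:=0$ for $k>0$, and at step $k\le 0$ solve
\[
 d_{Y^\bullet_N}^{\{N-1\}}\,h^k \;=\; g^k-\sum_{s=1}^{N-1} d_{Y^\bullet_N}^{\{N-s-1\}}\,h^{k+s}\,d_{X^\bullet_N}^{\{s\}}
\]
for $h^k$, which is possible by projectivity of $(X^\bullet_N)^k$ once the right-hand side is known to factor through the admissible epic $(Y^\bullet_N)^{k-N+1}\twoheadrightarrow B^k_{(N-1)}(Y^\bullet_N)=\im d_{Y^\bullet_N}^{\{N-1\}}$ supplied by the epic–monic bicartesian array of \Cref{cor: syz-resolution}. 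For $k\le -N+1$ this is routine: the right-hand side is annihilated by $d^k_{Y^\bullet_N}$ (since $g$ is a chain map, the homotopy relation already holds at position $k+1$, and $d_{X^\bullet_N}^{\{N\}}=0$), and $Y^\bullet_N$ is $N$-acyclic there, so $\ker d^k_{Y^\bullet_N}=B^k_{(N-1)}(Y^\bullet_N)$. At the finitely many positions $k\in\{-N+2,\dots,0\}$, where $X^\bullet_N$ and $Y^\bullet_N$ fail to be acyclic, one instead uses that $g$ lifts $0$, i.e.\ induces $0$ on the $C^{-N+1+r}_{(r)}$, to land the right-hand side in $B^k_{(N-1)}(Y^\bullet_N)$. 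Alternatively, contracting via the $\gamma^n_r$ turns $X^\bullet_N,Y^\bullet_N$ into Keller's $2$-resolutions (see the remark after \Cref{prp: resolution-exist}) and reduces this to {\cite[4.1, Lemma]{Kel90}}. This inductive construction, and especially the (co)image bookkeeping near position $0$, is the step I expect to be the main obstacle.

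Faithfulness then follows by the reverse computation: if $\rho(f)=0$ with homotopy $h$ for some lift $\tilde f$, then in $\tilde f^{-N+1+r}=\sum_{s=0}^{N-1} d_{Y^\bullet_N}^{\{N-s-1\}}h^{-N+1+r+s}d_{X^\bullet_N}^{\{s\}}$ every summand with $s\ge N-r$ vanishes because $(X^\bullet_N)^{-N+1+r+s}=0$, and every summand with $s\le N-1-r$ has image in $B^{-N+1+r}_{(N-s-1)}(Y^\bullet_N)\subseteq B^{-N+1+r}_{(r)}(Y^\bullet_N)=\ker\bigl((Y^\bullet_N)^{-N+1+r}\twoheadrightarrow C^{-N+1+r}_{(r)}(Y^\bullet_N)\bigr)$; hence $\tilde f^{-N+1+r}$ induces $0$ on $C^{-N+1+r}_{(r)}$, i.e.\ $f^r=0$ for all $r$, so $f=0$. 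For fullness, take any chain map $g\colon X^\bullet_N\to Y^\bullet_N$; since $g$ commutes with every $d^{\{r\}}$ it descends to the (co)images of \Cref{cor: syz-resolution}, in particular to a morphism $f\colon X\to Y$ of the recovered chains of monics, hence in $\mMor_{N-2}(\E)$, and $g$ is then itself a lift of $f$, so $[g]=\rho(f)$ by (i).
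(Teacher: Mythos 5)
Your strategy is the paper's: lift $f$ through the resolving arrays of \Cref{cor: syz-resolution}, prove uniqueness up to homotopy by descending induction using projectivity of the terms of $X^\bullet_N$, and deduce full faithfulness from that; your faithfulness computation and the descent argument for fullness are fine. But the step you yourself flag as the main obstacle is a genuine gap, and the mechanism you sketch for it is wrong for most of the range. For $k\in\{-N+2,\dots,-1\}$ you propose to land $\tilde g^k:=g^k-\sum_{s\geq 1}d_{Y^\bullet_N}^{\{N-s-1\}}h^{k+s}d_{X^\bullet_N}^{\{s\}}$ in $B^k_{(N-1)}(Y^\bullet_N)$ by using that $g$ induces $0$ on the cokernels $C^{-N+1+r}_{(r)}$. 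This only works at $k=0$ (there $\tilde g^0=g^0$ and the composite with the epic onto $C^0_{(N-1)}=Y^{N-1}$ is $f^{N-1}$ composed with an epic, hence $0$). At $k\leq -1$ the cokernel you must kill is $C^k_{(N-1)}(Y^\bullet_N)=Y^k_{N-1}$, which is \emph{not} one of the chain-of-monics terms (those sit on the diagonal $C^{-N+1+r}_{(r)}$), so the hypothesis $f=0$ says nothing about it directly; moreover $g^k$ itself need not induce $0$ there, only the corrected $\tilde g^k$ does, and that has to be proved. The fix, which is exactly what the paper does, is to use the array structure at \emph{all} $k\leq -1$, not acyclicity: in the resolving array $d^k_{Y^\bullet_N}$ factors as an admissible monic $Y^k_{N-1}\rightarrowtail (Y^\bullet_N)^{k+1}$ after the admissible epic $(Y^\bullet_N)^k\twoheadrightarrow Y^k_{N-1}$, and the bicartesian squares give the short exact sequence $Y^{k-N+1}_1\rightarrowtail (Y^\bullet_N)^k\twoheadrightarrow Y^k_{N-1}$ with $Y^{k-N+1}_1=B^k_{(N-1)}(Y^\bullet_N)$; hence $\ker d^k_{Y^\bullet_N}=B^k_{(N-1)}(Y^\bullet_N)$ for every $k\leq -1$, including the non-acyclic positions. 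Your computation $d_{Y^\bullet_N}\tilde g^k=h^{k+N}d_{X^\bullet_N}^{\{N\}}=0$ (valid for all $k\leq -1$) then already yields the factorization, and projectivity of $(X^\bullet_N)^k$ finishes the step as you set it up; so the only genuine case distinction is $k=0$ versus $k<0$.

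Your fallback does not rescue the gap either: contracting with the functors $\gamma^n_r$ and applying Keller's lemma for $2$-complexes would give $2$-homotopies of the contractions $\gamma^n_r(g)$, but these do not assemble into an $N$-homotopy of $g$ (null-homotopy of all contractions is weaker than $N$-null-homotopy), so no reduction to the case $N=2$ is available here; the induction has to be carried out for $N$-complexes directly, as above.
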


\begin{proof}
	Let $f\colon X \to Y$ be a morphism in $\mMor_{N-2}(\E)$. Denote by $X^\bullet_\bullet$ and $Y^\bullet_\bullet=(Y^\bullet_\bullet, q^\bullet_\bullet, j^\bullet_\bullet)$ the respective projectively resolving admissible $N$-arrays from \Cref{cor: syz-resolution} with $X^\bullet_N = \rho(X) =: P$ and $Y^\bullet_N = \rho(Y) =: Q$. Consider a lift $g\colon P \to Q$ of $f$ in $\C_N(\E)$. To prove its uniqueness up to homotopy, we suppose that $f=0$ and show that $g$ is null-homotopic. To this end, set $h^k := 0$ for $k > 0$, fix $n \leq 0$, and assume for all $k>n$ that $h^k\colon P^k \to Q^{k-N+1}$ is already defined such that $g^k = \sum_{r=0}^{N-1} d_Q^{\{N-r-1\}} h^{k+r} d_P^{\{r\}}$.\\
	To construct $h^n$ set $\tilde g^n := g^n - \sum_{r=1}^{N-1} d_Q^{\{N-r-1\}} h^{n+r} d_P^{\{r\}}$. We claim that $q^n_N \tilde g^n =0$. If $n=0$, then $q^n_N \tilde g^n = q^n_N g^n = f^{N-1} p^n_N = 0$ as $f=0$. Otherwise, $j^{n}_{N-1}$ is a monic, and we verify the equivalent claim $d_Q \tilde g^n = j^{n}_{N-1} q^n_N \tilde g^n = 0$:
	
	\begin{align*}
		d_Q \left( \sum_{r=1}^{N-1} d_Q^{\{N-r-1\}} h^{n+r} d_P^{\{r\}} \right) &= \sum_{r=1}^{N-1} d_Q^{\{N-r\}} h^{n+r} d_P^{\{r\}} 	
		= \left(\sum_{r=0}^{N-2} d_Q^{\{N-r-1\}} h^{(n+1)+r} d_P^{\{r\}} \right) d_P \\
		&= \left(g^{n+1} - h^{r+N}d_P^{\{N-1\}} \right) d_P = g^{n+1} d_P = d_Q g^n
	\end{align*} 
	
	It follows that $\tilde g^n$ factors through the kernel $j^{n-1}_{N-1} \cdots j^{n-N+1}_1$ of $q^n_N$, that is, $\tilde g^n = j^{n-1}_{N-1} \cdots j^{n-N+1}_1 \tilde h^n$ for some $\tilde h^n\colon P^n \to Y^{n-N+1}_1$. Projectivity of $P^n$ yields  a lift $h^n\colon P^n \to Q^{n-N+1}$ such that $q^{n-N+1}_2 \cdots q^{n-N+1}_N h^n =\tilde h^n$. In conclusion, $g$ is null-homotopic as desired:
	
	\begin{align*}
		g^n & = \tilde g^n + \sum_{r=1}^{N-1} d_Q^{\{N-r-1\}} h^{n+r} d_P^{\{r\}} =  j^{n-1}_{N-1} \cdots j^{n-N+1}_1 q^{n-N+1}_2 \cdots q^{n-N+1}_N h^n + \sum_{r=1}^{N-1} d_Q^{\{N-r-1\}} h^{n+r} d_P^{\{r\}} \\
		&= d_Q^{\{N-1\}}h^n + \sum_{r=1}^{N-1} d_Q^{\{N-r-1\}} h^{n+r} d_P^{\{r\}} = \sum_{r=0}^{N-1} d_Q^{\{N-r-1\}} h^{n+r} d_P^{\{r\}}
	\end{align*}
	
	Therefore, up to homotopy the unique lift of the sum of two morphisms equals the sum of the respective unique lifts, and the unique lift of a composition equals the composition of the unique lifts. This makes $\rho$ a full (additive) functor. For faithfulness, suppose that $g$ factors through $I(P)$, see \Cref{rmk: factor}.\ref{rmk: factor-proj}. Due to \Cref{cor: syz-resolution}, $f^r\colon X^r \to Y^r$ factors through the cokernel of  $d_{I(P)}^{\{r\}}: I(P)^{-N+1} \to I(P)^{r-N+1}$. This cokernel is zero, since $P^k=0$ for $k > 0$, see \Cref{con: I-and-P}.
\end{proof}

\begin{dfn}
	Let $f\colon X \to Y$ be a morphism of $N$-complexes over an exact category $\E$. We call both $f$ and $X$ a \textbf{resolution} (of $Y$) if the cone $C(f)$ is $N$-acyclic, and \textbf{projective} if $X \in \C_N(\Prj(\E))$. We call $f$ and $Y$ a \textbf{coresolution} (of $X$) if the cocone $C^\ast(f)$ is $N$-acyclic, and \textbf{injective} if $Y \in \C_N(\Inj(\E))$. 
\end{dfn}

\begin{prp} \label{prp: cone-resolution}
	Let $(X, p, i)$ be a resolving $N$-array over an exact category $\E$. Then $p^{\{N\}}\colon X^\bullet_N \to X^\bullet_0$ is a resolution, see \Cref{rmk: array-complex}. The acyclic $N$-array $(C, q, j)$ of its cone $ C(p^{\{N\}})$ is given by $C^n_r := X^{n+N-r}_{N-r} \oplus \bigoplus_{k=N-r+1}^{N-1} X_N^{n+k}$,
	\[ q^n_r = \left( \begin{array}{c|c|c}
		i & p^{\{r-1\}} & 0 \\ \hline 0 & 0 & E_{r-2}
	\end{array} \right)\colon C^n_{r} \twoheadrightarrow C^{n}_{r-1}, \; j^n_r = \left( \begin{array}{c|ccc} p & 0 & \cdots & 0 \\ \hline 0 && E_{r-1} \\ \hline -i^{\{r\}} & -d_{X_N^\bullet}^{\{r-1\}} & \cdots & -d_{X_N^\bullet} \end{array} \right)\colon C^n_{r} \rightarrowtail C^{n+1}_{r+1},\]
	written in the spirit of \Cref{ntn: C,ntn: d^r}.
\end{prp}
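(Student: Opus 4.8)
By definition a resolving $N$-array satisfies $i^{\{N\}}=0$, so by \Cref{rmk: array-complex} both $X^\bullet_N$ and $X^\bullet_0$ are $N$-complexes and $p^{\{N\}}\colon X^\bullet_N\to X^\bullet_0$ is a morphism of $N$-complexes. The plan is to show that the triple $(C,q,j)$ from the statement --- where $C^n_r$ is as given for $r\in\{1,\dots,N\}$, while $C^\bullet_0:=0$, $q^\bullet_1:=0$, $j^\bullet_0:=0$ as is forced for any acyclic $N$-array --- is an acyclic $N$-array with $C^\bullet_N=C(p^{\{N\}})$. By the equivalence $A_N(\E)\simeq\C^{\infty,\varnothing}_N(\E)$ of \Cref{thm: array} this simultaneously yields that $C(p^{\{N\}})$ is $N$-acyclic, so that $p^{\{N\}}$ is a resolution, and that $(C,q,j)$ is the acyclic $N$-array of its cone.

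First I would check $C^\bullet_N=C(p^{\{N\}})$. Taking $r=N$ in the formula gives $C^n_N=X^n_0\oplus\bigoplus_{k=1}^{N-1}X^{n+k}_N$, which is exactly the term of the cone as computed in \Cref{con: Sigma-explicit}. That the composite $d_{C^\bullet_N}=j^\bullet_{N-1}q^\bullet_N$ (see \Cref{rmk: array-complex}) reproduces the differential of $C(p^{\{N\}})$ is a direct matrix computation: one expands $j^n_{N-1}q^n_N$ and rewrites using $d_{X^\bullet_0}=pi$, $d_{X^\bullet_N}=ip$, the square relations $p^{k+1}_{r+1}i^k_r=i^k_{r-1}p^k_r$ of the array, and --- crucially --- $i^{\{N\}}=0$, which makes the a priori entry $X^n_0\to X^{n+N}_N$ vanish.

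Next I would verify that $(C,q,j)$ is an $N$-array with $q$ admissible epic, $j$ admissible monic, and all squares bicartesian. Commutativity of each square, $q^{k+1}_{r+1}j^k_r=j^k_{r-1}q^k_r$, is another matrix identity flowing from the relations of the resolving array. For $q^n_r$: after permuting summands it is the direct sum of $E_{r-2}$ with $\begin{pmatrix}i&p^{\{r-1\}}\end{pmatrix}\colon X^{n+N-r}_{N-r}\oplus X^{n+N-r+1}_N\to X^{n+N-r+1}_{N-r+1}$, which is an admissible epic since appending a morphism to a summand of an admissible epic --- here $p^{\{r-1\}}$, a composite of admissible epics --- again gives an admissible epic (factor through $(\bullet)\oplus X^{n+N-r+1}_{N-r+1}\twoheadrightarrow X^{n+N-r+1}_{N-r+1}$ and use \Cref{prp: Buehler2.9}). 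The delicate point is that $j^n_r$ is an admissible monic. Applying the unipotent automorphism of $C^{n+1}_{r+1}$ that clears the bottom block of $j^n_r$ against its central block $E_{r-1}$, and permuting summands, $j^n_r$ becomes the direct sum of $E_{r-1}$ with $\begin{pmatrix}p^{n+N-r}_{N-r}\\-i^{\{r\}}\end{pmatrix}\colon X^{n+N-r}_{N-r}\to X^{n+N-r}_{N-r-1}\oplus X^{n+N}_N$. This map is an admissible morphism (a split monic followed by an admissible epic, cf.\ \Cref{prp: Buehler2.12}), and it is monic: iterating the observation that each pullback square of the resolving array identifies the kernels of its two vertical admissible epics along the top horizontal arrow, the composite of $i^{\{r\}}$ with the kernel inclusion $\ker(p^{n+N-r}_{N-r})\rightarrowtail X^{n+N-r}_{N-r}$ is, up to isomorphism, the kernel inclusion of $p^{n+N}_N$, hence monic. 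An admissible morphism that is monic is an admissible monic (\Cref{dfn: admissible}), so $j^n_r$ --- a direct sum of two admissible monics --- is one as well, by \Cref{prp: Buehler2.9}.

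Finally, $C^\bullet_0=0$ is immediate, so it remains to see that each square $(Y^k_r)$ is bicartesian. Its upper edge is an admissible monic and its lower-right edge an admissible epic, so by \Cref{cor: bicart}.\ref{cor: bicart-prep} it is bicartesian as soon as it is a pushout (equivalently a pullback). I expect the cleanest route is to note that the four objects of $(Y^k_r)$ each split off the summands $X^{k+N-r}_{N-r}$, $X^{k+N-r}_{N-r-1}$, $X^{k+N-r+1}_{N-r}$, $X^{k+N-r+1}_{N-r+1}$, which are exactly the corners of the bicartesian square of the resolving array at position $(k+N-r,\,N-r)$, while the remaining summands and the off-diagonal entries of $q$ and $j$ are ``triangular''; a change of coordinates should then exhibit $(Y^k_r)$ as a direct sum of that bicartesian square with split short exact (hence bicartesian) squares, and direct sums of bicartesian squares are bicartesian. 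Alternatively, with the explicit matrices in hand one reduces the short exact sequence characterizing the pushout, via Gaussian elimination (\Cref{lem: gauss}), to a direct sum of split sequences and copies of $\ker(p^{k+N-r}_{N-r})\rightarrowtail X^{k+N-r}_{N-r}\twoheadrightarrow X^{k+N-r}_{N-r-1}$. The main obstacles are these last two points --- the admissible-monic property of $j^n_r$ and the bicartesian property of the squares --- both resting on tracking $\ker(p^\bullet_\bullet)$ through the pullback structure of $(X,p,i)$; beyond that there is only the bookkeeping of indices, signs, and the degenerate cases $r\in\{1,N\}$ where some blocks $E_{r-2}$, $E_{r-1}$ or the interior direct sums are empty.
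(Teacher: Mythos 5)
Your plan is viable and organized genuinely differently from the paper: you propose to verify the array axioms for $(C,q,j)$ square by square (each $q^n_r$ an admissible epic, each $j^n_r$ an admissible monic, each square bicartesian) and then conclude via \Cref{thm: array}, whereas the paper, after the same identification of $C^\bullet_N$ with $C(p^{\{N\}})$ (including the role of $i^{\{N\}}=0$) and the same commutativity check, only proves short exactness of the composite sequences $(j^{\{N-r\}},q^{\{r\}})$ --- using one concatenated bicartesian square of $(X,p,i)$, \Cref{prp: Buehler2.12}.\ref{prp: Buehler2.12-pull} and \Cref{lem: gauss} --- and then obtains all local admissibility and bicartesian statements at once from \Cref{prp: array-local}.\ref{prp: array-local-admissible}. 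Your treatment of $q^n_r$ is fine.

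At the step you yourself call delicate, however, there is a genuine flaw: the justification that $\begin{pmatrix} p \\ -i^{\{r\}} \end{pmatrix}$ is an admissible morphism because it is ``a split monic followed by an admissible epic'' proves too much and is false in a general exact category. Every morphism $f\colon A \to C$ factors as the split monic $\begin{pmatrix} \id_A \\ f \end{pmatrix}$ followed by the split (hence admissible) epic $\begin{pmatrix} 0 & \id_C \end{pmatrix}$, so this criterion would make every morphism admissible; admissible morphisms are defined as an admissible epic \emph{followed by} an admissible monic and are not closed under composition outside the Abelian case, see \Cref{dfn: admissible} and \Cref{rmk: admissible}.\ref{rmk: admissible-Abelian}. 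Consequently your separate monicity argument (which is correct) cannot close the gap, since being monic does not upgrade a non-admissible morphism. The fix is short and renders the monicity check unnecessary: the $r$ small bicartesian squares of $(X,p,i)$ linking $X^{n+N-r}_{N-r}$, $X^{n+N}_N$, $X^{n+N-r}_{N-r-1}$, $X^{n+N}_{N-1}$ concatenate to a bicartesian square by \Cref{rmk: pushpull-concat}, and \Cref{prp: Buehler2.12}.\ref{prp: Buehler2.12-pull} then exhibits $\begin{pmatrix} p \\ -i^{\{r\}} \end{pmatrix}$ directly as an admissible monic. Finally, your bicartesian step is only a hedged sketch; both routes you indicate can be completed (the Gaussian-elimination variant is essentially the paper's computation done square by square, and the correction terms do not disturb the embedded square of $X$), but this per-square bookkeeping is exactly what the paper avoids by checking the composite sequences once and citing \Cref{prp: array-local}.\ref{prp: array-local-admissible}.
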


\begin{proof}
	Note that $C^n_{N} = C(p^{\{N\}})^n$ for all $n$, see \Cref{con: Sigma-explicit}.\ref{con: Sigma-explicit-formula}. For the commutativity of the squares
	\[
	\begin{tikzcd}[sep={15mm,between origins}]
		& C^{n+1}_{r+1} \ar[rd, "q^{n+1}_{r+1}"] & \\
		C^{n}_{r} \ar[ru,  "j^{n}_r"] \ar[rd, "q^{n}_{r}"] && C^{n+1}_{r}  \\
		& C^{n}_{r-1}, \ar[ru,  "j^{n}_{r-1}"]
	\end{tikzcd}
	\]
	
	we verify the equation $q^{n+1}_{r+1} j^{n}_r = j^{n}_{r-1} q^{n}_r$ in matrix form for all $n$ and $r$, using the commutativity of the $N$-array $X$ and $d := d_{X^\bullet_N}=ip$, see \Cref{rmk: array-complex}:
	
	\begin{align*}
		\left( \begin{array}{c|c|ccc}
			ip & p^{\{r\}} & 0 & \cdots & 0 \\ \hline
			0 & 0  \\
			\vdots & \vdots & & E_{r-2} \\
			0 & 0  \\ \hline
			-i^{\{r\}} & -d^{\{r-1\}} & -d^{\{r-2\}} & \cdots & -d
		\end{array} \right)  = \left( \begin{array}{c|c|ccc}
			pi & p^{\{r\}} & 0 & \cdots & 0 \\ \hline
			0 & 0  \\
			\vdots & \vdots & & E_{r-2} \\
			0 & 0  \\ \hline
			-i^{\{r\}} & -i^{\{r-1\}} p^{\{r-1\}} & -d^{\{r-2\}} & \cdots & -d
		\end{array} \right).
	\end{align*}
	
	For $r=N$, this specializes to $d_{C(p^{\{N\}})}^n = j^{n}_{N-1} q^n_N$, see  \Cref{con: Sigma-explicit}.\ref{con: Sigma-explicit-formula}. Due to \Cref{prp: array-local}.\ref{prp: array-local-admissible}, it remains to prove that the sequence \begin{tikzcd} C^{n-N+r}_{r} \ar[r, tail, "j^{\{N-r\}}"] & C(p^{\{N\}})^n \ar[r, two heads, "q^{\{r\}}"] & C^{n}_{N-r} \end{tikzcd}
	in $\E$ is short exact for all $n$ and $r$. In explicit terms, this sequence reads
	\begin{center}
		\begin{tikzcd}
			X^n_{N-r} \oplus \bigoplus_{k=1}^{r-1} X_N^{n+k} \ar[r, "j^{\{N-r\}}"] & X_0^n \oplus \bigoplus_{k=1}^{N-1} X_N^{n+k} \ar[r, "q^{\{r\}}"] & X^{n+r}_r \oplus \bigoplus_{k=r+1}^{N-1} X_N^{n+k}.
		\end{tikzcd}
	\end{center}
	
	Using the commutativity of $X$ again, one sees that
	\[ j^{\{N-r\}} = \prod_{k=N-1}^r j^{n-N+k}_k =  \left( \begin{array}{c|ccc} p^{\{N-r\}} & 0 & \cdots & 0 \\ \hline
		0 \\
		\vdots && E_{r-1} \\
		0 \\ \hline
		-i^{\{r\}} & -d^{\{r-1\}} & \cdots & -d \\ \hline
		0 & 0 & \cdots & 0 \\
		\vdots & \vdots & \ddots & \vdots \\
		0 & 0 &  \cdots & 0
	\end{array} \right).\]
	
	Indeed, by induction, we compute
	\begin{align*}
		j^{\{N-r\}} &= j^{\{N-r-1\}} j^{n-N+r}_r = \left( \begin{array}{c|ccc} p^{\{N-r-1\}} & 0 & \cdots & 0 \\ \hline
			0 \\
			\vdots && E_{r} \\
			0 \\ \hline
			-i^{\{r+1\}} & -d^{\{r\}} & \cdots & -d \\ \hline
			0 & 0 & \cdots & 0 \\
			\vdots & \vdots & \ddots & \vdots \\
			0 & 0 &  \cdots & 0
		\end{array} \right) \left( \begin{array}{c|ccc} p & 0 & \cdots & 0 \\ \hline 0 && E_{r-1} \\ \hline -i^{\{r\}} & -d^{\{r-1\}} & \cdots & -d \end{array} \right) \\
		&= \left( \begin{array}{c|ccc} p^{\{N-r\}} & 0 & \cdots & 0 \\ \hline
			0 \\
			\vdots && E_{r-1} \\
			0 \\ \hline
			-i^{\{r\}} & -d^{\{r-1\}} & \cdots & -d \\ \hline
			-i^{\{r+1\}}p + di^{\{r\}} & -d^{\{r\}} + d d^{\{r-1\}} & \cdots & -d^{\{2\}} + d d\\ \hline
			0 & 0 & \cdots & 0 \\
			\vdots & \vdots & \ddots & \vdots \\
			0 & 0 &  \cdots & 0
		\end{array} \right) = \left( \begin{array}{c|ccc} p^{\{N-r\}} & 0 & \cdots & 0 \\ \hline
			0 \\
			\vdots && E_{r-1} \\
			0 \\ \hline
			-i^{\{r\}} & -d^{\{r-1\}} & \cdots & -d \\ \hline
			0 & 0 & \cdots & 0 \\
			\vdots & \vdots & \ddots & \vdots \\
			0 & 0 &  \cdots & 0
		\end{array} \right),
	\end{align*}
	since $di^{\{r\}} = ipi^{\{r\}} = i^{\{r+1\}}p$. Similarly, we obtain
	\[q^{\{r\}} = \prod_{k=r-1}^0 q^{n}_{N-k} = \left( \begin{array}{c|c|ccc|c|ccc}
		i^{\{r\}} & p^{\{N-r\}}d^{\{r-1\}} & p^{\{N-r\}}d^{\{r-2\}} & \cdots & p^{\{N-r\}}d & p^{\{N-r\}} & 0 & \cdots & 0 \\ \hline
		0 & 0 & 0 & \cdots & 0 & 0 \\
		\vdots & \vdots & \vdots & \ddots & \vdots & \vdots && E_{N-r-1} \\
		0 & 0 & 0 & \cdots & 0 & 0
	\end{array} \right).\]
	
	The context for the morphisms in the these matrices is provided by the following excerpt of $X$, see \Cref{lem: pasting-law}:
	\begin{center}
		\begin{tikzcd}[sep={17.5mm,between origins}]
			X_N^n \ar[rd, "p^{\{r\}}", two heads] \ar[r, "d"] & \cdots \ar[r, "d"] & X_N^{n+r} \ar[rd, "p^{\{N-r\}}", two heads] \ar[r, "d"] & \cdots \ar[r, "d"] & X_N^{n+N} \\
			& X^n_{N-r} \ar[rd, "p^{\{N-r\}}", two heads] \ar[ru, "i^{\{r\}}"] &\diamond& X^{n+r}_r \ar[ru, "i^{\{N-r\}}"] \\
			&& X_0^n \ar[ru, "i^{\{r\}}"]
		\end{tikzcd}
	\end{center}
	By \Cref{prp: Buehler2.12}.\ref{prp: Buehler2.12-pull}, $a' := \begin{pmatrix} p^{\{N-r\}} \\ -i^{\{r\}} \end{pmatrix}$ and $a := \begin{pmatrix} i^{\{r\}} & p^{\{N-r\}} \end{pmatrix}  $ define a short exact sequence $(a', a)$ with
	\[-ab = \begin{pmatrix}
		p^{\{N-r\}}d^{\{r-1\}} & p^{\{N-r\}}d^{\{r-2\}} & \cdots & p^{\{N-r\}}d
	\end{pmatrix} \hspace{5mm} \text{ for } \hspace{5mm} b :=\begin{pmatrix} 0 & \cdots & 0 \\ -d^{\{r-1\}} & \cdots & -d \end{pmatrix}.\]
	Now move the $(r+1)$st row of $j^{\{N-r\}}$ and the $(r+1)$st column of $q^{\{r\}}$ to the respective second position. Then applying \Cref{lem: gauss}.\ref{lem: gauss-a}, followed by \Cref{lem: gauss}.\ref{lem: gauss-b} with $c=0$, yields the claim.
\end{proof}

Combining \Cref{prp: resolution-exist,prp: cone-resolution} yields

\begin{cor} \label{cor: resolution-exist}
	Every $N$-complex $X \in C^-_N(\E)$ over an exact category $\E$ admits a projective resolution $P \to X$ with $P \in C^-_N(\Prj(\E))$.  \qed
\end{cor}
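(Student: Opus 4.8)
The plan is to assemble the statement directly from the two preceding propositions, so the argument is short. Given $X \in \C^-_N(\E)$, I would first apply \Cref{prp: resolution-exist} to produce a bounded above, projectively resolving $N$-array $(X^\bullet_\bullet, p, i)$ of $X$; recall that being a resolving $N$-array of $X^\bullet_0$ means $i^{\{N\}}=0$, so that both $X^\bullet_0=X$ and $X^\bullet_N$ are genuine $N$-complexes by \Cref{rmk: array-complex}, and being projectively resolving means $X^\bullet_N \in \C_N(\Prj(\E))$. Setting $P := X^\bullet_N$, the boundedness of the array gives $P^k = X^k_N = 0$ for all sufficiently large $k$, hence $P \in \C^-_N(\Prj(\E))$.

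Next I would invoke \Cref{prp: cone-resolution} for this $N$-array: it states that the induced morphism $p^{\{N\}}\colon X^\bullet_N \to X^\bullet_0$, that is $p^{\{N\}}\colon P \to X$, is a resolution, i.e.\ its cone $C(p^{\{N\}})$ is $N$-acyclic (the proposition even exhibits the acyclic $N$-array of the cone explicitly). By definition, a morphism $f\colon P \to X$ of $N$-complexes with $P \in \C_N(\Prj(\E))$ whose cone is $N$-acyclic is precisely a projective resolution of $X$. Therefore $p^{\{N\}}\colon P \to X$ is the desired projective resolution, and since $P \in \C^-_N(\Prj(\E))$ the claim follows.

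The only points requiring care are bookkeeping: that the boundedness of the constructed $N$-array passes to the top row $X^\bullet_N$ (immediate from the definition of a bounded above array) and that the morphism extracted from the array is the one appearing in \Cref{prp: cone-resolution}, which is exactly the content of \Cref{rmk: array-complex}. There is no genuine obstacle here; the substance of the corollary is already contained in \Cref{prp: resolution-exist} (the Keller-style construction of the array) and \Cref{prp: cone-resolution} (the identification of its cone), and this last step is merely their concatenation.
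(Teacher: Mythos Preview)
Your proposal is correct and matches the paper's own proof exactly: the corollary is stated immediately after the sentence ``Combining \Cref{prp: resolution-exist,prp: cone-resolution} yields'' and carries a \qed, so the paper's argument is precisely the concatenation you describe. Your additional bookkeeping remarks (boundedness passing to $X^\bullet_N$, the identification via \Cref{rmk: array-complex}) are accurate and simply make explicit what the paper leaves implicit.
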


\subsection{$N$-syzygies} \label{subsection: N-syzygies}

Syzygies of 2-complexes consist of single objects of the base exact category $\E = \mMor_{0}(\E)$. It seems natural to define the syzygy $\Omega^n X$ of an $N$-complex $X$ at position $n$ as an element of $\mMor_{N-2}(\E)$. In this subsection, we prove that these $N$-syzygies induce a triangle equivalence $\underline \Omega^n\colon \underline \APC_N(\F) \to \underline \mMor_{N-2}(\F)$ of stable categories, for any Frobenius category $\F$. The required essential surjectivity is given by a complete resolution. This is constructed by patching the one-sided $N$-array from \Cref{cor: syz-resolution} together with a dual one-sided $N$-array.

\smallskip

Brightbill and Miemietz {\cite{BM24}} establish this equivalence in a setup, where $\F$ is a Frobenius subcategory of an Abelian category $\E$ with $\Prj(\F)=\Prj(\E)$, see \Cref{asn: setting}. Propositions \ref{prp: Omega-exact}, \ref{prp: Omega-surj-full}, and \Cref{thm: Omega-triangle-equiv} correspond to {\cite[Props.~4.7, 4.3, 4.5, Thm.~4.12]{BM24}}.

\begin{dfn} \label{dfn: Omega}
	Let $X \in \C_N(\E)$ be an $N$-complex over an exact category $\E$ and $n \in \ZZ$. Suppose that $X$ is acyclic at positions $n-N+1, \dots, n-1$. We define the \textbf{syzygy of $X$ at position $n$} as the unique object $\Omega^n X := \Omega_\E^n X \in \mMor_{N-2}(\E)$ with $\iota^{n-1}\Omega^n X = \tau^{\leq n-1} \sigma^{\geq n-N+1} X$, see \Cref{dfn: soft-trunc}.\ref{dfn: soft-trunc-left}.
	In explicit terms, this reads
	\begin{center}
		\begin{tikzcd}
			\Omega^n X\colon \; X^{n-N+1}_{1} \ar[r, tail] & X^{n-N+2}_{2} \ar[r, tail] & \cdots \ar[r, tail] & X^{n-1}_{N-1},
		\end{tikzcd}
	\end{center}
	where $X^k_r = C^k_{(r)}(X)$ for all occurring indices $k$ and $r$. Recall that $X^k_r= Z^{n}_{(r)}(X)$ for $r \in \{1, \dots, N-1\}$ if $X$ is acyclic at position $n$.  This gives rise to a functor $\C^{\infty, \varnothing}_N(\E) \to \mMor_{N-2}(\E).$
\end{dfn}

 To show exactness of $\Omega^n$, the proof of {\cite[Prop.~4.7]{BM24}} uses the snake lemma in an Abelian category $\E$, which is not available in general exact categories. However, \Cref{lem: Zn-exact} serves as a replacement.

\begin{prp} \label{prp: Omega-exact}
	Let $\E$ be an exact category. The functor $\Omega^n: \C^{\infty, \varnothing}_N(\E) \to \mMor_{N-2}(\E)$ is exact for all $n \in \ZZ$. 
\end{prp}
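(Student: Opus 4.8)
The plan is to reduce the statement directly to \Cref{lem: Zn-exact}, which is exactly the exact-category substitute for the snake lemma used in the Abelian proof of \cite[Prop.~4.7]{BM24}. So let $X \rightarrowtail Y \twoheadrightarrow Z$ be a short exact sequence in $\C^{\infty, \varnothing}_N(\E)$. By \Cref{rmk: C_N-subcat} and \Cref{prp: acyclic-ext-closed}, the exact structure on $\C^{\infty, \varnothing}_N(\E)$ consists of the termsplit short exact sequences of $\C_N(\E)$ all of whose terms are acyclic; in particular $X$, $Y$, $Z$ are acyclic $N$-complexes and $X \rightarrowtail Y \twoheadrightarrow Z$ is termsplit, hence termwise short exact by \Cref{prp: Buehler2.9}. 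On the target side, by \Cref{thm: mMor}.\ref{thm: mMor-exact} and \Cref{lem: fullyexact} a sequence in $\mMor_{N-2}(\E)$ is short exact precisely when it is short exact in $\E$ at each of the positions $1,\dots,N-1$. Thus it suffices to check termwise exactness of the induced sequence $\Omega^n X \to \Omega^n Y \to \Omega^n Z$.

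Next I would unwind the definition of $\Omega^n$. By \Cref{dfn: Omega} together with \Cref{thm: array}, for an acyclic $N$-complex $W$ the object $\Omega^n W$ is the chain of admissible monics $W^{n-N+1}_1 \rightarrowtail \cdots \rightarrowtail W^{n-1}_{N-1}$ with $W^{n-N+r}_r = C^{n-N+r}_{(r)}(W) = Z^n_{(r)}(W)$ for $r \in \{1,\dots,N-1\}$, the arrows being the canonical inclusions of the kernels $Z^n_{(r)}(W) \hookrightarrow Z^n_{(r+1)}(W)$; moreover $\Omega^n$ sends a morphism $f$ to the family of maps induced by $f^n$ on these kernels, so that $\Omega^n f$ is automatically a morphism in $\mMor_{N-2}(\E)$. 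Consequently, at position $r \in \{1,\dots,N-1\}$ the sequence $\Omega^n X \to \Omega^n Y \to \Omega^n Z$ is exactly $Z^n_{(r)}(X) \to Z^n_{(r)}(Y) \to Z^n_{(r)}(Z)$.

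Finally, applying \Cref{lem: Zn-exact} at position $n$ to the termwise short exact sequence $X \rightarrowtail Y \twoheadrightarrow Z$ of acyclic $N$-complexes yields that $Z^n_{(r)}(X) \rightarrowtail Z^n_{(r)}(Y) \twoheadrightarrow Z^n_{(r)}(Z)$ is short exact in $\E$ for every $r \in \{1,\dots,N-1\}$. Hence $\Omega^n X \to \Omega^n Y \to \Omega^n Z$ is short exact in $\mMor_{N-2}(\E)$, and $\Omega^n$ is exact. There is essentially no obstacle: once the terms of $\Omega^n$ are identified with the kernels $Z^n_{(r)}$, \Cref{lem: Zn-exact} does all the work. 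The one point to watch is the bookkeeping of exact structures — ensuring that the termsplit short exact sequence on the source feeds into the (termwise) hypothesis of \Cref{lem: Zn-exact}, and that short exactness in $\mMor_{N-2}(\E)$ is precisely termwise short exactness in the $r$-direction.
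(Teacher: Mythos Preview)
Your proof is correct and follows essentially the same approach as the paper: both reduce exactness of $\Omega^n$ to termwise exactness via the termwise exact structure on $\mMor_{N-2}(\E)$, and then invoke \Cref{lem: Zn-exact} to conclude. The only cosmetic difference is that the paper phrases the components as cokernels $C^{n-r}_{(N-r)}$ while you use the equivalent kernel description $Z^n_{(r)}$.
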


\begin{proof}
	Due to the termwise exact structure of $\mMor_{N-2}(\E)$, exactness of $\Omega^n$ is equivalent to exactness of $C^{n-r}_{(N-r)}$, for all $r \in \{1, \dots, N-1\}$. Hence, the claim follows from \Cref{lem: Zn-exact}
\end{proof}

\begin{dfn}
	Let $\E$ be an exact category and $X \in \mMor_{N-2}(\E)$. A \textbf{complete resolution} of $X$ is an object $P \in \TAPC_N(\E)$ with $\Omega^1(P)=X$.
\end{dfn}

Over a Frobenius category $\F$, $\TAPC_N(\F)=\APC_N(\F)$, see \Cref{ntn: Verdier-notation}, and \Cref{prp: Omega-surj-full} establishes the existence of complete resolutions. The use of \Cref{prp: Buehler2.15} in \Cref{cor: syz-resolution} allows us to extend the arguments of {\cite[Prop.~4.5]{BM24}} from the case where $\E$ is Abelian.

\begin{prp} \label{prp: Omega-surj-full}
	If $\F$ is a Frobenius category, then $\Omega^n$ restricts to an essentially surjective and full functor $\APC_N(\F) \to \mMor_{N-2}(\F)$, for all $n \in \ZZ$.
\end{prp}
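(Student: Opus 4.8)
The plan is to prove the two assertions — essential surjectivity and fullness of $\Omega^n$ on $\APC_N(\F)$ — separately, and to reduce to the case $n=1$ at the outset, since applying a suitable power of the suspension $\Sigma$ (or shift $\Theta$) translates the syzygy at position $n$ to the syzygy at position $1$; here $\APC_N(\F)=\TAPC_N(\F)$ by \Cref{rmk: Frobenius-tac}, and $\Sigma,\Sigma^{-1}$ preserve (total) acyclicity by \Cref{cor: Sigma-acyclic}, so this reduction is harmless.

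For essential surjectivity, fix $X\in\mMor_{N-2}(\F)$. First I would invoke \Cref{cor: syz-resolution}: since $\F$ has enough projectives, $\iota^0 X$ admits a projectively resolving $N$-array whose modification is a one-sided $N$-acyclic array, i.e.\ the left half $P^{\leq 0}\in\C^-_N(\Prj(\F))$ of an acyclic $N$-complex with $\Omega^1(P^{\leq 0})$ already equal to $X$ in the sense that $\tau^{\leq 0}\sigma^{\geq -N+1}P^{\leq 0}=\iota^0 X$. Dually, since $\Prj(\F)=\Inj(\F)$, I would apply the dual of \Cref{cor: syz-resolution} — a one-sided $N$-coacyclic array of injectives/projectives coresolving $\iota^0 X$ — to obtain a right half $P^{\geq 1}\in\C^+_N(\Prj(\F))$. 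The key point is that both halves share the same ``boundary data'' $X$ (the chain $X^{-N+1}_1\rightarrowtail\cdots\rightarrowtail X^{-1}_{N-1}$), by construction of the arrays and the explicit cokernel description at the end of \Cref{cor: syz-resolution}; so I would patch them along this common $\mMor_{N-2}(\F)$-object into a single $N$-complex $P\in\C_N(\Prj(\F))$. It remains to check that $P$ is acyclic at every position. Acyclicity at positions $\leq 0$ and at positions $\geq 1$ is part of the construction; acyclicity at the ``seam'' positions $-N+2,\dots,1$ is exactly what \Cref{prp: array-local}.\ref{prp: array-local-admissible} guarantees once one observes that the bicartesian squares of the two half-arrays fit together into the full diagram there — the images/coimages match up because the syzygy object $X$ computed from the left equals the cosyzygy object computed from the right. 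Since $\F$ is Frobenius, $P\in\APC_N(\F)=\TAPC_N(\F)$ by \Cref{rmk: Frobenius-tac}, and by construction $\Omega^1(P)=X$.

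For fullness, let $P,Q\in\APC_N(\F)$ and let $f\colon\Omega^1(P)\to\Omega^1(Q)$ be a morphism in $\mMor_{N-2}(\F)$; I must produce $g\colon P\to Q$ in $\C_N(\F)$ with $\Omega^1(g)=f$ up to the equivalence in the stable category. The idea is to lift $f$ in two directions. On the left half $P^{\leq 0}$, which is a bounded-above $N$-complex of projectives, the lifting statement \Cref{lem: lifting-bijection} (together with \Cref{rmk: mMor-lift}) gives a morphism $P^{\leq 0}\to Q^{\leq 0}$ of $N$-arrays restricting to $f$ on the boundary chain. On the right half, $Q^{\geq 1}$ is a bounded-below $N$-complex of injectives, so the dual lifting argument — using \Cref{rmk: array-lift} for the coresolving arrays, which requires exactly injectivity/one-sided boundedness — extends $f$ to $P^{\geq 1}\to Q^{\geq 1}$. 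Since both lifts agree with $f$ on the common $\mMor_{N-2}(\F)$-datum at the seam, they glue to a morphism $g\colon P\to Q$ of $N$-complexes, and by construction $\Omega^1(g)=f$.

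The main obstacle I anticipate is the patching step in essential surjectivity: one must verify that the one-sided resolving $N$-array of \Cref{cor: syz-resolution} and its dual coresolving counterpart can genuinely be glued into an array satisfying the hypotheses of \Cref{prp: array-local}.\ref{prp: array-local-admissible} at the finitely many seam positions — i.e.\ that after the gluing, $\gamma^{k}_{N-k'}$ is $2$-acyclic there for the relevant indices. This amounts to identifying the cokernel objects $X^k_r=C^k_{(r)}$ produced on the left with the kernel objects $Z^k_{(r)}$ produced on the right, using that the boundary $\mMor_{N-2}(\F)$-object is literally $X$ on both sides (the final diagrams of \Cref{cor: syz-resolution} and its dual). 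Once this bookkeeping is in place, total acyclicity is automatic over a Frobenius category, and the rest follows from the already-established machinery.
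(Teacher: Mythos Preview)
Your approach is essentially the same as the paper's: reduce to $n=1$, build the left half via \Cref{cor: syz-resolution}, build the right half via its dual using $\Prj(\F)=\Inj(\F)$, and glue; then obtain fullness by lifting morphisms on each half via \Cref{rmk: array-lift} and its dual, using the equivalence of \Cref{thm: array}.

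One point where the paper is cleaner than your proposal concerns the input to the dual construction. You suggest applying the dual of \Cref{cor: syz-resolution} to ``coresolve $\iota^0 X$'', but the dual statement takes an element of $\eMor_{N-2}(\F)$, not of $\mMor_{N-2}(\F)$. The paper instead feeds in the chain of epics
\[
X^{-N+2}_{N-1}\twoheadrightarrow X^{-N+2}_{N-2}\twoheadrightarrow\cdots\twoheadrightarrow X^{-N+2}_{2}\twoheadrightarrow X^{1}
\]
which is the column of the \emph{already constructed} left-half array sitting at the seam, and explicitly reuses the already constructed bicartesian squares and projective-injectives. This makes the gluing automatic and removes what you flagged as ``the main obstacle'': there is no independent right half to match up, only an extension of the existing array to the right, so acyclicity at the seam comes for free from \Cref{thm: array} without any additional seam-verification via \Cref{prp: array-local}. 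For fullness, \Cref{rmk: array-lift} (existence of lifts of array morphisms) and its dual already suffice; you do not need the homotopy-uniqueness of \Cref{lem: lifting-bijection}.
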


\begin{proof} We may assume that $n = 1$. For essential surjectivity, consider an arbitrary object
	\begin{center}
		\begin{tikzcd}
			X^1 \ar[r, tail] & X^2 \ar[r, tail] & \cdots \ar[r, tail] & X^{N-2} \ar[r, tail] & X^{N-1}
		\end{tikzcd}
	\end{center}
	of $\mMor_{N-2}(\F)$. In view of \Cref{thm: array}, it suffices to construct an acyclic $N$-array $X^\bullet_\bullet \in A_N(\F)$ with $X^k_N \in \Prj(\F)=\Inj(\F)$, for all $k \in \ZZ$, and $X^{-N+r+1}_{r} = X^r$, for all $r \in \{1,\dots,N-1\}$. \Cref{cor: syz-resolution} yields the \enquote{left half} of the desired array. To complete the array we apply the dual construction to the element
	\begin{center}
		\begin{tikzcd}
			X^{-N+2}_{N-1} \ar[r, two heads] & X^{-N+2}_{N-2} \ar[r, two heads] & \cdots \ar[r, two heads] & X^{-N+2}_2 \ar[r, two heads] & X^1
		\end{tikzcd}
	\end{center}

	of $\eMor_{N-2}(\F)$ using the already constructed bicartesian squares and projective-injectives. In view of \ref{thm: array}, fullness follows from \Cref{rmk: array-lift} and its dual.
\end{proof}

\begin{rmk} \label{rmk: Omega-mu}
	For an object $A$ of an exact category $\E$,
	\[\Omega^n \mu_N^s(A) = \begin{cases} 0, & \text{ if } n > s \text{ or } n \leq s-N+1, \\ \mu_{N+n-s-1}(A), & \text{ otherwise,} \end{cases}  \]
	which is an object of $ \smMor_{N-2}(\E)$, see \Cref{rmk: kernel-mu} and \Cref{ntn: mu-mMor}.
\end{rmk}

\begin{lem} \label{lem: Omega-stab} Let $\E$ be an exact category and $Q \in \APC_N(\E)$. Then $\Omega^n P(Q)\in \Prj(\mMor_{N-2}(\E))$ for all $n \in \ZZ$, see \Cref{con: I-and-P}. In particular, $\Omega^n = \Omega^n_\E$ induces a unique functor $\underline \Omega^n = \underline \Omega_\E^n$ of stable categories:
	\begin{center}
		\begin{tikzcd}[column sep={30mm,between origins}, row sep={17.5mm,between origins}]
			\APC_N(\E) \ar[r, "\Omega^n_\E"] \ar[d] & \mMor_{N-2}(\E) \ar[d] \\
			\underline \APC_N(\E) \ar[r, dashed, "\underline \Omega^n_\E"] & \underline \mMor_{N-2}(\E)
		\end{tikzcd}
	\end{center}
\end{lem}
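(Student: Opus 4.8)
The plan is to compute $\Omega^n P(Q)$ explicitly, deduce that it is projective in $\mMor_{N-2}(\E)$, and then derive the descent to stable categories formally.

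By \Cref{con: I-and-P}, $P(Q) = \bigoplus_{k \in \ZZ}\mu^k_N(Q^{k-N+1})$ is a termwise finite direct sum, and since $Q \in \APC_N(\E) \subseteq \C_N(\Prj(\E))$ each $Q^{k-N+1}$ lies in $\Prj(\E)$. Every summand $\mu^k_N(Q^{k-N+1})$ is therefore totally $N$-acyclic by \Cref{rmk: mu-acyclic}.\ref{rmk: mu-acyclic-statement}, so $P(Q)$ is $N$-acyclic and $\Omega^n P(Q)$ is defined. Fixing $n$, I would argue that $\Omega^n$ distributes over this direct sum: by \Cref{dfn: Omega} and \Cref{ntn: Z-B-H}, for an $N$-complex $X$ acyclic at position $n$ the $r$-th slot of $\Omega^n X$ is the kernel $Z^n_{(r)}(X)$, viewed as a subobject of $X^n$, and the structure arrows are the inclusions $Z^n_{(r)}(X) \hookrightarrow Z^n_{(r+1)}(X)$; these commute with termwise finite direct sums. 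Hence $\Omega^n P(Q) = \bigoplus_{k\in\ZZ}\Omega^n\mu^k_N(Q^{k-N+1})$, and by \Cref{rmk: Omega-mu} each term vanishes unless $n \le k \le n+N-2$, in which case it equals $\mu_{N+n-k-1}(Q^{k-N+1})$. Since $Q^{k-N+1} \in \Prj(\E)$, every nonvanishing term lies in $\smMor_{N-2}(\Prj(\E))$, see \Cref{ntn: mu-mMor}, so $\Omega^n P(Q)$ is a finite direct sum of such objects and thus itself lies in $\smMor_{N-2}(\Prj(\E)) = \Prj(\mMor_{N-2}(\E))$, see \Cref{thm: mMor}.\ref{thm: mMor-Proj}.

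For the statement that $\Omega^n$ induces a functor on stable categories, I would use that $\APC_N(\E)$ is a Frobenius category, see \Cref{thm: Verdier-notation} applied with $\A = \Prj(\E)$, or \Cref{cor: APC}; in particular, for every $X \in \APC_N(\E)$ the admissible epic $p_X\colon P(X) \twoheadrightarrow X$ of \Cref{con: I-and-P} has source projective in $\APC_N(\E)$. By \Cref{rmk: factor}.\ref{rmk: factor-proj}, a morphism $f\colon X \to Y$ in $\APC_N(\E)$ is zero in $\underline\APC_N(\E)$ if and only if it factors as $f = p_Y \circ g$; then $\Omega^n f = \Omega^n(p_Y) \circ \Omega^n(g)$ factors through $\Omega^n P(Y)$, which is projective by the first part, so $\Omega^n f$ is zero in $\underline\mMor_{N-2}(\E)$. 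Hence $\Omega^n$ sends the ideal of morphisms factoring through projectives in $\APC_N(\E)$ into the corresponding ideal in $\mMor_{N-2}(\E)$, and the universal property of the projectively stable category produces the unique functor $\underline\Omega^n$ making the square commute.

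The main obstacle I anticipate is the identity $\Omega^n\bigl(\bigoplus_k \mu^k_N(Q^{k-N+1})\bigr) = \bigoplus_k \Omega^n\mu^k_N(Q^{k-N+1})$: exactness of $\Omega^n$, see \Cref{prp: Omega-exact}, only gives compatibility with finite biproducts, so this needs the slot-wise kernel description above, or equivalently one splits off the finitely many summands with nonzero $N$-syzygy as a direct summand and checks that $\Omega^n$ annihilates the remaining (still termwise finite) direct sum. Once this is settled, the rest is bookkeeping with \Cref{rmk: Omega-mu} and formal.
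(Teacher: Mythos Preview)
Your proof is correct and follows essentially the same route as the paper's: compute $\Omega^n P(Q)$ via \Cref{rmk: Omega-mu} applied to the summands $\mu^k_N(Q^{k-N+1})$, identify the result as an object of $\smMor_{N-2}(\Prj(\E)) = \Prj(\mMor_{N-2}(\E))$ by \Cref{thm: mMor}.\ref{thm: mMor-Proj}, and deduce the induced functor from \Cref{rmk: factor}.\ref{rmk: factor-proj}. The only difference is that you spell out the distributivity of $\Omega^n$ over the termwise finite direct sum, which the paper takes for granted; your justification via the slot-wise kernel description is sound, and indeed the simpler observation you mention---that only the finitely many summands with $n \le k \le n+N-2$ contribute---already suffices.
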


\begin{proof}
	Using \Cref{rmk: Omega-mu}, we find that $\Omega^n P(Q)  \in \smMor_{N-2}(\Prj(\E)) =\Prj\left(\mMor_{N-2}(\E)\right),$ see \Cref{con: I-and-P} and \Cref{thm: mMor}.\ref{thm: mMor-Proj}. The particular claim follows using \Cref{rmk: factor}.\ref{rmk: factor-proj} and \Cref{con: I-and-P}.
\end{proof}

Brightbill and Miemietz show in {\cite[\S 4.2]{BM24}} that $\underline \Omega^n_\F$ is faithful in their setup. They construct a null-homotopy for a morphism $f: P \to Q$ in $\APC_N(\F)$ by lifting a factorization of $\Omega^n(f)$ through an element of $\Prj(\mMor_{N-2}(\F))$. The quotient objects in their proof, formed in the ambient Abelian category, occur in the acyclic $N$-arrays of $P$ and $Q$, see \Cref{thm: array}. So, their argument works almost verbatim in our setup Combining this with \Cref{prp: Omega-surj-full,prp: Omega-exact}, \Cref{lem: Omega-stab} and \Cref{prp: stable-functor-triang} yields

\begin{thm} \label{thm: Omega-triangle-equiv} If $\F$ is Frobenius category, then the functor $\underline \Omega^n\colon \underline \APC_N(\F) \to \underline \mMor_{N-2}(\F)$ is a triangle equivalence for all $n \in \ZZ$.  \hfill $\square$
\end{thm}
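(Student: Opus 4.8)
The plan is to verify that $\underline{\Omega}^n$ has the four properties characterising a triangle equivalence: it is triangulated, essentially surjective, full and faithful. Such a functor is then an equivalence of categories, and a triangle functor which is an equivalence has an automatically triangulated quasi-inverse. Two of the four properties are already at hand: over a Frobenius category $\F$ one has $\APC_N(\F) = \TAPC_N(\F)$ by \Cref{rmk: Frobenius-tac} (see \Cref{ntn: Verdier-notation}), so \Cref{prp: Omega-surj-full} gives essential surjectivity and fullness of $\Omega^n$, hence of $\underline{\Omega}^n$.

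For triangulatedness I would invoke \Cref{prp: stable-functor-triang}. The categories $\APC_N(\F)$ and $\mMor_{N-2}(\F)$ are Frobenius (by \Cref{thm: Verdier-notation}, \Cref{thm: C-Frob} and {\cite[Thm.~3.12]{BM24}}), and $\Omega^n$ restricts to an exact functor between them by \Cref{prp: Omega-exact}. It remains to check that $\Omega^n$ preserves the specific projective-injectives $I(X)$ and $P(X)$ of \Cref{con: I-and-P} that are used in \Cref{con: cone}. These are termwise-finite direct sums of complexes $\mu^s_N(A)$ with $A \in \Prj(\F)$, and \Cref{rmk: Omega-mu} identifies $\Omega^n \mu^s_N(A)$ with an object of $\smMor_{N-2}(\Prj(\F)) = \Prj(\mMor_{N-2}(\F)) = \Inj(\mMor_{N-2}(\F))$, see \Cref{thm: mMor}.\ref{thm: mMor-Proj}. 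Hence $\underline{\Omega}^n$ is triangulated; this simultaneously re-derives its well-definedness (\Cref{lem: Omega-stab}).

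Faithfulness is the only genuinely new point, and the plan is to follow the null-homotopy construction of {\cite[\S 4.2]{BM24}}. Let $f\colon P \to Q$ be a morphism of $\APC_N(\F)$ with $\underline{\Omega}^n(f) = 0$, that is, $\Omega^n(f) = h \circ g$ for morphisms $g\colon \Omega^n P \to M$ and $h\colon M \to \Omega^n Q$ in $\mMor_{N-2}(\F)$ with $M \in \Prj(\mMor_{N-2}(\F)) = \smMor_{N-2}(\Prj(\F))$. The task is to produce an $N$-homotopy $(h^k)_{k \in \ZZ}$, $h^k\colon P^k \to Q^{k-N+1}$, witnessing $f = 0$. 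I would use the acyclic $N$-arrays of $P$ and $Q$ supplied by \Cref{thm: array}: for all $k$ and $r$ they furnish the objects $P^k_r = C^k_{(r)}(P)$, $Q^k_r = C^k_{(r)}(Q)$, the bicartesian squares linking them, and the short exact sequences $P^{k-r}_{N-r} \rightarrowtail P^k \twoheadrightarrow P^k_r$ (and likewise for $Q$). These are precisely the sub- and quotient objects that Brightbill and Miemietz form inside their ambient Abelian category; projectivity of the $P^k$ together with injectivity of the $Q^k$ then allows one to lift the given factorisation through $M$ step by step along the arrays to the desired $h^k$. Combining this with the previous paragraphs yields the theorem. (One could first reduce to $n = 1$, since $\Omega^n = \Omega^1 \circ \Theta^{n-1}$ under the evident identification and $\Theta$ is an autoequivalence of $\APC_N(\F)$, but the cited statements already hold for arbitrary $n$.)

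I expect the faithfulness step to be the main obstacle. The remainder is a formal assembly of \Cref{prp: Omega-exact,prp: Omega-surj-full}, \Cref{lem: Omega-stab} and \Cref{prp: stable-functor-triang}, whereas transporting the argument of {\cite{BM24}} from the Abelian to the exact setting means systematically replacing every image, kernel and quotient they take by the corresponding entry of an acyclic $N$-array and checking that the maps involved stay admissible — the point being precisely that \Cref{thm: array} makes all those objects available over a general exact category. A smaller technical check is that applying the exact functor $\Omega^n$ to the possibly infinite direct sums $I(X) = \bigoplus_k \mu^k_N(X^k)$ and $P(X) = \bigoplus_k \mu^k_N(X^{k-N+1})$ behaves as expected; this is fine because at each of the finitely many positions relevant to $\Omega^n$ only finitely many summands contribute to the cokernels $C^k_{(r)}$.
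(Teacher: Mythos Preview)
Your proposal is correct and follows essentially the same approach as the paper: assemble triangulatedness from \Cref{prp: Omega-exact} and \Cref{prp: stable-functor-triang} (via the computation of $\Omega^n$ on $\mu^s_N(A)$, which is the content of \Cref{lem: Omega-stab}), take essential surjectivity and fullness from \Cref{prp: Omega-surj-full}, and obtain faithfulness by transporting the null-homotopy construction of \cite[\S4.2]{BM24} using the acyclic $N$-arrays of \Cref{thm: array} in place of the Abelian quotient objects. The paper's own proof is precisely this combination, and it singles out the same point you do---that the Brightbill--Miemietz argument goes through verbatim once one observes that all sub- and quotient objects they form are already available as entries of the acyclic $N$-array.
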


\section{Stabilized $N$-derived categories}

In this section, we complete the diagram in \Cref{thm: buchweitz-intro} by the (stabilized) $N$-derived category and establish the two remaining (stabilized) functors $\underline \iota^0$ and $\underline \tau^{\leq 0}$. Along the way we provide analogues of known results on 2-derived categories for general $N$. The proof that the stabilized truncation $\underline \tau^{\leq 0}$ is a triangle equivalence occupies a substantial part of this section.

\subsection{$N$-derived categories} \label{subsection: N-derived}

In this subsection, we consider the $N$-derived category $\D_N(\E)$ of an exact idempotent complete category $\E$ and its subcategories $\D^{\#, \natural}_N(\E)$ given by boundedness conditions. These are defined as Verdier quotients of the corresponding homotopy categories $\K^{\#, \natural}_N(\E)$ by their respective triangulated subcategory $\K^{\#, \varnothing}_N(\E)$ of acyclic $N$-complexes. We establish several fundamental properties known from the classical case, where $N=2$ and $\E$ is Abelian. The case $N=2$ was considered by Keller {\cite{Kel96}}, the Abelian case by Iyama, Kato and Miyachi {\cite{IKM17}}. Like Keller, we impose idempotent completeness of $\E$ to ensure that $N$-acyclicity is preserved under homotopy equivalence. Notably, we obtain canonical inclusions $\mMor_{N-2}(\E) \subseteq \D^b_N(\E) \subseteq \D^\#_N(\E) \subseteq \D_N(\E)$ as  triangulated subcategories and triangle equivalences $\D^-_N(\E) \simeq \K^-_N(\Prj(\E))$ and $\D^b_N(\E) \simeq \D^{-, b}_N(\E)\simeq \K^{-, b_\E}_N(\Prj(\E))$. Finally, we relate syzygies to truncations of $N$-complexes in $\D^{-, b}_N(\E)$, a key ingredient for the proof of \Cref{thm: buchweitz-intro}.

\smallskip

Bühler proves \Cref{prp: idemp-compl}, proposed by Keller, for $N=2$, see {\cite[Prop.~10.9]{Buh10}}. We adapt his arguments for general $N$. 

\smallskip

\begin{prp} \label{prp: idemp-compl} The following conditions are equivalent for any exact category $\E$:
	\begin{enumerate}[label=(\arabic*)]
		\item \label{prp: idemp-compl-1} Every null-homotopic $N$-complex in $\C_N(\E)$ is acyclic.
		\item \label{prp: idemp-compl-2} The category $\E$ is idempotent complete.
		\item \label{prp: idemp-compl-3} If a morphism $X \to Y$ in $\K_N(\E)$ admits a left-inverse and $Y$ is acyclic at position $n \in \ZZ$, then so is $X$.
	\end{enumerate}
	In particular, the full, but not essential, image of $\K^{\infty, \varnothing}_N(\E)$ in $\K_N(\E)$ is thick if and only if $\E$ is idempotent complete.
\end{prp}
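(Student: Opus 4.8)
The plan is to reduce every implication to the case $N=2$, which is Bühler's \cite[Prop.~10.9]{Buh10}, by means of the contraction functors $\gamma^n_r$ of \Cref{dfn: gamma}. The preliminary point is that $\gamma^n_r\colon \C_N(\E)\to\C_2(\E)$ descends to a functor $\K_N(\E)\to\K_2(\E)$: it is exact for the termwise structure, and by \Cref{con: cone-gamma} (together with \Cref{con: I-and-P} and \Cref{rmk: mu-acyclic}) it carries the canonical projective-injective $N$-complexes $I_N(X)$ and $P_N(Y)$ to projective-injective $2$-complexes, so the claim follows from \Cref{prp: stable-functor-triang} and its footnote. In particular $\gamma^n_r$ sends null-homotopic $N$-complexes to null-homotopic $2$-complexes, and, by the definition of $N$-acyclicity (\Cref{dfn: N-acyclic}, \Cref{rmk: gamma-homology}), an $N$-complex is $N$-acyclic at $n$ exactly when $\gamma^n_r$ of it is $2$-acyclic at $n$ for every $r\in\{1,\dots,N-1\}$.

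Granting this, $(2)\Rightarrow(3)$ is immediate: if $\varphi\colon X\to Y$ in $\K_N(\E)$ has a left inverse $\psi$, then $\gamma^n_r(\psi)$ is a left inverse of $\gamma^n_r(\varphi)$ in $\K_2(\E)$ and $\gamma^n_r(Y)$ is $2$-acyclic at $n$, so the $N=2$ statement gives $2$-acyclicity of $\gamma^n_r(X)$ at $n$ for each $r$, i.e.\ $X$ is $N$-acyclic at $n$. The same reduction yields $(2)\Rightarrow(1)$ directly (a contractible $X$ has each $\gamma^n_r(X)$ contractible, hence $2$-acyclic, hence $X$ is $N$-acyclic). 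For $(3)\Rightarrow(1)$ I would use a short argument avoiding $\gamma$: if $X$ is null-homotopic then $\id_X=0$ in $\K_N(\E)$, so the morphism $X\to 0$ admits $0\to X$ as a left inverse; as the zero complex is acyclic at every position, $(3)$ forces $X$ to be $N$-acyclic everywhere. These, together with $(1)\Rightarrow(2)$ below, close the cycle.

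The one genuinely computational implication is $(1)\Rightarrow(2)$. Given an idempotent $e\colon A\to A$ with complement $f:=\id_A-e$, I would form the $N$-complex $X$ with $X^k=A$ for all $k$ and $N$-periodic differential equal to $f$ in degrees $\equiv 0$, to $\id_A$ in degrees $\equiv 1,\dots,N-2$, and to $e$ in degrees $\equiv N-1\pmod N$. Any $N$ consecutive differentials then contain, in composition order, a factor $ef$ or $fe$, hence compose to $0$, so $X\in\C_N(\E)$; and $h^k:=\id_A\colon X^k\to X^{k-N+1}$ is a contracting homotopy, because in $\sum_{r=0}^{N-1}d_X^{\{N-1-r\}}\,h^{k+r}\,d_X^{\{r\}}$ every summand whose omitted period index lies in $\{1,\dots,N-2\}$ vanishes and the two surviving summands add up to $e+f=\id_A$. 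Thus $X$ is null-homotopic, hence $N$-acyclic by $(1)$, and reading off $2$-acyclicity of $\gamma^{N-1}_1(X)$ at position $N-1$ shows that $e=d_X^{N-1}$ is an admissible morphism, in particular one admitting a kernel; so every idempotent of $\E$ has a kernel, i.e.\ $\E$ is idempotent complete (\Cref{dfn: idem-compl}). For the concluding ``in particular'': by \Cref{thm: Verdier-notation} the image of $\K^{\infty,\varnothing}_N(\E)$ in $\K_N(\E)$ is a triangulated subcategory, so thickness means closure under direct summands in $\K_N(\E)$; if $\E$ is idempotent complete this holds by $(3)$, since a summand of an acyclic $N$-complex is the source of a split monic with acyclic target, and if $\E$ is not idempotent complete the complex $X$ above is null-homotopic but not acyclic, so $X\cong 0$ in $\K_N(\E)$ while the zero complex lies in the subcategory and $X$ does not, and the subcategory fails to be isomorphism-closed.

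The main obstacle I anticipate is bookkeeping rather than conceptual depth: one must verify carefully that $\gamma^n_r$ is well defined on homotopy categories and transports the hypotheses of $(3)$ and contractibility correctly, and one must carry out the index-heavy homotopy computation for the periodic complex in $(1)\Rightarrow(2)$; neither is difficult, but both are error-prone.
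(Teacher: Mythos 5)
Your proposal is correct in substance and, for most of the cycle, coincides with the paper's own proof: you prove (3)$\Rightarrow$(1) exactly as the paper does (a null-homotopic $X$ is $\cong 0$ in $\K_N(\E)$ and $0$ is acyclic), your (1)$\Rightarrow$(2) uses the same device as the paper — an $N$-periodic contractible complex built from the idempotent (the paper's pattern is $e,\,1-e,\,\id,\dots,\id$, yours the shifted pattern $1-e,\id,\dots,\id,e$; your homotopy computation checks out, and extracting a kernel of $e$ from $2$-acyclicity of $\gamma^{N-1}_1(X)$ at one position is fine) — and your treatment of the ``in particular'' thickness claim matches the paper's one-line argument (thickness follows from (3); failure of thickness for a non-split idempotent follows from the explicit contractible, non-acyclic complex). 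The genuinely different piece is (2)$\Rightarrow$(3). The paper argues at the level of general $N$: it upgrades the homotopy left inverse to a strict left inverse in $\C_N(\E)$ by replacing $s$ with the split monic $X\to I(X)\oplus Y$, and then splits the idempotent $sr$ and invokes \Cref{lem: eX-acyclic}. You instead push the entire implication down to $N=2$ through the contraction functors, after verifying that $\gamma^n_r$ descends to a functor $\K_N(\E)\to\K_2(\E)$; that verification is correct (via \Cref{con: cone-gamma} and \Cref{prp: stable-functor-triang}), with the small remark that what you need is exactness of $\gamma^n_r$ for the term\emph{split} structure (equally obvious), not the termwise one, and that $\gamma^n_r I_N(X)$, $\gamma^n_r P_N(Y)$ are sums of $2$-disks, hence projective-injective in $\C_2(\E)$.

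The one point to tighten is the base case you cite. Your reduction needs the position-wise, retract form of (3) for $2$-complexes: a morphism in $\K_2(\E)$ with a left inverse and target acyclic \emph{at a single position} $n$ has source acyclic at $n$. Bühler's Prop.~10.9 is formulated for globally acyclic complexes (and in terms of contractibility/homotopy equivalence), so as written the citation slightly overshoots. The statement you need is true and follows from the same idempotent-splitting diagram — it is precisely the $N=2$ case of \Cref{lem: eX-acyclic} together with the $I(X)\oplus Y$ upgrade, and note that the paper's proof of \Cref{lem: eX-acyclic} itself begins by contracting to $N=2$, so you are applying the same idea one level earlier. Your route therefore works and is somewhat shorter at the general-$N$ level, but it does not avoid that lemma's content: either localize Bühler's argument explicitly or quote \Cref{lem: eX-acyclic} for $N=2$ in place of the bare reference to Prop.~10.9.
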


\begin{proof} \
	
	\begin{itemize}[leftmargin=16mm]
		\item[\ref{prp: idemp-compl-3} $\Rightarrow$ \ref{prp: idemp-compl-1}] If $X \in \C_N(\E)$ is null-homotopic, then there is an isomorphism $X \to 0$ in $\K_N(\E)$, see \Cref{dfn: null-homotopic}. As the zero $N$-complex is acyclic, \ref{prp: idemp-compl-3} implies that $X$ is so as well.
		
		\item[\ref{prp: idemp-compl-1} $\Rightarrow$ \ref{prp: idemp-compl-2}] For an idempotent $e\colon A \to A$ in $\E$, consider the $N$-complex
		\begin{center}
			\begin{tikzcd}[sep=6mm]
				X\colon \; \cdots \ar[r, "\id"] & X^0 \ar[r, "e"] & X^1 \ar[r, "1-e"] & X^2 \ar[r, "\id"] & \cdots \ar[r, "\id"] & X^N \ar[r, "e"] & X^{N+1} \ar[r, "1-e"] & X^{N+2} \ar[r, "\id"] & \cdots
			\end{tikzcd}
		\end{center}
		where $X^i = A$ for all $i \in \ZZ$. One can see that the morphism $\id_{X}$ is null-homotopic with homotopy $h$ defined by $h^i = \id_A$. So, $X$ is acyclic by \ref{prp: idemp-compl-1} and $e$ splits with kernel $Z^0_{(1)}(X)$, see \Cref{dfn: idem-compl}.
		
		\item[\ref{prp: idemp-compl-2} $\Rightarrow$ \ref{prp: idemp-compl-3}] Let $s\colon X \to Y$ be a morphism in $\C_N(\E)$ with left-inverse $r\colon Y \to X$ in $\K_N(\E)$.
		This means that $r s - \id_{X}$ factors in $\C_N(\E)$ through $i = i_X\colon X \rightarrowtail I(X)$, see \Cref{rmk: factor}.\ref{rmk: factor-proj} and \Cref{con: I-and-P}:
		
		\begin{center}
			\begin{tikzcd}[sep={15mm,between origins}]
				X \ar[rr, "r s \, - \, \id_{X}"] \ar[rd, "i", tail] && X \\
				& I(X) \ar[ru, "f"]
			\end{tikzcd}
		\end{center}
		\noindent
		Equivalently, the morphism $\begin{pmatrix} i \\ s \end{pmatrix}\colon X \to I(X) \oplus Y$, has the left-inverse $\begin{pmatrix} - f & r \end{pmatrix}\colon I(X) \oplus Y \to X$ in $\C_N(\E)$.
		Suppose that $Y$ and hence $I(X) \oplus Y$ is acyclic at position $n$, see \Cref{rmk: mu-acyclic}.\ref{rmk: mu-acyclic-statement}  and \Cref{prp: Buehler2.9}. Replacing $s$ by $\begin{pmatrix} i \\ s \end{pmatrix}$ we may assume that $s$ has a left inverse $r\colon Y \to X$ in $\C_N(\E)$. Due to \ref{prp: idemp-compl-2}, \Cref{lem: eX-acyclic} applies to the idempotent $e := sr: Y \to Y$ and shows that  $X \cong_{\C_N(\E)} eY$ is acyclic at position $n$, see \Cref{rmk: eA}.
	\end{itemize}
	
	The particular claim follows since thickness follows from \ref{prp: idemp-compl-3} and implies \ref{prp: idemp-compl-1}.\qedhere
\end{proof}

\begin{lem} \label{lem: eX-acyclic}
	Consider an idempotent $e\colon X \to X$ of an $N$-complex $X$ over an exact idempotent complete category $\E$. If $X$ is acyclic at position $n \in \ZZ$, then so is $eX$, see \Cref{prp: idemp-compl-C-mMor}.\ref{prp: idemp-compl-C}.
\end{lem}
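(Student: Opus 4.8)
The plan is to reduce the statement to the assertion that a direct summand of an $N$-complex which is acyclic at a given position is again acyclic there. First I would observe that, since $\E$ is idempotent complete, so is $\C_N(\E)$ by \Cref{prp: idemp-compl-C-mMor}.\ref{prp: idemp-compl-C}; hence the idempotent $e$ of $X$ splits in $\C_N(\E)$, and by \Cref{rmk: idem-morph} together with \Cref{rmk: biproduct-C} this splitting is degreewise, so that $eX$ is a genuine direct summand $X \cong eX \oplus (1-e)X$ of $N$-complexes. It therefore suffices to prove: if $W \cong U \oplus V$ in $\C_N(\E)$ and $W$ is acyclic at position $n$, then $U$ is acyclic at $n$.

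Next I would peel off the $N$ by applying the contraction functors. Each $\gamma^n_r\colon \C_N(\E) \to \C_2(\E)$ is additive and exact (\Cref{dfn: gamma}), so it sends the biproduct $W \cong U \oplus V$ to a degreewise biproduct of $2$-complexes $\gamma^n_r(W) \cong \gamma^n_r(U) \oplus \gamma^n_r(V)$. By the definition of $N$-acyclicity (\Cref{dfn: N-acyclic}), $W$ is acyclic at $n$ if and only if every $\gamma^n_r(W)$, $r \in \{1,\dots,N-1\}$, is $2$-acyclic at $n$, and the analogous statement holds for $U$. Thus the problem reduces to the case $N=2$: a direct summand $U$ of a $2$-complex $W$ that is $2$-acyclic at $n$ is itself $2$-acyclic at $n$.

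For the $N=2$ case the idea is to push the summand projection $\pi\colon W \to W$ — an idempotent endomorphism of $2$-complexes with $\pi W \cong U$ degreewise — through the short exact sequence furnished by $2$-acyclicity (\Cref{dfn: 2-acyclic}): an admissible monic $k\colon Z^n(W) \rightarrowtail W^n$ and admissible epic $c\colon W^n \twoheadrightarrow C^n(W)$, where $Z^n(W) = \ker d_W^n$ and $C^n(W) = \coker d_W^{n-1}$. Since $\pi$ is a chain map, its components $(\pi^{n-1},\pi^n)$ on $d_W^{n-1}$ and $(\pi^n,\pi^{n+1})$ on $d_W^n$ induce idempotents on $C^n(W)$ and $Z^n(W)$ (\Cref{rmk: idem-induce}), compatible with the displayed short exact sequence; as $\E$ is idempotent complete these all split, and \Cref{lem: idem-ses}, applied to the induced idempotent of $k$, produces a biproduct of short exact sequences whose $\pi$-part is a short exact sequence $Z' \rightarrowtail U^n \twoheadrightarrow C'$ obtained by restricting $k$ and $c$ to the relevant summands. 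A short diagram chase — exploiting that the summand inclusions $U^k \rightarrowtail W^k$ are monic and the projections $W^k \twoheadrightarrow U^k$ are epic — then identifies $Z'$ with $\ker d_U^n$ and $C'$ with $\coker d_U^{n-1}$; hence $d_U^{n-1}$ and $d_U^n$ are admissible morphisms whose analyses recover exactly the sequence $Z' \rightarrowtail U^n \twoheadrightarrow C'$, and so $U$ is $2$-acyclic at $n$ by \Cref{dfn: 2-acyclic}. Applying this with $U = eX$, and using \Cref{rmk: eA} to identify $eX$ with the summand cut out by $e$, finishes the argument.

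I expect the main obstacle to lie precisely in the $N=2$ step: checking that $\pi$ really induces idempotents on the kernel and cokernel terms that are mutually compatible (so that \Cref{lem: idem-ses} applies), and then carrying out the diagram chase that identifies the resulting summands $Z'$, $C'$ with $\ker d_U^n$ and $\coker d_U^{n-1}$. The two reductions — splitting the idempotent $e$ degreewise and trading $N$ for $2$ via the exact contraction functors — are purely formal.
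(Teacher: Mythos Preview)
Your proposal is correct and follows essentially the same approach as the paper: reduce to $N=2$ via the contraction functors $\gamma^n_r$, induce idempotents on $Z^n$ and $C^n$ using \Cref{rmk: idem-induce}, check compatibility via \Cref{rmk: help-commute}, and then apply \Cref{lem: idem-ses} (and its dual) to split the short exact sequence $Z^n \rightarrowtail X^n \twoheadrightarrow C^n$ along with the adjacent admissible epic $X^{n-1}\twoheadrightarrow Z^n$ and monic $C^n\rightarrowtail X^{n+1}$. The only cosmetic difference is that you first rephrase the claim as ``direct summand of an $N$-complex acyclic at $n$ is acyclic at $n$'' before contracting, whereas the paper contracts first and works with the idempotent throughout; your anticipated ``diagram chase'' is exactly what the paper packages into its large commutative diagram.
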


\begin{proof} By applying the contraction functors $\gamma^n_r$ for $r \in \{1,\dots,N-1\}$, we may assume that $N=2$.
	There are induced idempotents $e^n$ on $Z^n=Z^n(X)$ and $C^n = C^n(X)$, see \Cref{rmk: idem-induce}, and the diagram
	
	\begin{center}
		\begin{tikzcd}[sep={15mm,between origins}]
			X^{n-1} \ar[rr, "d^{n-1}", "\circ" marking] \ar[rd, two heads] \ar[ddd, "e^{n-1}"]&& X^n \ar[rr, "d^{n}", "\circ" marking] \ar[ddd, "e^{n}"] \ar[rd, two heads] && X^{n+1} \ar[ddd, "e^{n+1}"]\\
			& Z^n \ar[ru, tail]\ar[d, "e^{n}", dashed] && C^n \ar[ru, tail]\ar[d, "e^{n}", dashed]\\
			& Z^n \ar[rd, tail] && C^n \ar[rd, tail]\\
			X^{n-1} \ar[rr, "d^{n-1}", "\circ" marking] \ar[ru, two heads] && X^n \ar[rr, "d^{n}", "\circ" marking] \ar[ru, two heads] && X^{n+1}
		\end{tikzcd}
	\end{center}
	commutes, see \Cref{rmk: help-commute}. Applying \Cref{lem: idem-ses} and its dual yields a commutative diagram
	
	\begin{center}
		\begin{tikzcd}[column sep={20mm,between origins}, row sep={12.5mm,between origins}]
			&&& e^{n}C^{n} \ar[rd, tail, dashed] \ar[dd, tail, dashed, crossing over] & \\
			e^{n-1}X^{n-1} \ar[rr] \ar[rd, two heads, dashed] \ar[dd, tail] && e^{n}X^{n} \ar[rr, crossing over] \ar[ru, two heads, dashed] \ar[dd, tail] && e^{n+1}X^{n+1} \ar[dd, tail] \\
			& e^{n}Z^{n} \ar[ru, tail, dashed] && C^{n}  \ar[dd, two heads, dashed] \ar[rd, tail] \\
			X^{n-1} \ar[rr] \ar[rd, two heads] \ar[dd, two heads] && X^{n} \ar[rr, crossing over] \ar[ru, two heads] \ar[dd, two heads] && X^{n+1} \ar[dd, two heads]\\
			&  Z^{n} \ar[ru, tail] \ar[uu, <-< , dashed, crossing over] && (1-e^{n}) C^{n}  \ar[rd, tail, dashed]  \\
			(1-e^{n-1})X^{n-1} \ar[rr] \ar[rd, two heads, dashed] && (1-e^{n})X^{n} \ar[rr] \ar[ru, dashed, two heads] && (1-e^{n+1}) X^{n+1}  \\
			& (1-e^{n}) Z^{n} \ar[ru, tail, dashed] \ar[uu, <<- , dashed, crossing over] &&  
		\end{tikzcd}
	\end{center}
	\noindent
	such that all sequences \begin{tikzcd}[cramped] \bullet  \ar[r, tail] & \bullet \ar[r, two heads] & \bullet \end{tikzcd} are short exact. This implies the claim.
\end{proof}

\begin{cor} \label{cor: resolution-cohomology}
	If $f\colon X \to Y$ is a resolution over an exact idempotent complete category $\E$ with $X \in \C^-_N(\Prj(\E))$ and $Y \in \C^{-, b}_N(\E)$, then $X \in \C^{-, b_\E}_N(\Prj(\E))$. 
\end{cor}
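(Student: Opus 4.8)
The plan is to realize $X$, up to isomorphism in $\K_N(\E)$, as a mapping cocone built from $Y$ and $C(f)$, and then to feed this into the acyclicity-transfer results of \Cref{subsection: cones,subsection: N-derived}. First observe that $X$ is acyclic at every position where it vanishes (immediate from the definitions), and since $X \in \C^-_N(\Prj(\E))$ it vanishes in all degrees $> m := \max\{k \in \ZZ \, \vert \, X^k \neq 0\}$ (the case $X = 0$ being trivial). Hence the positions at which $X$ fails to be acyclic all lie in $\ZZ_{\leq m}$, and it suffices to show that $X$ is acyclic at all sufficiently small positions.

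For this I would start from the short exact sequence $(S(f))$ of \Cref{con: cone}, namely $X \rightarrowtail I(X) \oplus Y \twoheadrightarrow C(f)$ in $\C_N(\E)$ with quotient map $(f'\ i')$. By \Cref{lem: ses-triangle} it induces a distinguished triangle $X \to I(X) \oplus Y \to C(f) \to \Sigma X$ in $\K_N(\E)$; since $I(X) = I_N(X)$ is injective, hence projective-injective, in the Frobenius category $\C_N(\E)$ (\Cref{con: I-and-P,thm: C-Frob}), it is zero in $\K_N(\E)$, so this triangle is isomorphic to $X \xrightarrow{-f} Y \xrightarrow{i'} C(f) \to \Sigma X$. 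On the other hand, the sequence $(S^\ast(i'))$ of \Cref{con: cone} yields, after discarding the contractible summand $P(C(f))$, a distinguished triangle $C^\ast(i') \to Y \xrightarrow{i'} C(f) \to \Sigma C^\ast(i')$. Both are distinguished triangles completing the morphism $i'\colon Y \to C(f)$, so by uniqueness of such completions $X \cong C^\ast(i')$ in $\K_N(\E)$.

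Next I would invoke the two acyclicity statements. Since $f$ is a resolution, $C(f)$ is $N$-acyclic, hence acyclic at every position; and since $Y \in \C^{-, b}_N(\E)$ there is $n_0 \in \ZZ$ with $Y$ acyclic at every position $\leq n_0$. For any $n \leq n_0$, both $Y$ and $C(f)$ are acyclic at the $N$ consecutive positions $n-N+1, \dots, n$, so the dual part of \Cref{prp: cone-acyclic} shows $C^\ast(i')$ is acyclic at position $n$. Because $\E$ is idempotent complete, \Cref{prp: idemp-compl}.\ref{prp: idemp-compl-3} guarantees that acyclicity at a position passes along any morphism of $\K_N(\E)$ admitting a left inverse, in particular along the isomorphism $X \cong C^\ast(i')$; hence $X$ is acyclic at every $n \leq n_0$. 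Combined with the reduction above, $X$ is acyclic at all but the finitely many positions in $\{n_0+1, \dots, m\}$, so $X \in \C^{-, b}_N(\E)$; together with $X \in \C_N(\Prj(\E))$ this gives $X \in \C^{-, b_\E}_N(\Prj(\E))$.

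The only genuinely load-bearing point — the one that uses the hypotheses — is that acyclicity at a fixed position is invariant under isomorphism in $\K_N(\E)$, and this is exactly where idempotent completeness of $\E$ enters, via \Cref{prp: idemp-compl}. Everything else is routine: the bookkeeping with signs and contractible summands needed to identify $X$ with $C^\ast(i')$ in $\K_N(\E)$, and the elementary remark that $X$ is acyclic wherever it vanishes. One could alternatively avoid the triangulated machinery by dualizing, directly at the level of $\C_N(\E)$, the computation in the proof of \Cref{prp: acyclic-ext-closed}.\ref{prp: acyclic-ext-closed-ac} that rewrites a termsplit extension as a mapping cone, but I expect that to trade rotations for explicit matrices with no real gain.
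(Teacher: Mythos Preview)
Your argument is correct. Both your proof and the paper's rest on the same two pillars: the distinguished triangle linking $X$, $Y$, and $C(f)$ in $\K_N(\E)$, and \Cref{prp: idemp-compl}.\ref{prp: idemp-compl-3} to transfer local acyclicity across an isomorphism in $\K_N(\E)$. The difference is only in packaging. The paper's proof is a one-liner: since $\K^{-,b}_N(\E)$ is a triangulated subcategory of $\K^-_N(\E)$ by \Cref{thm: Verdier-notation}.\ref{thm: Verdier-notation-c}, and both $Y$ and $C(f)$ lie in it, so does $X$ up to isomorphism in $\K_N(\E)$; idempotent completeness then finishes. You instead unpack the relevant piece of that triangulated-subcategory statement by hand, explicitly identifying $X \cong C^\ast(i')$ and invoking the local cocone-acyclicity result \Cref{prp: cone-acyclic} directly. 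This is exactly the mechanism behind \Cref{thm: Verdier-notation}.\ref{thm: Verdier-notation-c}, so you are effectively reproving a special case of it inline. Your version is more self-contained and makes the role of \Cref{prp: cone-acyclic} visible; the paper's version is shorter and leverages structure already in place.
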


\begin{proof}
	As $\K^{-, b}_N(\E)$ is a triangulated subcategory of $\K^-_N(\E)$, see \Cref{thm: Verdier-notation}.\ref{thm: Verdier-notation-c}, and $C(f) \in \C^{-, \varnothing}_N(\E) \subseteq \C^{-, b}_N(\E)$, this implies that $X$ is isomorphic in $\K_N(\E)$ to a complex in $\C^{-, b}_N(\E)$ and hence lies in $\C^{-, b_\E}_N(\Prj(\E))$ by \Cref{prp: idemp-compl}.\ref{prp: idemp-compl-3}.
\end{proof}

\begin{cor} \label{cor: K-cap-K}
	Let $\E$ be an exact idempotent complete category and $\#, \#' \in \in \{\infty,-,b\}$ and $\natural, \natural' \in \{\infty,-,b, \varnothing\}$ with $(\#', \natural') \leq (\#, \natural)$. Then $\K^{\#', \natural'}_N(\E)  = \K^{\#', \natural}_N(\E) \cap \K^{\#, \natural'}_N(\E)$ as subcategories of $\K^{\#, \natural}_N(\E)$.
\end{cor}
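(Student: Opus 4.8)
The plan is to reduce everything to the underlying categories of $N$-complexes and transport the resulting identity along the fully faithful embeddings of \Cref{thm: Verdier-notation}.\ref{thm: Verdier-notation-c}; the single non-formal ingredient is that, over an idempotent complete $\E$, acyclicity at a fixed position is invariant under homotopy equivalence. On the level of $\C_N(\E)$ the corresponding identity $\C^{\#', \natural'}_N(\E) = \C^{\#', \natural}_N(\E) \cap \C^{\#, \natural'}_N(\E)$ is purely formal: by definition an object of $\C^{\bullet,\bullet}_N(\E)$ is an $N$-complex satisfying a boundedness condition (encoded by the first index) \emph{and} an acyclicity condition (encoded by the second), and $(\#', \natural') \leq (\#, \natural)$ means precisely that the first index of $(\#',\natural')$ implies that of $(\#,\natural)$ and likewise for the second index; hence the conjunction of ``$\#'$ and $\natural$'' with ``$\#$ and $\natural'$'' is equivalent to ``$\#'$ and $\natural'$''. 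In particular $\C^{\#', \natural'}_N(\E)$ is contained in both $\C^{\#', \natural}_N(\E)$ and $\C^{\#, \natural'}_N(\E)$.

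\textbf{The inclusion $\K^{\#', \natural'}_N(\E) \subseteq \K^{\#', \natural}_N(\E) \cap \K^{\#, \natural'}_N(\E)$.} Here I would simply apply the canonical fully faithful triangle functors of \Cref{thm: Verdier-notation}.\ref{thm: Verdier-notation-c} to the two chain inclusions just observed, which identify $\K^{\#', \natural'}_N(\E)$ with a strictly full subcategory of both $\K^{\#', \natural}_N(\E)$ and $\K^{\#, \natural'}_N(\E)$ inside $\K^{\#, \natural}_N(\E)$.

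\textbf{The reverse inclusion.} First note that an isomorphism in $\K_N(\E)$ has a left inverse, so \Cref{prp: idemp-compl}.\ref{prp: idemp-compl-3} --- here using that $\E$ is idempotent complete --- shows that acyclicity at a given position $n \in \ZZ$ is preserved under isomorphisms in $\K_N(\E)$; consequently every acyclicity condition indexed by $\natural \in \{\infty, +, -, b, \varnothing\}$ is a homotopy invariant. Now take an object $X$ lying in $\K^{\#', \natural}_N(\E) \cap \K^{\#, \natural'}_N(\E)$ viewed as a subcategory of $\K^{\#, \natural}_N(\E)$. Unwinding the identification of these subcategories with essential images, there are $Y \in \C^{\#', \natural}_N(\E)$ and $Z \in \C^{\#, \natural'}_N(\E)$ with $X \cong Y$ and $X \cong Z$ in $\K_N(\E)$, hence $Y \cong Z$ in $\K_N(\E)$. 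Since $Z$ satisfies the acyclicity condition $\natural'$, homotopy invariance gives that $Y$ satisfies $\natural'$ too; as $Y$ already satisfies the boundedness condition $\#'$ we get $Y \in \C^{\#', \natural'}_N(\E)$, so $X \cong Y$ exhibits $X$ as an object of $\K^{\#', \natural'}_N(\E)$. The only point requiring a little care --- and, such as it is, the main obstacle --- is to keep the boundedness and acyclicity conditions separate: boundedness of the complex itself is \emph{not} a homotopy invariant, so one must carry along the $\#'$-bounded representative $Y$ and merely upgrade its acyclicity from $\natural$ to $\natural'$, rather than attempting to improve the boundedness of $Z$.
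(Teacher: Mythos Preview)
Your proof is correct and follows essentially the same approach as the paper's own argument: pick representatives in $\C^{\#',\natural}_N(\E)$ and $\C^{\#,\natural'}_N(\E)$, then use \Cref{prp: idemp-compl}.\ref{prp: idemp-compl-3} to transfer the stronger acyclicity condition $\natural'$ to the $\#'$-bounded representative. The paper's proof is more terse and swaps the names of the two representatives, but the logic is identical, including the implicit recognition that one must upgrade acyclicity on the bounded side rather than attempt to improve boundedness.
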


\begin{proof}
	Let $X \in \K^{\#', \natural}_N(\E) \cap \K^{\#, \natural'}_N(\E)$. This means that $X$ is isomorphic in $\K_N(\E)$ to both an object $Z \in \C^{\#', \natural}_N(\E)$ and $Y \in \C^{\#, \natural'}_N(\E)$. \Cref{prp: idemp-compl}.\ref{prp: idemp-compl-3} applied to $Z \cong_{\K_N(\E)} Y$ yields that $Z \in \C^{\#', \natural}_N(\E) \cap \C^{\infty, \natural'}_N(\E) = \C^{\#', \natural'}_N(\E)$. So, $X \cong_{\K_N(\E)} Z$ lies in the subcategory $\K^{\#', \natural'}_N(\E)$ of $\K_N(\E)$. The converse inclusion is obvious.
\end{proof}

\begin{dfn} \label{dfn: D}
	The \textbf{$\boldsymbol N$-derived categories} of an exact idempotent complete category $\E$ are defined as the Verdier quotients $\D^{\#, \natural}_N(\E) := \K^{\#, \natural}_N(\E)/\K^{\#, \varnothing}_N(\E)$, for $\# \in \{\infty, +,-,b\}$ and $\natural \in \{\infty, +, -, b\}$. A morphism in $\C_N(\E)$ is called an \textbf{($\boldsymbol N$-)quasi-isomorphism} if it is an isomorphism in $\D_N(\E)$.
\end{dfn}

\begin{lem} \label{lem: D-in-D}
	Let $\E'$ be an exact subcategory of an exact idempotent complete category $\E$. Then there is a canonical triangle functor $\D^{\#, \natural}_N(\E') \to \D^{\#, \natural}_N(\E)$, for $\# \in \{\infty, +,-,b\}$ and $\natural \in \{\infty, +, -, b\}$.
\end{lem}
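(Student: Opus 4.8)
The plan is to deduce the statement from \Cref{prp: K-in-K} together with the universal property of the Verdier quotient. The key observation is that, since the inclusion $\E' \to \E$ is exact, it sends short exact sequences of $\E'$ to short exact sequences of $\E$; hence an $N$-complex over $\E'$ which is acyclic in $\E'$ at some position is also acyclic in $\E$ at that position. Applying \Cref{prp: K-in-K} with $\A = \E$ and $\A' = \E'$ then yields, for every $\natural \in \{\infty, +, -, b, \varnothing\}$, a canonical fully faithful triangulated functor $F_\natural\colon \K^{\#, \natural}_N(\E') \to \K^{\#, \natural}_N(\E)$ induced by the inclusion $\C_N(\E') \hookrightarrow \C_N(\E)$. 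Because all the $F_\natural$ come from this single inclusion, they are compatible with the canonical functors $\K^{\#, \varnothing}_N(-) \to \K^{\#, \natural}_N(-)$ of \Cref{thm: Verdier-notation}; in particular $F_\natural$ maps the triangulated subcategory $\K^{\#, \varnothing}_N(\E')$ of $\K^{\#, \natural}_N(\E')$ into $\K^{\#, \varnothing}_N(\E)$.

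I would then compose $F_\natural$ with the Verdier quotient functor $\K^{\#, \natural}_N(\E) \to \D^{\#, \natural}_N(\E)$ to obtain a triangulated functor $\K^{\#, \natural}_N(\E') \to \D^{\#, \natural}_N(\E)$ which annihilates $\K^{\#, \varnothing}_N(\E')$. By the universal property of the Verdier quotient, see \cite{Nee01}, this functor factors uniquely through $\D^{\#, \natural}_N(\E') = \K^{\#, \natural}_N(\E')/\K^{\#, \varnothing}_N(\E')$, and the induced triangulated functor $\D^{\#, \natural}_N(\E') \to \D^{\#, \natural}_N(\E)$ is the asserted canonical one.

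There is no real obstacle: the substantial input is \Cref{prp: K-in-K}, and the remaining work is the bookkeeping of checking that the acyclicity and boundedness labels are preserved by the inclusion and that the construction is independent of $\natural$ in the way required to invoke the universal property. Both points are immediate from \Cref{prp: K-in-K} and \Cref{thm: Verdier-notation}, so no computation is needed.
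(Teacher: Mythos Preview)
Your proposal is correct and follows the same route as the paper: invoke \Cref{prp: K-in-K} to obtain the fully faithful triangulated functor on homotopy categories, observe that it carries $\K^{\#,\varnothing}_N(\E')$ into $\K^{\#,\varnothing}_N(\E)$, and then apply the universal property of the Verdier quotient. The paper's proof says exactly this in a single sentence; your version is simply more explicit.
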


\begin{proof}
	This follows from the universal property of the Verdier quotient, since the fully faithful functor triangle $\K^{\#, \natural}_N(\E') \to \K^{\#, \natural}_N(\E)$ from \Cref{prp: K-in-K} sends $\K^{\#, \varnothing}_N(\E')$ to $\K^{\#, \varnothing}_N(\E)$.
\end{proof}

Due to the particular statement of \Cref{prp: idemp-compl}, \Cref{prp: quasi-characterization} is a special case of a general fact on Verdier quotients, see {\cite[Prop.~2.1.35]{Nee01}}.

\begin{prp} \label{prp: quasi-characterization}
	Over an exact idempotent complete category, resolutions of $N$-complexes agree with $N$-quasi-isomorphisms. \qed
\end{prp}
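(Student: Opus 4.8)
The plan is to deduce the statement from the standard description of the morphisms that a Verdier localization inverts, so that essentially nothing needs to be proved beyond what is already packaged into \Cref{prp: idemp-compl}. Recall that by \Cref{dfn: D} the category $\D_N(\E)$ is the Verdier quotient $\K_N(\E)/\K^{\infty, \varnothing}_N(\E)$, and that the objects of the quotienting subcategory $\K^{\infty, \varnothing}_N(\E)$, viewed inside $\K_N(\E)$, are exactly the $N$-acyclic $N$-complexes. Since $\E$ is idempotent complete, this subcategory is thick in $\K_N(\E)$ by the particular statement of \Cref{prp: idemp-compl}.

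First I would note that for a morphism $f\colon X \to Y$ of $N$-complexes the $N$-complex $C(f)$ of \Cref{con: cone} represents the cone of $f$ in the triangulated category $\K_N(\E)$, sitting in a distinguished triangle $X \to Y \to C(f) \to \Sigma X$ by \Cref{thm: stable-Frobenius}. Then I would invoke the general fact on Verdier quotients, \cite[Prop.~2.1.35]{Nee01}: because $\K^{\infty, \varnothing}_N(\E)$ is thick, the image of $f$ in $\D_N(\E)$ is an isomorphism if and only if $C(f)$ lies in $\K^{\infty, \varnothing}_N(\E)$, i.e.\ if and only if $C(f)$ is $N$-acyclic. By definition this is exactly the assertion that $f$ (equivalently $X$) is a resolution of $Y$, and since, by \Cref{dfn: D}, an $N$-quasi-isomorphism is precisely a morphism that becomes an isomorphism in $\D_N(\E)$, the two classes coincide.

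No genuine obstacle arises; the only points that deserve a sentence of care are the identification of the explicit cone $C(f)$ from \Cref{con: cone} with the abstract triangulated cone used in \cite[Prop.~2.1.35]{Nee01}, and the role of idempotent completeness. Without it, \cite[Prop.~2.1.35]{Nee01} would only give that $C(f)$ is a direct summand in $\K_N(\E)$ of an $N$-acyclic $N$-complex, and one would still need \Cref{prp: idemp-compl}.\ref{prp: idemp-compl-3} to upgrade this to $N$-acyclicity of $C(f)$ itself — which is precisely the content of thickness in the particular statement of \Cref{prp: idemp-compl}.
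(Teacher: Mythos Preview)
Your proposal is correct and follows exactly the same route as the paper: invoke the thickness of $\K^{\infty,\varnothing}_N(\E)$ in $\K_N(\E)$ from the particular statement of \Cref{prp: idemp-compl}, and then apply the general fact on Verdier quotients \cite[Prop.~2.1.35]{Nee01}. Your added remarks about identifying the explicit cone with the triangulated one and about the role of idempotent completeness are accurate elaborations, but the core argument is identical.
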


Henrard and van Roosmalen {\cite{HR20}} consider 2-complexes over (more general) deflation-exact categories. We reduce the claim of \Cref{lem: termwise-triangle} to their argument by contraction.

\begin{lem} \label{lem: termwise-triangle}
	Over an exact idempotent complete category $\E$, any term\emph{wise} short exact sequence \begin{tikzcd}[cramped]
		X \ar[r, "f"] & Y \ar[r, "g"] & Z
	\end{tikzcd} in $\C_N(\E)$, see \Cref{rmk: C-exact}, induces a distinguished triangle in $\D_N(\E)$:
	\begin{center}
		\begin{tikzcd}[cramped]
			X \ar[r, "f"] & Y \ar[r, "g"] & Z \ar[r] & \Sigma X
		\end{tikzcd}
	\end{center}
	
\end{lem}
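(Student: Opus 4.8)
The plan is to realize the triangle in the statement as one isomorphic, in $\D_N(\E)$, to the standard triangle of a mapping cone. Form the cone $C(f)$ as in \Cref{con: cone}, with the explicit description from \Cref{con: Sigma-explicit}.\ref{con: Sigma-explicit-formula}; by \Cref{thm: stable-Frobenius} the standard triangle $X \to Y \to C(f) \to \Sigma X$ is distinguished in $\K_N(\E)$, hence also in the Verdier quotient $\D_N(\E)$. Next I would write down the canonical chain map $\pi\colon C(f) \to Z$ whose $n$-th component is $\begin{pmatrix} g^n & 0 & \cdots & 0 \end{pmatrix}\colon Y^n \oplus \bigoplus_{k=1}^{N-1} X^{n+k} \to Z^n$; a direct matrix computation with the differential of \Cref{con: Sigma-explicit}.\ref{con: Sigma-explicit-formula}, using $gf = 0$, shows that $\pi$ is a morphism in $\C_N(\E)$ and that $\pi$ composed with the map $Y \to C(f)$ of the standard triangle equals $g$. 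Thus $(\id_X, \id_Y, \pi)$ is a morphism of candidate triangles from $X \to Y \to C(f) \to \Sigma X$ to $X \xrightarrow{f} Y \xrightarrow{g} Z \to \Sigma X$, where the last map of the target is the composite of $C(f) \to \Sigma X$ with a formal inverse of $\pi$. Once $\pi$ is known to be an $N$-quasi-isomorphism, all three vertical maps are isomorphisms in $\D_N(\E)$, and a candidate triangle isomorphic to a distinguished one is distinguished; this proves the lemma.

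It therefore remains to show that $\pi$ is an $N$-quasi-isomorphism, equivalently, by \Cref{prp: quasi-characterization}, that $C(\pi)$ is $N$-acyclic; this is the only substantive point. Here I would reduce to the case $N = 2$ by contraction. Fix $n \in \ZZ$ and $r \in \{1, \dots, N-1\}$. The functor $\gamma^n_r$ is exact for the termwise structure (\Cref{dfn: gamma}), so $\gamma^n_r(X) \to \gamma^n_r(Y) \to \gamma^n_r(Z)$ is again a termwise short exact sequence, now of $2$-complexes. Applying \Cref{lem: cone-acyclic-prep}.\ref{lem: cone-acyclic-prep-iso} to $f$ and to $\pi$, there are isomorphisms $\gamma^n_r(C(f)) \cong C(\gamma^n_r(f))$ and $\gamma^n_r(C(\pi)) \cong C(\gamma^n_r(\pi))$ in $\K_2(\E)$, under which $\gamma^n_r(\pi)$ is identified with the canonical comparison map $C(\gamma^n_r(f)) \to \gamma^n_r(Z)$ attached to the contracted sequence. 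By the argument of Henrard and van Roosmalen for $2$-complexes over (deflation-)exact categories {\cite{HR20}}, this canonical map is a quasi-isomorphism, so $C(\gamma^n_r(\pi))$ is $2$-acyclic; since $\E$ is idempotent complete, $2$-acyclicity is invariant under isomorphism in $\K_2(\E)$ (\Cref{prp: idemp-compl}.\ref{prp: idemp-compl-3}), whence $\gamma^n_r(C(\pi))$ is $2$-acyclic for every $n$ and $r$. By \Cref{rmk: gamma-homology} this says precisely that $C(\pi)$ is $N$-acyclic.

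The main obstacle is this last step: matching $\gamma^n_r(\pi)$, up to isomorphism in $\K_2(\E)$, with the comparison map of the contracted short exact sequence, and then invoking the $N = 2$ result. This requires tracking the isomorphisms $c_X$ of \Cref{lem: cone-acyclic-prep} — whose cones are projective and acyclic, so that they are simultaneously $\K_2$-isomorphisms and $N$-quasi-isomorphisms — through the two cone constructions. Everything else (the chain-map check for $\pi$, the construction of the morphism of triangles, and the closure of distinguished triangles under isomorphism) is routine given the explicit formulas of \Cref{con: Sigma-explicit}.
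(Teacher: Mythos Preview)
Your proposal is correct and follows essentially the same approach as the paper: both construct the comparison map $\pi = h_f\colon C(f) \to Z$, reduce the $N$-acyclicity of its cone to the case $N=2$ via the contraction functors $\gamma^n_r$ and \Cref{lem: cone-acyclic-prep}, identify $\gamma^n_r(\pi)$ with the comparison map $h_{\gamma^n_r(f)}$ of the contracted sequence (the paper makes this identification explicit via $h_{\gamma f} = \gamma h_f \circ c_X$ and \Cref{con: cone-gamma}), and then invoke Henrard--van~Roosmalen together with \Cref{prp: idemp-compl}.\ref{prp: idemp-compl-3}. The only cosmetic difference is that you write $\pi$ explicitly and apply \Cref{lem: cone-acyclic-prep} to $\pi$ directly, while the paper builds an isomorphism of distinguished triangles to compare $C(h_{\gamma f})$ and $C(\gamma h_f)$; these amount to the same thing.
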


\begin{proof}
	The morphism $f$ fits into the distinguished triangle \hyperref[eqn: std-triangle]{$T(f)$}
	\begin{center}
		\begin{tikzcd}
			X \ar[r, "f"] & Y \ar[r] & C(f) \ar[r, "w"] & \Sigma X
		\end{tikzcd}
	\end{center}
	in $\K_N(\E)$ and hence in $\D_N(\E)$, see \Cref{con: cone}. Since $g f = 0$, there is a morphism
	\begin{align} \label{diag: HR20}
		\begin{tikzcd}[row sep={17.5mm,between origins}, ampersand replacement=\&]
			X \ar[r, tail, "\begin{pmatrix}
				i_X \\ f
			\end{pmatrix}"] \ar[d, "\id_X"] \& I(X) \oplus Y \ar[r, two heads] \ar[two heads]{d}{\begin{pmatrix} 0 & \id_Y \end{pmatrix}} \& C(f) \ar[d, dashed, "h_f"]\\
			X \ar[r, "f"] \& Y \ar[r, "g"] \& Z
		\end{tikzcd}
	\end{align}
	of term\emph{wise} short exact sequences in $\E$, see \Cref{con: cone,con: I-and-P}. We claim that the cone $C(h_f)$ is $N$-acyclic. Then $h_f$ is an $N$-quasi-isomorphism, see \Cref{prp: quasi-characterization}, and the isomorphism
	
	\begin{center}
		\begin{tikzcd}[sep={17.5mm,between origins}]
			X \ar[r, "f"] \ar[d, "\id"] & Y \ar[r] \ar[d, "\id"] & C(f) \ar[r, "w"] \ar[d, "h_f"] & \Sigma X \ar[d, "\id"] \\
			X \ar[r, "f"] & Y \ar[r, "g"] & Z \ar[r, dashed, "w \,  \circ \, h^{-1}_f"] & \Sigma X
		\end{tikzcd}
	\end{center}
	of candidate triangles in $\D_N(\E)$ yields the claim. Henrard and van Roosmalen prove this statement in {\cite[Prop.~3.23]{HR20}} for $N=2$. In particular, $C(h_{\gamma f})$ is 2-acyclic, where $\gamma := \gamma^n_r$ for arbitrary $n \in \ZZ$ and $r \in \{1,\dots,N-1\}$. We show that $C(h_{\gamma f}) \cong C(\gamma h_f)$ in $\K_2(\E)$. Then 2-acyclicity of $\gamma C(h_f)$ and thus $N$-acyclicity of $C(h_f)$ follow from \Cref{lem: cone-acyclic-prep}.\ref{lem: cone-acyclic-prep-2} and \Cref{prp: idemp-compl}.\ref{prp: idemp-compl-3}. \\
	To this end, we apply $\gamma$ to \eqref{diag: HR20} and combine the result with \eqref{diag: cone-contract}, see \Cref{con: cone-gamma}. This leads to a commutative diagram
	\begin{center}
		\begin{tikzcd}[sep={17.5mm,between origins}]
			&\gamma X \ar[dd, "\id" near start] \ar[rr, tail] \ar[dl, "\id"] && I(\gamma X) \oplus \gamma Y \ar[dd, two heads] \ar[rr, two heads] \ar[dl, tail] && C(\gamma f) \ar[ld, tail, "c_X"] \ar[dd, "h_{\gamma f}"] \\
			\gamma X \ar[rr, tail, crossing over] \ar[rd, "\id"] && \gamma I(X) \oplus \gamma Y \ar[rr, two heads, crossing over] \ar[rd, two heads] && \gamma C(f) \ar[rd, "\gamma h_f"] \\
			&\gamma X \ar[rr, "\gamma f"] && \gamma Y \ar[rr, "\gamma g"] && \gamma Z
		\end{tikzcd}
	\end{center}
	in $\C_2(\E)$ with termwise short exact rows. Indeed, $h_{\gamma f} = \gamma h_f \circ c_X$ by the universal property of cokernel. The isomorphism $c_X$ in $\K_2(\E)$ from \Cref{lem: cone-acyclic-prep}.\ref{lem: cone-acyclic-prep-iso} then fits into an isomorphism 
	\begin{center}
		\begin{tikzcd}[row sep={17.5mm,between origins}]
			C(\gamma f) \ar[r, "h_{\gamma f}"] \ar[d, "c_X"] & \gamma Z \ar[d, "\id"] \ar[r] & C(h_{\gamma f}) \ar[d, dashed, "\cong"] \ar[r] & \Sigma C(\gamma f) \ar[d, "\Sigma c_X"]\\
			\gamma C(f) \ar[r, "\gamma h_{f}"] & \gamma Z \ar[r] & C(\gamma h_{f}) \ar[r] & \Sigma (\gamma C(f))
		\end{tikzcd}
	\end{center}
	of distinguished triangles in $\K_2(\E)$, which yields the claim.
\end{proof}

We now prove \Cref{thm: diamond,thm: IKM-resolution}.

\begin{thm} \label{thm: D-diamond}
	For an exact idempotent complete category $\E$, there is a diagram of canonical fully faithful, triangle functors and equivalences:
	
	\begin{center}
		\begin{tikzcd}[sep={15mm,between origins}]
			&& \D^+_N(\E) \ar[rd, "\simeq"] && \\
			& \D^{+, b}_N(\E) \ar[ru] \ar[rd, "\simeq"] && \D^{\infty, +}_N(\E) \ar[rd] \\
			 \D^b_N(\E) \ar[ru, "\simeq"] && \D^{\infty, b}_N(\E) \ar[ru] \ar[rd] && \D_N(\E) \\
			& \D^{-, b}_N(\E) \ar[ru, "\simeq"'] \ar[rd] \ar[lu, <-, "\simeq"] && \D^{\infty, -}_N(\E) \ar[ru] \\
			&& \D^-_N(\E) \ar[ru, "\simeq"']
		\end{tikzcd}
	\end{center}
\end{thm}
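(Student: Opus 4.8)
The plan is to adapt the classical arguments of Verdier~{\cite[Ch.~III, Thm.~1.2.3]{Ver96}} and Iyama--Kato--Miyachi~{\cite[Thm.~3.12]{IKM17}} to the present setting, replacing the abelian truncation machinery by the soft truncations of \Cref{subsection: arrays} and the transition from termwise short exact sequences to distinguished triangles by \Cref{lem: termwise-triangle}. Idempotent completeness of $\E$ enters precisely through these two inputs (and through \Cref{cor: K-cap-K}).

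\smallskip

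First, all displayed functors exist and the diagram commutes. Whenever $(\#',\natural')\leq(\#,\natural)$, the inclusion of the corresponding categories of $N$-complexes induces a fully faithful triangle functor $\K^{\#',\natural'}_N(\E)\to\K^{\#,\natural}_N(\E)$ carrying $\K^{\#',\varnothing}_N(\E)$ into $\K^{\#,\varnothing}_N(\E)$, see \Cref{prp: K-in-K}; the universal property of the Verdier quotient then yields the triangle functor $\D^{\#',\natural'}_N(\E)\to\D^{\#,\natural}_N(\E)$, and commutativity of the whole diagram is immediate since every such functor is induced by an honest inclusion. For the unlabelled arrows only the index $\natural$ decreases, so by \Cref{cor: K-cap-K} the quotient is taken by one and the same thick subcategory $\K^{\#,\varnothing}_N(\E)$ on source and target, which moreover is contained in both. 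A short argument with roofs then shows that the Verdier quotient $\mathcal{A}/\mathcal{A}_0\to\mathcal{B}/\mathcal{B}_0$ of a fully faithful triangle functor by a common thick subcategory $\mathcal{A}_0=\mathcal{B}_0\subseteq\mathcal{A}$ is again fully faithful (a roof representing a morphism between objects of $\mathcal{A}$ already lives over $\mathcal{A}$, since $\mathcal{A}$ is triangulated and the cone of its denominator lies in $\mathcal{A}_0\subseteq\mathcal{A}$; dually for faithfulness). This settles all unlabelled arrows at once, and the quasi-inverses of the equivalences are triangle functors automatically, see~{\cite[Prop.~1.4]{BK89}}.

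\smallskip

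For an arrow marked $\simeq$ — precisely those in which the boundedness index of the complex is relaxed while the acyclicity index stays bounded — I would prove essential surjectivity and full faithfulness by one-sided soft truncation. For essential surjectivity, let $T$ lie in the larger category; depending on the case, $T$ is acyclic at all sufficiently large, or at all sufficiently small, positions. In the first case pick $n$ with $T$ acyclic at all positions $\geq n-N+1$, so that $\sigma^{\geq n-N+2}T$ is acyclic (\Cref{cor: soft-trunc-seq}) and the termwise short exact sequence $\sigma^{\leq n}T\rightarrowtail T\twoheadrightarrow\sigma^{\geq n-N+2}T$ becomes, via \Cref{lem: termwise-triangle}, a distinguished triangle in $\D_N(\E)$ whose third term is zero; hence $\sigma^{\leq n}T\to T$ is an isomorphism in $\D_N(\E)$, and $\sigma^{\leq n}T$ is bounded above while inheriting any lower bound of $T$. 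The second case is dual, using $\sigma^{\geq n-N+2}T$. For full faithfulness I would invoke the standard criterion for a full triangulated subcategory $\mathcal{A}$ of a triangulated category $\mathcal{B}$ with thick $\mathcal{B}_0\subseteq\mathcal{B}$ and $\mathcal{A}_0:=\mathcal{A}\cap\mathcal{B}_0$: the functor $\mathcal{A}/\mathcal{A}_0\to\mathcal{B}/\mathcal{B}_0$ is fully faithful as soon as every morphism of $\mathcal{B}$ from an object of $\mathcal{A}$ to an object of $\mathcal{B}_0$ factors through an object of $\mathcal{A}_0$, or dually every morphism from $\mathcal{B}_0$ to $\mathcal{A}$ does. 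The required factorizations come from two elementary facts: a chain map into an acyclic $N$-complex $N$ from an $N$-complex supported in degrees $\leq m$ factors, as a chain map and hence in $\K_N(\E)$, through the subcomplex $\sigma^{\leq m}N$; dually, a chain map out of an acyclic $N$-complex into one supported in degrees $\geq a$ factors through the quotient complex $\sigma^{\geq a}N$. Both $\sigma^{\leq m}N$ and $\sigma^{\geq a}N$ are acyclic by \Cref{cor: soft-trunc-seq} and inherit the remaining boundedness of $N$, so they lie in $\mathcal{A}_0$, and the criterion applies in the appropriate variance for each of the six equivalences.

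\smallskip

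Finally, the equivalences not reached directly follow by duality: $\op\E$ is again exact and idempotent complete (\Cref{rmk: E-op}), and passing to $\op\E$ interchanges $\sigma^{\leq}$ with $\sigma^{\geq}$ and the indices $+$ with $-$ while reversing every arrow; composing equivalences around the diagram gives the remaining commutativities. I expect the main obstacle to be the bookkeeping in the full faithfulness step: one must choose the truncation direction and apply the localization criterion (left roofs versus right roofs) so that the truncated complex genuinely lands in the acyclic part $\mathcal{A}_0$ of the smaller category — soft truncations preserve acyclicity, hard truncations do not — and this has to be arranged case by case for the six equivalences.
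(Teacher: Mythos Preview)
Your proposal is correct and follows essentially the same route as the paper: soft truncations from \Cref{cor: soft-trunc-seq} supply both the factorizations needed for full faithfulness and, via \Cref{lem: termwise-triangle}, the isomorphisms needed for essential surjectivity, with \Cref{cor: K-cap-K} identifying the relevant intersections. The only packaging difference is that the paper invokes the J{\o}rgensen--Kato criterion \Cref{thm: sod}.\ref{thm: sod-3}$\Rightarrow$\ref{thm: sod-c} uniformly for all arrows (the unlabelled ones being the trivial case $\U\cap\V=\V$), whereas you state the underlying localization criterion directly and treat the two kinds of arrows separately.
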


\begin{proof}
	By symmetry, it suffices to consider the arrows pointing upwards. Let $\#, \#', \natural, \natural' \in \{\infty,-,b\}$ with $(\#', \natural') \leq (\#, \natural)$. We use \Cref{thm: sod}.\ref{thm: sod-c} to establish the functor $\D^{\#', \natural'}_N(\E) \to \D^{\#, \natural}_N(\E)$. By \Cref{thm: Verdier-notation}.\ref{thm: Verdier-notation-c} and \Cref{cor: K-cap-K},  $\U := \K^{\#', \natural'}_N(\E)$, $\V:=\K^{\#, \varnothing}_N(\E)$, and $\U \cap \V= \K^{\#', \varnothing}_N(\E)$ are triangulated subcategories of $\T := \K^{\#, \natural}_N(\E)$. Condition \ref{thm: sod-3} in \Cref{thm: sod} holds trivially if $\#' = \#$, since $\U \cap \V = \V$. For the functor $\D^-_N(\E) \to \D^{\infty, -}_N(\E)$, consider a morphism $X \to Y$ in $\T$ with $X \in \C^-_N(\E)$ and $Y \in \C^{\infty, \varnothing}_N(\E)$. For any sufficiently large $n \in \ZZ$, it factors through the canonical morphism $\sigma^{\leq n} Y \to Y$ with $\sigma^{\leq n} Y \in \C^{-, \varnothing}_N(\E)$ by \Cref{cor: soft-trunc-seq}. To show its essential surjectivity, let $X \in \C^{\infty, -}_N(\E)$. Due to \Cref{cor: soft-trunc-seq} and \Cref{lem: termwise-triangle}, for sufficiently large $n \in \ZZ$, there is a distinguished triangle
	\begin{center}
		\begin{tikzcd}
			\sigma^{\leq n} X \ar[r] & X \ar[r] & \sigma^{\geq n-N+2} X \ar[r] & \Sigma \sigma^{\leq n} X
		\end{tikzcd}
	\end{center}
	in $\D^{\infty, -}_N(\E)$ with $\sigma^{\geq n-N+2} X = 0 \in \D^{-, b}_N(\E)$ and hence $X \cong \sigma^{\leq n} X \in \D^{-}_N(\E)$. The preceding arguments restrict to $\D^{-, b}_N(\E) \to \D^{\infty, b}_N(\E)$ and $\D^{b}_N(\E) \to \D^{+, b}_N(\E)$.
\end{proof}

\begin{lem} \label{lem: Verdier-crit}
	Let $X, Y \in \C_N(\E)$ be $N$-complexes over an exact idempotent complete category $\E$. Then the canonical functor $\K_N(\E) \to \D_N(\E)$ induces an isomorphism $\Hom_{\K_N(\E)}(X, Y) \cong \Hom_{\D_N(\E)}(X, Y)$ of Abelian groups if $X \in \C^{-}_N(\Prj(\E))$ or $Y \in \C^{+}_N(\Inj(\E))$.
\end{lem}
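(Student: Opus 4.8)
The plan is to reduce the statement to the orthogonality relations
\[\Hom_{\K_N(\E)}(X,Z)=0\quad(\text{for }X\in\C^-_N(\Prj(\E))),\qquad \Hom_{\K_N(\E)}(Z,Y)=0\quad(\text{for }Y\in\C^+_N(\Inj(\E)))\]
valid for every $Z\in\K^{\infty,\varnothing}_N(\E)$; recall that $\D_N(\E)=\K_N(\E)/\K^{\infty,\varnothing}_N(\E)$, see \Cref{dfn: D}, and that $\K^{\infty,\varnothing}_N(\E)$ is a thick triangulated subcategory by \Cref{thm: Verdier-notation} and \Cref{prp: idemp-compl}. Granting the first relation, surjectivity of $\Hom_{\K_N(\E)}(X,Y)\to\Hom_{\D_N(\E)}(X,Y)$ follows from the calculus of fractions: a morphism $X\to Y$ in $\D_N(\E)$ is a roof $X\xleftarrow{s}X'\xrightarrow{g}Y$ with $C(s)\in\K^{\infty,\varnothing}_N(\E)$, and completing $s$ to a distinguished triangle $X'\xrightarrow{s}X\xrightarrow{u}C(s)\to\Sigma X'$ in $\K_N(\E)$, the orthogonality gives $u=0$, so $s$ splits in $\K_N(\E)$ and the roof equals the image of $gt$ for a section $t$ of $s$. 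For injectivity, if $f\colon X\to Y$ dies in $\D_N(\E)$ there is a quasi-isomorphism $t\colon X'\to X$ with $ft=0$ in $\K_N(\E)$; then $f=f'u$ factors through the morphism $u\colon X\to C(t)$ of the triangle of $t$, and $u=0$ by orthogonality, whence $f=0$. The case $Y\in\C^+_N(\Inj(\E))$ is symmetric, using left fractions, the second orthogonality relation, and the fact that $\C^+_N(\Inj(\E))$ is stable under $\Sigma$.

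It remains to prove $\Hom_{\K_N(\E)}(P,Z)=0$ for $P\in\C^-_N(\Prj(\E))$ and $Z\in\C^{\infty,\varnothing}_N(\E)$, by constructing for any $f\colon P\to Z$ an $N$-homotopy $h=(h^k)$. Put $m:=\max\{k\mid P^k\neq0\}$, set $h^k:=0$ for $k>m$, and define $h^n\colon P^n\to Z^{n-N+1}$ for $n\le m$ by descending recursion on $n$, keeping the identity $f^k=\sum_{r=0}^{N-1}d_Z^{\{N-r-1\}}h^{k+r}d_P^{\{r\}}$ for all $k>n$. Setting $g^n:=f^n-\sum_{r=1}^{N-1}d_Z^{\{N-r-1\}}h^{n+r}d_P^{\{r\}}$, the identity at $n$ amounts to $d_Z^{\{N-1\}}h^n=g^n$. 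Applying $d_Z^n$, substituting $d_Z^nf^n=f^{n+1}d_P^n$, expanding $f^{n+1}$ via the inductive identity at $n+1$, and reindexing yields $d_Z^ng^n=h^{n+N}d_P^{\{N\}}=0$ because $P$ is an $N$-complex (for $n=m$ the sums are trivial, as $d_P^{\{r\}}$ lands in $P^{m+r}=0$). Hence $g^n$ factors through $\ker d_Z^n=Z^n_{(1)}(Z)$, which by acyclicity of $Z$ at position $n$ equals $B^n_{(N-1)}(Z)=\im d_Z^{\{N-1\}}$; moreover, acyclicity makes $d_Z^{\{N-1\}}\colon Z^{n-N+1}\to Z^n$ admissible, factoring as an admissible epic onto $B^n_{(N-1)}(Z)$ followed by an admissible monic, see \Cref{dfn: 2-acyclic} and \Cref{dfn: N-acyclic}. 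Since $P^n\in\Prj(\E)$, the morphism $g^n$ lifts along this epic to $h^n$ with $d_Z^{\{N-1\}}h^n=g^n$, completing the step. The resulting $h$ witnesses $f=0$ in $\K_N(\E)$. The dual vanishing $\Hom_{\K_N(\E)}(Z,Y)=0$ for $Y\in\C^+_N(\Inj(\E))$ follows by running the same argument over $\op\E$, see \Cref{rmk: E-op}.

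The genuinely substantial content is the orthogonality, and within it the recursion: the crucial point $d_Z^ng^n=0$ comes from coupling the inductive homotopy identity at level $n+1$ with the $N$-complex relation $d_P^{\{N\}}=0$, and the lifting step relies on the admissibility of $d_Z^{\{N-1\}}$ — precisely where acyclicity of $Z$ (not merely vanishing of some homology as in the Abelian case) is used. The passage from orthogonality to the Hom-isomorphism is routine triangulated bookkeeping, but it has to be carried out on both sides, with right fractions on the $X$ side and left fractions on the $Y$ side.
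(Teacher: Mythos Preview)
Your proof is correct and follows essentially the same approach as the paper: establish the orthogonality relations $\Hom_{\K_N(\E)}(\C^-_N(\Prj(\E)),\K^{\infty,\varnothing}_N(\E))=0$ and $\Hom_{\K_N(\E)}(\K^{\infty,\varnothing}_N(\E),\C^+_N(\Inj(\E)))=0$, then deduce the Hom-isomorphism via the calculus of fractions. The paper simply cites {\cite[Lem.~3.3]{IKM17}} for the orthogonality (observing that the homotopy-construction argument there, which is exactly your descending recursion, works over an exact category without invoking homology) and Verdier's criterion {\cite[Ch.~I, \S2, 5-3]{Ver77}} for the passage to the quotient, whereas you have written both steps out in full.
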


\begin{proof} If $\E$ is Abelian, $\Hom_{\K_N(\E)}(X, \K^{\infty, \varnothing}_N(\E)) = 0$ if $X \in \C^{-}_N(\Prj(\E))$ and $\Hom_{\K_N(\E)}(\K^{\infty, \varnothing}_N(\E), Y) = 0$ if $Y \in \C^{+}_N(\Inj(\E))$ by {\cite[Lem.~3.3]{IKM17}}. However, the proof works in general without the use of homology. Then Verdier's criterion {\cite[Ch.~I, \S 2, n\textsuperscript{o} 5,  5-3 Prop.]{Ver77}} yields the claim.
\end{proof}

\begin{thm} \label{thm: K-sod} Let $\E$ be an exact idempotent complete category with enough projectives.
	\begin{enumerate}
		\item \label{thm: K-sod--} The pair $(\K^{-}_N(\Prj(\E)), \K^{-, \varnothing}_N(\E))$ is a semiorthogonal decomposition of $\K^-_N(\E)$, which gives rise to a triangle equivalence $\D^-_N(\E) \simeq \K^-_N(\Prj(\E))$.
		\item \label{thm: K-sod-b} The pair $(\K^{-, b_\E}_N(\Prj(\E)), \K^{-, \varnothing }_N(\E))$ is a semiorthogonal decomposition of $\K^{-, b}_N(\E)$, which gives rise to a triangle equivalence $\D^{b}_N(\E) \simeq \K^{-, b_\E}_N(\Prj(\E))$.
	\end{enumerate}
\end{thm}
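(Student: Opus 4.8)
The plan is to check directly that the two pairs satisfy the two defining conditions of a semiorthogonal decomposition, see \Cref{dfn: sod}, and then read off the triangle equivalences from \Cref{prp: sod}. For part \ref{thm: K-sod--} I would first record, using \Cref{prp: K-in-K} and \Cref{thm: Verdier-notation}.\ref{thm: Verdier-notation-c}, that $\K^-_N(\Prj(\E))$ and $\K^{-, \varnothing}_N(\E)$ sit inside $\K^-_N(\E)$ as strictly full triangulated subcategories, so that it makes sense to ask whether they form a semiorthogonal decomposition. Condition \Cref{dfn: sod}.\ref{dfn: sod-1}, i.e. $\Hom_{\K^-_N(\E)}(\K^-_N(\Prj(\E)), \K^{-, \varnothing}_N(\E)) = 0$, is a special case of the vanishing obtained in the proof of \Cref{lem: Verdier-crit}: for $X \in \C^-_N(\Prj(\E))$ one has $\Hom_{\K_N(\E)}(X, \K^{\infty, \varnothing}_N(\E)) = 0$, and $\C^{-, \varnothing}_N(\E) \subseteq \C^{\infty, \varnothing}_N(\E)$.

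For condition \Cref{dfn: sod}.\ref{dfn: sod-2} I would take $Y \in \C^-_N(\E)$ and apply \Cref{cor: resolution-exist} to obtain a resolution $f\colon P \to Y$ with $P \in \C^-_N(\Prj(\E))$; since the cone $C(f)$ is then $N$-acyclic and, being built from $P$ and $Y$, bounded above, it lies in $\C^{-, \varnothing}_N(\E)$. The standard triangle $P \to Y \to C(f) \to \Sigma P$ in $\K^-_N(\E)$ thus exhibits $Y$ in $\K^-_N(\Prj(\E)) \ast \K^{-, \varnothing}_N(\E)$, establishing \ref{dfn: sod-2}. Now \Cref{prp: sod} yields the triangle equivalence $\D^-_N(\E) = \K^-_N(\E)/\K^{-, \varnothing}_N(\E) \simeq \K^-_N(\Prj(\E))$.

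Part \ref{thm: K-sod-b} follows the same pattern with $\K^-_N(\E)$ replaced by $\K^{-, b}_N(\E)$, the subcategory $\K^-_N(\Prj(\E))$ by $\K^{-, b_\E}_N(\Prj(\E))$, and $\K^{-, \varnothing}_N(\E)$ viewed as a subcategory of $\K^{-, b}_N(\E)$; the relevant inclusions are again triangulated by \Cref{prp: K-in-K} and \Cref{thm: Verdier-notation}.\ref{thm: Verdier-notation-c}. Condition \ref{dfn: sod-1} is inherited from part \ref{thm: K-sod--}, since $\K^{-, b_\E}_N(\Prj(\E)) \subseteq \K^-_N(\Prj(\E))$. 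For \ref{dfn: sod-2}, given $Y \in \C^{-, b}_N(\E)$ I would again invoke \Cref{cor: resolution-exist} for a resolution $f\colon P \to Y$ with $P \in \C^-_N(\Prj(\E))$; the additional input is that $P$ is in fact cohomologically bounded, i.e. $P \in \C^{-, b_\E}_N(\Prj(\E))$, which is exactly \Cref{cor: resolution-cohomology} and is where idempotent completeness of $\E$ enters. As before $C(f) \in \C^{-, \varnothing}_N(\E)$, so the standard triangle realizes $Y$ in $\K^{-, b_\E}_N(\Prj(\E)) \ast \K^{-, \varnothing}_N(\E)$. Then \Cref{prp: sod} gives $\D^{-, b}_N(\E) \simeq \K^{-, b_\E}_N(\Prj(\E))$, and composing with the triangle equivalence $\D^b_N(\E) \simeq \D^{-, b}_N(\E)$ from \Cref{thm: D-diamond} finishes the argument.

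The steps that need only routine care are the identification of $\K^-_N(\Prj(\E))$, $\K^{-, b_\E}_N(\Prj(\E))$ and $\K^{-, \varnothing}_N(\E)$ with strictly full triangulated subcategories of the ambient homotopy categories, the transfer of the $\Hom$-vanishing along the fully faithful embeddings into $\K_N(\E)$, and the observation that cones of morphisms between bounded-above $N$-complexes are bounded above. The genuine obstacle — already isolated in the preceding material — is the preservation of cohomological boundedness when passing to a projective resolution in part \ref{thm: K-sod-b}; this rests on \Cref{prp: idemp-compl}.\ref{prp: idemp-compl-3} via \Cref{cor: resolution-cohomology}, and is precisely the place where the hypothesis that $\E$ be idempotent complete cannot be dropped.
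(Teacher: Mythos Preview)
Your proposal is correct and follows essentially the same route as the paper's proof: the $\Hom$-vanishing from \Cref{lem: Verdier-crit} for condition~\ref{dfn: sod-1}, the standard triangle of a projective resolution from \Cref{cor: resolution-exist} (together with \Cref{cor: resolution-cohomology} in part~\ref{thm: K-sod-b}) for condition~\ref{dfn: sod-2}, and then \Cref{prp: sod} plus \Cref{thm: D-diamond} for the equivalences. Your write-up simply spells out a few routine points (triangulated subcategory status, boundedness of cones) that the paper leaves implicit.
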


\begin{proof}  \Cref{lem: Verdier-crit} yields condition \ref{dfn: sod-1} in \Cref{dfn: sod}, condition \ref{dfn: sod-2} follows from the standard triangle of a resolution, see \Cref{cor: resolution-exist,cor: resolution-cohomology}. The triangle equivalences are then due to \Cref{prp: sod} using the triangle equivalence $\D^b_N(\E) \simeq \D^{-, b}_N(\E)$ from \Cref{thm: D-diamond}. 
\end{proof}

\begin{rmk} \label{rmk: K-sod}
	In view of \Cref{rmk: Bondal-sod}, the triangle equivalences in \Cref{thm: K-sod} send an $N$-complex $X$ to $P$ for a projective resolution $P \to X$, and lift morphisms.
\end{rmk}

\begin{prp} \label{prp: mMor-Db}
	Let $\E$ be an exact idempotent complete category. Then the composed functor $\mMor_{N-2}(\E) \xrightarrow{\iota^n} \C^b_N(\E) \to \D^b_N(\E)$, see \Cref{ntn: iota}, is fully faithful and also denoted by $\iota^n$.
\end{prp}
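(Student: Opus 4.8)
The plan is to reduce to the case $n=0$ and then to identify $\iota^0$, up to an equivalence of categories, with the fully faithful functor $\rho$ of \Cref{lem: lifting-bijection}.

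\textbf{Reduction to $n=0$.} The shift functor $\Theta$ of \Cref{ntn: d^r} restricts to an exact autoequivalence of $\C^b_N(\E)$; it preserves $N$-null-homotopy and sends an $N$-complex which is $N$-acyclic at position $k$ to one which is $N$-acyclic at position $k-1$, both by the evident compatibility of $\Theta$ with the contraction functors $\gamma^k_r$. Hence $\Theta$ induces an autoequivalence of $\K^b_N(\E)$ mapping $\K^{b,\varnothing}_N(\E)$ onto itself, and so an autoequivalence — again written $\Theta$ — of $\D^b_N(\E)$. Comparing positions gives $\iota^n X = \Theta^{-n}\iota^0 X$ functorially in $X$, so $\iota^n = \Theta^{-n}\circ\iota^0$ as functors into $\D^b_N(\E)$, and it suffices to treat $n=0$.

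\textbf{The case of enough projectives.} Assume first that $\E$ has enough projectives. For $X\in\mMor_{N-2}(\E)$ the $N$-complex $\rho(X)=X^\bullet_N\in\C^-_N(\Prj(\E))$ of \Cref{lem: lifting-bijection} fits, by \Cref{cor: syz-resolution} and \Cref{prp: cone-resolution}, into a resolution $\rho(X)\to\iota^0 X$; as $\iota^0 X$ is bounded it lies in $\C^{-,b}_N(\E)$, so \Cref{cor: resolution-cohomology} puts $\rho(X)$ in $\C^{-,b_\E}_N(\Prj(\E))$ and $\rho$ lands in $\K^{-,b_\E}_N(\Prj(\E))$. By \Cref{thm: K-sod}.\ref{thm: K-sod-b} and the equivalence $\D^{-,b}_N(\E)\simeq\D^b_N(\E)$ of \Cref{thm: D-diamond}, the composite
\[
\Psi\colon\ \K^{-,b_\E}_N(\Prj(\E))\hookrightarrow\K^{-,b}_N(\E)\longrightarrow\D^{-,b}_N(\E)\xrightarrow{\ \simeq\ }\D^b_N(\E)
\]
is a triangle equivalence. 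Each resolution $\rho(X)\to\iota^0 X$ is a morphism in $\K^{-,b}_N(\E)$ with acyclic cone, hence gives an isomorphism $\Psi\rho(X)\xrightarrow{\ \sim\ }\iota^0 X$ in $\D^b_N(\E)$; these are natural in $X$ because $\rho(f)$ lifts $\iota^0 f$ along the chosen resolutions for every morphism $f$ of $\mMor_{N-2}(\E)$, see \Cref{rmk: array-lift} and \Cref{rmk: mMor-lift}. Thus $\Psi\circ\rho\cong\iota^0$; since $\rho$ is fully faithful by \Cref{lem: lifting-bijection} and $\Psi$ is an equivalence, $\iota^0$ is fully faithful.

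\textbf{The general case and the main obstacle.} For an arbitrary exact idempotent complete $\E$, embed $\E$ as an extension-closed — hence fully exact, see \Cref{def: ext-closed} — subcategory of an exact idempotent complete category $\widetilde{\E}$ with enough projectives; for instance take $\widetilde{\E}=\op{\mathcal G}$ for a Gabriel--Quillen embedding $\op\E\hookrightarrow\mathcal G$ into a Grothendieck abelian category. Then $\mMor_{N-2}(\E)\to\mMor_{N-2}(\widetilde{\E})$ is fully faithful, $\iota^0$ over $\widetilde{\E}$ is fully faithful by the previous step, and the two copies of $\iota^0$ together with the embedding of the $\mMor$'s and the canonical functor $\D^b_N(\E)\to\D^b_N(\widetilde{\E})$ of \Cref{lem: D-in-D} form a commuting square. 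A diagram chase then reduces full faithfulness of $\iota^0$ over $\E$ to full faithfulness of $\D^b_N(\E)\to\D^b_N(\widetilde{\E})$. This last statement — for $N=2$ the classical fact that an extension-closed subcategory induces a fully faithful functor on bounded derived categories — is the step I expect to be the main obstacle; I would establish it by applying the contraction functors $\gamma^n_r$ to reduce to the case $N=2$, as is done throughout \Cref{section: N-acyclicity}. (In the setting of \Cref{thm: buchweitz-intro} the category $\E$ has enough projectives, so this final reduction can be bypassed there.)
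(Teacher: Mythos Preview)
Your argument under the assumption of enough projectives is essentially the paper's proof: the paper likewise reduces to $n=0$ and then observes that postcomposing $\iota^0$ with the equivalence $\D^b_N(\E)\xrightarrow{\ \simeq\ }\K^{-,b_\E}_N(\Prj(\E))$ of \Cref{thm: K-sod}.\ref{thm: K-sod-b} (and the inclusion into $\K^-_N(\Prj(\E))$) yields the fully faithful functor $\rho$ of \Cref{lem: lifting-bijection}. You have noticed, correctly, that both of these ingredients require enough projectives; the paper's own proof invokes exactly these results and so, like yours, only establishes the proposition under that additional hypothesis. This is harmless for the paper's purposes, since the proposition is only applied in contexts where enough projectives are available.

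Your sketch for removing the hypothesis via a Gabriel--Quillen-type embedding goes beyond what the paper attempts. The step you flag as the main obstacle---full faithfulness of $\D^b_N(\E)\to\D^b_N(\widetilde\E)$---is genuine. However, your proposed reduction to $N=2$ via the contraction functors $\gamma^n_r$ is not convincing as stated: the $\gamma^n_r$ are functors on categories of complexes, not on derived categories, and there is no evident way to assemble the various $\gamma^n_r$ into something that transports full faithfulness of $\D^b_2(\E)\to\D^b_2(\widetilde\E)$ back to the $N$-derived level. If you wish to pursue this, a more promising route is to imitate Keller's $N=2$ argument directly for $N$-complexes, using the one-sided resolutions of \Cref{subsection: resolutions} in place of classical resolutions.
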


\begin{proof}
	We may assume that $n=0$. Due to \Cref{rmk: K-sod}, postcomposing $\iota^0$ with the equivalence $\D^b_N(\E) \to \K^{-, b_\E}_N(\Prj(\E))$ and the fully faithful functor $\K^{-, b_\E}_N(\Prj(\E)) \to \K^{-}_N(\Prj(\E))$, see Theorems \ref{thm: Verdier-notation}.\ref{thm: Verdier-notation-c} and \ref{thm: K-sod}.\ref{thm: K-sod-b}, yields the functor $\rho$ from \Cref{lem: lifting-bijection}. Then $\iota^0$ is fully faithful since $\rho$ is so.
\end{proof}

\begin{dfn} For an additive category $\A$, the \textbf{(left hard) truncation} $\tau^{\leq n}$ at $n \in \ZZ$ is the exact functor $\C_N(\A) \to \C^-_N(\A)$ sending $X \in \C_N(\A)$ to
	\begin{center}
		\begin{tikzcd}
			\tau^{\leq n} X\colon & \cdots \ar[r,"d_X^{n-3}"] & X^{n-2} \ar[r,"d_X^{n-2}"] & X^{n-1} \ar[r,"d_X^{n-1}"] & X^n \ar[r] & 0 \ar[r] & \cdots.
		\end{tikzcd}
	\end{center}
	The \textbf{(right hard) truncation} $\tau^{\geq n} $ is defined analogously.
\end{dfn}

\begin{rmk} \label{rmk: tau-triangle}
	For any $N$-complex $X \in \C_N(\E)$ over an exact category $\E$ and $n \in \ZZ$, there is a termsplit short exact sequence \begin{tikzcd}[cramped] \tau^{\geq n} X \ar[r, tail] & X \ar[r, two heads] & \tau^{\leq n-1} X  \end{tikzcd}. It induces a distinguished triangle
	\begin{center}
		\begin{tikzcd} \tau^{\geq n} X \ar[r] & X \ar[r] & \tau^{\leq n-1} \ar[r] & \Sigma \tau^{\geq n} X \end{tikzcd}
	\end{center}
	in $\K_N(\E)$, see \Cref{lem: ses-triangle}, and hence in $\D_N(\E)$ if $\E$ is idempotent complete.
\end{rmk}

Reversing the construction in \Cref{cor: syz-resolution}, we relate truncations and syzygies of acyclic $N$-complexes. This is generalizes the quasi-isomorphism between an object and its resolution, known from the case $N=2$.

\begin{lem} \label{lem: quasi}
	Let $X \in \C_N(\E)$ be an $N$-complex  over an exact idempotent complete category $\E$. Suppose that $X$ is acyclic at all positions up to $ n \in \ZZ$. Then the canonical morphism 
	\begin{center}
		\begin{tikzcd}[sep={22.5mm,between origins}]
			\tau^{\leq n} X\colon \ar[d] & \cdots \ar[r] &  X^{n-N+1} \ar[r, "d_X"] \ar[d, two heads]& X^{n-N+2 } \ar[r, "d_X"] \ar[d, two heads]&\cdots \ar[r, "d_X"]  & X^{n-1} \ar[r, "d_X"] \ar[d, two heads] & X^{n}  \ar[d, two heads] \\
			\iota^n \Omega^{n+1}X\colon & \cdots \ar[r] &  0 \ar[r] & C^{n-N+2}_{(1)}(X) \ar[r, tail] & \cdots \ar[r, tail] & C^{n-1}_{(N-2)}(X) \ar[r, tail] & C^{n}_{(N-1)}(X)
		\end{tikzcd}
	\end{center}
	is a resolution. In particular, there is an isomorphism of functors $\tau^{\leq n} \cong \iota^n \circ \Omega^{n+1} \colon \APC_N(\E) \to \D^{-, b}_N(\E)$.
\end{lem}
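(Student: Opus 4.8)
The plan is to realize the canonical morphism $f\colon\tau^{\leq n}X\to\iota^n\Omega^{n+1}X$ as the cokernel projection of a term\emph{wise} short exact sequence whose kernel is the soft truncation $\sigma^{\leq n}X$, and then to combine \Cref{lem: termwise-triangle} with \Cref{prp: quasi-characterization}.

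First I would unwind the definitions. As $X$ is acyclic at all positions $\leq n$, both soft truncations $\sigma^{\leq n}X$ and $\sigma^{\geq n-N+2}X$ are defined, see \Cref{dfn: soft-trunc}, and \Cref{dfn: Omega} with $n$ replaced by $n+1$ reads $\iota^n\Omega^{n+1}X=\tau^{\leq n}\sigma^{\geq n-N+2}X$; moreover $\tau^{\leq n}\sigma^{\leq n}X=\sigma^{\leq n}X$, since the latter is concentrated in degrees $\leq n$. The decisive point is that $f$ is a term\emph{wise} admissible epic with kernel $\sigma^{\leq n}X$: at a position $k\leq n-N+1$ the component $f^k\colon X^k\to 0$ has kernel $X^k=(\sigma^{\leq n}X)^k$; at a position $k=n-N+1+j$ with $j\in\{1,\dots,N-1\}$ the acyclicity of $X$ at $k\leq n$ forces the $j$-fold composition $d_X^{\{j\}}\colon X^{n-N+1}\to X^k$ to be an admissible morphism (cf.\ \Cref{rmk: gamma-homology} and the bicartesian array underlying \Cref{prp: array-local}.\ref{prp: array-local-admissible}), so $f^k\colon X^k\twoheadrightarrow C^k_{(j)}(X)$ is an admissible epic with kernel $B^k_{(j)}(X)=Z^k_{(N-j)}(X)=(\sigma^{\leq n}X)^k$; above position $n$ every term vanishes. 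Compatibility of these kernels with the differentials is precisely the description of $\sigma^{\leq n}X$ in \Cref{dfn: soft-trunc}.\ref{dfn: soft-trunc-right}, so we obtain a term\emph{wise} short exact sequence $\sigma^{\leq n}X\rightarrowtail\tau^{\leq n}X\xrightarrow{f}\iota^n\Omega^{n+1}X$ in $\C_N(\E)$.

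Since $\E$ is idempotent complete, \Cref{lem: termwise-triangle} turns this sequence into a distinguished triangle $\sigma^{\leq n}X\to\tau^{\leq n}X\xrightarrow{f}\iota^n\Omega^{n+1}X\to\Sigma\sigma^{\leq n}X$ in $\D_N(\E)$. By \Cref{cor: soft-trunc-seq}.\ref{cor: soft-trunc-seq-a} the $N$-complex $\sigma^{\leq n}X$ is $N$-acyclic, hence a zero object of $\D_N(\E)$; therefore $f$ is an isomorphism in $\D_N(\E)$, i.e.\ an $N$-quasi-isomorphism, and it is a resolution by \Cref{prp: quasi-characterization}. For the functorial statement, $f=f_X$ is natural in $X$, being assembled from the natural projections $X^k\twoheadrightarrow C^k_{(r)}(X)$ and the functors $\tau^{\leq n}$, $\iota^n$ and $\Omega^{n+1}$ (exact by \Cref{prp: Omega-exact}). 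For $X\in\APC_N(\E)$ the complex $X$ is acyclic everywhere, so $\tau^{\leq n}X$ is acyclic at all positions $\leq n-N+1$ and lies in $\C^{-,b_\E}_N(\Prj(\E))\subseteq\C^{-,b}_N(\E)$, while $\iota^n\Omega^{n+1}X\in\C^b_N(\E)\subseteq\C^{-,b}_N(\E)$; thus $\tau^{\leq n}$ and $\iota^n\circ\Omega^{n+1}$ restrict to functors $\APC_N(\E)\to\D^{-,b}_N(\E)$. As each $f_X$ is an $N$-quasi-isomorphism and the inclusion $\D^{-,b}_N(\E)\to\D_N(\E)$ is fully faithful by \Cref{thm: D-diamond}, $f_X$ is already invertible in $\D^{-,b}_N(\E)$, so $f$ is the asserted natural isomorphism $\tau^{\leq n}\cong\iota^n\circ\Omega^{n+1}$.

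The main obstacle is the first step: checking that $f$ is a term\emph{wise} admissible epic with kernel exactly $\sigma^{\leq n}X$. One must note that only acyclicity at positions $\leq n$ is available---so \Cref{cor: soft-trunc-seq}.\ref{cor: soft-trunc-seq-b} cannot be applied verbatim---but the truncation $\tau^{\leq n}$ discards the would-be problematic position $n+1$, and the admissibility of the compositions $d_X^{\{j\}}$ ending in positions $\leq n$ is exactly what the local acyclicity hypothesis yields, as in the construction of the acyclic $N$-array in the proof of \Cref{prp: array-local}.
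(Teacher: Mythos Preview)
Your proof is correct but follows a different route from the paper. The paper does not pass through $\D_N(\E)$ at all: it patches the local diagrams of \Cref{prp: array-local} (as in the proof of \Cref{thm: array}), modifies the result at the right end to obtain a \emph{resolving $N$-array} $X^\bullet_\bullet$ with $X^\bullet_N=\tau^{\leq n}X$ and $X^\bullet_0=\iota^n\Omega^{n+1}X$, and then invokes \Cref{prp: cone-resolution}, which gives an explicit acyclic $N$-array of the cone $C(p^{\{N\}})$ and hence shows directly that the cone is $N$-acyclic.

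Your argument instead recognizes $f$ as the third term of the termwise short exact sequence $\sigma^{\leq n}X\rightarrowtail\tau^{\leq n}X\twoheadrightarrow\iota^n\Omega^{n+1}X$, applies \Cref{lem: termwise-triangle} to get a distinguished triangle in $\D_N(\E)$, and kills the first term using \Cref{cor: soft-trunc-seq}.\ref{cor: soft-trunc-seq-a}. This is cleaner and more conceptual: it bypasses the explicit cone description of \Cref{prp: cone-resolution} entirely. The trade-off is that you import \Cref{lem: termwise-triangle}, whose proof is itself non-trivial (it reduces to the $N=2$ case of Henrard--van Roosmalen via the contraction functors and \Cref{lem: cone-acyclic-prep}), whereas the paper's argument stays within the elementary array machinery of \S\ref{subsection: arrays}--\ref{subsection: resolutions}. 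Your observation that \Cref{cor: soft-trunc-seq}.\ref{cor: soft-trunc-seq-b} does not literally apply (it would need acyclicity at $n+1$) but that $\tau^{\leq n}$ removes the obstruction is exactly right.
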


\begin{proof}
	Patch all diagrams that can be obtained from \Cref{prp: array-local} as in the proof of \Cref{thm: array}. Then modify the resulting diagram at the right end as follows, and extend by zero to create a resolving $N$-array $X^\bullet_\bullet$ of $X^\bullet_0 =\iota^n \Omega^{n+1}X$ with $X^\bullet_N = \tau^{\leq n} X$:
	\begin{center}
		\begin{footnotesize}
			\begin{tikzcd}[sep={1.1cm,between origins}]
				\cdots \ar[r] & X^{n-N+2} \ar[rr] \ar[rd, two heads] && X^{n-N+3} \ar[-, r] & \cdots \ar[r] & X^{n-1} \ar[rr]\ar[rd, two heads]  && X^{n} \ar[rd, two heads] && &&& \\
				 \udots  \ar[ru, tail] && C^{n-N+2}_{(N-1)} \ar[rd, two heads] \ar[ru, tail] && && C^{n-1}_{(N-1)} \ar[rd, two heads] \ar[ru, tail] && C^{n}_{(N-1)} \ar[rd, equal] \\
				& \udots \ar[ru, tail]  && \ddots \ar[rd, two heads] && \udots \ar[ru, tail] && C^{n-1}_{(N-2)} \ar[ru, tail] \ar[rd, equal] && C^{n}_{(N-1)} \ar[rd, equal] \\
				&& \ddots\ar[rd, two heads]  && C^{n-N+2}_{(2)}\ar[rd, two heads] \ar[ru, tail] && \udots \ar[ru, tail] && C^{n-1}_{(N-2)} \ar[rd, equal] \ar[ru, tail] && \ddots \ar[rd, equal] \\
				&&& C^{n-N+1}_{(1)} \ar[rd, two heads] \ar[ru, tail] && C^{n-N+2}_{(1)} \ar[ru, tail] \ar[rd, equal] && \udots \ar[ru, tail] && \ddots \ar[rd, equal] && C^{n}_{(N-1)}\ar[rd, equal]\\
				\cdots \ar[rr] && 0 \ar[ru, tail] \ar[rr] && 0 \ar[ru, tail] \ar[rr, tail] && C^{n-N+2}_{(1)} \ar[rr, tail] \ar[ru, tail] && \cdots \ar[rr, tail] && C^{n-1}_{(N-2)} \ar[rr, tail] \ar[ru, tail] && C^n_{(N-1)}
			\end{tikzcd}
		\end{footnotesize}
	\end{center}
	Then \Cref{prp: cone-resolution} and yield the desired resolution. Naturality is clear.
\end{proof}

\subsection{Perfect $N$-complexes} \label{subsection: perfect}

In this subsection, we consider the subcategory $\D^{\perf}_N(\E)$ of perfect $N$-complexes of $\D^b_N(\E)$. We characterize it by means of an Ext condition due to Buchweitz in the classical case. The corresponding Verdier quotient is the $N$-singularity category $\underline \D^b_N(\E)$. We show that its objects are obtained by embedding $\underline \mMor_{N-2}(\E)$ at various positions, generalizing a statement of Orlov's (\Cref{lem: Orlov1.10}).

\smallskip

If $\E$ is idempotent complete, then so are its subcategories $\Prj(\E)$ and $\Inj(\E)$, see \Cref{rmk: Proj-exact}. This allows for the following

\begin{dfn} \label{dfn: perfect}
	 The category of \textbf{perfect} $N$-complexes over an exact idempotent complete category $\E$ is defined as $\D^{\perf}_N(\E) := \D^b_N(\Prj(\E))$.
\end{dfn}
	
\begin{lem} \label{lem: perf-functor} Let $\E$ be an exact idempotent complete category with enough projectives.
	\begin{enumerate}
		\item \label{lem: perf-functor-equiv} The canonical triangle functor $\K^b_N(\Prj(\E)) \to \D^{\perf}_N(\E)$ is an equivalence.
		
		\item \label{lem: perf-functor-embed} The canonical triangle functor $\D^{\perf}_N(\E) \to \D^b_N(\E)$ is fully faithful.
	\end{enumerate}
\end{lem}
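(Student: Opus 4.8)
The plan is to establish part \ref{lem: perf-functor-equiv} first and then bootstrap part \ref{lem: perf-functor-embed} from it. For part \ref{lem: perf-functor-equiv}, note that $\Prj(\E)$ is an exact idempotent complete category, see \Cref{rmk: Proj-exact} and the remark preceding \Cref{dfn: perfect}, so \Cref{dfn: perfect,dfn: D} give $\D^{\perf}_N(\E)=\D^b_N(\Prj(\E))=\K^b_N(\Prj(\E))/\K^{b,\varnothing}_N(\Prj(\E))$. Thus the functor in question is the Verdier localization at $\K^{b,\varnothing}_N(\Prj(\E))$, and it suffices to prove that this subcategory is zero. In the split exact structure every object of $\Prj(\E)$ is projective, so $\Prj(\E)$ has enough projectives and $\Prj(\Prj(\E))=\Prj(\E)$. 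Applying \Cref{thm: K-sod}.\ref{thm: K-sod--} with $\E$ replaced by $\Prj(\E)$ then yields that the pair $(\K^-_N(\Prj(\E)),\K^{-,\varnothing}_N(\Prj(\E)))$ is a semiorthogonal decomposition of $\K^-_N(\Prj(\E))$. As its first component is the whole ambient category, \Cref{dfn: sod}.\ref{dfn: sod-1} forces $\Hom(V,V)=0$, hence $V\cong 0$, for every object $V$ of the second component; so $\K^{-,\varnothing}_N(\Prj(\E))=0$ and, a fortiori, $\K^{b,\varnothing}_N(\Prj(\E))=0$, which proves the claim.

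For part \ref{lem: perf-functor-embed}, the functor $F\colon\D^{\perf}_N(\E)=\D^b_N(\Prj(\E))\to\D^b_N(\E)$ is the one attached by \Cref{lem: D-in-D} to the fully exact subcategory $\Prj(\E)\subseteq\E$. I would compare two factorizations of the canonical functor $\K^b_N(\Prj(\E))\to\D_N(\E)$. First, it factors through the equivalence of part \ref{lem: perf-functor-equiv} followed by $F$ and by the functor $\D^b_N(\E)\to\D_N(\E)$, which is fully faithful by \Cref{thm: D-diamond}. Second, it factors as $\K^b_N(\Prj(\E))\to\K^b_N(\E)\to\K_N(\E)\to\D_N(\E)$, where the first two arrows are fully faithful by \Cref{prp: K-in-K} and \Cref{thm: Verdier-notation}.\ref{thm: Verdier-notation-c}, and the localization $\K_N(\E)\to\D_N(\E)$ restricts to a fully faithful functor on the objects of $\C^b_N(\Prj(\E))\subseteq\C^-_N(\Prj(\E))$ by \Cref{lem: Verdier-crit}; hence this second factorization is fully faithful. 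The two factorizations agree up to natural isomorphism, since all functors involved arise from localization and inclusion functors via the relevant universal properties, see \Cref{lem: D-in-D,thm: D-diamond}. Therefore the faithful functor $\D^b_N(\E)\to\D_N(\E)$, precomposed with $F$ and with an equivalence, is fully faithful, and the standard cancellation (a functor $F$ is fully faithful whenever $G\circ F$ is fully faithful and $G$ is faithful) shows that $F$ is fully faithful.

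I do not expect a genuinely hard step here: the lemma is essentially bookkeeping on top of \Cref{thm: K-sod}, \Cref{lem: Verdier-crit} and \Cref{thm: D-diamond}. The only point demanding care is the identification of the various ``canonical'' functors and checking commutativity of the comparison diagram in part \ref{lem: perf-functor-embed}, which is routine from the universal property of the Verdier quotient.
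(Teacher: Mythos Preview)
Your argument is correct. For part \ref{lem: perf-functor-embed} you are doing exactly what the paper does---precompose with the equivalence from \ref{lem: perf-functor-equiv} and invoke \Cref{lem: Verdier-crit}---only you spell out the passage through $\D_N(\E)$ via \Cref{thm: D-diamond} more carefully than the paper's one-line reference.

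For part \ref{lem: perf-functor-equiv} there is a small difference in route. The paper applies \Cref{lem: Verdier-crit} directly to $\Prj(\E)$ (with $\Prj(\Prj(\E))=\Prj(\E)$): every object of $\K^b_N(\Prj(\E))$ lies in $\C^-_N(\Prj(\Prj(\E)))$, so $\K$-homs and $\D$-homs agree and the localization is already fully faithful, hence an equivalence. You instead invoke \Cref{thm: K-sod}.\ref{thm: K-sod--} (for $\Prj(\E)$) to force $\K^{-,\varnothing}_N(\Prj(\E))=0$ from the semiorthogonality condition. This works, but it is slightly circuitous: the proof of \Cref{thm: K-sod} itself rests on \Cref{lem: Verdier-crit}, so you are going through a stronger packaged statement to extract the same vanishing. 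The paper's route is one step shorter; yours has the minor conceptual advantage of making explicit that bounded acyclic complexes of projectives are contractible.
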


\begin{proof} \
	\begin{enumerate}[leftmargin=*]
		\item The claim follows from \Cref{lem: Verdier-crit} applied to $\Prj(\E)=\Prj(\Prj(\E))$.
			
		\item The functor exists due to \Cref{lem: D-in-D}. By precomposition with the equivalence from \ref{lem: perf-functor-equiv} the claim follows from \Cref{lem: Verdier-crit}. \qedhere
	\end{enumerate}
\end{proof}

\begin{rmk} \label{rmk: Perf-Ext} Consider a perfect $N$-complex \begin{tikzcd}[sep=small, cramped] P\colon P^m \ar[r] & \cdots \ar[r] & P^n \end{tikzcd} $\in \D^{\perf}_N(\E)$ over an exact idempotent complete category, then there is a distinguished triangle,
	\begin{center}
		\begin{tikzcd}
			\mu_1^n(P^n) \ar[r] & P \ar[r] & \tau^{\leq n-1} P \ar[r] & \Sigma \mu_1^n(P^n),
		\end{tikzcd}
	\end{center}
	in $\D_N(\E)$. Hence, any such $P$ is an iterated extension of the object $\mu_1^s(P^s)$, where $s \in \{m, \dots, n\}$. In particular, as $\Hom$ is a (co)homological functor,
	\[\Hom_{\D_N(\E)}(-, P) =0 \; \Longleftrightarrow \; \Hom_{\D_N(\E)}(-, \mu_1^s(P^s)) = 0 \text{ for all } s \in \{m,\dots,n\},\]
	\[\Hom_{\D_N(\E)}(P, -) =0 \; \Longleftrightarrow \; \Hom_{\D_N(\E)}(\mu_1^s(P^s), -)= 0 \text{ for all } s \in \{m,\dots,n\}.\]
\end{rmk}

\begin{dfn}
	The \textbf{triangulated hull} $\tri_\T(\cS)$ of subcategory $\cS$ of a  triangulated category $\T$ is the smallest triangulated subcategory of $\T$ containing all objects of $\cS$.
\end{dfn}

\begin{lem} \label{lem: perf-tri} The category of perfect complexes over an exact idempotent complete category $\E$ is given by $\D^{\perf}_N(\E) = \tri\{\mu_1^s(P) \mid P \in \Prj(\E), s \in \{1,\dots,N-1\} \}$, see also \cite[Lem.~2.6.(ii)]{IKM17}.
\end{lem}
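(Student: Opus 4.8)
The plan is to prove both inclusions, computing $\tri$ inside $\D^{\perf}_N(\E)$ itself; this is harmless, since $\D^{\perf}_N(\E)$ embeds fully faithfully as a (strictly full) triangulated subcategory of $\D^b_N(\E)$ and of $\D_N(\E)$ by \Cref{lem: perf-functor}.\ref{lem: perf-functor-embed} and \Cref{thm: D-diamond}, and every object occurring below is a bounded $N$-complex of projectives. The inclusion ``$\supseteq$'' is then immediate: each stalk $\mu_1^s(P)$ with $P \in \Prj(\E)$ is a bounded $N$-complex of projectives, hence an object of $\D^{\perf}_N(\E) = \D^b_N(\Prj(\E))$, so the triangulated hull of such objects is contained in $\D^{\perf}_N(\E)$.

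For ``$\subseteq$'' I would first reduce a general perfect complex to stalks. Using the equivalence $\K^b_N(\Prj(\E)) \xrightarrow{\simeq} \D^{\perf}_N(\E)$ of \Cref{lem: perf-functor}.\ref{lem: perf-functor-equiv}, represent an arbitrary object of $\D^{\perf}_N(\E)$ by a bounded $N$-complex $P^m \to \cdots \to P^n$ of projectives, and iterate the hard-truncation triangle $\mu_1^k(P^k) \to \tau^{\leq k} \to \tau^{\leq k-1} \to \Sigma\mu_1^k(P^k)$ of \Cref{rmk: tau-triangle}, as recorded in \Cref{rmk: Perf-Ext}. This exhibits the object as a finite iterated extension of the stalks $\mu_1^s(P^s)$, $s \in \{m,\dots,n\}$, so it lies in $\tri\{\mu_1^s(Q) \mid Q \in \Prj(\E),\ s \in \ZZ\}$. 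It therefore remains to show that every stalk $\mu_1^s(Q)$ with $s \in \ZZ$ lies in $\U := \tri\{\mu_1^{s'}(Q) \mid Q \in \Prj(\E),\ s' \in \{1,\dots,N-1\}\}$.

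The second step is a shift reduction. If $s \not\equiv 0 \pmod N$, pick $s' \in \{1,\dots,N-1\}$ with $s \equiv s' \pmod N$ and $k \in \ZZ$ with $s = s' - kN$; then $\mu_1^s(Q) = \Sigma^{2k}\mu_1^{s'}(Q)$ by \Cref{lem: Sigma-mu}, so $\mu_1^s(Q) \in \U$. Since $\U$ is closed under $\Sigma^{\pm 2}$, the remaining case reduces to $s = 0$, and this is the one point that needs the triangulated structure rather than a mere shift, because no power of $\Sigma$ moves a stalk in a degree $1,\dots,N-1$ to one in a degree divisible by $N$. Here I would use that $\mu_N^{N-1}(Q)$ is contractible: indeed $\mu_N^{N-1}(Q) = I(\mu_1^{N-1}(Q))$ in the notation of \Cref{con: I-and-P}, hence an injective object of the Frobenius category $\C_N(\E)$ and so $\cong 0$ in $\K_N(\E)$, and likewise in $\D^{\perf}_N(\E)$. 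Its hard-truncation triangle (\Cref{rmk: tau-triangle}) at $n=1$ is $\mu_{N-1}^{N-1}(Q) \to \mu_N^{N-1}(Q) \to \mu_1^0(Q) \to \Sigma\mu_{N-1}^{N-1}(Q)$, so the vanishing of the middle term yields $\mu_1^0(Q) \cong \Sigma\mu_{N-1}^{N-1}(Q)$. Finally $\mu_{N-1}^{N-1}(Q)$, a bounded $N$-complex of projectives concentrated in degrees $1,\dots,N-1$, is an iterated extension of $\mu_1^1(Q),\dots,\mu_1^{N-1}(Q)$ by \Cref{rmk: Perf-Ext}, hence lies in $\U$; therefore so does $\mu_1^0(Q)$, which finishes the proof.

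The main obstacle is exactly this last case $s \equiv 0 \pmod N$: \Cref{lem: Sigma-mu} never produces such a stalk from the chosen generators, so one must bridge the ``missing'' residue class by invoking a genuinely contractible $N$-complex, namely $\mu_N^{N-1}(Q)$, together with its hard truncation. Everything else is formal once the reduction to stalks and the identification $\mu_N^{N-1}(Q) = I(\mu_1^{N-1}(Q))$ are in place.
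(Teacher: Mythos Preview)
Your proof is correct and follows the same overall strategy as the paper: reduce to stalks via \Cref{rmk: Perf-Ext}, then show every stalk $\mu_1^s(Q)$ with $s\in\ZZ$ lies in the triangulated hull of the generators, the key point being the residue class $s\equiv 0\pmod N$.

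The one difference is how you handle that residue class. You argue via the contractibility of $\mu_N^{N-1}(Q)$ and its hard-truncation triangle to obtain $\mu_1^0(Q)\cong\Sigma\mu_{N-1}^{N-1}(Q)$. The paper simply applies the $l=1$ case of \Cref{lem: Sigma-mu} (with $k=0$, $s=N-1$, $t=N-1$) to read off $\Sigma\mu_{N-1}^{N-1}(P)=\mu_1^0(P)$ directly. So your claim that ``\Cref{lem: Sigma-mu} never produces such a stalk from the chosen generators'' is not quite right: the lemma does produce it, you just restricted yourself to the $l=0$ case. Your contractibility argument is a valid alternative derivation of that same identity, but the paper's route is shorter.
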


\begin{proof} By \Cref{lem: Sigma-mu} and \Cref{rmk: Perf-Ext}, we have $$\mu^0_1(P)=\Sigma \mu^{N-1}_{N-1}(P) \in \tri\{\mu_1^s(P) \mid P \in \Prj(\E), s \in \{1,\dots,N-1\}\}$$ and the claim follows from \Cref{rmk: Perf-Ext} using \Cref{thm: IKM2.4}.
\end{proof}

\begin{dfn}
	We define the \textbf{stabilized $\boldsymbol N$-derived category}, or  \textbf{$\boldsymbol N$-singularity category}, of an exact idempotent complete category $\E$ as the Verdier quotient $\underline \D_N^b(\E):=\D_N^b(\E)/\D^{\perf}_N(\E)$.
\end{dfn}

\begin{dfn} \label{ntn: iota-stab}
	Let $\E$ be an exact idempotent complete category. The functor $\iota^n = \iota^n_\E$, see \Cref{ntn: iota}, sends $\Prj(\mMor_{N-2}(\E))$ to $\D^{\perf}_N(\E)$, see \Cref{thm: mMor}.\ref{thm: mMor-Proj}, and hence factors uniquely through the stable category as $\underline \iota^n = \underline \iota^n_\E$:
	\begin{center}
		\begin{tikzcd}[row sep={20mm,between origins}, column sep={25mm,between origins}]
			\mMor_{N-2}(\E) \ar[d]  \ar[r, "\iota^n_\E"] & \D^{b}_N(\E) \ar[d] \\
			\underline \mMor_{N-2}(\E) \ar[r, dashed, "\underline \iota^n_\E"]& \underline \D^{b}_N(\E)
		\end{tikzcd}
	\end{center}
\end{dfn}

\begin{dfn}
	For $n \in \ZZ$, we define the \textbf{$\boldsymbol n$th extension group} of two $N$-complexes $X, Y \in \C_N(\E)$ over an exact idempotent complete category $\E$ as $\Ext_\E^n(X, Y):=\Hom_{\D_N(\E)}(X, \Sigma^nY)$.
\end{dfn}

Part \ref{lem: Orlov1.10-orig} of \Cref{lem: Orlov1.10} yields in particular a statement of Orlov's in our setting, see {\cite[Lem.~1.10]{Orl09}}. It also serves to generalize Buchweitz's characterization of perfect 2-complexes of modules in part \ref{lem: Orlov1.10-perf}, see {\cite[Lem.~1.2.1]{Buc21}}.

\begin{lem} \label{lem: Orlov1.10}
	Let $\E$ be an exact idempotent complete category with enough projectives.
	\begin{enumerate}
		\item \label{lem: Orlov1.10-orig} If $P \in \C^{-, b_\E}_N(\Prj(\E))$ is acyclic at all positions up to $ n+1 \in \ZZ$, then $P \cong \iota^{n} \Omega^{n+1} P$ in $\underline \D^b_N(\E)$. In particular, any object of $\underline \D^b_N(\E)$ lies in $\underline \iota^n(\underline\mMor_{N-2}(\E))$, for any sufficiently small $n \in \ZZ$.
		
		\item \label{lem: Orlov1.10-perf} An $N$-complex $X \in \D^{b}_N(\E)$ lies in $\D^{\perf}_N(\E)$ if and only if $\Ext_{\E}^n(X, \iota^0(\mMor_{N-2}(\E)))=0$ for any sufficiently large $n \in \ZZ$. In particular, the subcategory $\D^{\perf}_N(\E)$ of $\D^b_N(\E)$ is thick.
	\end{enumerate}
\end{lem}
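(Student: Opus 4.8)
The plan is to get part \ref{lem: Orlov1.10-orig} directly from the truncation triangle and \Cref{lem: quasi}, and then to bootstrap both implications of part \ref{lem: Orlov1.10-perf} and the thickness claim from it.

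For part \ref{lem: Orlov1.10-orig}: since $P$ is bounded above and acyclic at all positions up to $n+1$, the hard truncation $\tau^{\geq n+1}P$ is a bounded $N$-complex of projectives, hence lies in $\C^b_N(\Prj(\E))\subseteq\D^{\perf}_N(\E)$, see \Cref{dfn: perfect}. The termsplit short exact sequence of \Cref{rmk: tau-triangle} then gives a distinguished triangle $\tau^{\geq n+1}P\to P\to\tau^{\leq n}P\to\Sigma\tau^{\geq n+1}P$ in $\D^b_N(\E)$, so $P\cong\tau^{\leq n}P$ in $\underline\D^b_N(\E)$. Since $P$ is acyclic at all positions up to $n$, \Cref{lem: quasi} provides a resolution $\tau^{\leq n}P\to\iota^n\Omega^{n+1}P$, that is, an $N$-quasi-isomorphism between objects of $\C^{-,b}_N(\E)$, hence an isomorphism in $\D^b_N(\E)$ by \Cref{thm: D-diamond}. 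Composing, $P\cong\iota^n\Omega^{n+1}P$ in $\underline\D^b_N(\E)$, which by \Cref{ntn: iota-stab} lies in $\underline\iota^n(\underline\mMor_{N-2}(\E))$. For the ``in particular'': any object of $\underline\D^b_N(\E)$ comes from some $X\in\C^b_N(\E)$, and by \Cref{thm: K-sod}.\ref{thm: K-sod-b} together with \Cref{rmk: K-sod}, $X$ is isomorphic in $\D^b_N(\E)$ to a projective resolution $P\in\C^{-,b_\E}_N(\Prj(\E))$; as $P$ is acyclic at almost all positions, it is acyclic at all positions up to $n+1$ for $n\ll0$, and the first part applies.

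For the first implication of part \ref{lem: Orlov1.10-perf}: let $X\in\D^{\perf}_N(\E)$, so by \Cref{lem: perf-functor}.\ref{lem: perf-functor-equiv} we may take $X=Q$ for a bounded $N$-complex $Q\colon Q^m\to\cdots\to Q^l$ of projectives. By \Cref{rmk: Perf-Ext}, vanishing of $\Hom_{\D_N(\E)}(Q,-)$ on an object is detected on the finitely many $\mu^s_1(Q^s)$, $s\in\{m,\dots,l\}$, so it suffices to bound $\Hom_{\D_N(\E)}(\mu^s_1(Q^s),\Sigma^n\iota^0 M)$ for fixed $s$ and $M\in\mMor_{N-2}(\E)$. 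As $\mu^s_1(Q^s)\in\C^b_N(\Prj(\E))\subseteq\C^-_N(\Prj(\E))$, \Cref{lem: Verdier-crit} identifies this with $\Hom_{\K_N(\E)}(\mu^s_1(Q^s),\Sigma^n\iota^0 M)$. Using $\Sigma^2\cong\Theta^N$ (\Cref{thm: IKM2.4}) and the shift formulas of \Cref{con: Sigma-explicit}.\ref{con: Sigma-explicit-formula}, the $N$-complex $\Sigma^n\iota^0 M$ is concentrated in positions tending to $-\infty$ as $n\to\infty$, by an amount depending on $n$ only, not on $M$; once these positions all lie strictly below $s$, every chain map out of $\mu^s_1(Q^s)$ into $\Sigma^n\iota^0 M$ vanishes. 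Taking the maximum over the finitely many $s$ gives a bound $n_0=n_0(X)$, independent of $M$, with $\Ext^n_\E(X,\iota^0(\mMor_{N-2}(\E)))=0$ for $n\geq n_0$.

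For the converse implication of part \ref{lem: Orlov1.10-perf}: replace $X$ by a projective resolution $P\in\C^{-,b_\E}_N(\Prj(\E))$ and pick $n=-kN$ with $k$ large enough that $P$ is acyclic at all positions up to $n+1$ and $2k+1\geq n_0$. As in part \ref{lem: Orlov1.10-orig}, $\tau^{\geq n+1}P$ is perfect, \Cref{lem: quasi} gives $\tau^{\leq n}P\cong\iota^n\Omega^{n+1}P$ in $\D_N(\E)$, and $\iota^{-kN}=\Theta^{kN}\circ\iota^0\cong\Sigma^{2k}\circ\iota^0$ by \Cref{thm: IKM2.4}; since $\D^{\perf}_N(\E)$ is a triangulated subcategory of $\D^b_N(\E)$ closed under $\Sigma^{\pm1}$, see \Cref{lem: perf-functor}.\ref{lem: perf-functor-embed}, the triangle of \Cref{rmk: tau-triangle} shows that $X$ is perfect if and only if $\iota^0 M$ is perfect, where $M:=\Omega^{n+1}P\in\mMor_{N-2}(\E)$. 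Propagating the hypothesis through this triangle — its two outer $\Hom$-terms involve $\tau^{\geq n+1}P$, which kills $\Hom_{\D_N(\E)}(-,\Sigma^m\iota^0 M')$ in high degree uniformly in $M'$ by the first implication — yields $\Ext^1_\E(\iota^0 M,\iota^0 M')=\Hom_{\D_N(\E)}(\iota^0 M,\Sigma\iota^0 M')=0$ for all $M'\in\mMor_{N-2}(\E)$. Now $\mMor_{N-2}(\E)$ has enough projectives (\Cref{thm: mMor}.\ref{thm: mMor-enough}), so there is a short exact sequence $K\rightarrowtail\pi\twoheadrightarrow M$ with $\pi\in\smMor_{N-2}(\Prj(\E))=\Prj(\mMor_{N-2}(\E))$, see \Cref{thm: mMor}.\ref{thm: mMor-Proj}; applying $\iota^0$ and \Cref{lem: termwise-triangle} gives a distinguished triangle $\iota^0 K\to\iota^0\pi\to\iota^0 M\xrightarrow{\delta}\Sigma\iota^0 K$ in $\D_N(\E)$ with $\delta=0$, so $\iota^0\pi\twoheadrightarrow\iota^0 M$ is a split epic in $\D^b_N(\E)$; by full faithfulness of $\iota^0$ on $\D^b_N(\E)$ (\Cref{prp: mMor-Db}) the splitting descends, $\pi\twoheadrightarrow M$ splits, $M$ is a direct summand of the projective $\pi$, hence $M\in\Prj(\mMor_{N-2}(\E))$ (\Cref{rmk: Proj-exact}). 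Thus $\iota^0 M\in\C^b_N(\Prj(\E))\subseteq\D^{\perf}_N(\E)$ and $X$ is perfect. Finally, the characterization just established makes $\D^{\perf}_N(\E)$ closed under direct summands — if $X\oplus Y$ satisfies the $\Ext$-vanishing, so does $X$, since $\Ext^n_\E(X,-)$ is a direct summand of $\Ext^n_\E(X\oplus Y,-)$ — and closed under two-out-of-three along triangles by the long exact sequence, so it is thick.

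The main obstacle is the step $\Ext^1_\E(\iota^0 M,\iota^0 M')=\Hom_{\D_N(\E)}(\iota^0 M,\Sigma\iota^0 M')=0$ and its use via \Cref{prp: mMor-Db}: one must make precise that the connecting class of the syzygy sequence in $\mMor_{N-2}(\E)$ is exactly the class $\delta$ computed in $\D_N(\E)$, and that a split triangle in $\D^b_N(\E)$ between objects of $\iota^0(\mMor_{N-2}(\E))$ gives back a split short exact sequence in $\mMor_{N-2}(\E)$. This is the $N$-complex analogue of Buchweitz's argument in the classical case; the only genuinely new input is the shift identity $\iota^{-kN}\cong\Sigma^{2k}\circ\iota^0$, which is what lets us choose the depth of the syzygy $M$ so that the relevant extension group is forced into degree $\geq n_0$.
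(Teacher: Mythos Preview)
Your proof is correct. Part \ref{lem: Orlov1.10-orig} and the forward implication of \ref{lem: Orlov1.10-perf} are essentially the paper's argument; the only cosmetic difference is that the paper first soft-truncates to land in $\C^b_N(\E)$, while you apply the hard-truncation triangle directly in $\D^{-,b}_N(\E)$ and then invoke \Cref{thm: D-diamond}.

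For the converse of \ref{lem: Orlov1.10-perf} you take a genuinely different route. The paper, having reduced to showing $\iota^n C$ is perfect for $C=\Omega^{n+1}P$, uses the vanishing $\Hom_{\D_N(\E)}(P,\iota^n C)=0$ to factor $\id_{\iota^n C}$ through $\Sigma\tau^{\geq n+1}P$, then through $\tau^{\leq n}\Sigma\tau^{\geq n+1}P$, which by \Cref{lem: tau-Sigma-tau}.\ref{lem: tau-Sigma-tau-b} is explicitly of the form $\iota^n(\text{projective})$; full faithfulness of $\iota^n$ then forces $C$ to be projective. You instead run the long exact sequence on the truncation triangle to obtain $\Ext^1_\E(\iota^0 M,\iota^0 M')=0$ for all $M'$, and then split the syzygy sequence $K\rightarrowtail\pi\twoheadrightarrow M$. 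Your worry in the last paragraph is unnecessary: the connecting morphism $\delta$ lies in $\Hom_{\D_N(\E)}(\iota^0 M,\Sigma\iota^0 K)$, which vanishes since $K\in\mMor_{N-2}(\E)$, so the triangle splits, and full faithfulness of $\iota^0$ from \Cref{prp: mMor-Db} transports the section back to $\mMor_{N-2}(\E)$ verbatim---no identification of extension classes is needed. The one place to be careful is the left-hand term in your long exact sequence: it equals $\Hom_{\D_N(\E)}(\tau^{\geq n+1}P,\iota^n M')$, which vanishes not ``by the first implication'' but simply because the source and target have disjoint supports and the source lies in $\C^-_N(\Prj(\E))$, so \Cref{lem: Verdier-crit} reduces it to a zero $\K_N$-Hom. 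With this correction the choice $2k+1\geq n_0$ is all that is needed. Your approach trades the explicit computation of \Cref{lem: tau-Sigma-tau} for a short projectivity argument in $\mMor_{N-2}(\E)$; the paper's route is more self-contained, yours closer to the classical Buchweitz argument.
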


\begin{proof}\
	\begin{enumerate}[leftmargin=*]
		\item By \Cref{cor: soft-trunc-seq} and \Cref{lem: termwise-triangle}, there is a distinguished triangle
		
		\begin{center}
			\begin{tikzcd}
				\sigma^{\leq n} P \ar[r] & P \ar[r] & \sigma^{\geq n-N+2} P \ar[r] & \Sigma\sigma^{\leq n} P
			\end{tikzcd}
		\end{center}
		in $\D^{-, b}_N(\E)$ with $\sigma^{\leq n} P$ acyclic. It follows that $P \cong \sigma^{\geq n-N+2} P$ in $\D_N(\E)$. Setting $C := \Omega^{n+1} P$ we have $\tau^{\leq n} \sigma^{\geq n-N+2} P = \iota^{n} C$ and $\tau^{\geq n+1} \sigma^{\geq n-N+2} P = \tau^{\geq n+1} P$. Applying \Cref{rmk: tau-triangle} to $X=\sigma^{\geq n-N+2} P \cong P$ yields a distinguished triangle
		\begin{align} \label{eqn: Orlov-triangle}
			\begin{tikzcd}[ampersand replacement=\& ]
				\tau^{\geq n+1} P \ar[r] \& P \ar[r] \&  \iota^{n} C \ar[r] \& \Sigma\tau^{\geq n+1} P
			\end{tikzcd}
		\end{align}
		in $\D^b_N(\E)$ with $\tau^{\geq n+1} P \in \D^{\perf}_N(\E)$. It follows that $P \cong \iota^{n} C$ in $\underline \D^b_N(\E)$, which proves \ref{lem: Orlov1.10-orig}. The particular claim follows by \Cref{thm: K-sod}.\ref{thm: K-sod-b}.
		
		\item Assume first that $X \in \D^{\perf}_N(\E)$. It follows from \Cref{thm: IKM2.4} and \Cref{lem: Verdier-crit} that, for any sufficiently large $n \in \ZZ$, and $s \in \{0,1\}$,
		\[\Hom_{\D_N(\E)}(X, \Sigma^{2n+s}\iota^0(\mMor_{N-2}(\E))) \cong \Hom_{\K_N(\E)}(\Sigma^{-s} X, \Theta^{Nn}\iota^0(\mMor_{N-2}(\E))) = 0,\]
		because these two complexes have disjoint support. To show the converse suppose that $X \in \D^b_N(\E)$ satisfies the vanishing of extension groups. \Cref{cor: resolution-exist,cor: resolution-cohomology} yield a projective resolution $P \to X$ with $P \in \C^{-, b_\E}_N(\Prj(\E))$ and $X \cong P$ in $\D_N(\E)$. Pick $m \in \ZZ$ sufficiently large such that $P$ is acyclic at all positions up to $n + 1$ where $n= -Nm$.
		Due to the distinguished triangle \eqref{eqn: Orlov-triangle} from the proof of \ref{lem: Orlov-prep-1}, it suffices to show that $\iota^{n} C \in \D^{\perf}_N(\E)$. Then $X$ lies in the triangulated subcategory $\D^{\perf}_N(\E)$ of $\D^b_N(\E)$. Since $\iota^n C \cong \Sigma^{2m}\iota^0 C \in \Sigma^{2m} \iota^0 \mMor_{N-2}(\E)$ by \Cref{thm: IKM2.4}, we may assume by hypothesis that $\Hom_{\D_N(\E)}(P, \iota^n C) = 0$. Then \eqref{eqn: Orlov-triangle} yields an induced morphism, see \Cref{rmk: cohom},
		\begin{center}
			\begin{tikzcd}[sep={22.5mm,between origins}]
				P \ar[r, "0"] \ar[rd, "0"] &  \iota^n C \ar[d, equal] \ar[r] & \Sigma\tau^{\geq n+1} P \ar[ld, dashed, "f"] \\
				& \iota^n C
			\end{tikzcd}
		\end{center}
		in $\D_N(\E)$.  \Cref{lem: Verdier-crit} shows that
		$$\Hom_{\D_N(\E)}(\tau^{\geq n+1} \Sigma \tau^{\geq n+1} P, \iota^n C) \cong  \Hom_{\K_N(\E)}(\tau^{\geq n+1} \Sigma \tau^{\geq n+1} P, \iota^{n} C) = 0,$$ because these two complexes have disjoint support. Thus, there is an induced morphism, see \Cref{rmk: tau-triangle} and \Cref{rmk: cohom},
		
		\begin{center}
			\begin{tikzcd}[sep={25mm,between origins}]
				\tau^{\geq n+1} \Sigma \tau^{\geq n+1} P \ar[r] \ar[rd, "0"] &  \Sigma \tau^{\geq n+1} P \ar[d, "f"] \ar[r] & \tau^{\leq n} \Sigma \tau^{\geq n+1} P \ar[ld, dashed] \\
				& \iota^n C.
			\end{tikzcd}
		\end{center}
		in $\D_N(\E)$. By \Cref{lem: tau-Sigma-tau}.\ref{lem: tau-Sigma-tau-b}, $f$ and hence $\id_{\iota^n C} = \iota^{n} \id_C$ thus factors through $\tau^{\leq n} \Sigma \tau^{\geq n+1} P \in \iota^{n}\Prj(\mMor_{N-2}(\E))$, see \Cref{thm: mMor}.\ref{thm: mMor-Proj}. By \Cref{prp: mMor-Db} this implies that $\id_C$ and hence $C$ is zero in the stable category $\underline \mMor_{N-2}(\E)$. It follows that $C \in \Prj(\mMor_{N-2}(\E))$ due to \Cref{rmk: factor}.\ref{rmk: factor-zero} and thus $\iota^{n} C \in \D^{\perf}_N(\E)$ as claimed. The particular claim follows as  $\Ext^n_\E(-,-)$ commutes with direct sums in the first argument. \qedhere
	\end{enumerate}
\end{proof}

\begin{lem} \label{lem: tau-Sigma-tau} For an $N$-complex $X \in \C_N(\A)$ over an additive category $\A$, and $n \in \ZZ$ we have:
	\begin{enumerate}
		\item \label{lem: tau-Sigma-tau-a} $\tau^{\geq n-N+1} \Sigma \tau^{\leq n} X = \bigoplus_{k=1}^{N-1} \mu^{n-k}_{N-k}(X^{n-k+1})$
		\item \label{lem: tau-Sigma-tau-b} $\tau^{\leq n-1} \Sigma \tau^{\geq n} X \cong_{\C_N(\A)} \bigoplus_{k=1}^{N-1} \mu^{n-1}_{N-k}(X^{n+k-1})$
	\end{enumerate}
	
\end{lem}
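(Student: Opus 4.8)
The plan is a direct computation from the explicit description of $\Sigma$ in \Cref{con: Sigma-explicit}.\ref{con: Sigma-explicit-formula}, the definitions of the hard truncations, and \Cref{ntn: mu}.

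For part \ref{lem: tau-Sigma-tau-a} I first unwind $(\Sigma\tau^{\leq n}X)^m=\bigoplus_{k=1}^{N-1}(\tau^{\leq n}X)^{m+k}$: its $k$th summand is $X^{m+k}$ if $m+k\leq n$ and zero otherwise. Hence this object vanishes for $m\geq n$, and for $m=n-j$ with $1\leq j\leq N-1$ it equals $X^{n-j+1}\oplus\cdots\oplus X^{n}$; applying $\tau^{\geq n-N+1}$ merely discards the positions below $n-N+1$. For the differential, observe that the only nonzero entries of the matrix of $d_{\Sigma X}$ are the identity blocks $E_{N-2}$ on the superdiagonal and the powers $-d_X^{\{r\}}$ in the last row, and that the latter all land in the target summand $X^{m+N}$; as $m+N>n$ whenever $m\geq n-N+1$, these entries become zero after applying $\tau^{\leq n}$. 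So the differential at position $n-j$ is the staircase map: the identity on $X^{n-j+2},\dots,X^{n}$ and zero on $X^{n-j+1}$. This matches, summand by summand, the differential of $\bigoplus_{k=1}^{N-1}\mu^{n-k}_{N-k}(X^{n-k+1})$, since $\mu^{n-k}_{N-k}(X^{n-k+1})$ lives on $\{n-N+1,\dots,n-k\}$ with identity differentials and the summand carrying $X^{n-j+1}$, namely $k=j$, is exactly the one whose top position is $n-j$. As the identity blocks of $d_{\Sigma X}$ carry a plus sign, this is an equality.

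For part \ref{lem: tau-Sigma-tau-b} the same unwinding gives $(\Sigma\tau^{\geq n}X)^{n-j}=X^{n}\oplus X^{n+1}\oplus\cdots\oplus X^{n+N-1-j}$ for $1\leq j\leq N-1$ (the surviving summands being those of index $k=j,\dots,N-1$) and zero for $m\leq n-N$, so $\tau^{\leq n-1}\Sigma\tau^{\geq n}X$ is supported on $\{n-N+1,\dots,n-1\}$. This time the last-row entries of $d_{\Sigma X}$ survive, because every differential of $\tau^{\geq n}X$ in degree $\geq n$ equals $d_X$: after restricting to surviving summands, the differential at position $n-j$ (which is $0$ at $n-1$, position $n$ being truncated away, and which for $2\leq j\leq N-1$ has source summands labelled $i=0,\dots,N-1-j$ with $i$th summand $X^{n+i}$) is the identity on the first $N-j$ target coordinates plus the wrap-around $-\sum_{i}d_X^{\{N-j-i\}}$ into the new top summand $X^{n+N-j}$ of the target. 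The target complex $\bigoplus_{k=1}^{N-1}\mu^{n-1}_{N-k}(X^{n+k-1})$ has instead, at the same spot, the plain inclusion of the first $N-j$ coordinates. To identify the two, I exhibit the degreewise isomorphism $\psi$ whose component at $n-j$ is the unipotent lower-triangular matrix with $(i,i')$-entry $d_X^{\{i-i'\}}\colon X^{n+i'}\to X^{n+i}$ for $i\geq i'$ (and $0$ otherwise); each component is invertible, and a short matrix computation using $d_X^{\{a\}}d_X^{\{b\}}=d_X^{\{a+b\}}$ and $d_X^{\{0\}}=\mathrm{id}$ checks $\psi^{n-j+1}\circ d=d'\circ\psi^{n-j}$, where $d,d'$ are the differentials of the two complexes at position $n-j$. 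The only nontrivial row is the bottom one, where the term $d_X^{\{N-j-i\}}$ contributed to $\psi^{n-j+1}$ by the identity part of $d$ exactly cancels the wrap-around term $-d_X^{\{N-j-i\}}$ of $d$ routed through the diagonal entry of $\psi^{n-j+1}$.

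Neither part carries conceptual difficulty; the work is bookkeeping. The points to watch are the reparametrization between the internal index $k\in\{1,\dots,N-1\}$ of $\Sigma$ and the labelling $X^{n},X^{n+1},\dots$ of the summands that survive the truncation, and — in part \ref{lem: tau-Sigma-tau-b} — writing down $\psi$ correctly and checking it at the two boundary positions $n-N+1$ and $n-1$, where the differential degenerates. The chain-map verification for $\psi$ is the most laborious step, but it is a finite, elementary matrix identity.
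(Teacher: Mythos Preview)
Your proof is correct and follows essentially the same approach as the paper. For part \ref{lem: tau-Sigma-tau-a} both you and the paper observe that the last row of $d_{\Sigma X}$ vanishes after truncation, leaving only the identity blocks; for part \ref{lem: tau-Sigma-tau-b} your unipotent lower-triangular isomorphism $\psi$ with entries $d_X^{\{i-i'\}}$ is exactly the paper's explicit isomorphism $f$.
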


\begin{proof}  We use the explicit description of the considered $N$-complexes given in \Cref{con: Sigma-explicit}.\ref{con: Sigma-explicit-formula}.
	\begin{enumerate}[leftmargin=*]
		\item The direct summand corresponding to the last row of the matrix of the differential of $\tau^{\geq n-N+1} \Sigma \tau^{\leq n} X$ is zero, which implies the claimed equality. 
		\item The differentials of $\tau^{\leq n-1} \Sigma \tau^{\geq n} X$ read
		\[d^k_{\tau^{\leq n-1} \Sigma \tau^{\geq n} X} = \left( \begin{array}{ccc}
			
			& \large{E_{k-n+N}} &  \\ \hline
			-d_X^{\{k-n+N\}} & \cdots & -d_X
		\end{array} \right)\]
		for $k \in \{n-N+1, \dots, n-2\}$.
			The desired isomorphism $f\colon \tau^{\leq n-1} \Sigma \tau^{\geq n} X \to \bigoplus_{k=1}^{N-1} \mu^{n-k}_{N-k}(X^{n-k+1})$ is given by
			\[f^k = \begin{pmatrix}
				1 & 0 & \cdots & 0\\
				d_X & 1 & \ddots & \vdots \\
				\vdots & \ddots & \ddots & 0 \\
				d_X^{\{k-n+N-1\}} & \cdots & d_X & 1
			\end{pmatrix}\]
			for $k \in \{n-N+1, \dots, n-1\}$ and zero elsewhere. \qedhere
	\end{enumerate}
	\end{proof}

\subsection{Stabilized syzygies} \label{subsection: stabilized-syzygies}

In this subsection, we approach the question whether full faithfulness of $\iota^n\colon \mMor_{N-2}(\E) \to \D^b_N(\E)$, see \Cref{prp: mMor-Db}, persists under stabilization. We bypass this problem by restricting the functor $\underline \iota^n$ to $\underline \Omega^{n+1}(\underline \TAPC_N(\E))$, see \Cref{prp: Orlov1.11}. Following Buchweitz, this suffices to prove \Cref{thm: buchweitz-intro}. Avramov, Briggs, Iyengar and Letz offer an alternative argument, see {\cite[133]{Buc21}}, which depends on the existence of arbitrary products in the category of modules. Our proof, inspired by Orlov, see {\cite[Prop.~1.11]{Orl09}}, does not require this assumption.

\begin{prp} \label{prp: Orlov1.11}
	Let $\E$ be an exact idempotent complete category and $n \in \ZZ$. Then the functor $\underline \iota^n\colon \underline \mMor_{N-2}(\E) \rightarrow \underline \D^b_N(\E)$ becomes fully faithful when restricted to the image $\underline \Omega^{n+1}(\underline \TAPC_N(\E))$.
\end{prp}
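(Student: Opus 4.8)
The plan is to fix $n$ and, by applying a suitable power of $\Sigma$ (using $\Sigma^2\cong\Theta^N$ from \Cref{thm: IKM2.4} and \Cref{lem: Sigma-mu}), reduce to the case $n=0$; the functors $\underline\iota^n$ and $\underline\Omega^{n+1}$ are compatible with these shifts, so no generality is lost. So fix two objects $P,Q\in\underline\TAPC_N(\E)$ and put $C:=\Omega^1 P$, $D:=\Omega^1 Q\in\underline\mMor_{N-2}(\E)$. We must show that $\underline\iota^0$ induces a bijection
\[
\Hom_{\underline\mMor_{N-2}(\E)}(C,D)\;\longrightarrow\;\Hom_{\underline\D^b_N(\E)}(\iota^0 C,\iota^0 D).
\]
The key idea, following Orlov, is to exploit that $P$ is totally acyclic: truncating $P$ far to the left produces, by \Cref{lem: quasi}, an isomorphism $\tau^{\leq m}P\cong\iota^m\Omega^{m+1}P$ in $\D^{-,b}_N(\E)$, and for $m$ sufficiently small (say $m=-Nk$) \Cref{lem: Orlov1.10}.\ref{lem: Orlov1.10-orig} together with $\Sigma^2\cong\Theta^N$ gives $\iota^m\Omega^{m+1}P\cong\Sigma^{2k}\iota^0 C\cong\iota^0 C$ in $\underline\D^b_N(\E)$. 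Thus in $\underline\D^b_N(\E)$ the object $\iota^0 C$ is represented by an honest truncation $\tau^{\leq m}P$ of a complex of projectives, and a morphism $\iota^0 C\to\iota^0 D$ is, after shifting, the same datum as a morphism $\tau^{\leq m}P\to\tau^{\leq m}Q$ in $\underline\D^b_N(\E)$.

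Next I would compare $\Hom$ in $\underline\D^b_N(\E)$ with $\Hom$ in $\D^b_N(\E)$ using the perfection triangle \eqref{eqn: Orlov-triangle}: for $m$ small enough there is a distinguished triangle $\tau^{\geq m+1}P\to P\to\iota^m\Omega^{m+1}P\to\Sigma\tau^{\geq m+1}P$ in $\D^b_N(\E)$ with $\tau^{\geq m+1}P$ perfect. Applying $\Hom_{\D_N(\E)}(-,\iota^0 D)$ (a cohomological functor) and using that the perfect terms have disjoint support from $\iota^0 D$ after further shifting — here \Cref{lem: Verdier-crit} and \Cref{thm: IKM2.4} do the bookkeeping, exactly as in the proof of \Cref{lem: Orlov1.10}.\ref{lem: Orlov1.10-perf} — one shows that $\Hom_{\D_N(\E)}(\iota^0 C,\iota^0 D)$ and $\Hom_{\underline\D^b_N(\E)}(\iota^0 C,\iota^0 D)$ agree for objects arising this way from totally acyclic $P,Q$. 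Combined with \Cref{prp: mMor-Db}, which identifies $\Hom_{\D^b_N(\E)}(\iota^0 C,\iota^0 D)$ with $\Hom_{\mMor_{N-2}(\E)}(C,D)$ via the fully faithful $\iota^0$, this reduces the stabilized statement to controlling which morphisms $C\to D$ in $\mMor_{N-2}(\E)$ become zero after passing to $\underline\mMor_{N-2}(\E)$ versus after passing to $\underline\D^b_N(\E)$, i.e.\ to comparing the two ideals of maps that factor through $\Prj(\mMor_{N-2}(\E))$ and through $\D^{\perf}_N(\E)$ respectively.

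The heart of the argument, and the step I expect to be the main obstacle, is the surjectivity part: given $\varphi\colon\iota^0 C\to\iota^0 D$ in $\underline\D^b_N(\E)$, produce a morphism $C\to D$ in $\mMor_{N-2}(\E)$ mapping to it modulo maps factoring through projectives. Lifting $\varphi$ to a morphism $\iota^0 C\to\iota^0 D$ in $\D^b_N(\E)$ is obstructed by an element of $\Hom_{\D_N(\E)}(\iota^0 C,\Sigma^{-1}R)$ for $R$ perfect; the point is that, after the shifting reductions above, $C$ is itself a syzygy of a \emph{totally} acyclic complex, so this obstruction group vanishes by the disjoint-support/\Cref{lem: Verdier-crit} argument — this is precisely where total acyclicity of $P$ (as opposed to mere acyclicity) is indispensable and where Orlov's idea replaces the module-theoretic products used in \cite{Buc21}. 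Once the lift exists, \Cref{prp: mMor-Db} converts it to a morphism $C\to D$ in $\mMor_{N-2}(\E)$, and one checks it has the right image in $\underline\D^b_N(\E)$. For faithfulness one runs the same comparison in reverse: a morphism $C\to D$ with $\underline\iota^0$-image zero lifts to a map $\iota^0 C\to\iota^0 D$ in $\D^b_N(\E)$ factoring through some perfect complex, and the disjoint-support argument together with \Cref{lem: tau-Sigma-tau} forces this factorization to pass through an object of $\iota^0\Prj(\mMor_{N-2}(\E))$, hence the original map is zero in $\underline\mMor_{N-2}(\E)$ by \Cref{prp: mMor-Db} and \Cref{rmk: factor}.\ref{rmk: factor-zero}.
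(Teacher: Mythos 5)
Your overall strategy is the one the paper follows (Orlov-style reduction via truncations of the totally acyclic complexes $\tilde P,\tilde Q$ and the comparison triangle with a perfect truncation), but the surjectivity step---which you yourself single out as the crux---rests on a false vanishing. You claim that the obstruction to lifting a roof $\iota^0 C\xrightarrow{g}W\xleftarrow{s}\iota^0 D$ through $s$ lies in a group $\Hom_{\D_N(\E)}(\iota^0 C,\Sigma^{-1}R)$ with $R$ perfect, and that this group vanishes ``by disjoint support'' because $C$ is a syzygy of a totally acyclic complex. It does not: in $\D_N(\E)$ one has $\iota^0 C\cong\tau^{\leq 0}\tilde P$ by \Cref{lem: quasi}, whose support is all of $\ZZ_{\leq 0}$, and $R=C(s)$ is a \emph{given} bounded complex of projectives that you are not free to shift; for instance, with $N=2$, $\E$ the finite modules over a Gorenstein ring and $C$ a non-projective maximal Cohen--Macaulay module, $\Hom_{\D_N(\E)}(\iota^0C,\iota^0P)\cong\Hom_\E(C,P)\neq 0$ for a projective $P$ in degree $0$, by \Cref{prp: mMor-Db} and \Cref{lem: Verdier-crit}. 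So neither the obstruction group nor, in general, the obstruction element vanishes, and a lift $f$ with $s\circ\iota^0 f=g$ in $\D^b_N(\E)$ need not exist. The paper's proof never lifts $g$ through $s$: for the given $s$ it chooses $m\ll 0$ \emph{depending on the support of} $C(s)$ and uses the triangle \eqref{eqn: Orlov-triangle-2}, $\tau^{\geq m}Q\to\iota^0 D\xrightarrow{t}\iota^{m-1}\Omega^m Q\to\Sigma\tau^{\geq m}Q$; disjointness of supports (now available because $m$ is chosen after $s$) gives $\Hom(\Sigma^{-1}C(s),\iota^{m-1}\Omega^m Q)=0$, hence an extension $r\colon W\to\iota^{m-1}\Omega^m Q$ of $t$ along $s$, while \Cref{lem: Orlov-prep}.\ref{lem: Orlov-prep-2}---a genuine computation with the homology of $\Hom_\E(\tilde P,-)$ that uses total acyclicity, not a support argument---kills $\Hom(\iota^0 C,\Sigma\tau^{\geq m}Q)$, so that $r\circ g$ factors through $t$, \Cref{prp: mMor-Db} produces $f$, and the roofs $s^{-1}g$ and $\iota^0 f$ are equivalent via the common denominator $t$. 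This two-sided maneuver is exactly what your sketch is missing.

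Two further points. First, your intermediate claim that $\Hom_{\D^b_N(\E)}(\iota^0C,\iota^0D)$ and $\Hom_{\underline\D^b_N(\E)}(\iota^0C,\iota^0D)$ ``agree'' cannot be literally true, since morphisms factoring through $\Prj(\mMor_{N-2}(\E))$ die in the singularity category but not in $\D^b_N(\E)$; you partially correct this afterwards, but the actual content needed on the injectivity side is \Cref{lem: part-Orlov}: any morphism $\iota^0 C\to R$ with $R$ perfect factors through $\iota^0\Prj(\mMor_{N-2}(\E))$, and its proof again requires both parts of \Cref{lem: Orlov-prep} (total acyclicity entering through $H_{(r)}(\Hom_\E(\tilde P,Q^j))=0$, cf.\ \Cref{rmk: Hom-mu}), together with \Cref{lem: tau-Sigma-tau}; ``disjoint support'' alone does not suffice here either. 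Second, a minor slip: in $\D^b_N(\E)$ one has $\iota^m\Omega^{m+1}P\cong\Sigma^{2k}\iota^0\Omega^{m+1}P$ for $m=-Nk$, not $\Sigma^{2k}\iota^0 C$; the identification with $\iota^0 C$ holds only after passing to $\underline\D^b_N(\E)$.
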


\begin{proof}
	We may assume that $n=0$. Consider $X , Y\in \mMor_{N-2}(\E)$, and write $X = \Omega^1 \tilde P$ and $Y = \Omega^1 \tilde Q$ where $\tilde P, \tilde Q \in \TAPC_N(\E)$. By \Cref{lem: quasi}, we have isomorphisms \[\iota^0 Y \cong \tau^{\leq 0} \tilde Q =: Q \in \C^{-, b_\E}_N(\Prj(\E)) \hspace{5mm} \text{ and } \hspace{5mm} \tau^{\leq m-1} Q \cong \iota^{m-1} \Omega^m  Q\] in $\D_N(\E)$ for any sufficiently small $m \in \ZZ$. Then \Cref{rmk: tau-triangle} yields a distinguished triangle
	
	\begin{align} \label{eqn: Orlov-triangle-2}
		\begin{tikzcd}[sep=large, ampersand replacement = \&] 
			\tau^{\geq m}  Q \ar[r, "v"] \& \iota^0 Y \ar[r, "t"] \& \iota^{m-1} \Omega^m  Q \ar[r] \& \Sigma \tau^{\geq m}  Q
		\end{tikzcd}
	\end{align}
	in $\D_N(\E)$ with $C(t) \cong \Sigma \tau^{\geq m} Q \in \D^{\perf}_N(\E)$. We use this to prove that 
	\[\Hom_{\underline \mMor_{N-2}(\E)}(X, Y) \xlongrightarrow[\cong]{\underline \iota^0} \Hom_{\underline \D^b_N(\E)}(\iota^0 X, \iota^0 Y)\]
	is an isomorphism. 
	
	\smallskip
	
	\textbf{Surjectivity:} Consider a morphism $\iota^0 X \to \iota^0 Y$ in $\underline \D^b_N(\E)$ given by a roof
	
	\begin{center}
		\begin{tikzcd}
			s^{-1}g \colon \; \; \iota^0 X \ar[r, "g"] & W \ar[r, <-, "s"] & \iota^0 Y
		\end{tikzcd}
	\end{center}
	of morphisms $g, s$ in $\D^b_N(\E)$ with $C(s) \in \D^{\perf}_N(\E)$. We may assume that $\Sigma^{-1} C(s)$ and  $\iota^{m-1}\Omega^m Q$ have disjoint supports and hence
	\begin{align*}
		&\Hom_{\D_N(\E)}(\Sigma^{-1} C(s), \iota^{m-1}\Omega^m Q) \cong \Hom_{\K_N(\E)}(\Sigma^{-1} C(s), \iota^{m-1}\Omega^m Q) =0,\\
		&\Hom_{\D_N(\E)}(\iota^0 X, \Sigma \tau^{\geq m} Q) \cong  \Hom_{\D_N(\E)}(\tau^{\leq 0} \tilde P, \Sigma \tau^{\geq m} Q) = 0
	\end{align*}
	 by \Cref{lem: Verdier-crit,lem: quasi,lem: Orlov-prep}.\ref{lem: Orlov-prep-2}. Using the distinguished triangles \hyperref[eqn: std-triangle]{$T(s)$} from \Cref{con: cone} and \eqref{eqn: Orlov-triangle-2} together with \Cref{prp: mMor-Db}, we obtain the following induced morphisms in $\D_N(\E)$, where $f \in \Hom_{\mMor_{N-2}(\E)}(X, Y)$, see \Cref{rmk: cohom}:
	\begin{center}
		\begin{tikzcd}[sep={22.5mm,between origins}]
			& \iota^{m-1} \Omega^m Q  &&& \iota^0 X \ar[d, "r \, \circ \, g"] \ar[rd, "0"] \ar[ld, dashed, "\iota^0 f"'] & \\
			 \Sigma^{-1} C(s) \ar[r] \ar[ru, "0"] & \iota^0 Y \ar[r, "s"] \ar[u, "t"']& W \ar[lu, dashed, "r"']& \iota^0 Y \ar[r, "t"] & \iota^{m-1} \Omega^m Q \ar[r] & \Sigma \tau^{\geq m} Q 
		\end{tikzcd}
	\end{center}

	These fit into an equivalence of roofs $s^{-1}g \to \iota^0 f$ in $\D_N(\E)$:
	
	\begin{center}
		\begin{tikzcd}[sep={22.5mm,between origins}]
			& W \ar[d, "r"] &\\
			\iota^0 X \ar[ru, "g"] \ar[r, "r \, \circ \, g"] \ar[rd, "\iota^0 f"'] & \iota^{m-1} \Omega^m Q & \iota^0 Y \ar[lu, "s"'] \ar[l, "t"'] \\
			& \iota^0 Y \ar[u, "t"] \ar[ru, equal]
		\end{tikzcd}
	\end{center}
	
	\textbf{Injectivity:} Suppose that $f \in \Hom_{\mMor_{N-2}(\E)}(X, Y)$ is a morphism with $\iota^0 f = 0$ in $\underline \D^b_N(\E)$. We  prove that $f$ factors through an object of $\Prj(\mMor_{N-2}(\E))$. By assumption, there is a morphism $s\colon \iota^0 Y \to W$ in $\D^b_N(\E)$ with cone $C(s) \in \D^{\perf}_N(\E)$ such that
	\[
		\begin{tikzcd}[sep={22.5mm,between origins}]
			& \iota^0 Y \ar[d, "s"] & \\
			\iota^0 X \ar[ru, "\iota^0 f"] \ar[r, "0"] &W & \iota^0 Y \ar[lu, equal] \ar[l, "s"]
		\end{tikzcd}
	\]
	commutes in $\D_N(\E)$. As before, we may assume that $\Hom_{\D_N(\E)}(\Sigma^{-1} C(s), \iota^{m-1}\Omega^m Q) =0$
	and use the distinguished triangles \hyperref[eqn: std-triangle]{$T(s)$} from \Cref{con: cone} and \eqref{eqn: Orlov-triangle-2} to obtain the following induced morphisms in $\D_N(\E)$:
	\begin{center}
		\begin{tikzcd}[sep={22.5mm,between origins}]
			& \iota^0 X \ar[ld, dashed, "g"'] \ar[d, "\iota^0 f"] \ar[rd, "0"] &&& \Sigma^{-1} C(s) \ar[ld, dashed, "w"'] \ar[d, "u"] \ar[rd, "0"] & \\
			\Sigma^{-1} C(s) \ar[r, "u"] & \iota^0 Y \ar[r, "s"]  & W & \tau^{\geq m} Q \ar[r, "v"] & \iota^0 Y \ar[r, "t"] & \iota^{m-1} \Omega^m Q 
		\end{tikzcd}
	\end{center}
	These fit into a commutative diagram
	\begin{center}
		\begin{tikzcd}[sep={22.5mm,between origins}]
			& \tau^{\geq m} Q \ar[rd, "v"] & \\
			\iota^0 X \ar[r, "g"] \ar[ru, "w \, \circ \, g"] \ar[rd, "\iota^0 f"] & \Sigma^{-1} C(s) \ar[u, "w"]  \ar[r, "u"] \ar[d, "u"] & \iota^0 Y \\
			& \iota^0 Y \ar[ru, equal]
		\end{tikzcd}
	\end{center}
	in $\D_N(\E)$, and hence $\iota^0 f$ factors through $\tau^{\geq m} Q \in \D^{\perf}_N(\E)$. The claimed injectivity follows from \Cref{prp: mMor-Db} and \Cref{lem: part-Orlov}.
\end{proof}

\begin{lem} \label{lem: Orlov-prep}
	Consider $N$-complexes $X \in \C^{\varnothing^\ast}_N(\E)$, $\tilde P \in \TAPC_N(\E)$, $Q \in \C^{b}_N(\Prj(\E))$ over an exact idempotent complete category $\E$ and $n \in \ZZ$. We have
	\begin{enumerate}
		\item \label{lem: Orlov-prep-1} $\Hom_{\K_N(\E)}(X, Q) = 0$ and, in particular, $\Hom_{\D_N(\E)}(\tau^{\leq n} \tilde P, \tau^{\leq n-N+1 } Q) = 0$,
		\item \label{lem: Orlov-prep-2} $\Hom_{\K_N(\E)}(\tau^{\leq n} X, \Sigma \tau^{\leq n} Q) = 0$ and, in particular, $\Hom_{\D_N(\E)}(\tau^{\leq n} \tilde P, \Sigma \tau^{\leq n} Q) = 0$.
	\end{enumerate}
\end{lem}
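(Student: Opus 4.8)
The plan is to deduce both statements from \Cref{rmk: Hom-mu}.\ref{rmk: Hom-mu-d} together with a few standard hard-truncation triangles, so that the argument never leaves the homotopy category except through \Cref{lem: Verdier-crit}.

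For part~\ref{lem: Orlov-prep-1} I would first treat a single $P \in \Prj(\E)$ and $s \in \ZZ$: by \Cref{rmk: Hom-mu}.\ref{rmk: Hom-mu-d} we have $\Hom_{\K_N(\E)}(X, \mu_1^s(P)) \cong H^{-s}_{(1)}(\Hom_\E(X, P))$, and since $X$ is totally acyclic the $N$-complex $\Hom_\E(X, P) \in \C_N(\Ab)$ is $N$-acyclic at every position, so all of its homology vanishes by \Cref{rmk: gamma-homology}; hence $\Hom_{\K_N(\E)}(X, \mu_1^s(P)) = 0$. For a general bounded $N$-complex of projectives $Q$, let $M$ be the largest position with $Q^M \neq 0$ and filter $Q$ by the hard truncations $\tau^{\geq k}Q$; each termsplit short exact sequence $\tau^{\geq k+1}Q \rightarrowtail \tau^{\geq k}Q \twoheadrightarrow \mu_1^k(Q^k)$ yields a distinguished triangle in $\K_N(\E)$, see \Cref{lem: ses-triangle}, and applying the cohomological functor $\Hom_{\K_N(\E)}(X,-)$, which kills each $\mu_1^k(Q^k)$, a descending induction on $k$ starting from $\tau^{\geq M+1}Q = 0$ gives $\Hom_{\K_N(\E)}(X, \tau^{\geq k}Q) = 0$ for all $k$, in particular $\Hom_{\K_N(\E)}(X, Q) = 0$. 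For the supplementary claim, $\tau^{\leq n}\tilde P \in \C^-_N(\Prj(\E))$, so \Cref{lem: Verdier-crit} identifies $\Hom_{\D_N(\E)}(\tau^{\leq n}\tilde P, \tau^{\leq n-N+1}Q)$ with the corresponding group in $\K_N(\E)$; applying $\Hom_{\K_N(\E)}(-, \tau^{\leq n-N+1}Q)$ to the triangle $\tau^{\geq n+1}\tilde P \to \tilde P \to \tau^{\leq n}\tilde P \to \Sigma\tau^{\geq n+1}\tilde P$ of \Cref{rmk: tau-triangle}, the term over $\tilde P$ (which is totally acyclic, as $\tilde P \in \TAPC_N(\E)$) vanishes by what was just shown, and the term over $\Sigma\tau^{\geq n+1}\tilde P$ vanishes already in $\C_N(\E)$ because, by the formula for $\Sigma$ in \Cref{con: Sigma-explicit}.\ref{con: Sigma-explicit-formula}, $\Sigma\tau^{\geq n+1}\tilde P$ is supported in positions $\geq n-N+2$ while $\tau^{\leq n-N+1}Q$ is supported in positions $\leq n-N+1$; exactness then forces $\Hom_{\K_N(\E)}(\tau^{\leq n}\tilde P, \tau^{\leq n-N+1}Q) = 0$.

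Part~\ref{lem: Orlov-prep-2} runs along the same lines. First, $\Sigma\tau^{\leq n}Q$ is again a bounded $N$-complex of projectives by \Cref{con: Sigma-explicit}.\ref{con: Sigma-explicit-formula}, as each of its terms is a finite direct sum of projectives. Applying $\Hom_{\K_N(\E)}(-, \Sigma\tau^{\leq n}Q)$ to the triangle $\tau^{\geq n+1}X \to X \to \tau^{\leq n}X \to \Sigma\tau^{\geq n+1}X$: the term over $X$ vanishes by part~\ref{lem: Orlov-prep-1}, and the term over $\Sigma\tau^{\geq n+1}X$ is isomorphic to $\Hom_{\K_N(\E)}(\tau^{\geq n+1}X, \tau^{\leq n}Q)$, since $\Sigma$ is an autoequivalence, which is zero because $\tau^{\geq n+1}X$ and $\tau^{\leq n}Q$ have disjoint support. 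Hence $\Hom_{\K_N(\E)}(\tau^{\leq n}X, \Sigma\tau^{\leq n}Q) = 0$. The supplementary claim follows at once: $\tau^{\leq n}\tilde P \in \C^-_N(\Prj(\E))$, so \Cref{lem: Verdier-crit} and the case $X = \tilde P$ just established give $\Hom_{\D_N(\E)}(\tau^{\leq n}\tilde P, \Sigma\tau^{\leq n}Q) = 0$.

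I expect the only point requiring real care to be the bookkeeping around the shift functor and the hard truncations: one has to verify that $\Sigma$ sends a complex supported in positions $\geq a$ to one supported in positions $\geq a-N+2$, so that the supports of $\Sigma\tau^{\geq n+1}(-)$ and of $\tau^{\leq n}(-)$, resp.\ $\tau^{\leq n-N+1}(-)$, are genuinely disjoint. This is immediate from the matrix description of $d_{\Sigma X}$ in \Cref{con: Sigma-explicit}.\ref{con: Sigma-explicit-formula}; with it in hand, everything else is a routine application of the long exact sequence of a cohomological functor to the standard triangles from \Cref{rmk: tau-triangle} and \Cref{lem: ses-triangle}.
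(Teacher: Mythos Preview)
Your proof is correct. Part~\ref{lem: Orlov-prep-1} follows the paper essentially verbatim: filter $Q$ by the $\mu_1^s$-pieces and use \Cref{rmk: Hom-mu}.\ref{rmk: Hom-mu-d} together with total acyclicity, then pass to $\D_N$ via \Cref{lem: Verdier-crit} and the truncation triangle.

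For part~\ref{lem: Orlov-prep-2}, however, your route differs from the paper's and is in fact cleaner. The paper decomposes the \emph{second} variable: it truncates $\Sigma Q$ at position $-N+1$, invokes the explicit computation of \Cref{lem: tau-Sigma-tau}.\ref{lem: tau-Sigma-tau-a} to identify $\tau^{\geq -N+1}\Sigma Q$ as a direct sum of $\mu$-complexes, and then computes several homology groups of $\Hom_\E(\tau^{\leq 0}X, Q^{-k+1})$ directly. You instead decompose the \emph{first} variable via the truncation triangle of $X$: the term over $X$ dies by part~\ref{lem: Orlov-prep-1}, and the term $\Hom_{\K_N}(\Sigma\tau^{\geq n+1}X, \Sigma\tau^{\leq n}Q) \cong \Hom_{\K_N}(\tau^{\geq n+1}X, \tau^{\leq n}Q)$ dies by disjoint support. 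This bypasses \Cref{lem: tau-Sigma-tau} entirely and reduces (b) to a one-line application of (a). The paper's approach has the minor advantage of making visible exactly which homology groups must vanish, but your argument is shorter and relies on nothing beyond the autoequivalence $\Sigma$ and the support bound $(\Sigma Y)^k = \bigoplus_{j=1}^{N-1} Y^{k+j}$ you already noted.
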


\begin{proof} \
	\begin{enumerate}[leftmargin=*]
		\item We may assume that $Q \neq 0$ and set $m := \max\{ k \in \ZZ \, \vert \, Q^k \neq 0 \}$. Since $X$ is totally acyclic, $\Hom_{\K_N(\E)}(X, \mu^m_1(Q^m)) = H^{-m}_{(1)}(\Hom_\E(X, Q^m)) = 0$ due to \Cref{rmk: Hom-mu}.\ref{rmk: Hom-mu-d}. By induction on the length of $Q$ we may suppose that $\Hom_{\K_N(\E)}(X, \tau^{\leq m-1} Q)=0$. Then $\Hom_{\K_N(\E)}(X, Q) =0$ follows by applying $\Hom_{\K_N(\E)}(X, -)$ to the distinguished triangle
		\begin{center}
			\begin{tikzcd}
				\mu^m_1(Q^m) \ar[r] & Q \ar[r] & \tau^{\leq m-1} Q \ar[r] & \Sigma \mu^m_1(Q^m)
			\end{tikzcd}
		\end{center}
		in $\K_N(\E)$, see \Cref{rmk: tau-triangle}. The second claim follows using \Cref{lem: Verdier-crit}:
		
		$$
		\Hom_{\D_N(\E)}(\tau^{\leq n} \tilde P, \tau^{\leq n-N+1} Q)  \cong \Hom_{\K_N(\E)}(\tau^{\leq n} \tilde P, \tau^{\leq n-N+1} Q) = \Hom_{\K_N(\E)}(\tilde P, \tau^{\leq n-N+1} Q) = 0.
		$$
		
		\item We may assume that $n=0$ and $Q = \tau^{\leq 0} Q$. We have $\tau^{\geq -N+1} \Sigma Q = \bigoplus_{k=1}^{N-1} \mu^{-k}_{N-k}(Q^{-k+1})$ by \Cref{lem: tau-Sigma-tau}.\ref{lem: tau-Sigma-tau-a}, see \Cref{con: Sigma-explicit}.\ref{con: Sigma-explicit-formula}. Using \Cref{rmk: Hom-mu}.\ref{rmk: Hom-mu-d}, this implies that
		\begin{align*}
			\Hom_{\K_N(\E)}(\tau^{\leq 0} X, \tau^{\geq -N+1} \Sigma Q) & \cong \bigoplus_{k=1}^{N-1} \Hom_{\K_N(\E)}(\tau^{\leq 0} X, \mu^{-k}_{N-k}(Q^{-k+1})) \\
			& \cong \bigoplus_{k=1}^{N-1} H^{k}_{(N-k)}(\Hom_\E(\tau^{\leq 0}X, Q^{-k+1})) \\
			& = \bigoplus_{k=1}^{N-1} H^{k}_{(N-k)}(\tau^{\geq 0}\Hom_\E(X, Q^{-k+1}))\\
			& = \bigoplus_{k=1}^{N-1} H^{k}_{(N-k)}(\Hom_\E(X, Q^{-k+1}))\\
			&= 0,
		\end{align*}
		since $X$ is totally acyclic. 
		Due to part \ref{lem: Orlov-prep-1} for $\tau^{\leq -N} \Sigma Q \in \C^b_N(\Prj(\E))$, we have
		\[\Hom_{\K_N(\E)}(\tau^{\leq 0} X, \tau^{\leq -N} \Sigma Q) = \Hom_{\K_N(\E)}(X, \tau^{\leq -N} \Sigma Q) = 0.\]
		Then $\Hom_{\K_N(\E)}(\tau^{\leq 0} X, \Sigma Q) =0$ by applying $\Hom_{\K_N(\E)}(\tau^{\leq 0} X, -)$ to the distinguished triangle
		\begin{center}
			\begin{tikzcd}
				\tau^{\geq -N+1} \Sigma Q \ar[r] & \Sigma Q \ar[r] & \tau^{\leq -N} \Sigma Q \ar[r] & \Sigma \tau^{\geq -N+1} \Sigma Q
			\end{tikzcd}
		\end{center}
		in $\K_N(\E)$, see \Cref{rmk: tau-triangle}. The second claim is again due to \Cref{lem: Verdier-crit}. \qedhere
	\end{enumerate}
\end{proof}

\begin{lem} \label{lem: part-Orlov}
	Let $\E$ be an exact idempotent complete category, $X = \Omega^1 \tilde P \in \mMor_{N-2}(\E)$ where $\tilde P \in \TAPC_N(\E)$ and $Q= \tau^{\leq 0} Q \in \D^{\perf}_N(\E)$. Then any morphism $g\colon \iota^0 X \to Q$ in $\D^b_N(\E)$ factors through an object of $\iota^0\Prj(\mMor_{N-2}(\E))$
\end{lem}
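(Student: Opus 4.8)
The plan is to replace $\iota^0 X$ by the projective resolution $P:=\tau^{\le 0}\tilde P$ furnished by \Cref{lem: quasi}, to lift $g$ to an honest chain map, and then to strip off the negative tail of $\tilde P$ by a single triangle argument; what remains is the explicit bounded complex $\tau^{\le 0}\Sigma\tau^{\ge 1}\tilde P$, which \Cref{lem: tau-Sigma-tau}.\ref{lem: tau-Sigma-tau-b} identifies with an object of $\iota^0\Prj(\mMor_{N-2}(\E))$. We may take $n=0$ throughout.

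First I would note that, by \Cref{lem: quasi}, the canonical morphism $\tau^{\le 0}\tilde P\to\iota^0\Omega^1\tilde P=\iota^0 X$ is a resolution, so $\iota^0 X\cong P$ in $\D_N(\E)$, and $P\in\C^{-,b_\E}_N(\Prj(\E))$ by \Cref{cor: resolution-cohomology}. Since $P\in\C^-_N(\Prj(\E))$, \Cref{lem: Verdier-crit} lets us regard $g$ as an honest chain map $g\colon P\to Q$, unique up to homotopy. Next, apply \Cref{rmk: tau-triangle} with $n=1$ to $\tilde P$: the termsplit short exact sequence $\tau^{\ge 1}\tilde P\rightarrowtail\tilde P\xrightarrow{\,b\,}\tau^{\le 0}\tilde P=P$ gives, via \Cref{lem: ses-triangle}, a distinguished triangle $\tau^{\ge 1}\tilde P\to\tilde P\xrightarrow{\,b\,}P\xrightarrow{\,\delta\,}\Sigma\tau^{\ge 1}\tilde P$ in $\K_N(\E)$. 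Because $\tilde P\in\TAPC_N(\E)$ is totally acyclic and $Q\in\C^b_N(\Prj(\E))$, \Cref{lem: Orlov-prep}.\ref{lem: Orlov-prep-1} yields $\Hom_{\K_N(\E)}(\tilde P,Q)=0$; hence $g\circ b=0$ in $\K_N(\E)$, and applying the cohomological functor $\Hom_{\K_N(\E)}(-,Q)$ to the triangle produces a factorization $g=\tilde g\circ\delta$ with $\tilde g\colon\Sigma\tau^{\ge 1}\tilde P\to Q$.

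It then remains to factor $\tilde g$ through an object of $\iota^0\Prj(\mMor_{N-2}(\E))$. From the explicit formula $(\Sigma Y)^n=\bigoplus_{k=1}^{N-1}Y^{n+k}$ of \Cref{con: Sigma-explicit}.\ref{con: Sigma-explicit-formula}, the complex $\Sigma\tau^{\ge 1}\tilde P$ vanishes in all positions $\le-(N-1)$, while $Q$ vanishes in all positions $>0$; hence any chain-map representative of $\tilde g$ vanishes outside $\{-(N-2),\dots,0\}$ and therefore factors through the hard truncation $\Sigma\tau^{\ge 1}\tilde P\twoheadrightarrow M$ onto $M:=\tau^{\le 0}\Sigma\tau^{\ge 1}\tilde P$ — the only componentwise point to check is $d^0_Q\,\tilde g^{\,0}=\tilde g^{\,1}d^0=0$ at position $0$, which holds since $\tilde g^{\,1}=0$. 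So $g$ factors through $M$ in $\D_N(\E)$. Finally, \Cref{lem: tau-Sigma-tau}.\ref{lem: tau-Sigma-tau-b} with $n=1$ and $X=\tilde P$ gives $M\cong\bigoplus_{k=1}^{N-1}\mu^0_{N-k}(\tilde P^k)$ in $\C_N(\E)$; since each $\tilde P^k\in\Prj(\E)$ and $\mu^0_{N-k}(\tilde P^k)=\iota^0\mu_{N-k}(\tilde P^k)$ with $\mu_{N-k}(\tilde P^k)\in\smMor_{N-2}(\Prj(\E))=\Prj(\mMor_{N-2}(\E))$ by \Cref{thm: mMor}.\ref{thm: mMor-Proj}, additivity of $\iota^0$ shows $M\in\iota^0\Prj(\mMor_{N-2}(\E))$. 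As $\iota^0 X$, $M$ and $Q$ all lie in $\D^b_N(\E)$, which embeds fully faithfully into $\D_N(\E)$ by \Cref{thm: D-diamond}, the factorization $g\colon\iota^0 X\cong P\to M\to Q$ takes place in $\D^b_N(\E)$, as required.

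The main obstacle is conceptual rather than computational: one is tempted to factor $g$ directly through a hard truncation of the resolution $P$, but that fails; the right move is to bring in the two-sided totally acyclic complex $\tilde P$ (the complete resolution underlying $X$), exploit the vanishing $\Hom_{\K_N(\E)}(\tilde P,Q)=0$, and use the triangle $\tau^{\ge 1}\tilde P\to\tilde P\to P\to\Sigma\tau^{\ge 1}\tilde P$ so that $g$ is forced through $\Sigma\tau^{\ge 1}\tilde P$, whose $\le 0$-part is, by \Cref{lem: tau-Sigma-tau}, visibly built from the projectives $\tilde P^1,\dots,\tilde P^{N-1}$ via the $\mu^0_t$'s. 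Everything else is bookkeeping with supports and with the explicit formulas of \Cref{con: Sigma-explicit}.
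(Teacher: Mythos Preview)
Your proof is correct and takes a genuinely different route from the paper's. The paper works on the \emph{target} side: it first uses the vanishing $\Hom_{\D_N(\E)}(\tau^{\leq 0}\tilde P,\tau^{\leq -N+1}Q)=0$ from \Cref{lem: Orlov-prep}.\ref{lem: Orlov-prep-1} to reduce to the case $Q=\tau^{\geq -N+2}Q$, and then applies $\tau^{\leq 0}$ to the sequence $\Sigma^{-1}Q\rightarrowtail P(Q)\twoheadrightarrow Q$ and invokes \Cref{lem: Orlov-prep}.\ref{lem: Orlov-prep-2} to factor $g$ through $\tau^{\leq 0}P(Q)=\bigoplus_{k=1}^{N-1}\mu^0_k(Q^{-k+1})$. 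You instead work on the \emph{source} side: after passing to $P=\tau^{\leq 0}\tilde P$, the single hard-truncation triangle of $\tilde P$ together with $\Hom_{\K_N(\E)}(\tilde P,Q)=0$ forces $g$ through $\Sigma\tau^{\geq 1}\tilde P$, and a support argument drops to $M=\tau^{\leq 0}\Sigma\tau^{\geq 1}\tilde P$, which \Cref{lem: tau-Sigma-tau}.\ref{lem: tau-Sigma-tau-b} identifies explicitly.

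Your argument is arguably more economical: it needs only one triangle and only part \ref{lem: Orlov-prep-1} of \Cref{lem: Orlov-prep}, and it produces a factorizing object $\bigoplus_{k=1}^{N-1}\mu^0_{N-k}(\tilde P^k)$ that depends only on $\tilde P$, not on $Q$. The paper's approach, by contrast, yields a factorizing object built from the terms of $Q$ and uses both parts of \Cref{lem: Orlov-prep}; its advantage is that it stays entirely in $\D_N(\E)$ without lifting to chain maps.
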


\begin{proof} Since $\Hom_{\D_N(\E)}(\iota^0 X, \tau^{\leq -N+1} Q) \cong \Hom_{\D_N(\E)}(\tau^{\leq 0} \tilde P, \tau^{\leq -N+1} Q) = 0$ by \Cref{lem: quasi,lem: Orlov-prep}.\ref{lem: Orlov-prep-1}, the morphism $g$ factors in $\D_N(\E)$ as
	\[
	\begin{tikzcd}[sep={22.5mm,between origins}]
		& \iota^0 X \ar[d, "g"] \ar[rd, "0"] \ar[ld, dashed] & \\
		\tau^{\geq -N+2} Q \ar[r] & Q \ar[r] & \tau^{\leq -N+1} Q,
	\end{tikzcd}
	\]
	 see \Cref{rmk: tau-triangle,rmk: cohom}.	We may thus assume that $ Q = \tau^{\geq -N+2} Q$. Then applying the exact functor $\tau^{\leq 0}$ to the termsplit short exact sequence \begin{tikzcd}[cramped]
			 \Sigma^{-1} Q \ar[r, tail] & P(Q) \ar[r, two heads] & Q,
		\end{tikzcd}
	see \Cref{con: cone,con: I-and-P}, yields a distinguished triangle
	\[
		\begin{tikzcd}
			\tau^{\leq 0} \Sigma^{-1} Q \ar[r] & \tau^{\leq 0} P(Q) \ar[r] &  Q \ar[r] & \Sigma \tau^{\leq 0} \Sigma^{-1} Q
		\end{tikzcd}
	\]
	in $\D_N(\E)$, see \Cref{lem: ses-triangle}. Since $\Hom_{\D_N(\E)}(\iota^0 X, \Sigma \tau^{\leq 0} \Sigma^{-1} Q) = \Hom_{\D_N(\E)}(\tau^{\leq 0} \tilde P, \Sigma \tau^{\leq 0} \Sigma^{-1} Q) = 0$ by \Cref{lem: quasi,lem: Orlov-prep}.\ref{lem: Orlov-prep-2}, the morphism $g$ factors in $\D_N(\E)$ as
	\[
		\begin{tikzcd}[sep={22.5mm,between origins}]
			& \iota^0 X \ar[d, "g"] \ar[rd, "0"] \ar[ld, dashed] & \\
			\tau^{\leq 0} P(Q) \ar[r] & Q \ar[r] & \Sigma \tau^{\leq 0} \Sigma^{-1} Q,
		\end{tikzcd}
	\]
	 see \Cref{rmk: cohom}, where $\tau^{\leq 0} P(Q) = \bigoplus_{k=1}^{N-1} \mu^{0}_{k}(Q^{-k+1}) \in \iota^0 \Prj(\mMor_{N-2}(\E))$, see \Cref{thm: mMor}.\ref{thm: mMor-Proj}.
\end{proof}

\subsection{Stabilized truncations} \label{subsection: stabilized-truncations}

In this subsection, we prove that the stabilized truncations $\underline \tau^{\leq n}$ for $n \in \ZZ$ are pairwise isomorphic fully faithful triangle functors.

\begin{prp} \label{prp: tau-stab}
	Let $\E$ be an exact idempotent complete category and $n \in \ZZ$. Then the truncation $\tau^{\leq n}\colon \C_N(\E) \to \C_N(\E)$ induces a triangulated functor $\underline \tau^{\leq n}\colon \underline \APC_N(\E) \to \underline \D^{b}_N(\E)$ such that $\underline \tau^{\leq n} \cong \underline \iota^n \circ \underline \Omega^{n+1}$.
\end{prp}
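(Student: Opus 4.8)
The plan is to realize $\underline \tau^{\leq n}$ by restricting $\tau^{\leq n}$ to the Frobenius category $\APC_N(\E)$, passing to stable categories, and then transporting along the triangle equivalence $\K^{-, b_\E}_N(\Prj(\E))\simeq\D^b_N(\E)$ of \Cref{thm: K-sod}; the comparison with $\underline \iota^n\circ\underline \Omega^{n+1}$ will then follow formally from \Cref{lem: quasi}.

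First I would check that $\tau^{\leq n}$ restricts to an exact functor $\APC_N(\E)\to\C^{-, b_\E}_N(\Prj(\E))$: for $Q\in\APC_N(\E)$ the complex $\tau^{\leq n}Q$ consists of projectives, is bounded above, and is acyclic at all but finitely many positions (it agrees with the acyclic complex $Q$ at all positions $\le n-N+1$ and vanishes above position $n$), so it lies in the sub-Frobenius category $\C^{-, b_\E}_N(\Prj(\E))$ of $\C_N(\Prj(\E))$, see the general form of \Cref{thm: Verdier-notation}. Next I would verify that this functor preserves projective-injectives. By the footnote to \Cref{prp: stable-functor-triang} it suffices to treat the objects $P(Q)=\bigoplus_k\mu^k_N(Q^{k-N+1})$ and $I(Q)=\bigoplus_k\mu^k_N(Q^k)$ of \Cref{con: I-and-P}, and since $\tau^{\leq n}$ is additive it is enough to compute $\tau^{\leq n}\mu^k_N(P)$ for $P\in\Prj(\E)$. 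A direct inspection, distinguishing $k\le n$, $n<k\le n+N-1$ and $k>n+N-1$, shows that this equals $\mu^k_N(P)$, $\mu^n_{n-k+N}(P)$ or $0$ respectively, each of which is a bounded projective-injective $N$-complex in $\C_N(\Prj(\E))$ by \Cref{lem: mu-ProjInj} and \Cref{con: I-and-P}, hence projective-injective in $\C^{-, b_\E}_N(\Prj(\E))$ by \Cref{lem: sub-Frobenius}.\ref{lem: sub-Frobenius-a}. Now \Cref{prp: stable-functor-triang} produces a triangulated functor $\underline \APC_N(\E)\to\K^{-, b_\E}_N(\Prj(\E))$, and composing it with the triangle equivalence $\K^{-, b_\E}_N(\Prj(\E))\simeq\D^b_N(\E)$ of \Cref{thm: K-sod} and the Verdier quotient $\D^b_N(\E)\to\underline \D^b_N(\E)$ yields the desired triangulated functor $\underline \tau^{\leq n}\colon\underline \APC_N(\E)\to\underline \D^b_N(\E)$.

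For the natural isomorphism I would invoke \Cref{lem: quasi}, which supplies a natural isomorphism $\tau^{\leq n}\cong\iota^n\circ\Omega^{n+1}$ of functors $\APC_N(\E)\to\D^{-, b}_N(\E)$. Postcomposing with $\D^{-, b}_N(\E)\simeq\D^b_N(\E)\to\underline \D^b_N(\E)$, see \Cref{thm: D-diamond}, turns this into a natural isomorphism between two functors $\APC_N(\E)\to\underline \D^b_N(\E)$. By construction the first of these is $\underline \tau^{\leq n}$ precomposed with the stabilization functor $\APC_N(\E)\to\underline \APC_N(\E)$. For the second, \Cref{lem: Omega-stab} factors $\Omega^{n+1}$ through $\underline \APC_N(\E)$ as $\underline \Omega^{n+1}$, and \Cref{ntn: iota-stab}, which uses $\iota^n(\Prj(\mMor_{N-2}(\E)))\subseteq\D^{\perf}_N(\E)$, factors the ensuing $\iota^n$ through $\underline \mMor_{N-2}(\E)$ as $\underline \iota^n$; hence the second functor is $\underline \iota^n\circ\underline \Omega^{n+1}$ precomposed with the same stabilization functor. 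Since $\APC_N(\E)\to\underline \APC_N(\E)$ is the identity on objects and full on morphisms, a natural isomorphism between two functors both factoring through it descends uniquely to a natural isomorphism of the factorizations, whence $\underline \tau^{\leq n}\cong\underline \iota^n\circ\underline \Omega^{n+1}$.

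The only step requiring real care, and hence the main obstacle, is the preservation of projective-injectives: one must pin down exactly how the hard truncation acts on the generators $\mu^k_N(P)$ in each of the three position ranges and confirm that the outcome is projective-injective in $\C^{-, b_\E}_N(\Prj(\E))$, not merely in $\C_N(\Prj(\E))$. Everything else is a formal assembly of \Cref{thm: Verdier-notation}, \Cref{thm: K-sod}, \Cref{thm: D-diamond}, \Cref{prp: stable-functor-triang}, \Cref{lem: quasi}, \Cref{lem: Omega-stab} and \Cref{ntn: iota-stab}.
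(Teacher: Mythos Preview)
Your approach is correct and genuinely different from the paper's. The paper first establishes well-definedness of $\underline\tau^{\leq n}$ essentially \emph{via} the isomorphism $\tau^{\leq n}\cong\iota^n\circ\Omega^{n+1}$ of \Cref{lem: quasi}, and then proves triangulatedness by an explicit construction: it writes down the ``defect'' term $J(P)=\bigoplus_{k=1}^{N-1}\mu^0_{N-k}(P^k)$ measuring the failure of $\tau^{\leq 0}$ to commute with $I(-)$, builds the commutation isomorphism $\eta\colon\underline\tau^{\leq 0}\Sigma_A\to\Sigma_D\underline\tau^{\leq 0}$ from a large diagram of termsplit short exact sequences, and checks directly that standard triangles are sent to distinguished triangles. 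The key point there is that $J(P)$ is perfect, hence vanishes in $\underline\D^b_N(\E)$.

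You instead realize $\tau^{\leq n}$ as an exact functor between the Frobenius categories $\APC_N(\E)$ and $\C^{-,b_\E}_N(\Prj(\E))$, verify preservation of projective-injectives on the generators $\mu^k_N(P)$, and invoke \Cref{prp: stable-functor-triang} to obtain the triangulated structure for free; only afterwards do you import the isomorphism from \Cref{lem: quasi}. This is cleaner and avoids the diagram chase entirely. What the paper's explicit argument buys is a concrete formula for $\eta$ and a transparent explanation of \emph{why} passage to the singularity category is needed (namely, to kill $J(P)$); your argument hides this inside the machinery but reaches the conclusion with less effort.
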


\begin{proof}
	Due to \Cref{lem: quasi,lem: Omega-stab}, \Cref{ntn: iota-stab} and \Cref{thm: D-diamond}, $\tau^{\leq n}$ induces a well-defined functor $\underline \APC_N(\E) \to \underline \D^{-, b}_N(\E) \simeq \underline \D^b_N(\E)$ with $\underline \tau^{\leq n} \cong \underline \iota^n \circ \underline \Omega^{n+1}$.
	To prove that $\underline \tau^{\leq n}$ is triangulated we may assume that $n=0$. Let $\Sigma_A$ and $\Sigma_D$ be the suspension functors in $\underline \APC_N(\E)$ and $\underline \D^{b}_N(\E)$, respectively. We need to establish a natural isomorphism $\eta\colon \underline \tau^{\leq 0} \Sigma_A \to \Sigma_D \underline \tau^{\leq 0}$ under which the functor $\underline \tau^{\leq 0}$ maps distinguished triangles to distinguished triangles. To simplify notation, we abbreviate $X^{\leq 0} := \tau^{\leq 0} X$ for an $N$-complex $X$ and likewise for morphisms. For any $P \in \APC_N(\E)$, there is an isomorphism
	\[I(P)^{\leq 0} =\left(\bigoplus_{k \in \ZZ} \mu_N^k(P^k)\right)^{\leq 0} \cong \bigoplus_{k=-\infty}^{0} \mu_N^k(P^k) \oplus  J(P) = I(P^{\leq 0})  \oplus J(P),  \text{ where } J(P) := \bigoplus_{k=1}^{N-1}\mu_{N-k}^0(P^k).  \]
	Let $f\colon P \to Q$ be a morphism in $\APC_N(\E)$. We apply the exact functor $\tau^{\leq 0}$ to  \eqref{eqn: std-con}, see \Cref{con: cone,con: I-and-P}, and combine it with \hyperref[eqn: std-con]{$(D(f^{\leq 0}))$}. This yields a diagram of termsplit short exact sequences, in which the dashed and dotted morphisms remain to be constructed:
	\begin{equation} \label{eqn: tau-triangulated}
		\begin{tikzcd}[column sep={17.5mm,between origins}, row sep={15mm,between origins}]
			P^{\leq 0} \ar[rr, tail] \ar[d, "f^{\leq 0}"'] && I(P^{\leq 0}) \ar[rr, two heads] \ar[d] && \Sigma (P^{\leq 0}) \ar[d, equal]  && \\
			Q^{\leq 0} \ar[rru, phantom, "\square"] \ar[rr, tail] && C(f^{\leq 0}) \ar[rr, two heads] \ar[rdd, <<-, dashed] && \Sigma (P^{\leq 0}) \ar[rdd, <<-, dotted, "\eta_{P}" near start]\\
			&P^{\leq 0} \ar[rr, tail, crossing over] \ar[d, "f^{\leq 0}"] \ar[luu, equal, crossing over] && I(P)^{\leq 0} \ar[rr, two heads, crossing over] \ar[d] \ar[luu, crossing over, two heads, "p"'] && (\Sigma P)^{\leq 0} \ar[d, equal] \ar[luu, two heads, dotted, "\eta_{P}"'] \\
			&Q^{\leq 0} \ar[luu, equal] \ar[rr, tail] \ar[rru, phantom, "\square"] && C(f)^{\leq 0} \ar[rr, two heads]  && (\Sigma P)^{\leq 0} \ar[rdd, <-<, dotted] \\
			&&&& J(P) \ar[luu, crossing over, tail] \ar[d, equal] \ar[rr, equal, crossing over ] && J(P) \ar[d, equal] \ar[luu, tail, dotted] \\
			&&&& J(P) \ar[luu, tail, dashed] \ar[rr, equal] && J(P)
		\end{tikzcd}
	\end{equation}	

	 The lower pushout is due to exactness of $\tau^{\leq 0}$, see \Cref{prp: exact-pushpull}. The dashed morphisms occur in the following commutative diagram of short exact sequences obtained from \Cref{prp: Buehler2.12}.\ref{prp: Buehler2.12-push} and \Cref{lem: Buehler3.7}:
	 
	\begin{center}
		\begin{tikzcd}[sep={25mm,between origins}, ampersand replacement=\&]
			0 \ar[d, tail] \ar[r, tail] \& J(P) \ar[r, equal] \ar[d, tail] \& J(P) \ar[d, tail, dashed] \\
			P^{\leq 0} \ar[r, tail] \ar[d, equal] \& Q^{\leq 0} \oplus I(P)^{\leq 0} \ar[r, two heads] \ar{d}{\begin{pmatrix} \id & 0 \\ 0 & p \end{pmatrix}} \& C(f)^{\leq 0} \ar[d, two heads, dashed] \\
			P^{\leq 0} \ar[r, tail] \& Q^{\leq 0} \oplus I(P^{\leq 0}) \ar[r, two heads] \& C(f^{\leq 0})
		\end{tikzcd}
	\end{center}

	Then \eqref{eqn: tau-triangulated} commutes, save the dotted morphisms. Both dotted termsplit short exact sequences arise from \Cref{lem: Buehler3.7} as well. They agree by precomposing with the epic $I(P)^{\leq 0} \twoheadrightarrow (\Sigma P)^{\leq 0}$.\\
	In the singularity category $\underline \D^{b}_N(\E)$, the perfect $N$-complex $J(P)$ is zero  and \eqref{eqn: tau-triangulated} yields an isomorphism of candidate triangles:
	
	\begin{center}
		\begin{tikzcd}[sep={17.5mm,between origins}]
			P^{\leq 0} \ar[r] \ar[d, equal] & Q^{\leq 0} \ar[r] \ar[d, equal] & C(f)^{\leq 0} \ar[r] \ar[d, "\cong"] & (\Sigma P)^{\leq 0} \ar[d, "\eta_{P}"', "\cong"] \\
			P^{\leq 0} \ar[r] & Q^{\leq 0} \ar[r] & C(f^{\leq 0}) \ar[r] & \Sigma(P^{\leq 0})
		\end{tikzcd}
	\end{center}
	
	The naturality of the isomorphisms $\eta = (\eta_{P})_{P}$ results from the commutative diagram
	\begin{center}
		\begin{tikzcd}[sep={15mm,between origins}]
			& P^{\leq 0} \ar[rr, tail] \ar[dd, "f^{\leq 0}" near start] && I(P^{\leq 0}) \ar[rr, two heads] \ar[dd] && \Sigma(P^{\leq 0}) \ar[dd, "\Sigma(f^{\leq 0})"] \\
			P^{\leq 0} \ar[rr, tail, crossing over] \ar[dd, "f^{\leq 0}"] \ar[ru, equal] && I(P)^{\leq 0} \ar[rr, two heads, crossing over] \ar[ru, two heads]  && (\Sigma P)^{\leq 0} \ar[ru, two heads, "\eta_{P}"'] \\
			& Q^{\leq 0} \ar[rr, tail] && I(Q^{\leq 0}) \ar[rr, two heads] && \Sigma(Q^{\leq 0}) \\
			Q^{\leq 0} \ar[rr, tail] \ar[ru, equal] && I(Q)^{\leq 0} \ar[rr, two heads] \ar[ru, two heads] \ar[uu, <-, crossing over] && (\Sigma Q)^{\leq 0} \ar[ru, two heads, "\eta_{Q}"'] \ar[uu, <-, crossing over, "(\Sigma f)^{\leq 0}"' near end]
		\end{tikzcd}
	\end{center}
	in $\C_N(\E)$, see \Cref{rmk: I-P-functor}, where the rightmost square commutes by precomposing with the epic $I(P)^{\leq 0} \twoheadrightarrow \Sigma(P)^{\leq 0}$.
\end{proof}

\begin{lem} \label{lem: tau-lim} Let $\E$ be an exact idempotent complete category. The canonical maps induce a directed system of isomorphic functors
	\begin{center}
		\begin{tikzcd}[sep=large]
			\cdots \ar[r, "\cong"] & \underline \tau^{\leq n+1} \ar[r, "\cong"] & \underline \tau^{\leq n} \ar[r, "\cong"] & \cdots
		\end{tikzcd}
	\end{center}
	\noindent
	in the category $\textup{Func}(\underline\APC_N(\E), \underline \D^{b}_N(\E))$. In particular, the inverse limit $\underline \tau^{\leq} := \varprojlim_{k} \underline \tau^{\leq k }$ exists and is isomorphic to any of the functors $\underline \tau^{\leq n}$, where $n \in \ZZ$.
\end{lem}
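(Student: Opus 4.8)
The plan is to construct the canonical natural transformation, show it becomes invertible after passing to $\underline{\D}^b_N(\E)$, and then obtain the limit by formal nonsense. Fix $n \in \ZZ$ and $P \in \APC_N(\E)$. The canonical truncation morphism $c_P\colon \tau^{\leq n+1} P \to \tau^{\leq n} P$ is the identity in degrees $\le n$ and zero in degree $n+1$; it is precisely the termsplit epic supplied by \Cref{rmk: tau-triangle} for the $N$-complex $\tau^{\leq n+1} P$ with truncation index $n+1$, whose kernel is $\tau^{\geq n+1}\tau^{\leq n+1} P$. Since $\tau^{\leq n+1} P$ vanishes above degree $n+1$, this kernel is the object $P^{n+1}$ placed in degree $n+1$, that is, $\mu_1^{n+1}(P^{n+1})$ in the notation of \Cref{ntn: mu}. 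Using idempotent completeness of $\E$, \Cref{rmk: tau-triangle} thus yields a distinguished triangle
\[
	\mu_1^{n+1}(P^{n+1}) \longrightarrow \tau^{\leq n+1} P \xrightarrow{c_P} \tau^{\leq n} P \longrightarrow \Sigma\,\mu_1^{n+1}(P^{n+1})
\]
in $\D^b_N(\E)$.

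As $P$ is an $N$-complex of projectives, $P^{n+1} \in \Prj(\E)$, so $\mu_1^{n+1}(P^{n+1})$ is a bounded $N$-complex of projectives, hence an object of $\D^{\perf}_N(\E) = \D^b_N(\Prj(\E))$, see \Cref{dfn: perfect}; the same holds for $\Sigma\,\mu_1^{n+1}(P^{n+1})$ by \Cref{lem: Sigma-mu}. Consequently, in the Verdier quotient $\underline{\D}^b_N(\E) = \D^b_N(\E)/\D^{\perf}_N(\E)$ the two outer terms of the triangle vanish and $c_P$ becomes an isomorphism. Naturality of the family $c = (c_P)_P$ holds already in $\C_N(\E)$, since the canonical maps $\tau^{\leq n+1} \to \tau^{\leq n}$ form a natural transformation of exact endofunctors, and therefore descends. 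Invoking \Cref{prp: tau-stab}, which exhibits each $\underline{\tau}^{\leq m}$ as a well-defined triangulated functor $\underline{\APC}_N(\E) \to \underline{\D}^b_N(\E)$, we conclude that $c$ induces a natural isomorphism $\underline{c}\colon \underline{\tau}^{\leq n+1} \xrightarrow{\cong} \underline{\tau}^{\leq n}$. Letting $n$ range over $\ZZ$ produces the claimed directed system of isomorphic functors in $\textup{Func}(\underline{\APC}_N(\E), \underline{\D}^b_N(\E))$.

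For the last assertion I would invoke the elementary fact that any diagram in which every transition morphism is an isomorphism possesses a limit, realised by any one of its vertices together with the (unique) composites of the transition isomorphisms. Applied to the inverse system $(\underline{\tau}^{\leq k})_{k \in \ZZ}$ with transition maps $\underline{c}$, this shows that $\underline{\tau}^{\leq} := \varprojlim_k \underline{\tau}^{\leq k}$ exists in $\textup{Func}(\underline{\APC}_N(\E), \underline{\D}^b_N(\E))$ and is canonically isomorphic to $\underline{\tau}^{\leq n}$ for every $n \in \ZZ$.

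The only step carrying real content is the identification of the cone of $c_P$ with a bounded $N$-complex of projectives; this is exactly where the hypothesis $P \in \APC_N(\E)$ — rather than $P$ being an arbitrary acyclic $N$-complex — enters. The remaining steps are bookkeeping with \Cref{rmk: tau-triangle} and \Cref{prp: tau-stab} together with the formal remark on limits of systems of isomorphisms, so I do not expect a genuine obstacle here.
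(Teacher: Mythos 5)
Your proof is correct and takes essentially the same route as the paper: both arguments show that the canonical comparison $\tau^{\leq n+1}P \to \tau^{\leq n}P$ has a cone which is a perfect complex concentrated in a single degree (using $P^{n+1}\in\Prj(\E)$), hence becomes invertible in $\underline\D^b_N(\E)$, after which the statement about the inverse limit is formal. The only difference is cosmetic: the paper realizes this by relating $C(\tau^{\leq 1}P \to \tau^{\leq 0}P)$ to the contractible $C(\id_{\tau^{\leq 0}P})$ and the perfect object $\mu^0_{N-1}(P^1)$ via the explicit cone formula, whereas you invoke the hard-truncation triangle of \Cref{rmk: tau-triangle} directly, identifying the discrepancy as $\mu_1^{n+1}(P^{n+1})$.
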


\begin{proof}
	For $P \in \APC_N(\E)$, there is a distinguished triangle
	\begin{center}
		\begin{tikzcd}
			\mu^0_{N-1}(P^1) \ar[r] & C(\tau^{\leq 1} P \to \tau^{\leq 0} P) \ar[r] &  C(\id_{\tau^{\leq 0} P}) \ar[r] & \Sigma \mu^0_{N-1}(P^1)
		\end{tikzcd}
	\end{center}
	in $\underline \D^{-, b}(\E) \simeq \underline \D^{b}(\E)$, see \Cref{con: Sigma-explicit}.\ref{con: Sigma-explicit-formula}, \Cref{lem: ses-triangle} and \Cref{thm: D-diamond}. As $C(\id_{\tau^{\leq 0} P})$ and $\mu^0_{N-1}(P^1)$ are both zero, so is $C(\tau^{\leq 1} P \to \tau^{\leq 0} P)$. Thus, $\underline \tau^{\leq 1} P \cong \underline \tau^{\leq 0} P$ which implies the claim.
\end{proof}

\subsection{Buchweitz's Theorem} \label{subsection: buchweitz-theorem}

In this subsection we finally prove \Cref{thm: buchweitz-intro}. The last missing ingredient is the essential surjectivity of the stabilized truncation.

\begin{dfn} \label{dfn: locally-finite}
	We say that an exact category $\E$ has \textbf{locally finite $\boldsymbol \F$-dimension} for a subcategory $\F$ if every objects $X \in \E$ has a projective resolution $P$ over $\E$ such that $\syz_P^g(X) \in \F$ for some $g \in \ZZ$, see \Cref{dfn: syz}. In particular, $\E$ has enough projectives.
\end{dfn}

\begin{asn} \label{asn: setting}\
	\begin{enumerate}
		\item $\E$ is an exact idempotent complete category;
		
		\item $\F$ is a Frobenius category;
		
		\item $\F$ is a fully exact, replete subcategory of $\E$;
		
		\item \label{asn: setting-syz} $\E$ has locally finite $ \F$-dimension and $\Prj(\F)=\Prj(\E)$.
	\end{enumerate}
\end{asn}

\begin{rmk} \label{rmk: asn-syz}
	Assume \ref{asn: setting} and consider $X$, $P$ and $g$ as in \Cref{dfn: locally-finite}. Resolving $\syz^g_P(X) \in \F$ by $\E$-projectives in $\F$ yields a projective resolution $Q$ of $X$ over $\E$ with $\syz^n_Q(X) \in \F$ for $n \geq g$. By \Cref{prp: syz-welldef}, then $\syz_P^n(X) \oplus \tilde Q^n \cong \syz_Q^n(X) \oplus \tilde P^n$ lies in $\F$ for some $\tilde P^n, \tilde Q^n \in \Prj(\E) \subseteq \F$.
\end{rmk}

\begin{lem} \label{lem: Omega-in-F}
	Assuming \ref{asn: setting}, for any $P \in \C^{\infty, +}_N(\Prj(\E))$, there is an acyclic $N$-complex $Q \in \C^{-, \varnothing_\E}_N(\Prj(\E)) \cap \Prj(C_N(\E))$ such that $\Omega^n(P \oplus Q) \in \mMor_{N-2}(\F)$ for any sufficiently small $n \in \ZZ$.  
\end{lem}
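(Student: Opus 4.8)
The plan is to reduce the claim to a syzygy statement inside the exact category $\E$ by means of the contraction functors, and then to build $Q$ explicitly from the projective‑injectives $\mu^s_N$ of \Cref{con: I-and-P}, choosing the summands so as to absorb exactly the projective defects by which the syzygies of $P$ fail to lie in $\F$.

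First I would fix $m_P \in \ZZ$ with $P$ acyclic at every position $\le m_P$ and examine, for $n \ll 0$ and $r \in \{1,\dots,N-1\}$, the object $Z^n_{(r)}(P)$, which by \Cref{rmk: gamma-homology} equals $Z^n(\gamma^n_r(P))$. The contracted $2$‑complex $\gamma^n_r(P)$ has projective terms (contraction only regroups the terms of $P$) and is $2$‑acyclic at all sufficiently small positions; telescoping it as in \Cref{cor: syz-resolution} and \Cref{thm: array} exhibits $Z^n_{(r)}(P)$ as an iterated syzygy, of order tending to $\infty$ as $n \to -\infty$, of a fixed object of $\E$ — one such object for each of the finitely many residues of $n$ modulo $N$ and each $r$. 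Applying \Cref{asn: setting}.\ref{asn: setting-syz} together with \Cref{prp: syz-welldef} to each of these finitely many objects yields a single $n_0 \le m_P$ and, for all $n \le n_0$ and all $r$, a projective $B(n,r) \in \Prj(\E)$ with $Z^n_{(r)}(P) \oplus B(n,r) \in \F$. Since $\F$ is additive and $\Prj(\E) = \Prj(\F) \subseteq \F$, such a sum stays in $\F$ after adding any further projective summand.

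Next I would set $A_m := \bigoplus_{r=1}^{N-1} B(m,r)$ for $m \le n_0$, $A_m := 0$ otherwise, and $Q := \bigoplus_{m \in \ZZ} \mu^m_N(A_m)$. One checks directly that $Q$ is bounded above and termwise a finite sum of projectives, hence $Q \in \C^-_N(\Prj(\E))$; that $Q$ is acyclic (near each position it is a finite sum of the acyclic complexes $\mu^m_N(A_m)$, cf.\ \Cref{rmk: mu-acyclic}.\ref{rmk: mu-acyclic-statement} and \Cref{prp: acyclic-ext-closed}); and that $Q \in \Prj(\C_N(\E))$, because by \Cref{rmk: mu-adjunction} one has $\Hom_{\C_N(\E)}(Q, X) \cong \prod_m \Hom_\E(A_m, X^{m-N+1})$, which is exact in $X$. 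Using \Cref{rmk: mu-acyclic}.\ref{rmk: mu-acyclic-formula}, the identity $Z^n(\mu^n_2(A)) = A$ (and $Z^n(\mu^k_2(A)) = 0$ for $k \ne n$), and the fact that $\gamma^n_r(s) = n$ precisely when $n \le s \le n+r-1$ (read off from \Cref{ntn: gamma-indices}), I compute $Z^n_{(r)}(Q) = A_n \oplus \cdots \oplus A_{n+r-1}$, which contains $B(n,r)$ as a direct summand. Consequently, for every $n \le n_0$ and every $r$,
\[
	Z^n_{(r)}(P \oplus Q) \;=\; Z^n_{(r)}(P) \oplus Z^n_{(r)}(Q) \;=\; \bigl(Z^n_{(r)}(P) \oplus B(n,r)\bigr) \oplus (\text{a projective}) \;\in\; \F .
\]

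Finally, for $n \le n_0 - N + 2$ the $N$‑complex $P \oplus Q$ is acyclic at positions $n-N+1,\dots,n$, so $\Omega^n(P \oplus Q)$ is defined and, by \Cref{dfn: Omega}, is the chain $Z^n_{(1)}(P\oplus Q) \rightarrowtail \cdots \rightarrowtail Z^n_{(N-1)}(P\oplus Q)$; its objects lie in $\F$ by the displayed inclusion. By the acyclic array of $P \oplus Q$ (\Cref{thm: array}) and repeated use of \Cref{prp: Buehler2.12}.\ref{prp: Buehler2.12-push} down to the zero row $X^\bullet_0 = 0$, the cokernel of the $r$th arrow is $X^{n-N+r+1}_1 = Z^{n+r}_{(1)}(P\oplus Q)$, which again lies in $\F$ since $n+r \le n_0$. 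As $\F$ is a fully exact subcategory of $\E$, each of these short exact sequences with all three terms in $\F$ is short exact in $\F$, so the arrows are admissible monics of $\F$ and $\Omega^n(P \oplus Q) \in \mMor_{N-2}(\F)$. I expect the first step to be the main obstacle: identifying $Z^n_{(r)}(P)$ as a genuine high syzygy of a fixed object over $\E$ — in particular organizing the residues of $n$ modulo $N$ and keeping the syzygy order uniform — so that \Cref{asn: setting}.\ref{asn: setting-syz} applies; the verifications concerning $Q$ and the exact structure of $\F$ are then routine.
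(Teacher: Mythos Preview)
Your argument is correct and follows essentially the same route as the paper: identify the objects of $\Omega^n(P)$ as high syzygies via the contraction functors $\gamma^n_r$, use \Cref{asn: setting}.\ref{asn: setting-syz} and Schanuel's lemma to find the projective defects, and assemble $Q$ as a termwise finite sum of complexes $\mu^m_N(A_m)$. The paper writes $n = k - Nl$ and names the fixed base object $C^k_{(N-r)}(P)$ explicitly, whereas you leave this as ``one object per residue of $n$ modulo $N$ and per $r$''; and the paper phrases the cokernel check by saying the cokernels occur among the objects of $\Omega^{n+1}(P\oplus Q),\dots,\Omega^{n+N-1}(P\oplus Q)$, whereas you compute them directly as $Z^{n+r}_{(1)}(P\oplus Q)$ via the acyclic array --- but these are the same verification in different notation.
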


\begin{proof}
	By shifting $P$, we may assume that $P$ is acyclic at all non-positive positions. Suppose that $n \in \ZZ$ is sufficiently small. Then the cokernels of the admissible monics in $\Omega^n (P \oplus Q)$ occur in $\Omega^{n+1}(P \oplus Q), \dots, \Omega^{n+N-1}(P \oplus Q)$. If all their objects lie in $\F$, then $\Omega^n (P \oplus Q) \in \mMor_{N-2}(\F)$, since $\F$ is fully exact in $\E$.\\
	Write $n=k-Nl$, where $k \in \{-N+1,\dots,0\}$ and $l \in \NN$. For any $r \in \{1,\dots,N-1\}$, the projective resolution $\Theta^k \tau^{\leq k} \gamma^k_{r}(P)$ realizes $\Omega^{n}(P)^{r} = C^{k-Nl-N+r}_{(r)}(P)$ as a $2l$th syzygy of $C^k_{(N-r)}(P)$:
	\begin{center}
		\begin{tikzcd}[column sep={9mm,between origins}, row sep={10mm,between origins}]
		\cdots \ar[rr] && P^{n-N+r} \ar[rr, "d^{\{N-r\}}_P"] \ar[rd, two heads] && P^{n} \ar[rr, "d^{\{r\}}_P"] &&  \cdots \ar[rr, "d^{\{N-r\}}_P"] && P^{k-N} \ar[rr, "d^{\{r\}}_P"] && P^{k-N+r} \ar[rr, "d^{\{N-r\}}_P"] && P^k \ar[rr, "d^{\{r\}}_P"] \ar[rd, two heads] &&  P^{k+r} \ar[rr] && \cdots \\
		&&& \Omega^{n}(P)^{r} \ar[ru, tail] &&&&&&&&&& C^{k}_{(N-r)}(P) \ar[ru, tail]
		\end{tikzcd}
	\end{center}
	Due to \Cref{rmk: asn-syz}, there are $Q^{n}_{r} \in \Prj(\E)$, for $r \in \{1, \dots, N-1\}$, such that $\Omega^{n}(P)^{r} \oplus Q^{n}_{r} \in \F$. As $\Prj(\E) \subseteq \F$, the object $Q^{n} := \bigoplus_{r=1}^{N-1} Q^{n}_r$ works for all $r$. Due to \Cref{rmk: Omega-mu}, we have $\Omega^n(\mu^{n}_N(Q^n))=\mu_{N-1}(Q^n)$ and $\Omega^n(P \oplus \mu^{n}_N(Q^n))^r = \Omega^n(P)^r \oplus Q^n \in \F$ for any r. The claim follows with $Q := \bigoplus_{n \ll 0} \mu^{n}_N(Q^n)$, see \Cref{rmk: C-direct-sum,rmk: mu-acyclic}.\ref{rmk: mu-acyclic-statement} and \Cref{lem: mu-ProjInj}.
\end{proof}	

\begin{prp} \label{prp: tau-surj}
	Assuming \ref{asn: setting}, the restricted truncation $\underline \tau^\leq: \underline \APC_N(\F) \to \underline \D^{b}_N(\E)$ is essentially surjective, see \Cref{cor: APC}.
\end{prp}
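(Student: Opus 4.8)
The plan is to assemble three facts already available: \Cref{lem: Omega-in-F} (which pushes high syzygies into $\F$ using locally finite $\F$-dimension), \Cref{lem: Orlov1.10}.\ref{lem: Orlov1.10-orig} (which identifies an object of $\underline\D^b_N(\E)$ with a stabilized $N$-syzygy), and \Cref{prp: Omega-surj-full} (essential surjectivity of $\Omega^{n+1}$ onto $\mMor_{N-2}(\F)$ over the Frobenius category $\F$). So let $X\in\D^b_N(\E)$. By \Cref{cor: resolution-exist,cor: resolution-cohomology} there is a projective resolution $P\to X$ with $P\in\C^{-,b_\E}_N(\Prj(\E))$, and $P\cong X$ in $\D^b_N(\E)$ by \Cref{prp: quasi-characterization}. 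Being bounded above, $P$ lies in $\C^{\infty,+}_N(\Prj(\E))$, so \Cref{lem: Omega-in-F} produces an acyclic complex $Q\in\C^{-,\varnothing_\E}_N(\Prj(\E))\cap\Prj(\C_N(\E))$ with $\Omega^m(P\oplus Q)\in\mMor_{N-2}(\F)$ for all sufficiently small $m\in\ZZ$. Since $Q$ is acyclic, $P\oplus Q\cong P\cong X$ in $\D^b_N(\E)$, and $P\oplus Q$ still lies in $\C^{-,b_\E}_N(\Prj(\E))$ because direct sums preserve boundedness, projectivity, and termwise acyclicity.

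Next I would fix $n\in\ZZ$ small enough that simultaneously $C:=\Omega^{n+1}(P\oplus Q)\in\mMor_{N-2}(\F)$ and $P\oplus Q$ is acyclic at all positions up to $n+1$ — possible because a bounded above $N$-complex which is acyclic at almost all positions is acyclic at all sufficiently small ones. Then \Cref{lem: Orlov1.10}.\ref{lem: Orlov1.10-orig} applied to $P\oplus Q$ gives $\underline\iota^n(C)=\underline\iota^n\big(\Omega^{n+1}(P\oplus Q)\big)\cong P\oplus Q\cong X$ in $\underline\D^b_N(\E)$. By \Cref{prp: Omega-surj-full} I can choose $P'\in\APC_N(\F)$ with $\Omega^{n+1}(P')\cong C$ in $\mMor_{N-2}(\F)$; via \Cref{cor: APC} the object $P'$ also represents an object of $\underline\APC_N(\E)$, on which $\underline\tau^\leq$ is defined by restriction.

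Finally, \Cref{prp: tau-stab} together with \Cref{lem: tau-lim} yields $\underline\tau^\leq(P')\cong\underline\tau^{\leq n}(P')\cong\underline\iota^n\big(\underline\Omega^{n+1}(P')\big)$ in $\underline\D^b_N(\E)$. Here one uses that the functors $\Omega^{n+1}$ and $\iota^n$ are computed in the same way over $\F$ and over $\E$: $\Omega^{n+1}$ only involves cokernels of admissible morphisms, which agree because $\F$ is a fully exact, replete subcategory of $\E$, and $\iota^n$ merely places an object of $\mMor_{N-2}$ as a complex. Hence $\underline\Omega^{n+1}(P')\cong C$ in $\underline\mMor_{N-2}(\E)$, so $\underline\tau^\leq(P')\cong\underline\iota^n(C)\cong X$ in $\underline\D^b_N(\E)$, which is the asserted essential surjectivity.

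The substantive work has already been carried out in \Cref{lem: Omega-in-F}, where the hypothesis of locally finite $\F$-dimension and \Cref{prp: syz-welldef} are used to move the syzygies of $P$ into $\F$ after adjoining a contractible acyclic complex of projectives; what remains is bookkeeping of positions and the routine verification that $\Omega^{n+1}$ and $\iota^n$ commute with the inclusion $\F\hookrightarrow\E$. That compatibility is the one point I would check carefully, since it is precisely what matches the two descriptions of $\underline\iota^n(C)$ — the one coming from the resolution $P\oplus Q$ of $X$ and the one coming from the complete resolution $P'$ over $\F$.
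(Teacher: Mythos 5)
Your argument is correct and follows essentially the same route as the paper: projective resolution via \Cref{cor: resolution-exist,cor: resolution-cohomology}, adjustment by the contractible acyclic complex from \Cref{lem: Omega-in-F}, identification $X\cong\underline\iota^n\Omega^{n+1}(P\oplus Q)$ via \Cref{lem: Orlov1.10}.\ref{lem: Orlov1.10-orig}, lifting to a complete resolution in $\APC_N(\F)$ by \Cref{prp: Omega-surj-full}, and conclusion via $\underline\tau^{\leq n}\cong\underline\iota^n\circ\underline\Omega^{n+1}$ and \Cref{lem: tau-lim}. The compatibility of $\Omega^{n+1}$ over $\F$ and $\E$ that you flag is indeed the only point needing a remark, and it holds because $\F$ is fully exact in $\E$ with $\Prj(\F)=\Prj(\E)$, exactly as the paper uses implicitly.
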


\begin{proof}
	Due to \Cref{cor: resolution-exist,cor: resolution-cohomology}, any $X \in \C^b_N(\E)$ admits a projective resolution $P \to X $ with $ P\in \C^{-, b_\E}_N(\Prj(\E))$. By \Cref{lem: Omega-in-F}, we may assume that $\Omega^{n+1} P \in \mMor_{N-2}(\F)$ for any sufficiently small $n$. \Cref{prp: Omega-surj-full} yields a $Q \in \APC_N(\F)$ with $\Omega^{n+1} Q \cong \Omega^{n+1} P$ in $\mMor_{N-2}(\F)$. Using Lemmas \ref{lem: Orlov1.10}.\ref{lem: Orlov1.10-orig}, \ref{lem: quasi} and \ref{lem: tau-lim}, we conclude that $X \cong P \cong \underline \iota^{n} \underline \Omega^{n+1} Q \cong \underline \tau^{\leq n} Q \cong \underline \tau^\leq Q$ in $\underline \D^b_N(\E)$ for sufficiently small $n \in \ZZ$.
\end{proof}

\begin{thm}[Buchweitz's Theorem] \label{thm: buchweitz}
	Assuming \ref{asn: setting}, there is, for any $n \in \ZZ$, the following commutative diagram, where $\simeq$ indicates triangle equivalences:
	
	\begin{center}
		\begin{tikzcd}[sep={20mm,between origins}]
			 \underline{\textup{APC}}_N(\F) \ar[dd, hook] \ar[rd, "\simeq"] \ar[rr, "\underline \Omega^{n+1}_\F", "\simeq"'] && \underline \mMor_{N-2}(\F)   \ar[dd, hook] \ar[ld, "\simeq"'] \\
			&\underline \D^b_N(\E) \\
			 \underline \TAPC_N(\E) \ar[ru, "\underline \tau^{\leq n}"] \ar[rr, "\underline \Omega^{n+1}_\E"] &&  \underline \mMor_{N-2}(\E)  \ar[lu, "\underline \iota^n_\E"']
		\end{tikzcd}
	\end{center}
\end{thm}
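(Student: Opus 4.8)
The plan is to assemble the diagram from ingredients already in place, leaving only the equivalences $\underline\tau^{\leq n}$ and $\underline\iota^n_\F$ to be deduced. First fix the data: $\underline\Omega^{n+1}_\F\colon \underline\APC_N(\F)\to\underline\mMor_{N-2}(\F)$ is a triangle equivalence by \Cref{thm: Omega-triangle-equiv} (with index $n+1$), using $\APC_N(\F)=\TAPC_N(\F)$ since $\F$ is Frobenius; the left vertical hook is the canonical fully faithful, triangulated functor $\underline\APC_N(\F)=\underline\TAPC_N(\F)\to\underline\TAPC_N(\E)$ of \Cref{cor: APC} (valid since $\Prj(\F)=\Prj(\E)$ by \Cref{asn: setting}); the right vertical hook is $\underline\mMor_{N-2}(\F)\to\underline\mMor_{N-2}(\E)$, fully faithful by \Cref{lem: stab-mMor-embed} because $\F$ is fully exact in $\E$ and, by \Cref{rmk: enough-proj}, has enough $\E$-projectives; finally $\underline\tau^{\leq n}$ and $\underline\iota^n_\E$ are the functors of \Cref{prp: tau-stab} and \Cref{ntn: iota-stab}, and $\underline\iota^n_\F$ denotes $\underline\iota^n_\E$ precomposed with the right hook.

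Second, I would verify commutativity. The bottom triangle commutes up to a natural isomorphism $\underline\tau^{\leq n}\cong\underline\iota^n_\E\circ\underline\Omega^{n+1}_\E$ by \Cref{prp: tau-stab}. The vertical faces commute essentially by inspection: $\tau^{\leq n}$ is a single functor on $\C_N(\E)$; the embedding $\iota^n$ only places an object at position $n$ with zeros elsewhere and so is compatible with the right hook; and $\Omega^{n+1}$ is built from the cokernels $C^k_{(r)}$, which for an acyclic $N$-complex with objects in $\F$ agree whether formed in $\F$ or in $\E$ (because $\F$ is fully exact in $\E$; cf.~\Cref{thm: array}), so $\underline\Omega^{n+1}_\E$ restricted to $\underline\APC_N(\F)$ factors as the right hook after $\underline\Omega^{n+1}_\F$. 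Restricting the bottom-triangle isomorphism along the left hook and inserting these identifications yields $\underline\tau^{\leq n}|_{\underline\APC_N(\F)}\cong\underline\iota^n_\F\circ\underline\Omega^{n+1}_\F$, so the upper triangle commutes, whence the whole diagram commutes.

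Third, the equivalences. By \Cref{prp: Omega-surj-full}, every object of $\mMor_{N-2}(\F)$ is isomorphic to $\Omega^{n+1}(P)$ for some $P\in\APC_N(\F)=\TAPC_N(\F)\subseteq\TAPC_N(\E)$, so the essential image of the right hook lies in $\underline\Omega^{n+1}(\underline\TAPC_N(\E))$; by \Cref{prp: Orlov1.11}, $\underline\iota^n_\E$ is fully faithful on that subcategory, hence $\underline\iota^n_\F$ is fully faithful. Therefore $\underline\tau^{\leq n}|_{\underline\APC_N(\F)}\cong\underline\iota^n_\F\circ\underline\Omega^{n+1}_\F$ is fully faithful, being a composite of a fully faithful functor with an equivalence, and it is essentially surjective by \Cref{prp: tau-surj} together with \Cref{lem: tau-lim}; thus $\underline\tau^{\leq n}\colon\underline\APC_N(\F)\to\underline\D^b_N(\E)$ is a triangle equivalence. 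Consequently $\underline\iota^n_\F\cong\underline\tau^{\leq n}\circ(\underline\Omega^{n+1}_\F)^{-1}$ is a triangle equivalence as well, completing the proof.

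Everything substantial is already proved in the preceding subsections, so the main difficulty here is organizational: one must be scrupulous that the three comparison functors $\tau^{\leq n}$, $\Omega^{n+1}$, $\iota^n$ are genuinely compatible with passage from $\F$ to $\E$, and in particular that the image of $\underline\mMor_{N-2}(\F)$ in $\underline\mMor_{N-2}(\E)$ actually lands inside the subcategory $\underline\Omega^{n+1}(\underline\TAPC_N(\E))$ on which \Cref{prp: Orlov1.11} grants full faithfulness of $\underline\iota^n_\E$.
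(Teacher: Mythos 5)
Your proposal is correct and follows essentially the same route as the paper: assemble the hooks via \Cref{lem: stab-mMor-embed} and \Cref{cor: APC}, use \Cref{prp: tau-stab} to identify $\underline\tau^{\leq n}$ with $\underline\iota^n_\E\circ\underline\Omega^{n+1}$, get full faithfulness from \Cref{thm: Omega-triangle-equiv} and \Cref{prp: Orlov1.11}, and essential surjectivity from \Cref{prp: tau-surj} with \Cref{lem: tau-lim}. The only (harmless) difference is that you make explicit, via \Cref{prp: Omega-surj-full}, that the image of $\underline\mMor_{N-2}(\F)$ lands in $\underline\Omega^{n+1}(\underline\TAPC_N(\E))$ and deduce full faithfulness of $\underline\iota^n_\F$ directly, whereas the paper leaves this implicit in the commutativity of the outer square and recovers the equivalence of the restricted $\underline\iota^n_\E$ at the end by two-out-of-three.
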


\begin{proof} By \Cref{lem: stab-mMor-embed} and \Cref{cor: APC}, there is a fully faithful functor $\underline \mMor_{N-2}(\F) \to \underline \mMor_{N-2}(\E)$ and a fully faithful triangle functor $\underline \APC_N(\F) = \underline \TAPC_N(\F) \to \underline \TAPC_N(\E)$, see \Cref{rmk: Frobenius-tac}. The outer square commutes by construction. By \Cref{prp: tau-stab}, $\underline \tau^{\leq n} \cong \underline \iota^n_\E \circ \underline \Omega^{n+1}_\E$ is a triangle functor. The restricted truncation $\underline \tau^{\leq n}\colon \underline \APC_N(\F) \to \underline \D^{b}_N(\E)$ is then a triangle functor isomorphic to $\underline \iota^n_\E \circ \underline \Omega^{n+1}_\F$. Since $\underline \Omega^{n+1}_\F$ is a triangle equivalence by \Cref{thm: Omega-triangle-equiv}, it is fully faithful by \Cref{prp: Orlov1.11} and hence a triangle equivalence by \Cref{lem: tau-lim} and \Cref{prp: tau-surj}. The restriction $\underline \mMor_{N-2}(\F) \rightarrow \underline \D^b_N(\E)$ of $\underline \iota^n_\E$ is then a triangle equivalence as well. This concludes the proof.
\end{proof}

%%%%%%%%%%%%%%%%%%%%%%%%%%%%%%%%%%%%%%%%%%%%%%%%%%%%%%%%%%%%%%%%%%%%%%%%%%%%%%%%

\printbibliography

%%%%%%%%%%%%%%%%%%%%%%%%%%%%%%%%%%%%%%%%%%%%%%%%%%%%%%%%%%%%%%%%%%%%%%%%%%%%%%%%
\end{document}